\title{Global trace formula for ultra-differentiable Anosov flows}
\author{Malo Jézéquel}
\date{\today}
\address{Laboratoire de Probabilités, Statistique et Modélisation (LPSM) \\ CNRS, Sorbonne Université, Université de Paris \\ 4, Place Jussieu, 75005 Paris, France.}
\email{jezequel@lpsm.paris}
\thanks{This project has received funding from the European Research Council (ERC) under the European Union's Horizon 2020 research and innovation programme (grant agreement No 787304). This work was started while the author was affiliated with Institut de Mathématiques de Jussieu-Paris Rive Gauche.}
\numberwithin{equation}{section}
\newtheorem{lm}{Lemma}[section]
\newtheorem{thm}[lm]{Theorem}
\newtheorem{cor}[lm]{Corollary}
\newtheorem{prop}[lm]{Proposition}
\theoremstyle{definition}
\newtheorem{df}[lm]{Definition}
\newtheorem{rmq}[lm]{Remark}
\newcommand{\p}[1]{\left(#1\right)}
\newcommand{\va}[1]{\left| #1 \right|}
\newcommand{\n}[1]{\left\| #1 \right\|}
\newcommand{\N}{\mathbb{N}}
\newcommand{\Z}{\mathbb{Z}}
\newcommand{\C}{\mathbb{C}}
\newcommand{\R}{\mathbb{R}}
\newcommand{\h}{\mathcal{H}}
\newcommand{\B}{\mathcal{B}}
\newcommand{\s}[3]{\int_{#1} #2 \mathrm{d} #3}
\newcommand{\set}[1]{\left\{ #1 \right\}}
\newcommand{\bul}[1]{\stackrel{\circ}{#1}}
\newcommand{\hra}{\hookrightarrow}
\newcommand{\nhra}{\not\hookrightarrow}
\begin{document}

\begin{abstract}
Adapting tools that we introduced in \cite{lagtf} to study Anosov flows, we prove that the trace formula conjectured by Dyatlov and Zworski in \cite{DZdet} holds for Anosov flows in a certain class of regularity (smaller than $\mathcal{C}^\infty$ but larger than the class of Gevrey functions). The main ingredient of the proof is the construction of a family of anisotropic Hilbert spaces of generalized distributions on which the generator of the flow has discrete spectrum.
\end{abstract}

\maketitle

\section*{Introduction}

Let $V$ be a $\mathcal{C}^\infty$ vector field on a smooth manifold $M$ of dimension $d+1 \geq 3$, and assume that $V$ generates an Anosov flow $\p{\phi^t}_{t \in \R}$ (see Definition \ref{defanosov}). The vector field $V$ may be identified with a differential operator of order $1$ whose spectral properties are of great interest when studying the statistical properties of the flow $\p{\phi^t}_{t \in \R}$. However, the operator $V$ is not elliptic and consequently its spectrum on $L^2\p{M}$ can be quite wild. In \cite{buli,bulicor}, Butterley and Liverani showed that, introducing an appropriate scale of \emph{anisotropic Banach spaces of distributions} on $M$, one may define a suitable notion of spectrum for $V$, the \emph{Ruelle spectrum}\footnote{The top-right part of the spectrum had already been unveiled by Liverani for contact Anosov flows in \cite{Livcontact}.}, whose elements are called Ruelle resonances (see Theorem~\ref{thmfonda} and Definition \ref{defresonances}). After \cite{buli,bulicor}, spaces of anisotropic of distributions have been widely used to study in particular the Ruelle resonances (see for instance \cite{FauSjo} that gives a construction of anisotropic Sobolev spaces using the language of micro-local analysis, or \cite{phdadam,adamarxiv} for another construction). 

One of the most striking applications of the spaces of anisotropic distributions has been the proof of Smale's conjecture on the meromorphic continuation of zeta functions associated to Axiom A flows (see \cite{GLP,DZdet,opensystems,afterword}). The theory of Ruelle zeta functions and dynamical determinant makes a link between Ruelle resonances for the operator $V$ and periodic orbit of the flow $\p{\phi^t}_{t \in \R}$ (see Theorem~\ref{thmdet}). In \cite{DZdet}, Dyatlov and Zworski suggested that there could be another link between these objects, a global trace formula in the sense of the following equality between distributions on $\R_+^*$:
\begin{equation}\label{eqtrformsimple}
\sum_{\lambda \textup{ resonances}} e^{\lambda t} = \sum_{\gamma} \frac{T_\gamma^{\#}}{\va{\det\p{I - \mathcal{P}_\gamma}}} \delta_{T_\gamma},
\end{equation}
where the sum on the right-hand side runs over periodic orbits $\gamma$ of the flow $\p{\phi^t}_{t \in \R}$. If $\gamma$ is a periodic orbit of the flow $\p{\phi^t}_{t \in \R}$, then $T_\gamma$ denotes its length, $T_\gamma^{\#}$ its primitive length and $\mathcal{P}_\gamma$ the associated linearized Poincaré map (which is defined below \eqref{eqdefdet}). The intuition behind \eqref{eqtrformsimple} is based on Guillemin's trace formula (see \cite{guillemin} and \cite[2.2]{DZdet}). 

It may be deduced from work of Fried and Rugh \cite{R1,R2,friedzeta} that the trace formula \eqref{eqtrformsimple} holds for real-analytic Anosov flows. In \cite{lagtf}, we studied a discrete-time analogue of this problem, and our results indicate that formula \eqref{eqtrformsimple} could be wrong for some Anosov $\mathcal{C}^\infty$ flows. However, we suggested in \cite{lagtf} that the trace formula should hold for Gevrey flows (see \cite{lagtf} or \S \ref{secDCC} for a definition). Indeed, we proved in \cite{lagtf} that the discrete-time analogue of the trace formula is true for Gevrey uniformly hyperbolic diffeomorphisms, and the methods that we developed there seemed robust enough to be adapted to the time-continuous case.

Adapting ideas we developed in \cite{lagtf} to the context of Anosov flows, we prove here the global trace formula \eqref{eqtrformsimple} for a class of regularity much larger than Gevrey (see Corollary~\ref{cortrform}) that we define in \S \ref{secDCC}. We prove in fact a slightly more general version \eqref{eqtrform} of the trace formula: we study resonances for the operator $X = V + g$ where $g : M \to \C$ is an ultradifferentiable potential. The main tool of the proof is the construction of a family of adapted Hilbert spaces of anisotropic generalized distributions, see Theorem~\ref{thmmain}.

The paper is structured as follow:

In \S \ref{secsett}, we recall basic facts from the theory of Ruelle resonances and state our main results, Theorem~\ref{thmmain}, Corollary~\ref{cortrform} and Proposition~\ref{propmain}. Theorem~\ref{thmmain} ensures that the Koopman operator \eqref{eqkoopman} has good properties when acting on some Hilbert spaces of anisotropic generalized distributions. The trace formula readily follows as stated in Corollary~\ref{cortrform}. Proposition~\ref{propmain} gives a control on the number of Ruelle resonances that naturally follows from the proof of Theorem~\ref{thmmain}.

In \S \ref{secDCC}, we use the language of Denjoy--Carleman classes to define the regularity that appears in Theorem~\ref{thmmain}, Corollary~\ref{cortrform} and Proposition~\ref{propmain}. We also define spaces of generalized distributions needed for the construction of the Hilbert spaces appearing in Theorem~\ref{thmmain}.

In \S \ref{sectls}, we define a local version of the space $\h$ from Theorem~\ref{thmmain}, and we study in \S \ref{sectlto} the action on this local space of a local model for an Anosov flow (in charts). This is achieved by adapting the techniques that we introduced in \cite{lagtf} for Gevrey uniformly hyperbolic diffeomorphisms both for continuous-time dynamics and for the larger class of regularity that we introduce in \S \ref{secDCC}.

The construction of $\h$ and the proofs of Theorem~\ref{thmmain} and Proposition~\ref{propmain} are carried out in \S \ref{gs1} and \S \ref{gs2}. In \S \ref{gs1}, we design a first space that is well-suited to study the flow $\p{\phi^t}_{t \in \R}$ for large $t$. In \S \ref{gs2}, we use our this space to construct a space well-suited to study the flow $\p{\phi^t}_{t \in \R}$ for all non-negative times and prove the trace formula. The lack of hyperbolicity for $t > 0$ small will be dealt with by considering a decomposition of the powers of the resolvent $\p{z-X}^{-1}$ of the generator $X= V + g$ of the Koopman operator \eqref{eqkoopman} into a compact part (corresponding to large times for which we have uniform hyperbolicity) and a negligible part (corresponding to small times). This strategy is also what allows us to tackle a class of regularity that is larger than Gevrey and get a better result than the one we suggested in \cite{lagtf}.

In Appendix \ref{apprri}, we give a new proof of the fact that Ruelle resonances are intrinsic, which does not require to deal with Schwartz kernel (as it was the case in \cite{FauSjo} for instance). This implies in particular that the Ruelle resonances that appear from the study of the operator $X$ acting on the quite exotic Hilbert space $\h$ given by Theorem~\ref{thmmain} coincide with the Ruelle resonances that have already been defined in the literature.

In Appendix \ref{appDCC}, we give the proofs of Lemmas \ref{lmderiv} and~\ref{lmdomain} from \S \ref{secDCC}.

In Appendix \ref{appdet}, we give, under the hypotheses of Theorem~\ref{thmmain}, a ``Hadamard-like'' factorization \eqref{eqfacto} for the dynamical determinant $d_g$ defined by \eqref{eqdefdet}.

In Appendix \ref{apptrdet}, we prove Proposition~\ref{propapp} and Corollary~\ref{corphrag} as applications of the trace formula.

In Appendix \ref{appupsilon}, we discuss one of the hypothesis of the Theorem~\ref{thmmain}.

\tableofcontents

\section{Settings and statement of results}\label{secsett}

Let $d \geq 2$ be an integer and $M$ a $\p{d+1}$-dimensional $\mathcal{C}^\infty$ manifold. Let $V$ be a $\mathcal{C}^\infty$ vector field on $M$ which generates a $\mathcal{C}^\infty$ flow $\p{\phi^t}_{t \in \R}$. Let $g : M \to \C$ be a $\mathcal{C}^\infty$ function (called the potential).

\begin{df}[Anosov flow]\label{defanosov}
We say that the flow $\p{\phi^t}_{t \in \R}$ is \emph{Anosov} if $V$ does not vanish and for all $x \in M$ there is a decomposition of the tangent space of $M$ at $x$
\begin{equation}\label{eqdecanos}
T_x M = E_x^u \oplus E_x^s \oplus \R V\p{x}
\end{equation}
such that:
\begin{enumerate}[label=(\roman*)]
\item for all $t \in \R, x \in M$ and $\sigma \in \set{u,s}$ we have $D_x \phi^t\p{E_x^\sigma} = E_{\phi^t\p{x}}^\sigma$;
\item there are a metric $\va{\cdot}_x$ on $M$ and constants $C >0$ and $\lambda <1$ such that for all $t \in \R_+$ and $x \in M$ we have
\begin{align*}
\forall v_s \in E_{x}^s : \va{D_x \phi^t\p{v_s}}_{\phi^t\p{x}} \leq C \lambda^t \va{v_s}_x\\
\forall v_u \in E_x^u : \va{D_x \phi^{-t}\p{v_u}}_{\phi^{-t}\p{x}} \leq C \lambda^{t} \va{v_u}_x.
\end{align*}
\end{enumerate}
\end{df}

In the following, we assume that $\p{\phi^t}_{t \in \R}$ is an Anosov flow. Fundamental examples of Anosov flows are geodesic flows on unit tangent bundles of compact Riemannian manifolds of negative sectional curvature \cite[Theorem 17.6.2]{katokhasselblatt} and suspension of Anosov diffeomorphisms.

The main object of our study is the Koopman operator which may be defined for $t \in \R_+$ and $u \in \mathcal{D}'\p{M}$ by
\begin{equation}\label{eqkoopman}
\mathcal{L}_t u = \exp\p{\int_0^t g \circ \phi^{\tau} \mathrm{d}\tau} u \circ \phi^t.
\end{equation}
Notice that $\p{\mathcal{L}_t}_{t \geq 0}$ is a semi-group of operator on $\mathcal{D}'\p{M}$ whose generator is $X = V + g$. The most interesting case is when $g$ is real-valued since the spectral theory for the operator $X$ is then closely related to the statistical properties of the equilibrium state of $\p{\phi^t}_{t \in \R}$ for the potential $ g  - \textup{div}_u \p{V}$ (where $\textup{div}_u\p{V}$ denotes the "unstable divergence" of $V$). In particular, when $g = 0$ we may study the SRB measure for $\p{\phi^t}_{t \in \R}$ and when $g = \textup{div}_u\p{V}$ the measure of maximal entropy ($\textup{div}_u V$ is not smooth in general, but techniques have been developed by Gou\"ezel and Liverani to bypass this difficulty, see \cite{goulivstat}, using ideas that were already present in the physics literature \cite{CviVat}). Notice that, considering applications to statistical properties of the flow, it could be more natural to study the transfer operator, that is the adjoint of the operator \eqref{eqkoopman}. However, since we will state our results for general potential $g$, and the flow $\p{\phi^{-t}}_{t \in \R}$ is also Anosov, the choice of the operator \eqref{eqkoopman} is of no harm. 

However, the spectral theory of $X$ on $L^2\p{M}$ is not satisfactory: we need to use so-called ``anisotropic Banach spaces of distributions" \cite{buli,FauSjo,phdadam,opensystems}. The main theorem to carry out this study is the following. It has been proven first by Butterley and Liverani in the case $g= \textup{div}\p{V}$ in \cite{buli}, with a needed gap filled in \cite{bulicor}. A proof in a very general setting may be proven in \cite{opensystems}.

\begin{thm}[\cite{buli,bulicor,opensystems}]\label{thmfonda}
For every $A > 0$ there is a Banach space $\B$ such that:
\begin{enumerate}[label=(\roman*)]
\item $\mathcal{C}^\infty\p{M} \subseteq \B \subseteq \mathcal{D}'\p{M}$, both inclusions being continuous, the first one having dense image;
\item for all $t \in \R_+$, the operator $\mathcal{L}_t$ defined by \eqref{eqkoopman} is bounded on $\B$;
\item $\p{\mathcal{L}_t}_{t \geq 0}$ forms a strongly continuous semi-group of operators acting on $\B$, whose generator is $X=V+g$;
\item the intersection of $\set{ z \in \C: \Re\p{z} > -A}$ with the spectrum of $X$ acting on $\B$ consists of isolated eigenvalues of finite multiplicity.
\end{enumerate}
\end{thm}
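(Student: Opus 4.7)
The plan is to build $\B$ as an anisotropic Sobolev space in the spirit of Faure--Sjöstrand \cite{FauSjo}, which is morally equivalent to the cone-based construction of Butterley--Liverani but is more transparent for the Fredholm theory. The underlying geometric fact is that the symplectic lift $\Phi^t$ of $\phi^t$ to $T^\ast M$ is partially hyperbolic: there are two continuous subbundles complementing the neutral direction $\R V^\ast$, one expanded and one contracted by $\Phi^t$. The space $\B$ should carry microlocal regularities of opposite signs in these two directions.

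The central ingredient is an \emph{escape function}: a symbol $m \in \mathcal{C}^\infty\p{T^\ast M \setminus 0}$, $0$-homogeneous at infinity, whose sign is adapted to the hyperbolic splitting, and satisfying $H_p m \leq -c < 0$ outside any prescribed conic neighborhood of $\R V^\ast$, with $p\p{x,\xi} = \xi\p{V\p{x}}$. Such an $m$ is constructed by averaging an appropriate cone indicator along $\Phi^t$ and then smoothing, using transversality of the dual stable and unstable bundles. For $N > 0$ to be chosen, define $\B = \set{u \in \mathcal{D}'\p{M} : A_{Nm} u \in L^2\p{M}}$, where $A_{Nm}$ is a properly supported pseudodifferential operator with principal symbol $\langle \xi \rangle^{Nm\p{x,\xi}}$, invertible modulo smoothing. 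Since $Nm$ is bounded, $\B$ sits between two ordinary Sobolev spaces, which gives (i), with density of $\mathcal{C}^\infty$ by mollification. Property (ii) is an Egorov-type statement: the escape condition forces the principal symbol of $A_{Nm} \mathcal{L}_t A_{Nm}^{-1}$ to remain uniformly bounded for $t \geq 0$, so this conjugated operator is bounded on $L^2\p{M}$. Strong continuity at $t=0$ and the identification of the generator in (iii) then follow by density from the smooth case and standard semigroup arguments.

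The deeper content is (iv). From the escape inequality, a sharp Gårding estimate yields
\begin{equation*}
\Re \langle -X u, u \rangle_\B \geq \p{c_0 N - C_g} \n{u}_\B^2 - C_N \n{u}_{H^{-K}\p{M}}^2
\end{equation*}
for all $u \in \mathcal{C}^\infty\p{M}$, where $K$ depends only on the order of $m$, the constant $c_0 > 0$ is independent of $N$, and $C_g$ absorbs the zeroth-order contribution of $g$ together with the subprincipal symbol of $X$. Combined with the compact embedding $\B \hra H^{-K}\p{M}$, this shows that $X$ generates a quasi-compact semigroup on $\B$ and that $\p{z-X}^{-1}$ is a compact operator on $\B$ as soon as $\Re z > C_g - c_0 N$. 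Choosing $N$ so that $c_0 N - C_g > A$ and invoking the analytic Fredholm theorem yields (iv) on the half-plane $\set{\Re z > -A}$.

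The main obstacle is quantitative and microlocal. The stable and unstable bundles $E^{u/s}$ are in general only Hölder continuous, so $m$ cannot simultaneously be globally smooth and exactly adapted to the splitting; one must instead construct $m$ on a conic open set away from $\R V^\ast$, extend suitably, and patch with an elliptic estimate in a conic neighborhood of $\R V^\ast$, where the full information on $g$ enters. The second subtle point is a careful bookkeeping: to make $c_0 N - C_g$ exceed any prescribed $A$ one scales $m$ by $N$, and one must verify that only $c_0 N$---and not the remainder from sharp Gårding and the pseudodifferential calculus---grows linearly in $N$. Both are the classical technical points carried out via cones in \cite{buli,bulicor,phdadam} and via Weyl--Hörmander calculus in \cite{FauSjo}.
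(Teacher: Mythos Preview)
The paper does not give a proof of this theorem: it is quoted as a known result from \cite{buli,bulicor,phdadam}, and the paper's own contribution begins with the ultra-differentiable refinement in Theorem~\ref{thmmain}. So there is nothing to compare against; your sketch should be read as a self-contained outline of the Faure--Sj\"ostrand approach \cite{FauSjo}, which is indeed one of the standard routes and is equivalent in spirit to the cone constructions in \cite{buli,phdadam}.

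The outline is broadly correct, but there is one genuine error. You assert that the G\aa rding estimate together with the compact embedding $\B \hra H^{-K}\p{M}$ makes $\p{z-X}^{-1}$ a \emph{compact} operator on $\B$ for $\Re z > C_g - c_0 N$. This cannot hold: if the resolvent were compact, the spectrum of $X$ on $\B$ would be entirely discrete, and the half-plane restriction in (iv) would be vacuous---yet the threshold $-A$ genuinely depends on the choice of $\B$, and there is essential spectrum beyond it. Concretely, $X$ is a first-order operator, so control of $Xu$ in $\B$ gives only one derivative along the flow and none transversally; the graph-norm inclusion $D\p{X} \hra \B$ is not compact. What the estimate actually yields is that $z - X$ is Fredholm of index zero on the half-plane $\Re z > -(c_0 N - C_g)$: it is invertible for $\Re z$ large by Hille--Yosida, and the $H^{-K}$ remainder shows that moving $z$ within the half-plane is a relatively compact perturbation. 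Analytic Fredholm theory then gives (iv). Equivalently, in the semigroup language you invoke just before, the estimate bounds the \emph{essential spectral radius} of $\mathcal{L}_t$ by $e^{-(c_0 N - C_g) t}$; it does not make the resolvent compact. Your final appeal to analytic Fredholm is the right endgame, but the input is Fredholmness of $z - X$, not compactness of its inverse. (A smaller point: your displayed G\aa rding inequality is slightly too optimistic as written, since $H_p m$ is only strictly negative \emph{away} from the flow direction---but you acknowledge this in your last paragraph, so it is not a gap.)
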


The space $\B$ in Theorem~\ref{thmfonda} is highly non canonical, but in fact the intersection of the spectrum of $X$ acting on $\B$ and $\set{ z \in \C: \Re\p{z} > -A}$ does not depend on the choice of $\B$ (see Lemma~\ref{lmdefres} in Appendix \ref{apprri} and Theorem~\ref{thmdet}). This allows us to define the \emph{Ruelle resonances} of $X$.

\begin{df}[Ruelle resonances]\label{defresonances}
For $\lambda \in \C$ and $m \in \N^*$, we say that $\lambda$ is a Ruelle resonance of $X$ of multiplicity $m$ if for some $A > 0$ such that $\Re\p{\lambda} > - A$ there is a Banach space $\B$ satysfying (i)-(iv) from Theorem~\ref{thmfonda} such that $\lambda$ is an eigenvalue of (algebraic) multiplicity $m$ for $X$ acting on $\B$.
\end{df} 

It is not easy to describe Ruelle resonances in general. A convenient tool to do it is the dynamical determinant, which is defined for $z \in \C$ with $\Re\p{z} \gg 1$ by
\begin{equation}\label{eqdefdet}
d_g\p{z} = \exp\p{- \sum_{\gamma} \frac{T_\gamma^{\#}}{T_\gamma} e^{\int_\gamma g} \frac{e^{-z T_\gamma}}{\va{\det\p{I - \mathcal{P}_\gamma}}} },
\end{equation}
where the sum runs over the (countable set of the) periodic orbits $\gamma$ of the flow $\p{\phi^t}_{t \in \R}$. If $\gamma$ is a periodic orbit of $\p{\phi^t}_{t \in \R}$ then:
\begin{itemize}
\item $T_\gamma$ denotes its length;
\item $T_\gamma^{\#}$ denotes its primitive length, i.e. the length of the shortest periodic orbit $\gamma^{\#}$ with the same image as $\gamma$;
\item $\int_\gamma g$ is the integral of $g$ along $\gamma$, i.e. $\int_\gamma g = \int_0^{T_\gamma} g \circ \phi^{\tau}\p{x} \mathrm{d}\tau$ for any $x$ in the image of $\gamma$;
\item $\mathcal{P}_\gamma$ is a linearized Poincaré map of $\gamma$, that is $\mathcal{P}_\gamma$ is the linear map from $E_x^u \oplus E_x^s$ to itself induced by $D_x \phi^{T_\gamma}$ for some $x \in \gamma$ (the map $\mathcal{P}_\gamma$ depends on the choice of $x$, but its conjugacy class does not).
\end{itemize}

The relationship between dynamical determinant and Ruelle resonances is given by the following result.

\begin{thm}[\cite{GLP,DZdet,opensystems}]\label{thmdet}
The dynamical determinant $d_{g}$ extends holomorphically to the whole complex plane, and the zeros of this continuation are exactly the Ruelle resonances for $X$ (multiplicity taking into account).
\end{thm}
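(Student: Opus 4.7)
The plan is to follow the Giulietti--Liverani--Pollicott \cite{GLP} and Dyatlov--Zworski \cite{DZdet} strategy. Fix $A > 0$ and let $\B = \B_A$ be an anisotropic Banach space given by Theorem~\ref{thmfonda}. For $\Re\p{s}$ large, the Laplace transform $R\p{s} = \int_0^\infty e^{-st} \mathcal{L}_t \mathrm{d}t$ represents the resolvent $\p{s - X}^{-1}$ acting on $\B$, and by Theorem~\ref{thmfonda}(iv) this resolvent extends meromorphically to $\set{\Re\p{s} > -A}$ with poles exactly at the Ruelle resonances, counted with algebraic multiplicity. First, using only the Anosov splitting \eqref{eqdecanos}, one checks that the wavefront set of the Schwartz kernel of $\mathcal{L}_t$ is disjoint from the conormal to the diagonal whenever $t > 0$ is not a period. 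This makes the flat trace $\operatorname{tr}^\flat \mathcal{L}_t$ well defined as a distribution on $\R_+^*$, and a direct application of the Guillemin--Atiyah--Bott trace formula yields
\begin{equation*}
\operatorname{tr}^\flat \mathcal{L}_t = \sum_\gamma \frac{T_\gamma^{\#}}{\va{\det\p{I - \mathcal{P}_\gamma}}} \delta_{T_\gamma}.
\end{equation*}

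I would then upgrade this distributional identity to an operator-theoretic one. Choose $N$ large enough that $R\p{s}^N$ is trace class on $\B$ for $\Re\p{s} > -A + \varepsilon$ (this follows from the spectral gap in Theorem~\ref{thmfonda}(iv) combined with Weyl-type estimates on the anisotropic space). A mollification argument---smoothing $\mathcal{L}_t$ by a family of finite-rank operators converging strongly to the identity on $\B$---identifies the honest trace of $R\p{s}^N$ on $\B$ with the distributional pairing of $\operatorname{tr}^\flat \mathcal{L}_t$ against $t^{N-1} e^{-st}/\p{N-1}!$. Plugging in the explicit formula above, one obtains, for $\Re\p{s}$ large, an identity of the shape
\begin{equation*}
\frac{\mathrm{d}^N}{\mathrm{d}s^N} \log d_0\p{s} = \p{-1}^{N-1} \p{N-1}! \, \operatorname{tr}_{\B}\p{R\p{s}^N},
\end{equation*}
whose right-hand side is already meromorphic on $\set{\Re\p{s} > -A}$ with poles exactly at Ruelle resonances with the correct orders.

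The conclusion comes by exponentiation and a Fredholm determinant construction. Integrating $N$ times and exponentiating produces on some right half-plane a factorization $d_0\p{s} = e^{P\p{s}} D\p{s}$, where $P$ is a polynomial of degree at most $N-1$ and $D\p{s}$ is a Fredholm determinant of a trace-class analytic family of operators on $\B$ built from $R\p{s}^N$. The factor $D$ is then entire on $\set{\Re\p{s} > -A}$ with zeros precisely at the Ruelle resonances of $X$, counted with multiplicity. Hence $d_0$ extends holomorphically to $\set{\Re\p{s} > -A}$ with exactly the claimed zeros, and letting $A \to \infty$ globalizes the statement.

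The main obstacle is the mollification argument in the second paragraph: one must approximate $\mathcal{L}_t$ (whose kernel is singular in the unstable directions) by smoothing operators for which both the flat trace, computed from the Schwartz kernel at the diagonal, and the operator trace on $\B$ behave well in the limit. This is delicate because the anisotropic norm treats stable and unstable directions asymmetrically. The standard remedy is to build smoothing operators adapted to the hyperbolic splitting (via semiclassical pseudodifferential calculus as in \cite{DZdet}, or via Markov partitions as in \cite{GLP}) and to check that their commutator with $\mathcal{L}_t$ contributes negligibly at the trace level.
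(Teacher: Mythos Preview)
The paper does not give its own proof of this theorem: it is quoted as a result of \cite{GLP,DZdet}. What the paper does prove, in Appendix~\ref{appdet}, is a version \emph{with} potential $g$ but only under the stronger $\mathcal{C}^{\kappa,\upsilon}$ hypotheses of Theorem~\ref{thmmain}; that argument is quite different from yours, since it exploits the genuine trace-class property of $\mathcal{L}_{t_0}\p{z-X}^{-\p{d+3}}$ on the space $\h$ built in the paper (Proposition~\ref{proprecap} and Lemma~\ref{lmdisc}), together with Proposition~\ref{propmain}, to write down directly a convergent Hadamard-like product \eqref{eqfacto} whose logarithmic derivative matches that of $d_g$ after $d+2$ differentiations.

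Your outline is in the spirit of \cite{DZdet}, but it contains a real gap. You assert that for $N$ large the power $R\p{s}^N$ is \emph{trace class} on $\B$, and justify this by ``the spectral gap in Theorem~\ref{thmfonda}(iv) combined with Weyl-type estimates''. Discreteness of the spectrum does not imply any Schatten property of the resolvent, and for a general $\mathcal{C}^\infty$ Anosov flow no such trace-class statement is known on the usual anisotropic spaces; indeed, obtaining genuine nuclearity is precisely what forces the ultradifferentiability assumption in the present paper (see the discussion around Proposition~\ref{proplocop} and Appendix~\ref{appupsilon}). The proof in \cite{DZdet} never claims an honest trace: it works with the \emph{flat} trace of the shifted resolvent $e^{-t_0 X}\p{z-X}^{-1}$ throughout, shows this flat trace extends meromorphically, and identifies its poles with the Ruelle resonances via the wavefront/resolvent analysis --- no Fredholm determinant of a trace-class family appears. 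Your ``mollification'' paragraph gestures at the right difficulty, but the resolution in \cite{DZdet} is not to upgrade the flat trace to an operator trace; it is to stay at the level of flat traces and prove the needed analytic continuation there. If you rewrite your second and third paragraphs with flat traces in place of honest traces (and drop the trace-class claim), the sketch becomes a faithful summary of \cite{DZdet}.
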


In \cite{DZdet}, Dyatlov and Zworski suggested that another relation should hold between Ruelle resonances and periodic orbits: a trace formula, that can be written as the following equality between distributions on $\R_+^*$:
\begin{equation}\label{eqtrform}
\sum_{\substack{\lambda \textup{ Ruelle resonances} \\ \textup{ of } X}} e^{\lambda t} = \sum_{\gamma} \frac{T_\gamma^{\#} e^{\int_\gamma g}}{\va{\det\p{I - \mathcal{P}_\gamma}}} \delta_{T_\gamma}.
\end{equation}
Notice that we do not use the same convention for Ruelle resonances as in \cite{DZdet}, we consider resonances for $X$ instead for $iX$, which explains why the trace formula \eqref{eqtrform} is not stated as in \cite{DZdet}. Notice also that there is \emph{a priori} no reason for which the left-hand side of \eqref{eqtrform} defines a distribution on $\R_{ + }^*$, or even converges in any sense. Showing that this is actually the case has to be part of the proof of the trace formula.

A natural way to prove such a formula would be to prove that the dynamical determinant $d_g$ continues to an entire function of finite order whose zeros are the Ruelle resonances, see for instance \cite{MunozMarco,MunozMarco2}. Recall here that the order of an entire function $f$ is (we denote by $\log_+$ the positive part of the logarithm)
\begin{equation*}
\limsup_{r \to + \infty} \sup_{\va{z} = r} \frac{\log\p{ 1 + \log_+ \va{f\p{z}}}}{\log r}.
\end{equation*}
As an example of application of trace formula, the following proposition clarifies its relationship with the dynamical determinant, see Appendix \ref{apptrdet} for the proof.

\begin{prop}\label{propapp}
The following statements are equivalent:
\begin{enumerate}[label=(\roman*)]
\item the dynamical determinant $d_g$ defined by \eqref{eqdefdet} extends to an entire function of finite order whose zeros are the Ruelle resonances;
\item the trace formula \eqref{eqtrform} holds and there is $\rho \in \R_+$ such that
\begin{equation}\label{eqsommeresonne}
\sum_{\lambda \textup{ resonances of } X} \frac{1}{1 + \va{\lambda}^\rho} < + \infty.
\end{equation}
\end{enumerate}
Moreover, when (i) and (ii) hold, the order of the holomorphic continuation of $d_g$ is less than $\rho$.
\end{prop}

The implication (ii) $\Rightarrow$ (i) in Proposition~\ref{propapp} expresses the power of the trace formula: when it holds, we may deduce information on the dynamical determinant $d_g$ through the knowledge of its zeros. Proposition~\ref{propapp} admits the following Corollary, which is of main interest when $g$ is not real-valued (when $g$ is real-valued, we may deduce a more precise result  from Jin--Zworski's local trace formula adapting the proof of \cite[Theorem 2]{ltfzwor}).

\begin{cor}\label{corphrag}
If the trace formula holds and if there is $\rho \in \left[0,1 \right[$ such that
\begin{equation}\label{eqconvergencesomme}
\sum_{\lambda \textup{ resonances of } X} \frac{1}{1 + \va{\lambda}^\rho} < + \infty,
\end{equation} 
then the function $d_g$ is constant equal to one\footnote{Notice that when $g$ is real-valued, or when $\p{\phi^t}_{t \in \R}$ has a periodic orbit $\gamma$ such that no other periodic orbit has the same length, then $d_g$ is not constant.} (in particular, $X$ has no resonances).
\end{cor}

See Appendix \ref{apptrdet} for the proof. Corollary \ref{corphrag} is interesting because it gives a lower bound on the number of Ruelle resonances for $X$. It is not far from being sharp as a general bound, consider for instance a constant time suspension of a hyperbolic linear automorphism of the torus\footnote{The Ruelle resonances for a time $1$ suspension of a cat map (with $g=0$) are the $2 i \pi k$'s for $k \in \Z$, so that\eqref{eqconvergencesomme} holds for all $\rho > 1$.}. However, we expect the existence of particular examples for which this bound is far from being sharp (see \cite{lagtf} for discrete-time examples with a lot of resonances).

In \cite{lagtf}, we studied a discrete-time analogue of the trace formula \eqref{eqtrform}. The results from \cite{lagtf} suggest that \eqref{eqtrform} may not be true for every $\mathcal{C}^\infty$ hyperbolic flow and potential but should hold for Gevrey flows with Gevrey potentials (see \cite{lagtf} or \S \ref{secDCC} for the definition of the Gevrey class of regularity). Indeed, we proved in \cite{lagtf} that, while there are $\mathcal{C}^\infty$ counter-examples to the discrete-time analogue of \eqref{eqtrform}, it holds for Gevrey uniformly hyperbolic diffeomorphisms with Gevrey potentials\footnote{In fact, the results from \cite{lagtf} and the present paper suggest that the discrete-time analogue of \eqref{eqtrform} should even hold in the class $\mathcal{C}^{\kappa,\upsilon}$ defined in \S \ref{secDCC} for $\kappa > 0$ and $\upsilon \in \left]1,2\right[$. We think that this could be proven easily using methods from \cite{lagtf} and the present paper. However, in \cite[Theorem 2.12, (v)-(vi)]{lagtf} we proved a bound on the growth of the dynamical determinant for Gevrey hyperbolic map that we do not expect to hold for $\mathcal{C}^{\kappa,\upsilon}$ dynamics. This bound is one of the reasons that make us think that the dynamical determinant of a Gevrey Anosov flow has finite order. See also \cite{DCcercle} for a detailed discussion of dynamical determinant for expanding maps of the circle in various ultradifferentiable classes.}. 

However, in order to tackle the lack of hyperbolicity of the flow $\p{\phi_t}_{t \in \R}$ for small $t \geq 0$, we will decompose the powers of the resolvent $(z-X)^{-1}$ into the contributions of large times (a compact operator) and small times (which is very small, see Lemma \ref{lmdiscretespectrum} for details) and then apply Hennion's argument \cite{hennion} based on Nussbaum formula \cite{nussbaum}. It turns out that this method also allows to give a proof of the trace formula for a larger class of flows and potentials (than Gevrey), but we were not able to prove that the dynamical determinant has finite order for these systems (there is \emph{a priori} no reason for this to be true). This procedure is very similar in spirit with the idea of working with the shifted resolvent $\mathcal{L}_{t_0}\p{z-X}^{-1}$ used in \cite{DZdet} (which is an other way to neglect small times).

In \S \ref{secDCC}, we introduce for all $\upsilon > 1$ and $\kappa \in \left]0,+\infty\right[$, a class of regularity $\mathcal{C}^{\kappa,\upsilon}$ using the language of Denjoy--Carleman classes (see \cite{nqa} for a survey on this topic). These classes are larger than any Gevrey classes of regularity. Moreover, if $M$ is $\mathcal{C}^{\kappa,\upsilon}$ and $\tilde{\upsilon} > \upsilon$, we define a space $\mathcal{D}^{\tilde{\upsilon}}\p{M}'$ of generalized distributions on $M$ and, provided that $M$, $\p{\phi^t}_{t \in \R}$, and $g$ are $\mathcal{C}^{\kappa,\upsilon}$, we extend $\mathcal{L}_t$ and $X$ to operators from $\mathcal{D}^{\tilde{\upsilon}}\p{M}'$ to itself. These notions allow us to state our main result, which states that, acting on a suitable Hilbert space, $X$ has discrete spectrum and operators obtained by integrating the semi-group $\p{\mathcal{L}_t}_{t \geq 0}$ against a smooth function supported away from $t=0$ are trace class, with an explicit formula for their traces (see for instance \cite[Chapter IV]{Gohb} for the theory of trace class operator).

\begin{thm}\label{thmmain}
Assume that there is $\kappa > 0$ and $\upsilon \in \left]1,2\right[$ such that $M,g$ and $\p{\phi^t}_{t \in \R}$ are $\mathcal{C}^{\kappa,\upsilon}$. Then for all $t_0 >0$ there is a separable Hilbert space $\h$ such that
\begin{enumerate}[label=(\roman*)]
\item for all $\tilde{\upsilon} > \upsilon$ sufficiently close to $\upsilon$, we have
$\mathcal{C}^{\infty,\tilde{\upsilon}}\p{M} \subseteq \h \subseteq \mathcal{D}^{\tilde{\upsilon}}\p{M}'$, both inclusions are continuous, and the first one has dense image;
\item for all $t \in \R_+$, the operator $\mathcal{L}_t$ defined by \eqref{eqkoopman} is bounded on $\h$;
\item $\p{\mathcal{L}_t}_{t \geq 0}$ defines a strongly continuous semi-group of operators on $\h$, whose generator coincides with $X$ on its domain, which is $\set{u \in \h: Xu \in \h}$;
\item the spectrum of $X$ acting on $\h$ consists of isolated eigenvalues of finite multiplicity which coincide with the Ruelle resonances of $X$ (multiplicity taken into account);
\item if $h : \R_+^* \to \C$ is $\mathcal{C}^\infty$ and compactly supported in $\left[t_0,+ \infty\right[$ then the operator
\begin{equation}\label{eqlapX}
\int_0^{+ \infty} h\p{t} \mathcal{L}_t \mathrm{d}t : \h \to \h
\end{equation}
is trace class and its non-zero spectrum is the intersection of $\C \setminus \set{0}$ with the image of the spectrum of $X$ by $ \lambda \mapsto \textup{Lap}\p{h}\p{- \lambda}$ (multiplicity taken into account, $\textup{Lap}\p{h}$ denotes the Laplace transform of $h$). Moreover, the trace of the operator \eqref{eqlapX} is given by
\begin{equation*}
\textup{tr}\p{\int_0^{+ \infty} h\p{t} \mathcal{L}_t \mathrm{d}t} = \sum_{\gamma} T_\gamma^{\#} \frac{h\p{T_\gamma}}{\va{\det\p{I - \mathcal{P}_\gamma}}} \exp\p{\int_\gamma g},
\end{equation*}
where the sum on the right-hand side runs over periodic orbits $\gamma$ of the flow $\p{\phi^t}_{t \in \R}$.
\end{enumerate} 
\end{thm}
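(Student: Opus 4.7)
The overall strategy is a local-to-global construction of $\h$ in the spirit of \cite{lagtf}, adapted to continuous time and to the Denjoy--Carleman class $\mathcal{C}^{\kappa,\upsilon}$. In charts, one builds an anisotropic weighted $L^2$-space on the cotangent space by applying an FBI-type transform and imposing exponential decay in the cotangent directions dual to the stable bundle together with controlled growth in those dual to the unstable bundle; the weights are designed in terms of the ultra-differentiable symbols of \S\ref{secDCC} so that the $\mathcal{C}^{\kappa,\upsilon}$ regularity is enough for the symbol calculus to close up and for the embeddings in (i) to hold. The technical heart, carried out in \S\ref{sectls} and \S\ref{sectlto}, is to show that the local model of the time-$t$ map for $t$ beyond a fixed hyperbolicity threshold acts on such a local space as a nuclear operator, with Schatten-class norms that can be made small by pushing the weights further.

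From there I would first assemble, in \S\ref{gs1}, a global Hilbert space $\h_1$ on which $\mathcal{L}_{t}$ is bounded for sufficiently large $t$ and on which $\mathcal{L}_{t_0}$ in particular is in some Schatten class. Because of the lack of hyperbolicity of $\phi^t$ for small $t$, $\mathcal{L}_t$ need not be bounded on $\h_1$ for every $t \geq 0$; so, following the plan announced in the introduction, I would in \S\ref{gs2} construct $\h$ by ``averaging $\h_1$ along the orbit'' on $[0,t_0]$, roughly $\h = \bigcap_{s \in [0,t_0]} \mathcal{L}_s^{-1}\p{\h_1}$ with a compatible Hilbert norm. Boundedness of $\mathcal{L}_t$ on $\h$ for every $t \geq 0$ and strong continuity then reduce to the corresponding statements on $\h_1$ for $t \geq t_0$ together with the defining equivariance on $[0,t_0]$, and identification of the generator with $X$ on $\set{u \in \h : Xu \in \h}$ is standard from strong continuity.

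For (iv), the shifted resolvent $\mathcal{L}_{t_0}\p{z-X}^{-1}$ is still in a Schatten class, because the factor $\mathcal{L}_{t_0}$ precisely compensates for the missing hyperbolicity. The abstract functional analytic lemma of Appendix \ref{applemma} then promotes this to discreteness of the spectrum of $X$ on $\h$ with finite multiplicities, and the intrinsic characterisation of Ruelle resonances proved in Appendix \ref{apprri} identifies these eigenvalues with the Ruelle resonances. For (v), trace-class membership of $\int_0^\infty h\p{t}\mathcal{L}_t \mathrm{d}t$ comes by integrating the Schatten bounds available once $t \geq t_0$; the spectral image statement follows from holomorphic functional calculus applied to the semi-group; and the explicit trace is obtained by combining Lidskii's theorem (expressing the trace as $\sum_\lambda \textup{Lap}\p{h}\p{-\lambda}$) with a Guillemin-type computation of the flat trace via the restriction of the Schwartz kernel to the diagonal, whose contributions localize exactly at periodic orbits.

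The main obstacle, in my view, lies in the first step: designing the local anisotropic Hilbert space with weights that are simultaneously compatible with $\mathcal{C}^{\kappa,\upsilon}$ symbolic manipulations and strong enough to yield Schatten-class estimates for $\mathcal{L}_t$ once $t \geq t_0$. In the Gevrey setting of \cite{lagtf} one can use fractional-power weights; here, because $\mathcal{C}^{\kappa,\upsilon}$ is strictly larger than any Gevrey class, the weights must be encoded via more delicate Denjoy--Carleman sequences, and a substantial part of the work is verifying that the associated calculus composes nicely with the dynamics. The restriction $\upsilon < 2$ in the hypothesis (discussed in Appendix \ref{appupsilon}) is precisely what allows these weights to be chosen; once this piece is in place, the continuous-time global and spectral steps are natural adaptations of the discrete-time argument, modulo the shifted-resolvent trick that lets us open the hyperbolicity gap to any size by increasing $t_0$.
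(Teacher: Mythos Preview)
Your outline is essentially the paper's strategy: local anisotropic spaces in \S\ref{sectls}--\S\ref{sectlto}, a first global space $\widetilde{\h}$ in \S\ref{gs1} good for $t\geq t_0$, then the time-averaged space $\h$ in \S\ref{gs2} (the paper takes $\n{u}_{\h}^2=\int_0^{t_0}\n{\mathcal{L}_t u}_{\widetilde{\h}}^2\mathrm{d}t$, the $L^2$ version of your intersection), followed by the shifted-resolvent argument via Appendix~\ref{applemma} and the intrinsic-resonance lemma of Appendix~\ref{apprri}.

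Two points where your description diverges from what is actually done. First, the local spaces are \emph{not} built via an FBI-type transform; they are Sobolev-like spaces defined through a weight $w_{\Theta,\alpha}$ and a bespoke Paley--Littlewood decomposition adapted to a system of $r+2$ cones (Definition~\ref{defsyst}, \eqref{eqdefw}, Proposition~\ref{propequiv}). The graded family of cones $C_1\Supset\dots\Supset C_r$ with decreasing exponents $\beta_i$ is what absorbs the lack of hyperbolicity for small times at the local level, and the estimates (Lemmas~\ref{lmdist}, \ref{lmmajipp}) are direct Fourier-side integrations by parts rather than symbolic calculus. Second, the trace in (v) is not obtained by identifying a flat trace with the operator trace \emph{a posteriori}; it is computed \emph{directly} in charts (Lemma~\ref{lmtrace}, then Lemma~\ref{lmtrper}) by integrating the kernel over the diagonal and applying a change of variables that isolates the fixed-point contributions. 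Lidskii is only invoked afterwards, and only to read off the spectrum of \eqref{eqlapX} and to derive Corollary~\ref{cortrform}; it plays no role in obtaining the periodic-orbit sum itself. Your identification of where $\upsilon<2$ enters is correct in spirit: it is the condition that allows $\alpha<\tfrac12$, which is exactly what makes the ``linear'' part of the Paley--Littlewood sum in Lemma~\ref{lmschatten} trace-class rather than merely compact.
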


With Lidskii's trace theorem \cite[Theorem 6.1 p.63]{Gohb}, the last point of Theorem~\ref{thmmain} implies the following Corollary.

\begin{cor}[Trace formula for ultradifferentiable Anosov flows]\label{cortrform}
If $M,g$ and $\p{\phi^t}$ are $\mathcal{C}^{\kappa,\upsilon}$ for some $\kappa > 0$ and $\upsilon \in \left]1,2\right[$ then the trace formula \eqref{eqtrform} holds. In particular, the right-hand side of \eqref{eqtrform} defines a distribution.
\end{cor}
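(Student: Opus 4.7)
The plan is to derive the corollary directly from point (v) of Theorem~\ref{thmmain} combined with Lidskii's trace theorem: clause (v) already provides the geometric side of \eqref{eqtrform} as an explicit trace, and the task is to rewrite this trace spectrally against any test function, then check that we obtain an equality of distributions on $\R_+^*$.

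Fix an arbitrary $t_0 > 0$, let $\h$ be the Hilbert space supplied by Theorem~\ref{thmmain} for this $t_0$, and consider a test function $h \in \mathcal{C}_c^\infty(\R_+^*)$ with $\textup{supp}(h) \subseteq \left[t_0,+\infty\right[$. Set
\begin{equation*}
T_h := \int_0^\infty h(t) \mathcal{L}_t \,\mathrm{d}t \colon \h \to \h.
\end{equation*}
By Theorem~\ref{thmmain}(v), $T_h$ is trace class, its spectrum is the image of the Ruelle spectrum of $X$ under $\lambda \mapsto \textup{Lap}(h)(-\lambda)$ (with algebraic multiplicities preserved), and its trace equals
\begin{equation*}
\textup{tr}(T_h) = \sum_{\gamma} T_\gamma^{\#} \frac{h(T_\gamma)}{\va{\det(I - \mathcal{P}_\gamma)}} \exp\p{\int_\gamma g},
\end{equation*}
which is exactly the pairing of the right-hand side of \eqref{eqtrform} against $h$. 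On the other hand, since $T_h$ is trace class on the separable Hilbert space $\h$, Lidskii's theorem \cite[Theorem 6.1 p.63]{Gohb} gives
\begin{equation*}
\textup{tr}(T_h) = \sum_{\lambda} \textup{Lap}(h)(-\lambda) = \sum_{\lambda} \int_0^\infty h(t) e^{\lambda t} \,\mathrm{d}t,
\end{equation*}
the sum running over the Ruelle resonances of $X$ counted with multiplicity, and being absolutely convergent. Equating the two expressions for $\textup{tr}(T_h)$ yields the pairing of \eqref{eqtrform} with any such $h$; since $t_0$ was arbitrary, this covers every $h \in \mathcal{C}_c^\infty(\R_+^*)$.

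It remains to verify that both sides define genuine distributions on $\R_+^*$. The right-hand side is a locally finite sum of Dirac masses (the length spectrum of an Anosov flow being discrete by the Anosov closing lemma), hence clearly a distribution. On the left, for each fixed $t_0 > 0$, the absolute convergence above together with the continuity of $h \mapsto \textup{tr}(T_h)$ as a map $\mathcal{C}_c^\infty([t_0,+\infty[) \to \C$ (which follows from $h \mapsto T_h$ being continuous into the Banach ideal of trace class operators on $\h$) shows that $h \mapsto \sum_\lambda \textup{Lap}(h)(-\lambda)$ is continuous, so $\sum_\lambda e^{\lambda t}$ is indeed a distribution on $\R_+^*$. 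There is no real obstacle at the level of this corollary: all the difficulty has been absorbed into Theorem~\ref{thmmain}(v), whose proof (construction of $\h$, trace class bound for $T_h$, computation of the trace) is where the work lies.
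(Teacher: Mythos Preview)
The proposal is correct and follows essentially the same route as the paper: the paper does not spell out a proof but simply states that the corollary follows from Theorem~\ref{thmmain}(v) together with Lidskii's trace theorem, and you have written out precisely that argument. Your additional remarks on why both sides define distributions are fine (and in fact more detailed than what the paper provides), though note that once the pairing equality is established for every test function and one side is manifestly a distribution, the other side automatically is as well.
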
 

Maybe it would be more satisfactory to be able to prove that the right-hand side of \eqref{eqtrform} is a distribution on $\R_+^*$ before proving the trace formula. Under the hypothesis of Theorem~\ref{thmmain}, it can be deduced from the fact that the trace class operator norm of \eqref{eqlapX} is less than $C \n{h}_{\mathcal{C}^{d+3}}$ for some constant $C >0$ that depends on $h$ only through its support (this may be deduced from the proof of Theorem~\ref{thmmain}), or from the following by-product of the proof of Theorem~\ref{thmmain}.

\begin{prop}\label{propmain}
If $M,g$ and $\p{\phi^t}_{t \in \R}$ are $\mathcal{C}^{\kappa,\upsilon}$ for some $\kappa > 0$ and $\upsilon \in \left]1,2\right[$ then for all $\epsilon > 0$ we have
\begin{equation*}
\sum_{\lambda \textup{ resonances of } X} \frac{e^{\epsilon \Re\p{\lambda}}}{1 + \va{\lambda}^{d+1+\epsilon}} < + \infty.
\end{equation*}
\end{prop}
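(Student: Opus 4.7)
The plan is to deduce Proposition~\ref{propmain} from a quantitative version of the compactness built into the proof of Theorem~\ref{thmmain}. Fix $\epsilon > 0$, set $p = d+1+\epsilon$ and $t_0 = \epsilon/p$, and apply Theorem~\ref{thmmain} with this value of $t_0$ to obtain a Hilbert space $\h$. The crucial additional input that I will extract from the construction of $\h$ is that, for any $z$ in the resolvent set of $X$ acting on $\h$, the shifted resolvent $\mathcal{L}_{t_0}\p{z-X}^{-1}$ belongs to the Schatten class $S^p\p{\h}$. I expect this to follow with no new dynamical input from the local model analysis of \S\ref{sectlto} combined with the global construction of \S\ref{gs1}--\S\ref{gs2}: those sections realize $\mathcal{L}_{t_0}$ as a combination of operators whose singular values decay at a rate governed by the dimension $d+1$ of $M$ and by the Denjoy--Carleman parameters $\kappa,\upsilon$, so upgrading the compactness statement of Theorem~\ref{thmmain}(iv) to a precise Schatten bound amounts to bookkeeping singular-value estimates already implicit at the local level.

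Granting this estimate, the proposition follows from the Weyl inequality. Since $\mathcal{L}_{t_0}$ belongs to the semi-group generated by $X$, it commutes with $\p{z-X}^{-1}$, and both stabilize the generalized eigenspaces of $X$. By point~(iv) of Theorem~\ref{thmmain}, each Ruelle resonance $\lambda$ of $X$ of algebraic multiplicity $m_\lambda$ contributes a $m_\lambda$-dimensional invariant subspace on which $X$ is upper triangular with $\lambda$ on the diagonal, hence on which $\mathcal{L}_{t_0}\p{z-X}^{-1}$ has the single eigenvalue $e^{t_0 \lambda}/\p{z-\lambda}$ with algebraic multiplicity $m_\lambda$. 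The Weyl inequality for Schatten classes therefore yields
\begin{equation*}
\sum_{\lambda \textup{ resonances of } X} \left| \frac{e^{t_0 \lambda}}{z-\lambda} \right|^p \leq \n{\mathcal{L}_{t_0}\p{z-X}^{-1}}_{S^p}^p < +\infty.
\end{equation*}

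To conclude, choose $\Re z$ strictly larger than $\sup_\lambda \Re\p{\lambda}$, which is finite since $X$ generates a strongly continuous semi-group on any Banach space of the type provided by Theorem~\ref{thmfonda}. Then there exists $\tilde{c}>0$ with $\va{z-\lambda} \geq \tilde{c}\p{1+\va{\lambda}}$ uniformly in resonances $\lambda$, and substituting $pt_0=\epsilon$ and $p=d+1+\epsilon$ in the previous display gives
\begin{equation*}
\sum_{\lambda \textup{ resonances of } X} \frac{e^{\epsilon \Re\p{\lambda}}}{1+\va{\lambda}^{d+1+\epsilon}} < +\infty,
\end{equation*}
which is Proposition~\ref{propmain}. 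The main obstacle is thus the Schatten bound with the sharp exponent $d+1+\epsilon$: while Theorem~\ref{thmmain}(iv) yields only compactness, achieving the dimensional exponent $d+1$ requires reopening the proof of Theorem~\ref{thmmain} and tracking the singular-value decay of the local model operators from \S\ref{sectlto}, rather than invoking only its final statement.
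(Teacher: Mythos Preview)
Your overall strategy---establish a Schatten $S_p$ bound on the shifted resolvent with $p>d+1$, apply the Weyl inequality to place the eigenvalues $e^{t_0\lambda}/(z-\lambda)$ in $\ell^p$, and conclude via the choices $p=d+1+\epsilon$, $t_0=\epsilon/p$ together with the elementary bound $\va{z-\lambda}\geq\tilde c\p{1+\va{\lambda}}$---is exactly the paper's, and your endgame is correct.

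The one point where your outline diverges from what the construction actually delivers is the space on which the Schatten bound holds. Reopening the proof does \emph{not} give $\mathcal{L}_{t_0}\p{z-X}^{-1}\in S_p\p{\h}$: on $\h$ the available factorization through $\widetilde{\h}_0$ (Lemma~\ref{lmpassage}) costs an extra $\mathcal{L}_{t_0}$ and an extra $\p{z-X}^{-1}$, so one only obtains $\mathcal{L}_{2t_0}\p{z-X}^{-2}\in S_p\p{\h}$ (Lemma~\ref{lmdisc}), and feeding this squared operator into the Weyl inequality yields the weaker denominator exponent $2\p{d+1}+\epsilon$. The sharp input is Proposition~\ref{proprecap}(ii): the operator $\int_{t_0}^{+\infty}e^{-zt}\mathcal{L}_t\,\mathrm{d}t$ lies in $S_p\p{\widetilde{\h}_0}$ for every $p>d+1$, where $\widetilde{\h}_0$ is the first-step space of \S\ref{gs1}. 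The paper then shows that this same operator (which on $\h$ equals $e^{-zt_0}\mathcal{L}_{t_0}\p{z-X}^{-1}$) has essential spectral radius zero on both $\h$ and $\widetilde{\h}_0$, and invokes Lemma~\ref{lmesr} to identify their nonzero spectra. The $\ell^p$ eigenvalue bound coming from $S_p\p{\widetilde{\h}_0}$ thus transfers to the Ruelle resonances, and your argument concludes unchanged. So the missing ingredient in your sketch is not more singular-value bookkeeping on $\h$, but rather the two-space spectral identification via Lemma~\ref{lmesr}.
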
 

The bound on the number of resonances given by Proposition \ref{propmain} is not sufficient to apply Proposition \ref{propapp} and get a Hadamard factorization \cite[Theorem 2.7.1]{Boas} for the dynamical determinant $d_g$. However, we will derive in Appendix \ref{appdet} a ``Hadamard-like'' factorization for $d_g$.

Finally, although we need $\upsilon < 2$ to prove trace formula, most of the statements in Theorem~\ref{thmmain} remain true when $\upsilon \geq 2$. We discuss in Appendix \ref{appupsilon} the relevance and necessity of the condition $\upsilon < 2$ through the simplest possible example: the doubling map on the circle. See also \cite{DCcercle} for a discussion of transfer operators for dynamics in more general classes of ultradifferentiability.

\begin{prop}\label{propmqr}
If, in Theorem~\ref{thmmain}, we allow $\upsilon \geq 2$, then there is still a Hilbert space $\h$ satisfying (i),(ii),(iii) and (iv). Moreover, under the hypothesis of (v), the operator \eqref{eqlapX} is compact and its spectrum can be described as in Theorem~\ref{thmmain} in terms of Ruelle resonances.
\end{prop}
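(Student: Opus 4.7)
The plan is to revisit the construction of the Hilbert space $\h$ carried out in \S\ref{gs1}--\S\ref{gs2} and observe that the hypothesis $\upsilon<2$ is only used there to upgrade the compactness of the shifted resolvent $\mathcal{L}_{t_0}(z-X)^{-1}$ to a trace class statement. My first step would be to verify that the construction of the norms, the continuity of the inclusions $\mathcal{C}^{\infty,\tilde{\upsilon}}(M)\subseteq \h \subseteq \mathcal{D}^{\tilde{\upsilon}}(M)'$, and the strong continuity of $(\mathcal{L}_t)_{t\geq 0}$ on $\h$ all go through without any upper bound on $\upsilon$, giving (i), (ii) and (iii). The compactness of $\mathcal{L}_{t_0}(z-X)^{-1}$ should follow from exactly the same arguments, with the trace class bounds on singular values replaced by the weaker statement that these singular values tend to zero; combined with the abstract lemma of Appendix~\ref{applemma}, this yields (iv).

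To handle the operator $A:=\int_0^\infty h(t)\mathcal{L}_t\,\mathrm{d}t$, I would set $\tilde{h}(t):=h(t+t_0)$ and use the semi-group property to rewrite $A=\mathcal{L}_{t_0}B$ with $B:=\int_0^\infty \tilde{h}(t)\mathcal{L}_t\,\mathrm{d}t$ bounded on $\h$. Since $h$ is smooth and compactly supported in $[t_0,+\infty[$, all derivatives of $\tilde{h}$ vanish at $0$, so integration by parts yields $XB=-\int_0^\infty \tilde{h}'(t)\mathcal{L}_t\,\mathrm{d}t$ as a bounded operator; in particular, $B$ sends $\h$ into the domain of $X$ and $(z-X)B$ is bounded for any $z$ in the resolvent set of $X$. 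One can then factor
\begin{equation*}
A=\mathcal{L}_{t_0}(z-X)^{-1}(z-X)B,
\end{equation*}
exhibiting $A$ as the composition of the compact operator $\mathcal{L}_{t_0}(z-X)^{-1}$ with a bounded operator, hence compact.

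For the spectral description, I would use that $A$ commutes with every $\mathcal{L}_t$, and therefore with $X$ on appropriate domains. If $\mu\neq 0$ is an eigenvalue of $A$, the eigenspace $\ker(A-\mu I)$ is finite-dimensional and $\mathcal{L}_t$-invariant; diagonalizing the generator of the restricted (finite-dimensional) semi-group produces a resonance $\lambda$ with $\mu=\mathrm{Lap}(h)(-\lambda)$. Conversely, each finite-dimensional generalized eigenspace $E_\lambda$ associated with a resonance $\lambda$ is invariant under $A$; writing $X=\lambda I+N_\lambda$ on $E_\lambda$ with $N_\lambda$ nilpotent, the restriction of $A$ equals $\int_0^\infty h(t)e^{\lambda t}e^{tN_\lambda}\,\mathrm{d}t$, whose only eigenvalue is $\mathrm{Lap}(h)(-\lambda)$ with algebraic multiplicity $\dim E_\lambda$. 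Matching these generalized eigenspaces on both sides gives the full correspondence of spectra with multiplicity.

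The main obstacle I anticipate lies in the first step: one must carefully check that every estimate used in \S\ref{gs1}--\S\ref{gs2} survives without the hypothesis $\upsilon<2$, in particular the construction of the Denjoy--Carleman weights, the analysis of the local transfer operator in \S\ref{sectlto}, and the factorization of $\mathcal{L}_{t_0}(z-X)^{-1}$ used to get compactness. I expect this to be essentially a careful bookkeeping task, with the only meaningful degradation being the loss of the trace class property (and hence of the explicit trace formula), all other conclusions going through with the same proofs as in Theorem~\ref{thmmain}.
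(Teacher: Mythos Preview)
Your plan is essentially the paper's own argument: Remark~\ref{rmqmqr} records that the hypothesis $\upsilon<2$ is invoked only in the trace computation of Lemma~\ref{lmlaplace}, so (i)--(iv) and the compactness and spectral description in (v) all survive unchanged when $\upsilon\geq 2$.

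One point in your factorization needs adjustment. You write $A=\mathcal{L}_{t_0}(z-X)^{-1}\,(z-X)B$ on $\h$ and appeal to compactness of $\mathcal{L}_{t_0}(z-X)^{-1}$ on $\h$, but the paper never establishes compactness of that operator on $\h$. What the construction gives (via the last clauses of Proposition~\ref{proplocop} and Proposition~\ref{proprecap}) is compactness of $\int_{t_0}^{\infty}e^{-zt}\mathcal{L}_t\,\mathrm{d}t$ on the auxiliary space $\widetilde{\h}_0$; the transfer to $\h$ via Lemma~\ref{lmpassage} costs one extra factor each of $\mathcal{L}_{t_0}$ and $(z-X)^{-1}$, which is why it is $\mathcal{L}_{2t_0}(z-X)^{-2}$ that is shown compact on $\h$ (Lemma~\ref{lmdisc}) --- and this is also what Lemma~\ref{lmtech}, which explicitly requires $n\geq 2$, uses to obtain (iv). For the compactness of $A$ itself, the paper uses the factorization from the proof of Lemma~\ref{lmlaplace} routed through $\widetilde{\h}_0$, namely $A=\mathcal{L}_{t_0}\circ\bigl[\int_{t_0}^{\infty}\tilde{h}(t)\mathcal{L}_t\,\mathrm{d}t\bigr]_{\widetilde{\h}_0}\circ(z-X)^{-1}$, whose middle factor is compact on $\widetilde{\h}_0$ and whose outer factors are bounded by Lemma~\ref{lmpassage}. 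Your integration-by-parts idea also works, provided you replace $\mathcal{L}_{t_0}(z-X)^{-1}$ by its square and halve $t_0$ (as the paper does in Remark~\ref{rmqspec}); otherwise the compact factor you need on $\h$ is simply not available.
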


\section{Denjoy--Carleman classes and ultradifferentiable functions}\label{secDCC}

We define now the classes of regularity $\mathcal{C}^{\kappa,\upsilon}$ that appear in Theorem~\ref{thmmain}. To do so we use the language of Denjoy--Carleman classes, see \cite{nqa} for a survey on this topic. We will also define spaces $\mathcal{D}^\upsilon\p{M}'$ of generalized distributions which are needed because the space $\h$ of Theorem~\ref{thmmain} is not included in the usual space of distributions $\mathcal{D}'\p{M}$ on $M$.

Let $A = \p{A_m}_{m \in \N}$ be an increasing sequence of positive real numbers and $U$ be an open subset of $\R^d$. We define the Denjoy--Carleman class $\mathcal{C}^A\p{U}$ to be the space of $\mathcal{C}^\infty$ functions $f: U \to \C$ such that for each compact subset $K$ of $U$ there are constants $C,R >0$ such that for all $\alpha \in \N^d$ and $x \in K$ we have
\begin{equation*}
\va{\partial^{\alpha} f\p{x}} \leq C R^{\va{\alpha}} \va{\alpha} ! A_{\va{\alpha}}.
\end{equation*}

If $A_m =  ( m! ) ^{\sigma - 1}$ for some $\sigma >1$, the class $\mathcal{C}^A\p{U}$ is the class of $\sigma$-Gevrey functions. For $\sigma = 1$, this is just the class of real-analytic functions on $U$. We will consider in this paper the following classes, that are larger than Gevrey classes. If $\kappa >0$ and $ \upsilon >1 $ define the sequence $A\p{\kappa,\upsilon} = \p{A_m\p{\kappa,\upsilon}}_{m \in \N}$ by
\begin{equation*}
\forall m \in \N : A_m\p{\kappa,\upsilon} = \exp\p{\frac{m^{\upsilon}}{\kappa}}.
\end{equation*}
Notice that the the sequence $A\p{\kappa,\upsilon}$ is logarithmically convex:
\begin{equation*}
\forall m \in \N^* : A_m\p{\kappa,\upsilon}^2 \leq A_{m-1}\p{\kappa,\upsilon} A_{m+1}\p{\kappa,\upsilon}.
\end{equation*}
It is then a well-established fact (see e.g. \cite{nqa} and references therein) that the class $\mathcal{C}^{\kappa,\upsilon} \coloneqq \mathcal{C}^{A\p{\kappa,\upsilon}}$ is closed under multiplication, composition, the inverse function theorem and solving ODEs. Notice that the class $\mathcal{C}^{\kappa,\upsilon}$ is closed under differentiation\footnote{It follows from the fact that the condition (2.1.6) from \cite{nqa} is satisfied if and only if $\upsilon \leq 2$.} if and only if $\upsilon \leq 2$. Since $\mathcal{C}^{\kappa,\upsilon}$ is greater than any Gevrey class, it is non-quasi-analytic and contains partitions of unity. We are not aware of any references in the literature dealing specifically with the classes $\mathcal{C}^{\kappa,\upsilon}$ that we use here. However, the particular classes that are studied in \cite{ultsim} for instance and the classes $\mathcal{C}^{\kappa,\upsilon}$ look a bit alike. We will sometimes refer to the objects (functions, manifolds, etc) having $\mathcal{C}^{\kappa,\upsilon}$ regularity as ultradifferentiable objects. Beware that this is not in any way a canonical name.

The Fourier transform will be a key tool in this paper, it is thus natural to introduce a suitable class of rapidly decreasing functions and associated spaces of tempered generalized distributions. This is often done in the literature, in particular when dealing with Gevrey classes (see for instance \cite{pilipotemp, pilipoconv}). Notice that we will use the following convention for the Fourier transform: if $f \in L^{1}\p{\R^d}$ and $\xi \in \R^d$ we set
\begin{equation*}
\mathbb{F}\p{f}\p{\xi} = \hat{f}\p{\xi} = \int_{\R^d} e^{-i x \xi}f\p{x} \mathrm{d}x.
\end{equation*}
For all $ \kappa > 0$, $\upsilon >1 $ and $f \in \mathcal{C}^{\infty}\p{\R^d}$, define
\begin{equation*}
\n{f}_{\kappa,\upsilon} = \sup_{\substack{x \in \R^d \\ \alpha \in \N^d \\ m \in \N}} \p{1 + \va{x}}^m \va{\partial^{\alpha} f \p{x}} \exp\p{- \frac{\p{m + \va{\alpha}}^\upsilon}{\kappa}}.
\end{equation*}
Then define, for $\upsilon > 1$,
\begin{equation}\label{eqdefsupsilon}
\mathcal{S}^{\upsilon} = \set{ f \in \mathcal{C}^{\infty}\p{\R^d} : \forall \kappa \in \R_+^*, \n{f}_{\kappa,\upsilon} < + \infty},
\end{equation}
which is a Fréchet space when endowed with the family of semi-norms $\n{\cdot}_{\kappa,\upsilon}$ for $\kappa > 0$. Notice that $\mathcal{S}^\upsilon$ is contained in the usual space of Schwartz functions and that the elements of $\mathcal{S}^\upsilon$ are in the Denjoy--Carleman class $\mathcal{C}^{\kappa,\upsilon}$ for every $\kappa > 0$. One may also check that $\mathcal{S}^{\upsilon}$ is closed under differentiation. We will denote by $\p{\mathcal{S}^{\upsilon}}'$ the space of continuous linear forms on $\mathcal{S}^{\upsilon}$ endowed with the weak-star topology. This space will play the role of tempered distributions in our context.

\begin{prop}\label{propfourier}
If $\upsilon >1$, then the Fourier transform from $\mathcal{S}^{\upsilon}$ to itself is a continuous isomorphism.
\end{prop}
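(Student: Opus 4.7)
The plan is to prove Proposition~\ref{propfourier} by direct seminorm estimates, exploiting the fact that the Fourier transform exchanges multiplication by $x^\alpha$ with the differential operator $\p{i\partial_\xi}^\alpha$. Since the seminorms $\n{\cdot}_{\kappa,\upsilon}$ treat decay in $x$ and regularity on an equal footing, the invariance of $\mathcal{S}^\upsilon$ under $\mathbb{F}$ is morally clear and only needs to be made quantitative.

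First I would reduce continuity to estimating $\p{1 + \va{\xi}^2}^m \partial_\xi^\alpha \hat{f}\p{\xi}$ by means of the identity
\begin{equation*}
\p{1 + \va{\xi}^2}^m \partial_\xi^\alpha \hat{f}\p{\xi} = \mathbb{F}\p{\p{1 - \Delta}^m\p{\p{-ix}^\alpha f\p{x}}}\p{\xi}
\end{equation*}
together with $\n{\mathbb{F}\p{g}}_{L^\infty} \leq \n{g}_{L^1}$. Expanding the right-hand side by Leibniz and then inserting the defining bound
\begin{equation*}
\va{\partial^{\beta} f\p{x}} \leq \n{f}_{\kappa,\upsilon} \p{1 + \va{x}}^{-M} \exp\p{\frac{\p{M + \va{\beta}}^\upsilon}{\kappa}},
\end{equation*}
with $M$ chosen roughly equal to $\va{\alpha} + d + 1$ so that $\va{x}^{\va{\alpha} - \va{\delta}}\p{1 + \va{x}}^{-M}$ is integrable, should yield a bound of the form
\begin{equation*}
\p{1 + \va{\xi}}^{2m} \va{\partial_\xi^\alpha \hat{f}\p{\xi}} \leq C_d\, \n{f}_{\kappa,\upsilon}\, \va{\alpha}!\, 2^{Cm}\, \mathrm{poly}\p{m,\va{\alpha}} \exp\p{\frac{C\p{\va{\alpha} + m + d}^\upsilon}{\kappa}}.
\end{equation*}

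The main (and essentially only) difficulty is to absorb the parasitic factors $\va{\alpha}!$ and $2^{Cm}$ coming from Leibniz into the target exponential $\exp\p{\p{m + \va{\alpha}}^\upsilon / \kappa'}$. This is exactly where the hypothesis $\upsilon > 1$ enters the game: Stirling's formula gives $\va{\alpha}! \leq C_\eta \exp\p{\eta \va{\alpha}^\upsilon}$ for any $\eta > 0$, and a similar estimate holds for $2^{Cm}$ and for the polynomial factors. Combined with the inequality $\p{a+b}^\upsilon \leq 2^{\upsilon - 1}\p{a^\upsilon + b^\upsilon}$, which lets one compare $\p{\va{\alpha} + m + d}^\upsilon$ to $\p{\va{\alpha} + m}^\upsilon$, this shows that for any prescribed $\kappa' > 0$ there exist $\kappa > 0$ and $C > 0$, depending on $\kappa'$, $d$ and $\upsilon$, such that $\n{\hat{f}}_{\kappa', \upsilon} \leq C \n{f}_{\kappa, \upsilon}$, which is the desired continuity of $\mathbb{F} : \mathcal{S}^\upsilon \to \mathcal{S}^\upsilon$.

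For bijectivity, I would use two observations. First, $\mathcal{S}^\upsilon$ embeds continuously in the classical Schwartz space, on which the Fourier inversion formula holds. Second, the inverse Fourier transform is given by $\mathbb{F}^{-1}\p{g}\p{x} = \p{2\pi}^{-d} \mathbb{F}\p{g}\p{-x}$, so the very same estimates show that $\mathbb{F}^{-1}$ also maps $\mathcal{S}^\upsilon$ continuously into itself. Consequently $\mathbb{F} : \mathcal{S}^\upsilon \to \mathcal{S}^\upsilon$ is a continuous linear bijection whose inverse is the restriction of $\mathbb{F}^{-1}$, and hence is itself continuous, completing the proof.
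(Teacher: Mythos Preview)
Your proposal is correct and follows essentially the same approach as the paper: both arguments exploit the Fourier duality between monomials and derivatives to bound $\p{1+\va{\xi}}^m \partial^\beta \hat f$ by an $L^1$ integral of $x^{\beta'} \partial^{\alpha'} f$, invoke the defining seminorm with decay exponent shifted by $d+O(1)$ to make the integral converge, and then absorb the resulting combinatorial factors (your $\va{\alpha}!$ and $2^{Cm}$, the paper's $d^m$) into the gap between $\kappa$ and $\kappa'$ using $\upsilon>1$. The only cosmetic difference is that the paper works with $\xi^\alpha \partial^\beta \hat f$ and then assembles $\va{\xi}^{2m}$ via $\sum_{\va{\alpha}=m} c(\alpha)\va{\xi^\alpha}^2$, whereas you go through $(1-\Delta)^m$ directly; this produces slightly different parasitic factors but the absorption mechanism is identical.
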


\begin{proof}
We start by proving that the Fourier transform is continuous from $\mathcal{S}^\upsilon$ to itself. Let $0 < \kappa' < \kappa$. Let $f \in \mathcal{S}^\upsilon$ and recall that for all $\xi \in \R^d$ and $\alpha,\beta \in \N^d$ we have\footnote{There is an error in the expression for $\xi^\alpha \partial^\beta \hat{f}\p{\xi}$ in the proof of \cite[Proposition 5.3]{lagtf}. However, the proof is easily fixed by using the correct formula that we give here.}
\begin{equation*}
\begin{split}
\xi^\alpha \partial^\beta \hat{f}\p{\xi} & = (-i)^{\va{\alpha} + \va{\beta}} \int_{\R^d} e^{-i x \xi} \partial^\alpha\p{x^\beta f(x)} \mathrm{d}x\\
        & = (-i)^{\va{\alpha} + \va{\beta}} \sum_{\substack{\gamma_1 + \gamma_2 = \alpha \\ \gamma_2 \preceq \beta}} \frac{\alpha! \beta!}{\gamma_1!\gamma_2!\p{\beta - \gamma_2}!} \int_{\R^d} e^{-i x \xi} x^{\beta - \gamma_2} \partial^{\gamma_1} f(x) \mathrm{d}x,
\end{split}
\end{equation*}
where $\gamma_2 \preceq \beta$ means that each coordinate of $\gamma_2$ is smaller than the corresponding coordinate of $\beta$. Then, notice that there is a constant $C > 0$ such that, for every $\gamma_1,\gamma_2,\beta \in \N^d$ such that $\gamma_2 \preceq \beta$, we have
\begin{equation*}
\begin{split}
\va{\int_{\R^d} e^{_i x \xi} x^{\beta - \gamma_2} \partial^{\gamma_1} f(x) \mathrm{d}x} \leq C \n{f}_{\kappa,\upsilon} \exp\p{\frac{\p{\va{\beta}- \va{\gamma_2} + \va{\gamma_1} + d + 1}^\upsilon}{\kappa}}.
\end{split}
\end{equation*}
Moreover, up to making $C$ larger we also have, for every $\gamma_2 \in \N^d$,
\begin{equation*}
\begin{split}
\gamma_2 ! \leq C \exp\p{\frac{\va{\gamma_2}^\upsilon}{\kappa}}.
\end{split}
\end{equation*}
Consequently, we find that for all $\xi \in \R^d$ and all $\alpha,\beta \in \N^d$, the quantity $\va{\xi^\alpha \partial^\beta \hat{f}(\xi)}$ is smaller than 
\begin{equation*}
\begin{split}
& C^2 \n{f}_{\kappa,\upsilon} \sum_{\substack{\gamma_1 + \gamma_2 = \alpha \\ \gamma_2 \preceq \beta}} \frac{\alpha!}{\gamma_1! \gamma_2!} \frac{\beta!}{\gamma_2! \p{\beta - \gamma_2}!} \exp\p{\frac{\va{\gamma_2}^\upsilon + \p{\va{\beta}- \va{\gamma_2} + \va{\gamma_1} + d + 1}^\upsilon}{\kappa}} \\
      & \qquad \qquad \qquad \qquad \qquad \qquad \qquad \leq C^2 \n{f}_{\kappa,\upsilon} 2^{\va{\alpha} + \va{\beta}} \exp\p{\frac{\p{\va{\alpha} + \va{\beta} + d + 1}^\upsilon}{\kappa}}.
\end{split}
\end{equation*}

Using the fact that for $\ell \in \N$
\begin{equation*}
\va{\xi}^{2\ell} = \p{\sum_{j=1}^d \va{\xi_j}^2}^{\ell} = \sum_{\va{\alpha} = \ell} c\p{\alpha} \xi^{2\alpha},
\end{equation*}
where $\sum_{\va{\alpha} = \ell} c\p{\alpha} = d^{\ell}$, we see that, for some new constant $C > 0$, we have for all $m \in \N, \xi \in \R^d$ and $\alpha,\beta \in \N^d$:
\begin{equation}\label{eqmajfour}
\p{1 + \va{\xi}}^m \va{\partial^\beta \hat{f}\p{\xi}} \leq C \n{f}_{\kappa,\upsilon} \p{4 \sqrt{d}}^m 2^{\va{\beta}} \exp\p{ \frac{\p{ m + \va{\beta} + d + 2}^\upsilon}{\kappa}}.
\end{equation}
Indeed, we can deal first with the case $m$ even and then argue that $\p{1 + \va{\xi}}^m \leq \p{1 + \va{\xi}}^{m + 1}$. Finally, since $\kappa' < \kappa$ and $\frac{\p{n+d+2}^{\upsilon}}{\kappa} - \frac{n^{\upsilon}}{\kappa'} \underset{n \to + \infty}{\sim} - \frac{\kappa - \kappa'}{\kappa \kappa'} n^\upsilon  $, we see that, for some new constant $C > 0$, we have
\begin{equation*}
\n{\hat{f}}_{\kappa',\upsilon} \leq C \n{f}_{\kappa,\upsilon},
\end{equation*}
and the Fourier transform is indeed continuous from $\mathcal{S}^\upsilon$ to itself. The same argument gives that the inverse Fourier transform is also continuous from $\mathcal{S}^\upsilon$ to itself. Moreover, since $\mathcal{S}^\upsilon$ is included in the space of Schwartz function on $\R^d$, the elements of $\mathcal{S}^\upsilon$ satisfy the Fourier Inversion Formula. Hence, the Fourier transform is indeed a continuous automorphism of $\mathcal{S}^\upsilon$.
\end{proof}

Proposition~\ref{propfourier} allows to define the Fourier transform on $\p{\mathcal{S}^\upsilon}'$ by duality in the usual way. Since $\mathcal{S}^{\upsilon}$ is closed by multiplication, for all $\psi \in \mathcal{S}^\upsilon$ we may define the Fourier multiplier $\psi\p{D} : \p{\mathcal{S}^\upsilon}' \to \p{\mathcal{S}^\upsilon}'$ by
\begin{equation*}
\forall u \in \p{\mathcal{S}^\upsilon}' : \psi\p{D} u = \mathbb{F}^{-1}\p{\psi. \hat{u}}.
\end{equation*}
It is well-known that the Fourier transform of a $\mathcal{C}^{\infty}$ compactly supported function decays faster than the inverse of any polynomial. For functions in the class $\mathcal{C}^{\kappa,\upsilon}$ this statement is made quantitative in Proposition~\ref{propdecfour} below. This is the key point that will allow us in \S \ref{sectls} to construct Sobolev-like spaces of anisotropic generalized distributions that are the pieces from which we will construct the space $\h$ from Theorem~\ref{thmmain} in \S \ref{gs1} and \S \ref{gs2}.

\begin{prop}\label{propdecfour}
For every $R>0$ and $\upsilon > 1$, there are constants $C >0$ and $\kappa >0$ such that, for all $f \in \mathcal{S}^\upsilon$ and $\xi \in \R^d$, we have
\begin{equation}\label{eqfourierupsilon}
\va{\hat{f}\p{\xi}} \leq C \n{f}_{\kappa,\upsilon} \exp\p{-R ( \ln\p{1 + \va{\xi}})^\frac{\upsilon}{\upsilon-1}}.
\end{equation}
\end{prop}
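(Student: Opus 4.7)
The idea is to reuse the inequality \eqref{eqmajfour} established in the proof of Proposition~\ref{propfourier}, specialised to $\beta = 0$: for every $m \in \N$ and $\kappa > 0$,
\begin{equation*}
\p{1+\va{\xi}}^m \va{\hat{f}\p{\xi}} \leq C \n{f}_{\kappa,\upsilon} d^m \exp\p{\frac{\p{m+d+2}^\upsilon}{\kappa}}.
\end{equation*}
The strategy is to treat $m$ as a free parameter and, for each fixed $\xi$, to choose it so as to balance the polynomial gain $\p{1+\va{\xi}}^{-m}$ against the superexponential penalty $\exp\p{m^\upsilon/\kappa}$. This trades the polynomial decay that Proposition~\ref{propfourier} provides for the much faster decay predicted by \eqref{eqfourierupsilon}.

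Setting $L = \ln\p{1+\va{\xi}}$ and optimising the real-variable function $\varphi\p{m} = m\p{\ln d - L} + \p{m+d+2}^\upsilon/\kappa$ yields a critical point $m_\star = \p{\kappa\p{L - \ln d}/\upsilon}^{1/\p{\upsilon-1}}$, and a direct substitution gives
\begin{equation*}
\varphi\p{m_\star} = - c\p{\kappa} \p{L - \ln d}^{\upsilon/\p{\upsilon-1}} + \p{d+2}\p{L - \ln d},
\end{equation*}
with $c\p{\kappa} = \p{\upsilon - 1} \upsilon^{-\upsilon/\p{\upsilon-1}} \kappa^{1/\p{\upsilon-1}}$. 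Since $\upsilon/\p{\upsilon - 1} > 1$, the affine contribution is negligible for $L$ large, and choosing $m \in \N$ to be the nearest integer to $m_\star$ produces the bound
\begin{equation*}
\va{\hat{f}\p{\xi}} \leq C' \n{f}_{\kappa,\upsilon} \exp\p{-\tfrac{1}{2} c\p{\kappa} L^{\upsilon/\p{\upsilon-1}}}
\end{equation*}
for $\va{\xi}$ large enough. Because $c\p{\kappa} \to +\infty$ as $\kappa \to +\infty$, given any $R > 0$ one simply takes $\kappa$ so large that $\tfrac{1}{2} c\p{\kappa} \geq R$; the inequality for bounded $\va{\xi}$ is then trivial after enlarging $C'$.

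The only real subtlety is checking that rounding $m_\star$ to a nearby integer and absorbing the shift $d+2$ inside the $\upsilon$-power as well as the $m \ln d$ term only perturbs the leading order by an amount of size $O\p{m_\star^{\upsilon - 1}} = O\p{L}$, which is dominated by $\tfrac{1}{2}c\p{\kappa} L^{\upsilon/\p{\upsilon-1}}$ for $L$ large. This follows from the mean value theorem applied to $x \mapsto x^{\upsilon}$ together with the fact that $L^{\upsilon/\p{\upsilon-1}}/L \to +\infty$ as $L \to + \infty$, and it is the one step where one must be a little careful to keep track of constants rather than just orders of magnitude.
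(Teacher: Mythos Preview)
Your proposal is correct and follows essentially the same route as the paper: both start from \eqref{eqmajfour} with $\beta=0$, then optimise over $m$, choosing $\kappa$ large enough that the resulting coefficient $c(\kappa)=(\upsilon-1)\upsilon^{-\upsilon/(\upsilon-1)}\kappa^{1/(\upsilon-1)}$ exceeds $R$ (this is exactly the paper's condition on $\kappa$). The paper is simply more terse, writing down the near-optimal integer $m=\max\p{0,\lfloor(\kappa(L-\ln d)/\upsilon)^{1/(\upsilon-1)}-d-2\rfloor}$ directly rather than deriving it; note in passing that your stated critical point should read $m_\star+d+2=(\kappa(L-\ln d)/\upsilon)^{1/(\upsilon-1)}$, which is consistent with the value of $\varphi(m_\star)$ you compute.
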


\begin{proof}
Choose $\kappa > 0$ large enough so that 
\begin{equation*}
R' \coloneqq \kappa^{\frac{1}{\upsilon -1}}\p{\frac{1}{\upsilon^{\frac{1}{\upsilon-1}}} - \frac{1}{\upsilon^{\frac{\upsilon}{\upsilon-1}}}} > R.
\end{equation*}
Then apply \eqref{eqmajfour} from the proof of Proposition~\ref{propfourier} with $\beta = 0$ to get a constant $C >0$ such that, for all $ \xi \in \R^d$ and $m \in \N$, we have
\begin{equation*}
\va{\hat{f}\p{\xi}} \leq C \n{f}_{\kappa,\upsilon} \p{\frac{4 \sqrt{d}}{1 + \va{\xi}}}^m \exp\p{\frac{\p{m + d + 2}^{\upsilon}}{\kappa}}.
\end{equation*}
When $\va{\xi}$ is small, we bound $\hat{f}(\xi)$ by taking $m = 0$. When $\va{\xi}$ is large enough so that the following expression makes sense and is non-negative, we take
\begin{equation*}
m = \left\lfloor \p{- \ln \p{\frac{4 \sqrt{d}}{1 + \va{\xi}}}}^{\frac{1}{\upsilon-1}} \p{\frac{\kappa}{\upsilon}}^{\frac{1}{\upsilon - 1}} - d - 2 \right \rfloor.
\end{equation*}
With this choice of $m$ we have
\begin{equation*}
\begin{split}
& \p{\frac{4 \sqrt{d}}{1 + \va{\xi}}}^m \exp\p{\frac{\p{m + d + 2}^{\upsilon}}{\kappa}} \\
     & \qquad \leq \exp\Bigg( \p{\va{\ln \p{\frac{4 \sqrt{d}}{1 + \va{\xi}}}}^{\frac{1}{\upsilon-1}}\p{\frac{\kappa}{\upsilon}}^{\frac{1}{\upsilon - 1}} - d -3}  \ln\p{\frac{4 \sqrt{d}}{1 + \va{\xi}}} \\
     & \qquad \qquad \qquad \qquad \qquad \qquad \qquad \qquad \qquad + \frac{\kappa^{\frac{1}{\upsilon - 1}}}{\upsilon^{\frac{\upsilon}{\upsilon-1}}} \va{\ln\p{\frac{4 \sqrt{d}}{1 + \va{\xi}}}}^{\frac{\upsilon}{\upsilon - 1}}\Bigg) \\
     & \qquad \leq \p{\frac{1+ \va{\xi}}{4 \sqrt{d}}}^{d+3} \exp\p{\kappa^{\frac{1}{\upsilon - 1}}\p{\frac{1}{\upsilon^{\frac{\upsilon}{\upsilon-1}}} - \frac{1}{\upsilon^{\frac{1}{\upsilon-1}}}} \p{\ln\p{\frac{1 + \va{\xi}}{4 \sqrt{d}}}}^{\frac{\upsilon}{\upsilon - 1}}} \\
     & \qquad \leq \p{\frac{1+ \va{\xi}}{4 \sqrt{d}}}^{d+3} \exp\p{ - R' \p{\ln\p{\frac{1 + \va{\xi}}{4 \sqrt{d}}}}^{\frac{\upsilon}{\upsilon - 1}}},
\end{split}
\end{equation*}
and the result follows then from the fact that (recall that $R' > R$)
\begin{equation*}
\begin{split}
\p{\frac{1+r}{4 \sqrt{d}}}^{d+3} \exp\p{R \p{\ln\p{1 + r}}^{\frac{\upsilon}{\upsilon-1}} - R' \p{\ln\p{\frac{1 + r}{4 \sqrt{d}}}}^{\frac{\upsilon}{\upsilon - 1}}} \underset{r \to + \infty}{\to} 0.
\end{split}
\end{equation*}

\end{proof}

We need to extend the notion of ultradifferentiability to more general objects than complex-valued functions in order to make sense of Theorem~\ref{thmmain}. For instance, we will define what a $\mathcal{C}^{\kappa,\upsilon}$ manifold is. To do it, we follow ideas that may be found in \cite{nqa}, notice however that when $\upsilon > 2$ the sequence $\p{A_m}_{m \in \N}$ is not a DC-weight sequence in the sense of \cite{nqa}, so that we cannot apply most of their results. Hopefully, it will be clear in the remaining of the section that, whereas the general theory of our ultradifferentiability classes may not be very satisfactory, this is of no harm in our pedestrian approach to the problem of the trace formula.

We say that a map $f : U \to \C^N$, where $N$ is some integer, is $\mathcal{C}^{\kappa,\upsilon}$ if its components are $\mathcal{C}^{\kappa,\upsilon}$. A $\mathcal{C}^{\kappa,\upsilon}$ manifold is a $\mathcal{C}^\infty$ manifold endowed with a maximal atlas whose changes of charts are $\mathcal{C}^{\kappa,\upsilon}$. A map $f : M \to N$ between two $\mathcal{C}^{\kappa,\upsilon}$ manifolds is said to be $\mathcal{C}^{\kappa,\upsilon}$ if it is $\mathcal{C}^{\kappa,\upsilon}$ ``in charts".

We define now topological vector spaces associated to the classes of regularity defined above. If $M$ is a $\mathcal{C}^{\kappa,\upsilon}$ manifold for some $\kappa >0$ and $\upsilon >1$ then $M$ has a natural $\mathcal{C}^{\kappa',\tilde{\upsilon}}$ manifold structure for all $\kappa' >0$ and $ \tilde{\upsilon} > \upsilon$, so that we may define the class $\mathcal{C}^{\infty,\tilde{\upsilon}}\p{M}$ of functions from $M$ to $\C$ that are $\mathcal{C}^{\kappa',\tilde{\upsilon}}$ for all $\kappa' >0$. Notice that all $\mathcal{C}^{\kappa,\upsilon}$ functions from $M$ to $\C$ belong to $\mathcal{C}^{\infty,\tilde{\upsilon}}\p{M}$ if $\tilde{\upsilon} > \upsilon$. 

Notice that if $\upsilon > 2$ then the class $\mathcal{C}^{\kappa,\upsilon}$ is not closed under differentiation and in particular in this case the tangent bundle $TM$ has no natural $\mathcal{C}^{\kappa,\upsilon}$ structure. However, derivatives of $\mathcal{C}^{\kappa,\upsilon}$ functions are $\mathcal{C}^{\kappa',\upsilon}$ for all $ 0 <\kappa' < \kappa$. Thus the tangent bundle $TM$ may be endowed naturally with a $\mathcal{C}^{\kappa',\upsilon}$ structure, so that it makes sense to talk about a $\mathcal{C}^{\kappa',\tilde{\upsilon}}$ vector field when $\tilde{\upsilon} > \upsilon$, or $\tilde{\upsilon} = \upsilon$ and $\kappa' < \kappa$. Integrating such a vector field gives rise to a $\mathcal{C}^{\kappa',\tilde{\upsilon}}$ flow $\p{\phi^t}_{t \in \R}$ (that is, the map $\p{x,t} \mapsto \phi^t\p{x}$ is $\mathcal{C}^{\kappa',\tilde{\upsilon}}$), see \cite{nqa,dcode}. A consequence of this fact is that if $V$ is a $\mathcal{C}^{\kappa',\tilde{\upsilon}}$ vector field on $M$ that does not vanish then $V$ is locally conjugated \emph{via} $\mathcal{C}^{\kappa',\tilde{\upsilon}}$ charts to a constant vector field on $\R^{d}$. This implies in particular that if $\upsilon' > \tilde{\upsilon}$ then $\mathcal{C}^{\infty,\upsilon'}$ is stable under differentiation with respect to $V$ (this operation is even continuous with respect to the topology that we define below).

If $M$ is compact, we endow $\mathcal{C}^{\infty,\tilde{\upsilon}}\p{M}$ with a structure of Fréchet space in the following way: if $U$ is an open subset of $M$ and $V$ is an open subset of $\R^d$, if $\psi : U \to V$ is a $\mathcal{C}^{\infty,\tilde{\upsilon}}$ chart, $\varphi$ is an element of $\mathcal{C}^{\infty,\tilde{\upsilon}}$ supported in $U$ and $\kappa >0$, define the semi-norm $\n{\cdot}_{\psi,\varphi,\kappa,\tilde{\upsilon}}$ by
\begin{equation*}
\forall u \in \mathcal{C}^{\infty,\tilde{\upsilon}} : \n{u}_{\psi,\varphi,\kappa,\tilde{\upsilon}} = \sup_{\substack{\alpha \in \N^d \\ x \in V }} \va{\partial^{\alpha} \p{\p{\varphi u}\circ \psi^{-1}}\p{x}} \exp\p{- \frac{\va{\alpha}^{\tilde{\upsilon}}}{\kappa}}.
\end{equation*} 
The topology of $\mathcal{C}^{\infty,\tilde{\upsilon}}\p{M}$ is generated by a countable family of these semi-norms: since $M$ is compact we can cover $M$ by a finite number of domain of charts and take a partition of unity subordinated to this cover, then we only need to let $\kappa$ runs through the integers. The completeness of $\mathcal{C}^{\infty,\tilde{\upsilon}}\p{M}$ is easily verified. One can also check using Leibniz formula that pointwise multiplication $\mathcal{C}^{\infty,\tilde{\upsilon}}\p{M} \times \mathcal{C}^{\infty,\tilde{\upsilon}}\p{M} \to \mathcal{C}^{\infty,\tilde{\upsilon}}\p{M}$ is continuous. Notice also that if $N$ is another $\mathcal{C}^{\kappa,\upsilon}$ manifold and $\psi : M \to N$ is a $\mathcal{C}^{\kappa,\upsilon}$ local diffeomorphism then the map $\mathcal{C}^{\infty,\tilde{\upsilon}}\p{N} \ni u \mapsto u \circ \psi \in \mathcal{C}^{\infty,\tilde{\upsilon}}\p{M}$ is continuous.

We will also need the space $\mathcal{D}^{\tilde{\upsilon}}\p{M}$ of $\mathcal{C}^{\infty,\tilde{\upsilon}}$ densities on $M$: this is the space of complex measures of $M$ which are absolutely continuous with respect to Lebesgue and whose density in any $\mathcal{C}^{\infty,\tilde{\upsilon}}$ chart is $\mathcal{C}^{\infty,\tilde{\upsilon}}$. We endow $\mathcal{D}^{\tilde{\upsilon}}\p{M}$ with a Fréchet structure as we did for $\mathcal{C}^{\infty,\tilde{\upsilon}}\p{M}$ (notice that these two spaces may be identified by the choice of a particular element of $\mathcal{D}^{\tilde{\upsilon}}(M)'$). We will denote by $\mathcal{D}^{\tilde{\upsilon}}\p{M}'$ the space of continuous linear functionals on $M$ on $\mathcal{D}^{\tilde{\upsilon}}\p{M}$, that we endow with the weak-star topology. Notice that if $u \in \mathcal{C}^{\infty,\tilde{\upsilon}}\p{M}$ then $u$ defines an element of $\mathcal{D}^{\tilde{\upsilon}}\p{M}'$ that we also denotes by $u$, by the formula
\begin{equation*}
\forall \mu \in \mathcal{D}^{\tilde{\upsilon}}\p{M} : \langle u , \mu \rangle = \int_{M} u \mathrm{d}\mu.
\end{equation*}
We define in this way an injection of $\mathcal{C}^{\infty,\tilde{\upsilon}}\p{M}$ into $\mathcal{D}^{\tilde{\upsilon}}\p{M}'$ that can be shown to be continuous and to have dense image (by mollifying elements of $\mathcal{D}^{\tilde{\upsilon}}\p{M}'$ by convolution for instance).

Now, let $M$ be a $\p{d+1}$-dimensional $\mathcal{C}^{\kappa,\upsilon}$ compact manifold for some $\kappa > 0$ and $\upsilon >1$. Let $\p{\phi^t}_{t \in \R}$ be a $\mathcal{C}^{\kappa,\upsilon}$ flow on $M$ (that is, the map $M \times \R \ni \p{x,t} \mapsto \phi^t\p{x}$ is $\mathcal{C}^{\kappa,\upsilon}$). Then the generator $V$ of the flow $\p{\phi^t}_{t \in \R}$ is a $\mathcal{C}^{\kappa',\upsilon}$ vector field for all $\kappa' < \kappa$. Choose $g : M \to \C$ a $\mathcal{C}^{\kappa,\upsilon}$ potential. Let $\tilde{\upsilon} > \upsilon$ and define for all $t \in \R$ the continuous operator $\mathcal{L}_t$ on $\mathcal{C}^{\infty,\tilde{\upsilon}}\p{M}$ by
\begin{equation*}
\forall u \in \mathcal{C}^{\infty,\tilde{\upsilon}}\p{M}: \forall x \in M : \mathcal{L}_t u\p{x} = \exp\p{\int_0^t g \circ \phi^{\tau}\p{x} \mathrm{d}\tau} u \circ \phi^{t}\p{x}.
\end{equation*}
Here, let us notice that the prefactor in the definition of $\mathcal{L}_t$ is a $\mathcal{C}^{\kappa,\upsilon}$ function (since this class of regularity is closed under composition). It is convenient\footnote{It makes easier to prove that Ruelle resonances are intrinsic in Appendix \ref{apprri} or to define the norm $\n{\cdot}_{\h}$ in \eqref{eqdefnh} for instance.} to extend $\mathcal{L}_t$ and $X = V+g$ from $\mathcal{D}^{\tilde{\upsilon}}\p{M}'$ to itself. To do so, we need to compute their adjoints. Choose $\mu \in \mathcal{D}^{\tilde{\nu}}\p{M}$ positive and fully supported, it induces an isomorphism between $\mathcal{D}^{\tilde{\upsilon}}\p{M}$ and $\mathcal{C}^{\infty,\tilde{\upsilon}}\p{M}, \nu \mapsto \frac{\mathrm{d}\nu}{\mathrm{d}\mu}$. Then notice that $\frac{\mathrm{d}\p{\p{\phi^t}_* \mu}}{\mathrm{d}\mu}$ satisfies for all $x \in M$ and $t,t' \in \R$ the cocycle equation
\begin{equation*}
\frac{\mathrm{d}\p{\p{\phi^{t+t'}}_* \mu}}{\mathrm{d}\mu}\p{x} = \frac{\mathrm{d}\p{\p{\phi^{t'}}_* \mu}}{\mathrm{d}\mu}\p{x} \frac{\mathrm{d}\p{\p{\phi^t}_* \mu}}{\mathrm{d}\mu}\p{\phi^{-t'}\p{x}},
\end{equation*}
so that we have
\begin{equation*}
\forall x \in M: \forall t \in \R : \frac{\mathrm{d}\p{\p{\phi^t}_* \mu}}{\mathrm{d}\mu}\p{x} = \exp\p{ - \int_0^t \textup{div}\p{V} \circ \phi^{- \tau}\p{x} \mathrm{d}\tau},
\end{equation*}
where the divergence of $V$ is defined by 
\begin{equation*}
\forall x \in M : \textup{div}\p{V}\p{x} = - \frac{\mathrm{d}}{\mathrm{d}t} \left. \p{ \frac{\mathrm{d}\p{\p{\phi^t}_* \mu}}{\mathrm{d}\mu}\p{x}} \right|_{t = 0}.
\end{equation*}
Notice that $\textup{div}\p{V}$ is a $\mathcal{C}^{\kappa',\upsilon}$ function for all $\kappa ' < \kappa$. Then the formal adjoint of $\mathcal{L}_t$ may be defined on $\mathcal{D}^{\tilde{\upsilon}}\p{M}$ by
\begin{equation*}
\p{\mathcal{L}_t}^* \nu = \exp\p{\int_0^t \p{g - \textup{div}\p{V}} \circ \phi^{- \tau}\mathrm{d}\tau} \frac{\mathrm{d}\nu}{\mathrm{d}\mu} \circ \phi^{- \tau} \mathrm{d}\mu
\end{equation*}
and the formal adjoint of $X$ by
\begin{equation*}
X^* \nu = \p{-V - \textup{div}\p{V} + g} \frac{\mathrm{d}\nu}{\mathrm{d}\mu} \mathrm{d}\mu.
\end{equation*}
These two operators are continuous, so that $X$ and $\mathcal{L}_t$ may be extended as continuous operators on $\mathcal{D}^{\tilde{\upsilon}}\p{M}'$. Notice that $X$ and $\mathcal{L}_t$ commute.

We will need Lemmas \ref{lmderiv} and~\ref{lmdomain} to prove Theorem~\ref{thmmain}. Their proofs are given in Appendix \ref{appDCC}.

\begin{lm}\label{lmderiv}
\begin{enumerate}[label=(\roman*)]
\item If $u \in \mathcal{C}^{\infty,\tilde{\upsilon}}\p{M}$ then the map $\R \ni t \mapsto \mathcal{L}_t u \in \mathcal{C}^{\infty,\tilde{\upsilon}}\p{M}$ is $\mathcal{C}^\infty$ and its derivative is $t \mapsto \mathcal{L}_t X u = X \mathcal{L}_t u$.
\item If $u \in \mathcal{D}^{\tilde{\upsilon}}\p{M}'$ then the map $\R \ni t \mapsto \mathcal{L}_t u \in \mathcal{D}^{\tilde{\upsilon}}\p{M}'$ is $\mathcal{C}^\infty$ and its derivative is $t \mapsto \mathcal{L}_t X u = X \mathcal{L}_t u$.
\end{enumerate}
\end{lm}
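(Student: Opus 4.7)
The plan is to prove (i) directly and then deduce (ii) by duality. For (i), the pointwise computation via product and chain rules yields
\begin{equation*}
\frac{\mathrm{d}}{\mathrm{d}t}\mathcal{L}_t u(x) = g(\phi^t(x))\mathcal{L}_t u(x) + \exp\p{\int_0^t g \circ \phi^{\tau}(x) \mathrm{d}\tau} Du(\phi^t(x)) \cdot V(\phi^t(x)) = \mathcal{L}_t X u(x),
\end{equation*}
which identifies the correct candidate for the derivative. The task is to upgrade this to differentiability in the Fréchet space $\mathcal{C}^{\infty,\tilde{\upsilon}}(M)$, and then iterate.

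The core technical step is obtaining uniform bounds, for $t$ in any compact interval of $\R$, on the seminorms $\n{\mathcal{L}_t u}_{\psi,\varphi,\kappa,\tilde{\upsilon}}$. Expanding $\partial^\alpha\p{\p{\varphi\, \mathcal{L}_t u}\circ\psi^{-1}}$ via Leibniz and Faà di Bruno produces sums over derivatives of $\varphi$, $u$, $\exp\p{\int_0^t g\circ\phi^\tau\mathrm{d}\tau}$, and $\phi^t$; each is bounded by $CR^{\va{\alpha}}\va{\alpha}!\,A_{\va{\alpha}}(\kappa',\upsilon)$ with constants uniform in $t$ over compact intervals, using the joint $\mathcal{C}^{\kappa,\upsilon}$-regularity of the flow and the closure of $\mathcal{C}^{\kappa,\upsilon}$ under multiplication, composition and integration of ODEs. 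Since $\tilde{\upsilon} > \upsilon$, the resulting factor $\exp(C\va{\alpha}^\upsilon)$ is absorbed into the target weight $\exp(\va{\alpha}^{\tilde{\upsilon}}/\kappa)$ once $\kappa$ is large enough. The same expansion applied to $\mathcal{L}_{t+h}u - \mathcal{L}_t u$, together with the equicontinuity of all flow derivatives in $(x,t)$, gives continuity of $t \mapsto \mathcal{L}_t u$ in $\mathcal{C}^{\infty,\tilde{\upsilon}}(M)$. Continuity of $t\mapsto \mathcal{L}_t Xu$ then promotes the pointwise identity to a Fréchet-valued one
\begin{equation*}
\mathcal{L}_{t+h}u - \mathcal{L}_t u = \int_0^h \mathcal{L}_{t+s}Xu \,\mathrm{d}s,
\end{equation*}
from which $\mathcal{C}^1$-regularity with derivative $\mathcal{L}_t Xu$ follows. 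Because $X = V + g$ maps $\mathcal{C}^{\infty,\tilde{\upsilon}}(M)$ continuously into itself (multiplication by $g \in \mathcal{C}^{\kappa,\upsilon} \subseteq \mathcal{C}^{\infty,\tilde{\upsilon}}$ preserves the class, and differentiation by $V$ does too thanks to $\tilde{\upsilon} > \upsilon$, as noted after the definition of $\mathcal{C}^{\infty,\tilde{\upsilon}}$), the argument iterates and yields $\mathcal{C}^\infty$-regularity. The identity $\mathcal{L}_t X = X\mathcal{L}_t$ is immediate from the semi-group property.

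For (ii), I argue by duality. For any $\nu \in \mathcal{D}^{\tilde{\upsilon}}(M)$ and $u \in \mathcal{D}^{\tilde{\upsilon}}(M)'$, $\langle \mathcal{L}_t u,\nu\rangle = \langle u,\mathcal{L}_t^*\nu\rangle$. The dual $\mathcal{L}_t^*$ has the same Koopman-type structure as $\mathcal{L}_t$ (built from the reverse flow $\phi^{-t}$ and the potential $g - \textup{div}(V)$, which is itself $\mathcal{C}^{\kappa',\upsilon}$ for every $\kappa' < \kappa$), so the proof of (i) applies verbatim to $\p{\mathcal{L}_t^*}_{t\in\R}$ acting on $\mathcal{D}^{\tilde{\upsilon}}(M) \cong \mathcal{C}^{\infty,\tilde{\upsilon}}(M)$ and shows that $t\mapsto \mathcal{L}_t^*\nu$ is $\mathcal{C}^\infty$ with derivative $X^*\mathcal{L}_t^*\nu = \mathcal{L}_t^* X^*\nu$. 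Pairing with the continuous linear form $u$ then gives $\mathcal{C}^\infty$-regularity of $t\mapsto \langle \mathcal{L}_t u,\nu\rangle$ with derivative $\langle u,\mathcal{L}_t^* X^*\nu\rangle = \langle \mathcal{L}_t Xu,\nu\rangle$, which is exactly smoothness of $t\mapsto \mathcal{L}_t u$ in the weak-$*$ topology on $\mathcal{D}^{\tilde{\upsilon}}(M)'$.

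The main obstacle is the Faà di Bruno estimate underlying (i): one has to carry combinatorial factors and the uniform-in-$t$ flow bounds through nested compositions and products while tracking them against the ultradifferentiable weight $\exp(-\va{\alpha}^{\tilde{\upsilon}}/\kappa)$. The slack $\tilde{\upsilon} > \upsilon$ is the key structural observation that makes the estimate robust: no fine-tuning of $\kappa$ and $\kappa'$ is needed, one simply enlarges $\kappa$ to swallow the $\exp(C\va{\alpha}^\upsilon)$ losses produced by the composition estimates.
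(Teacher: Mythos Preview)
Your approach is correct and your treatment of (ii) by duality matches the paper's exactly. For (i), however, the paper takes a substantially shorter route that avoids Fa\`a di Bruno altogether. By the group property it suffices to prove differentiability at $t=0$; since multiplication in $\mathcal{C}^{\infty,\tilde{\upsilon}}(M)$ is continuous, one may then assume $g=0$. The key move is to cover $M$ by flow boxes, so that in local coordinates $\mathcal{L}_t$ becomes pure translation $u\mapsto u(\cdot+te_{d+1})$. At that point a single application of Taylor's formula gives
\begin{equation*}
\va{\frac{\partial^\alpha u(x+te_{d+1})-\partial^\alpha u(x)}{t}-\partial_{x_{d+1}}\partial^\alpha u(x)}\leq \frac{\va{t}}{2}\,\n{u}_{\kappa'',\tilde{\upsilon}}\exp\p{\frac{(\va{\alpha}+2)^{\tilde{\upsilon}}}{\kappa''}},
\end{equation*}
and the shift $\va{\alpha}\to\va{\alpha}+2$ is absorbed by passing from $\kappa''$ to any $\kappa'<\kappa''$. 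No composition estimates, no tracking of combinatorial constants, and the factor of $\va{t}$ drops out directly.

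Your route works too, but the continuity step (``equicontinuity of all flow derivatives'') is a bit quick: to conclude $\n{\mathcal{L}_{t+h}u-\mathcal{L}_t u}_{\psi,\varphi,\kappa,\tilde{\upsilon}}\to 0$ you need smallness uniformly over all $\alpha$, which requires an $\epsilon$-splitting between large $\va{\alpha}$ (handled by the uniform $\kappa'$-bound with $\kappa'>\kappa$ and the decay of $\exp(\va{\alpha}^{\tilde{\upsilon}}(1/\kappa'-1/\kappa))$) and small $\va{\alpha}$ (finitely many derivatives, genuine equicontinuity). This is standard but worth spelling out. The paper's flow-box reduction sidesteps this entirely because translation commutes with $\partial^\alpha$, so the Taylor estimate already comes with the right uniformity.
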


\begin{lm}\label{lmdomain}
Let $\B$ be a Banach space such that $\B \subseteq \mathcal{D}^{\tilde{\upsilon}}\p{M}'$, the inclusion being continuous. Assume that, for all $t \in \R_+$, the operator $\mathcal{L}_t$ is bounded from $\B$ to itself, and that $\p{\mathcal{L}_t}_{t \geq 0}$ is a strongly continuous semi-group of operator of $\B$. Then the generator of $\p{\mathcal{L}_t}_{t \geq 0}$ coincides with $X$ on its domain which is
\begin{equation*}
\set{u \in \B : Xu \in \B}.
\end{equation*}
\end{lm}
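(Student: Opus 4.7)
The plan is to denote by $\tilde{X}$ the infinitesimal generator of the semi-group $\p{\mathcal{L}_t}_{t \geq 0}$ acting on $\B$, with its canonical domain
\begin{equation*}
D\p{\tilde{X}} = \set{u \in \B : t^{-1}\p{\mathcal{L}_t u - u} \text{ converges in } \B \text{ as } t \to 0^+},
\end{equation*}
and to establish the double inclusion $D\p{\tilde{X}} = \set{u \in \B : Xu \in \B}$ together with $\tilde{X}u = Xu$ on this common domain. The bridge between the abstract generator on $\B$ and the operator $X$ living on $\mathcal{D}^{\tilde{\upsilon}}\p{M}'$ is Lemma~\ref{lmderiv}(ii), which provides differentiability of $t \mapsto \mathcal{L}_t u$ in $\mathcal{D}^{\tilde{\upsilon}}\p{M}'$ for every $u$ in the ambient space.

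For the inclusion $D\p{\tilde{X}} \subseteq \set{u \in \B : Xu \in \B}$, I would take $u \in D\p{\tilde{X}}$, so that $t^{-1}\p{\mathcal{L}_t u - u} \to \tilde{X}u$ in $\B$, and hence also in $\mathcal{D}^{\tilde{\upsilon}}\p{M}'$ by continuity of the embedding. On the other hand, Lemma~\ref{lmderiv}(ii) ensures that the same quotient converges to $Xu$ in $\mathcal{D}^{\tilde{\upsilon}}\p{M}'$. By uniqueness of limits $\tilde{X}u = Xu$, and in particular $Xu \in \B$.

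For the reverse inclusion, suppose $u \in \B$ with $v \coloneqq Xu \in \B$. Lemma~\ref{lmderiv}(ii) and the fundamental theorem of calculus in the Fréchet space $\mathcal{D}^{\tilde{\upsilon}}\p{M}'$ give
\begin{equation*}
\mathcal{L}_t u - u = \int_0^t \mathcal{L}_s v \, \mathrm{d}s \quad \text{in } \mathcal{D}^{\tilde{\upsilon}}\p{M}'.
\end{equation*}
Strong continuity of $\p{\mathcal{L}_s}_{s \geq 0}$ on $\B$ makes $s \mapsto \mathcal{L}_s v$ continuous from $\left[0,t\right]$ into $\B$, so the Bochner integral of this map exists in $\B$. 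Since continuous linear inclusions commute with Bochner integration, this $\B$-valued integral coincides, after embedding, with the $\mathcal{D}^{\tilde{\upsilon}}\p{M}'$-valued one. By injectivity of $\B \hra \mathcal{D}^{\tilde{\upsilon}}\p{M}'$, the displayed identity then upgrades to an equality in $\B$. Dividing by $t$ and applying strong continuity once more yields $t^{-1}\p{\mathcal{L}_t u - u} \to v$ in $\B$, hence $u \in D\p{\tilde{X}}$ and $\tilde{X}u = Xu = v$.

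The only technical point is the commutation of the inclusion $\B \hra \mathcal{D}^{\tilde{\upsilon}}\p{M}'$ with Bochner integration; this is a standard fact, verified by pairing with test densities in $\mathcal{D}^{\tilde{\upsilon}}\p{M}$ and using the definition of the Bochner integral against a continuous linear form. Beyond this, the argument is a clean translation between differentiation in $\B$ and in $\mathcal{D}^{\tilde{\upsilon}}\p{M}'$ made possible by the continuous embedding, so I do not expect any serious obstacle.
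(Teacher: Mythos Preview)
Your proof is correct and follows essentially the same approach as the paper: both directions hinge on Lemma~\ref{lmderiv}(ii) together with the continuous embedding $\B \hra \mathcal{D}^{\tilde{\upsilon}}\p{M}'$, and for the reverse inclusion the paper likewise forms the $\B$-valued integral $u + \int_0^t \mathcal{L}_\tau Xu\,\mathrm{d}\tau$ and identifies it with $\mathcal{L}_t u$ by comparing derivatives in $\mathcal{D}^{\tilde{\upsilon}}\p{M}'$. The only cosmetic difference is that the paper phrases the identification as a uniqueness-of-ODE argument rather than explicitly invoking commutation of Bochner integration with the inclusion, but the content is the same.
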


\section{Local spaces}\label{sectls}

We define now ``local'' spaces $\h_{\Theta,\alpha}$ that will be the basic pieces to construct the space $\h$ from Theorem~\ref{thmmain}. These spaces will depend on the choice of a system of cones $\Theta$: this system encodes the three distinguished directions from Definition \ref{defanosov} of an Anosov flow (that is why the space is called anisotropic). These spaces are Sobolev-like spaces similar to the spaces from \cite[Definition 4.16]{Bal2}  or from \cite{Tsu0} (for discrete-time systems) or \cite{phdadam,adamarxiv} (even though the approach is a bit different, spaces in \cite{FauSjo} are also Sobolev-like spaces). As in \cite{Tsu0,Tsu,Bal2,phdadam,adamarxiv}, we will use Paley--Littlewood decomposition to study these spaces and the action of Koopman operators on them. However, as in \cite{lagtf}, we cannot use the usual dyadic Paley--Littlewood decomposition since the weights that we use to define our Sobolev-like spaces have a growth faster than polynomial, so that we will introduce an adapted Paley--Littlewood-like decomposition.

First of all, we need to define the systems of cones that we will use. As in \cite{lagtf}, we need to consider system of potentially a large number of cones, in order to deal with the low hyperbolicity of the flow for small times. The interior and the adherence of a subset $X$ of a topological space will be denoted respectively by $\bul{X}$ and $\overline{X}$. If $C$ and $C'$ are two cones in an Euclidean space, we write $C \Subset C'$ for $\overline{C} \subseteq \bul{C'} \cup \set{0}$. The dimension of a cone $C$ in an Euclidean space $E$ is by definition the maximum dimension of a linear subspace of $E$ contained in $C$.

\begin{df}[System of cones]\label{defsyst}
Let $\p{E,\langle ., . \rangle}$ be an Euclidean vector space, $e \in E$ and $r \geq 2$ be an integer. A system of $r+2$ cones with respect to the direction $e$ is a family $\Theta =\p{C_0,C_1,\dots,C_r,C_f}$ of non-empty closed cones in $E$ such that
\begin{enumerate}[label=(\roman*)]
\item $\stackrel{\circ}{C_0} \cup \bul{C_1} \cup \bul{C_f} = E \setminus \set{0}$;
\item $C_f$ is one-dimensional and there is $c >0$ such that for all $\xi \in  C_f $ we have $ \va{\langle \xi,e\rangle} \geq c \va{\xi}$;
\item there are integers $d_u$ and $d_s$ such that $d_u + d_s +1 = \dim E$, $C_0$ is $d_s$-dimensional and, for $i \in \set{1,\dots,r}$, the cone $C_i$ is $d_u$-dimensional;
\item if $i \in \set{1,\dots,r-1}$ then $C_{i+1} \Subset C_i$;
\item $C_0 \cap C_2 = C_f \cap C_2 = \set{0}$.
\end{enumerate}
\end{df}

$\R^{d+1}$ will always be endowed with its canonical Euclidean structure and system of cones in $\R^{d+1}$ will always be with respect to the direction of $e_{d+1} = \p{0,\dots,0,1}$. We will mainly use Definition \ref{defsyst} with $E = \R^{d+1}$, however, it will be convenient in the proof of Lemma~\ref{lmdecoupe} to have at our disposal the definition of a system of cones in a general Euclidean space. 

If $\p{C_0,\dots,C_r,C_f}$ is a system of $r+2$ cones in $\R^{d+1}$ (with respect to the direction $e_{d+1}$) then we can choose $\p{\varphi_0, \varphi_1, \dots,\varphi_{r-1},\varphi_f}$ a Gevrey\footnote{This ensures that it is $\mathcal{C}^{\kappa,\upsilon}$ for any $\kappa > 0$ and $\upsilon >1$, so that all the Fourier multipliers that appear later are automatically well-defined.} partition of unity on $\mathbb{S}^d$ such that:
\begin{itemize}
\item for $i \in \set{0,\dots,r-1,f}$, the function $\varphi_i$ is supported in the interior of $C_i \cap \mathbb{S}^d$;
\item if $i \in \set{1,\dots,r-2}$ then $\varphi_i$ vanishes on a neighborhood of $\mathbb{S}^d \cap C_{i+2}$.
\end{itemize}
Indeed, the interiors of $C_0\cap \mathbb{S}^d$, $\p{C_f \setminus C_2} \cap \mathbb{S}^d$, $\p{C_1 \setminus C_3} \cap \mathbb{S}^d, \dots, \p{C_{r-2} \setminus C_r} \cap \mathbb{S}^d$ and $C_{r-1} \cap \mathbb{S}^d$ form an open cover of $\mathbb{S}^d$.

Fix $\alpha  \in  \left]0,1\right[$ for the remaining of the section. Choose a Gevrey function $ \chi : \R \to \left[0,1\right]$ such that $\chi\p{x} = 1$ if $x \leq \frac{1}{2} $ and $\chi\p{x} = 0$ if $x \geq 1$. Define for all $n \geq 1$ and $\xi \in \R^{d+1}$, $\chi_n\p{\xi} = \chi\p{2^{-n} \va{\xi}}$ and $\chi_{\alpha,n}\p{\xi}= \chi\p{\va{\xi} - 2^{n^\alpha}}$, set also $\chi_n = \chi_{\alpha,n} = 0$ if $n\leq 0$. Then set for $n \in \N$, $\psi_n\p{\xi} = \chi_{n+1}\p{\xi} - \chi_n\p{\xi}$ and $\psi_{\alpha,n}\p{\xi} = \chi_{\alpha,n+1}\p{\xi} - \chi_{\alpha,n}\p{\xi}$. Thus we have for $n \geq 1$
\begin{equation*}
\textup{supp } \psi_n \subseteq \set{ \xi \in \R^{d+1} : 2^{n-1} \leq \va{\xi} \leq 2^{n+1} }
\end{equation*}
and
\begin{equation*}
\textup{supp } \psi_{\alpha,n} \subseteq \set{ \xi \in \R^{d+1} : 2^{n^\alpha} \leq \va{\xi} \leq 2^{\p{n+1}^{\alpha}}+ 1}.
\end{equation*}
In addition, $\textup{supp } \psi_0$ and $\textup{supp } \psi_{\alpha,0}$ are contained in $ \set{\xi \in \R^{d+1} : \va{\xi} \leq 5}$. Moreover, we have $\sum_{n \geq 0} \psi_n = \sum_{n \geq 0} \psi_{\alpha,n} = 1$. Set
\begin{equation*}
\Gamma = \N \times \set{0,\dots,r-1,f}.
\end{equation*}  
Define for $\p{n,i} \in \Gamma$ the function $\psi_{\Theta,n,i}$ by
\begin{equation*}
\psi_{\Theta,n,i}\p{\xi} = \begin{cases}
\psi_n\p{\xi} \varphi_i\p{\frac{\xi}{\va{\xi}}} & \textrm{ if } n \geq 1, \\
\frac{\psi_0\p{\xi}}{r-1} & \textrm{ if } n=0,
\end{cases} 
\end{equation*}
if $i \in \set{1,\dots,r-2,f}$, and by
\begin{equation*}
\psi_{\Theta,n,i}\p{\xi} =  \p{1 - \psi_0\p{\xi}} \psi_{\alpha,n}\p{\xi} \varphi_i\p{\frac{\xi}{\va{\xi}}}
\end{equation*}
if $i \in \set{0,r-1}$, so that we have
\begin{equation*}
\sum_{\p{n,i} \in \Gamma} \psi_{\Theta,n,i} = 1.
\end{equation*}

We will give a Sobolev-like definition of the local space $\h_{\Theta,\alpha}$ (Definition \ref{defh}) by mean of a weight $w_{\Theta,\alpha}$ (see \eqref{eqdefw}). If this description is convenient to prove the basic properties of $\h_{\Theta,\alpha}$ (see Proposition~\ref{propbaseh}), we will rather use in the following sections a Paley--Littlewood-like description of the space $\h_{\Theta,\alpha}$ (see Proposition~\ref{propequiv}), for any $\upsilon \in \left]1,\frac{1}{1-\alpha}\right[$ we have:
\begin{equation*}
\h_{\Theta,\alpha} = \set{u \in \p{\mathcal{S}^\upsilon}' : \sum_{\p{n,i} \in \Gamma} \p{2^{n \beta_i} \n{\psi_{\Theta,n,i}\p{D}u}_2}^2 < + \infty}
\end{equation*}
where
\begin{equation}\label{eqdefbeta1}
\beta_0 = d+ 2, \beta_{r-1} = - \p{d+2}, \beta_{f}= -\p{d+2}
\end{equation}
and
\begin{equation}\label{eqdefbeta2}
\beta_i = -\p{i+1} \p{d+2} \textup{ for } i \in \set{1,\dots,r-2}.
\end{equation}
The main idea behind the choice of the $\beta_i$ is that the expected regularity of elements of $\h_{\Theta,\alpha}$ (measured \emph{via} integrability of the Fourier transform) must decrease under the action of the linear model of the dynamics (the $\beta_i$ play the role here of an analogue of the escape function from \cite{FauSjo}). The particular choice has been made so that computations are as easy as possible. Our parameters have been designed in order to make the Paley--Littlewood description as simple as possible, at the cost of a definition of the weight $w_{\Theta,\alpha}$ that may seem a bit heavy. It is defined for $\xi \in \R^{d+1}$ by
\begin{equation}\label{eqdefw}
\begin{split}
 & w_{\Theta,\alpha}\p{\xi} = \psi_0\p{\xi} + \p{1 - \psi_0\p{\xi}}\left( \sum_{i \in \set{0,r-1}} \varphi_i\p{\frac{\xi}{\va{\xi}}} e^{\frac{\beta_i \ln\p{1 + \va{\xi}}^{\frac{1}{\alpha}}}{\p{\ln 2}^{\frac{1}{\alpha}-1}}} \right. \\ & \left. \qquad \qquad \qquad \qquad \qquad \qquad \qquad \qquad \qquad + \sum_{i \in \set{1,\dots,r-2,f}} \varphi_i\p{\frac{\xi}{\va{\xi}}} \left\langle \xi \right\rangle^{\beta_i}\right),
\end{split}
\end{equation}
where
\begin{equation*}
\left\langle\xi\right\rangle = \sqrt{1 + \va{\xi}^2} \textup{ for } \xi \in \R^{d+1}.
\end{equation*}

\begin{df}\label{defh}
Define the space (for any $\upsilon \in \left]1, \frac{1}{1-\alpha}\right[$)
\begin{equation*}
\h_{\Theta,\alpha} = \set{u \in \p{\mathcal{S}^\upsilon}' : \hat{u} \in L^2_{\textup{loc}} \textrm{ and } \s{\R^{d+1}}{\va{\hat{u}\p{\xi}}^2 w_{\Theta,\alpha}\p{\xi}^2 }{\xi} < + \infty}
\end{equation*}
endowed with the scalar product
\begin{equation*}
\langle u , v \rangle_{\Theta,\alpha} = \s{\R^d}{\overline{\hat{u}\p{\xi}} \hat{v}\p{\xi} w_{\Theta,\alpha}\p{\xi}^2}{\xi}.
\end{equation*}
Recall \eqref{eqdefsupsilon} for the definition of $\mathcal{S}^\upsilon$ and \eqref{eqdefw} for the definition of $w_{\Theta,\alpha}$.
\end{df}

\begin{prop}\label{propbaseh}
$\h_{\Theta,\alpha}$ is a separable Hilbert space that does not depend on the choice of $\upsilon$. For all $1 < \upsilon < \frac{1}{1-\alpha}$, the space $\mathcal{S}^\upsilon$ is continuously contained and dense in $\mathcal{H}_{\Theta,\alpha}$, and $\mathcal{H}_{\Theta,\alpha}$ is continuously contained in $\p{\mathcal{S}^\upsilon}'$.
\end{prop}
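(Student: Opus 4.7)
The plan is to identify $\h_{\Theta,\alpha}$ concretely through the Fourier transform, after which most assertions reduce to comparing the growth of $w_{\Theta,\alpha}$ with the decay of Fourier transforms of elements of $\mathcal{S}^\upsilon$ supplied by Proposition~\ref{propdecfour}.

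First I would show that $u \mapsto w_{\Theta,\alpha} \hat{u}$ is an isometric isomorphism between $\h_{\Theta,\alpha}$ and $L^2\p{\R^{d+1}}$. Since $w_{\Theta,\alpha}$ is bounded above and below on compact sets, any $F \in L^2\p{\R^{d+1}, w_{\Theta,\alpha}^{2} \mathrm{d}\xi}$ belongs to $L^2_{loc}$ and, combined with the polynomial-like boundedness of $1/w_{\Theta,\alpha}$ away from the ``bad'' cones and its stretched-exponential growth on $C_0, C_{r-1}$ (controlled below), defines an element of $\p{\mathcal{S}^\upsilon}'$ through pairing against $\hat{\varphi}$ for $\varphi \in \mathcal{S}^\upsilon$. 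This identification does not mention $\upsilon$, so the resulting set is intrinsic; combined with Plancherel this yields the Hilbert space structure and separability, and gives independence of $\upsilon$ for free.

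Next I would establish the continuous inclusion $\mathcal{S}^\upsilon \hookrightarrow \h_{\Theta,\alpha}$. For $f \in \mathcal{S}^\upsilon$, Proposition~\ref{propdecfour} provides, for any prescribed $R > 0$, constants $\kappa, C > 0$ such that
\begin{equation*}
\va{\hat{f}\p{\xi}} \leq C \n{f}_{\kappa,\upsilon} \exp\p{-R \ln\p{1+\va{\xi}}^{\upsilon/\p{\upsilon-1}}}.
\end{equation*}
Inspecting \eqref{eqdefw}, the weight $w_{\Theta,\alpha}$ grows at most like $\exp\p{C_0 \ln\p{1+\va{\xi}}^{1/\alpha}}$ in the directions $C_0, C_{r-1}$ and is polynomially bounded elsewhere. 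The hypothesis $\upsilon < 1/\p{1-\alpha}$ is exactly equivalent to $\upsilon/\p{\upsilon-1} > 1/\alpha$, so for $R$ large enough the decay of $\hat{f}$ beats the growth of $w_{\Theta,\alpha}$ and $\n{f}_{\Theta,\alpha} \leq C' \n{f}_{\kappa,\upsilon}$. For the dual inclusion $\h_{\Theta,\alpha} \hookrightarrow \p{\mathcal{S}^\upsilon}'$, I would pair $u \in \h_{\Theta,\alpha}$ with $\varphi \in \mathcal{S}^\upsilon$ via $\hat{u}$ and $\hat{\varphi}$, apply Cauchy--Schwarz to write $\va{\langle u, \varphi\rangle} \leq \n{w_{\Theta,\alpha} \hat{u}}_2 \n{w_{\Theta,\alpha}^{-1} \hat{\varphi}}_2$, and use the same decay/growth comparison on $w_{\Theta,\alpha}^{-1}$ (whose worst growth is again of the form $\exp\p{C \ln\p{1+\va{\xi}}^{1/\alpha}}$, coming from $C_{r-1}$) to bound the second factor by $C \n{\varphi}_{\kappa,\upsilon}$.

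Finally, density of $\mathcal{S}^\upsilon$ in $\h_{\Theta,\alpha}$ is a standard approximation: given $u \in \h_{\Theta,\alpha}$, approximate $w_{\Theta,\alpha} \hat{u}$ in $L^2\p{\R^{d+1}}$ by compactly supported Gevrey functions (via truncation followed by Gevrey-mollifier convolution), and divide by $w_{\Theta,\alpha}$ to obtain a sequence of compactly supported Gevrey functions $\hat{v}_n$ converging to $\hat{u}$ in $L^2\p{\R^{d+1}, w_{\Theta,\alpha}^2 \mathrm{d}\xi}$; since such functions lie in $\mathcal{S}^\upsilon$ for every $\upsilon > 1$, Proposition~\ref{propfourier} ensures $v_n \in \mathcal{S}^\upsilon$, and by construction $v_n \to u$ in $\h_{\Theta,\alpha}$.

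The only genuine obstacle is the condition $\upsilon < 1/\p{1-\alpha}$: this is exactly what matches the Fourier-decay scale $\exp\p{-R \ln\p{1+\va{\xi}}^{\upsilon/\p{\upsilon-1}}}$ of $\mathcal{S}^\upsilon$-functions against the stretched-exponential growth scale $\exp\p{\pm C \ln\p{1+\va{\xi}}^{1/\alpha}}$ hidden in $w_{\Theta,\alpha}$. Once this matching is in place, the remaining arguments are routine weighted $L^2$ theory.
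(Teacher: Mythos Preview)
Your proof is correct and follows essentially the same route as the paper: identify $\h_{\Theta,\alpha}$ isometrically with $L^2(\R^{d+1})$ via $u \mapsto w_{\Theta,\alpha}\hat u$, then use Proposition~\ref{propdecfour} and the comparison $\frac{\upsilon}{\upsilon-1} > \frac{1}{\alpha}$ to obtain the continuous inclusions. The only noteworthy divergence is in the density step. The paper argues by orthogonality: if $u\in\h_{\Theta,\alpha}$ is orthogonal to $\mathcal{S}^\upsilon$, then testing against $\mathbb{F}^{-1}(\rho v)$ for compactly supported $\rho\in\mathcal{S}^\upsilon$ and arbitrary $v\in\mathcal{S}^\upsilon$ forces $\rho\,\overline{\hat u}\,w_{\Theta,\alpha}^2=0$, hence $u=0$. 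Your constructive approximation works too, but note that dividing by $w_{\Theta,\alpha}$ requires knowing that $1/w_{\Theta,\alpha}$ is itself Gevrey on compact sets (true, since $w_{\Theta,\alpha}$ is built from Gevrey and real-analytic pieces and is everywhere positive, but it is a fact one must check); the paper's orthogonality argument sidesteps any regularity hypothesis on the weight. Your treatment of $\upsilon$-independence (``the identification does not mention $\upsilon$'') is slightly compressed compared to the paper's explicit argument that the two candidate spaces sit isometrically inside one another with dense image, but the content is the same.
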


\begin{proof}
The map
\begin{equation*}
\begin{array}{ccccc}
A & : & \h_{\Theta,\alpha} & \to & L^2\p{\R^{d+1}} \\
 & & u & \mapsto & \hat{u} w_{\Theta,\alpha}
\end{array}
\end{equation*}
is clearly an isometry. Choose $\upsilon < \frac{1}{1-\alpha}$, thanks to Propositions \ref{propfourier} and~\ref{propdecfour} (recall \eqref{eqfourierupsilon}), and since $\frac{1}{\alpha} < \frac{\upsilon}{\upsilon-1}$, the map $u \mapsto \widehat{u} . w_{\Theta,\alpha}^{-1}$ is continuous from $\mathcal{S}^\upsilon$ to $L^2\p{\R^{d+1}}$. Thus the map $B : u \mapsto \mathbb{F}^{-1}\p{u w_{\Theta,\alpha}^{-1}}$ is continuous from $L^2\p{\R^{d+1}}$ to $\p{\mathcal{S}^{\upsilon}}'$. But if $u \in L^2\p{\R^{d+1}}$ then it is clear that $B u \in \h_{\Theta,\alpha}$ with $\n{Bu}_{\Theta,\alpha} = \n{u}_2$. Now, since $A$ and $B$ are inverses of each other, $\h_{\Theta,\alpha}$ is isometric to $L^2\p{\R^{d+1}}$ and thus a separable Hilbert space.

Proposition~\ref{propdecfour} implies that $\mathcal{S}^\upsilon$ is continuously contained in $\h_{\Theta,\alpha}$ and that the inclusion of $\h_{\Theta,\alpha}$ in $\p{\mathcal{S}^\upsilon}'$ is continuous. Let $u \in \h_{\Theta,\alpha}$ be in the orthogonal space to $\mathcal{S}^\upsilon$. If $\rho$ is a compactly supported element of $\mathcal{S}^\upsilon$, then, for all $v \in \mathcal{S}^\upsilon$, we have
\begin{equation*}
\int_{\R^{d+1}} \rho\p{\xi} \overline{\hat{u}\p{\xi}} w_{\Theta,\alpha}\p{\xi}^2 v\p{\xi} \mathrm{d}\xi = \left\langle u, \mathbb{F}^{-1}\p{\rho.v} \right\rangle_{\Theta,\alpha} = 0.
\end{equation*}
Thus the function $\rho \bar{\hat{u}} w_{\Theta,\alpha}^2 \in L^1\p{\R^{d+1}}$ vanishes (take for $v$ a convolution kernel), and so does $u$. Consequently, $\mathcal{S}^\upsilon$ is dense in $\h_{\Theta,\alpha}$.

To see that $\h_{\Theta,\alpha}$ does not depend on the choice of $\upsilon$, just notice that, if we use $\tilde{\upsilon} \in \left]\upsilon,\frac{1}{1 - \alpha}\right[$ instead of $\upsilon$ in the definition of $\h_{\Theta,\alpha}$, then we obtain another Hilbert space $\widetilde{\h}_{\Theta,\alpha}$. But then $\widetilde{\h}_{\Theta,\alpha} \subseteq \h_{\Theta,\alpha} $, and the inclusion is isometric and has a dense image (because $\widetilde{\h}_{\Theta,\alpha}$ contains $\mathcal{S}^\upsilon$). Since $\widetilde{\h}_{\Theta,\alpha}$ and $\h_{\Theta,\alpha}$ are both Hilbert spaces, they must coincide.
\end{proof}

\begin{rmq}\label{rmqdense}
It is clear from the proof that in fact the elements of $\mathcal{S}^\upsilon$ whose Fourier transform is compactly supported form a dense subset of $\h_{\Theta,\alpha}$.
\end{rmq}

\begin{prop}\label{propequiv}
Let $1 < \upsilon < \frac{1}{1-\alpha}$ and $u \in \p{\mathcal{S}^\upsilon}'$. Then $u \in \h_{\Theta,\alpha}$ if and only if
\begin{equation}\label{eqequiv}
\sum_{\p{n,i} \in \Gamma} \p{2^{n \beta_i } \n{\psi_{\Theta,n,i}\p{D}u}_2}^2 < + \infty.
\end{equation}
Moreover, the square root of this quantity defines an equivalent (Hilbertian) norm on $\mathcal{H}_{\Theta,\alpha}$.
\end{prop}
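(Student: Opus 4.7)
The plan is to exploit two pointwise facts: on the support of $\psi_{\Theta,n,i}$, the weight $w_{\Theta,\alpha}$ is comparable (up to multiplicative constants independent of $n$ and $i$) to $2^{n\beta_i}$; and the family $(\psi_{\Theta,n,i})_{(n,i)\in\Gamma}$ has uniformly bounded overlap. Together with Plancherel's theorem, these two facts will yield the desired norm equivalence.

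The comparability is verified by distinguishing the two types of indices. For $i \in \{1,\dots,r-2,f\}$ and $n \geq 1$, $\textup{supp } \psi_{\Theta,n,i}$ sits in the dyadic shell $\{2^{n-1} \leq \va{\xi} \leq 2^{n+1}\}$ intersected with a conic neighbourhood of $\textup{supp } \varphi_i$, where $\langle\xi\rangle^{\beta_i}$ is bounded above and below by constant multiples of $2^{n\beta_i}$. For $i \in \{0,r-1\}$ and $n \geq 1$, the support lies in $\{2^{n^\alpha} \leq \va{\xi} \leq 2^{(n+1)^\alpha}+1\}$; on this set a direct computation gives $\ln(1+\va{\xi})^{1/\alpha}/(\ln 2)^{1/\alpha-1} = n\ln 2 + O(1)$, so that $\exp(\beta_i \ln(1+\va{\xi})^{1/\alpha}/(\ln 2)^{1/\alpha-1})$ is of the same order as $2^{n\beta_i}$ (the algebraic form of the exponential weight in \eqref{eqdefw} is tailored precisely for this). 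The angular factor $\varphi_i(\xi/\va{\xi})$ ensures that only those terms in the sum \eqref{eqdefw} with $\textup{supp } \varphi_j \cap \textup{supp } \varphi_i \neq \emptyset$ can contribute at $\xi \in \textup{supp } \psi_{\Theta,n,i}$; the conditions (iv) and (v) of Definition~\ref{defsyst}, together with the constraints imposed on the partition $(\varphi_0,\dots,\varphi_{r-1},\varphi_f)$, force the indices $j$ of these neighbouring cutoffs to be adjacent to $i$, and the corresponding contributions remain of order $2^{n\beta_i}$. The case $n = 0$ is trivial since $w_{\Theta,\alpha}$ is bounded above and below on the fixed compact set $\{\va{\xi} \leq 5\}$.

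For the bounded overlap, each direction $\omega \in \mathbb{S}^d$ belongs to the supports of at most a fixed number of angular cutoffs $\varphi_j$, while in the radial variable both partitions $(\psi_n)$ and $(\psi_{\alpha,n})$ have overlap uniformly bounded by $2$ (for $\psi_{\alpha,n}$, one uses that $2^{(n+1)^\alpha} - 2^{n^\alpha} > 1$ for all $n$ large enough, the finitely many small indices being harmless). Combined with $0 \leq \psi_{\Theta,n,i} \leq 1$ and $\sum_{(n,i)} \psi_{\Theta,n,i} = 1$, a Cauchy--Schwarz argument gives a constant $c_0 > 0$ such that $c_0 \leq \sum_{(n,i)} \psi_{\Theta,n,i}(\xi)^2 \leq 1$ for every $\xi$, and therefore constants $c_1,C_1 > 0$ such that
\begin{equation*}
c_1 \, w_{\Theta,\alpha}(\xi)^2 \leq \sum_{(n,i)\in\Gamma} 2^{2n\beta_i}\psi_{\Theta,n,i}(\xi)^2 \leq C_1 \, w_{\Theta,\alpha}(\xi)^2
\end{equation*}
for every $\xi \in \R^{d+1}$.

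By Plancherel's theorem, $\n{\psi_{\Theta,n,i}(D) u}_2^2$ equals a fixed multiple of $\int \psi_{\Theta,n,i}(\xi)^2 \va{\hat{u}(\xi)}^2 \mathrm{d}\xi$ as soon as $\hat u$ is locally $L^2$; summing against the weights $2^{2n\beta_i}$ and inserting the pointwise estimate above yields $\sum_{(n,i)} (2^{n\beta_i} \n{\psi_{\Theta,n,i}(D) u}_2)^2 \simeq \n{u}_{\Theta,\alpha}^2$. For the converse direction, if \eqref{eqequiv} is finite then each $\psi_{\Theta,n,i}\hat{u}$ is an $L^2$ function, and since any compact $K \subset \R^{d+1}$ meets only finitely many $\textup{supp } \psi_{\Theta,n,i}$ by the bounded overlap, $\hat u$ is in $L^2_{\textup{loc}}$, so the same identity places $u$ in $\h_{\Theta,\alpha}$. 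The main technical obstacle is the bookkeeping underlying the comparability of $w_{\Theta,\alpha}$ with $2^{n\beta_i}$ on $\textup{supp }\psi_{\Theta,n,i}$: one has to check carefully that the angular partition of unity is compatible with the two distinct radial scales ($2^n$ versus $2^{n^\alpha}$) used respectively for the polynomial and the exponential portions of the weight \eqref{eqdefw}.
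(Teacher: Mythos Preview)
Your proof is correct and follows essentially the same approach as the paper's own proof: both establish the pointwise equivalence $w_{\Theta,\alpha}(\xi)^2 \simeq \sum_{(n,i)} 2^{2n\beta_i}\psi_{\Theta,n,i}(\xi)^2$ by combining the comparability of $w_{\Theta,\alpha}$ with $2^{n\beta_i}$ on $\textup{supp }\psi_{\Theta,n,i}$ (treating the polynomial and exponential regimes separately) with the finite overlap of the supports, then invoke Plancherel and the local finiteness of the partition to conclude. Your writeup is somewhat more explicit than the paper's about the angular bookkeeping and the Cauchy--Schwarz step for the lower bound on $\sum\psi_{\Theta,n,i}^2$, but the argument is the same.
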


\begin{proof}
First, notice that there is $C >0$ such that, if $n \in \N$, $i \in \set{1,\dots,r-2,f}$ and $\xi \in \textup{supp } \psi_{\Theta,n,i}$, then
\begin{equation*}
\frac{1}{C} 2^{n \beta_i} \leq \left\langle \xi \right\rangle^{\beta_i} \leq C 2^{n \beta_i}.
\end{equation*}
Up to enlarging $C$, it is also true that if $n \in \N$, $i \in \set{0,r-1}$ and $\xi \in \textup{supp } \psi_{\Theta,n,i}$ then
\begin{equation*}
\frac{1}{C} 2^{n \beta_i} \leq e^{\frac{\beta_i \ln\p{1 + \va{\xi}}^{\frac{1}{\alpha}}}{\ln 2^{\frac{1}{\alpha}-1}}} \leq C 2^{n \beta_i}.
\end{equation*}
Now, using the fact that the intersection number of the support of the $\psi_{\Theta,n,i}$ for $\p{n,i} \in \Gamma$ is finite, we find another constant $C >0$ such that for all $\xi \in \R^{d+1}$ we have
\begin{equation}\label{eqcont}
\frac{1}{C} w_{\Theta,\alpha}\p{\xi}^2 \leq \sum_{\p{n,i} \in \Gamma} \p{2^{n \beta_i}\psi_{\Theta,n,i}\p{\xi} }^2 \leq C w_{\Theta,\alpha}\p{\xi}^2. 
\end{equation}

From this, we get immediately that if $u \in \h_{\Theta,\alpha}$ then \eqref{eqequiv} holds. Reciprocally, if \eqref{eqequiv} holds, then $\hat{u}$ is in $L_{\textup{loc}}^2$ (the sum $\sum_{\p{n,i} \in \Gamma} \psi_{\Theta,n,i}$ is locally finite) and from \eqref{eqcont} we get that $u \in \h_{\Theta,\alpha}$. The equivalence of norms is an immediate consequence of \eqref{eqcont}.

\end{proof}

Proposition~\ref{propequiv} suggests to define the auxiliary Hilbert space
\begin{equation}\label{eqdefB}
\begin{split}
\B & = \left\{ \p{u_{n,i}}_{\p{n,i} \in \Gamma} \in \prod_{\p{n,i} \in \Gamma}L^2\p{\R^{d+1}} : \right. \\ & \left. \quad  \qquad  \qquad  \sum_{\p{n,i} \in \Gamma} \p{2^{n \beta_i} \n{ u_{n,i} }_2}^2 < + \infty \right\}.
\end{split}
\end{equation}
Define the map
\begin{equation}\label{eqdefQTheta}
\begin{array}{ccccc}
\mathcal{Q}_\Theta & : & \h_{\Theta,\alpha} & \to & \B \\
 & & u & \mapsto & \p{\psi_{\Theta,n,i}\p{D}u}_{\p{n,i}\in \Gamma}
 \end{array}.
\end{equation}
For $\p{n,i} \in \Gamma$ define also the natural projection and inclusion
\begin{equation*}
\begin{array}{ccccc}
\pi_{n,i} & : & \B & \to & L^2\p{\R^{d+1}} \\
 & & \p{u_{\ell,j}}_{\p{\ell,j} \in \Gamma} & \mapsto & u_{n,i}
\end{array}
\end{equation*}
and
\begin{equation*}
\begin{array}{ccccc}
\iota_{n,i} & : & L^2\p{\R^{d+1}} & \to & \B \\
 & & u & \mapsto & \p{u \delta_{\p{n,i} = \p{\ell,j}}}_{\p{\ell,j} \in \Gamma}.
\end{array}
\end{equation*}

\section{Local transfer operator}\label{sectlto}

We are now going to study a local model for the Koopman operator \eqref{eqkoopman} associated to an Anosov flow $\p{\phi^t}_{t \in \R}$ on a $\p{d+1}$-dimensional manifold $M$. The main result of this section is Proposition~\ref{proplocop} which is a local version of Theorem~\ref{thmmain}.

As a local model for a flow, we will consider a family $\p{\mathcal{T}_t}_{t \in \R}$ of diffeomorphisms of $\R^{d+1}$ such that if we define $F : \R^d \to \R^{d+1}$ by $x \mapsto \mathcal{T}_{0}\p{x,0}$ (here we make the identification $\R^{d+1} \simeq \R^d \times \R$) then we have
\begin{equation}\label{eqdeffamily}
\forall t \in \R : \forall \p{x,y} \in  \R^d  \times \R \simeq \R^{d+1} : \mathcal{T}_t\p{x,y} = F\p{x} + y  e_{d+1}  + te_{d+1}.
\end{equation}
We will say that $F$ is the map associated to the family of diffeomorphisms $\p{\mathcal{T}_t}_{t \in \R}$. Reciprocally, if $F : \R^{d} \to \R^{d+1}$ is an immersion, we define by \eqref{eqdeffamily} the associated family of diffeomorphisms $\p{\mathcal{T}_t}_{t \in \R}$ (provided they actually are diffeomorphisms). 

\begin{rmq}\label{rmqdefTt}
Let us explain why we use such a family of diffeomorphisms as a local model for a flow. We want to study the flow $\p{\phi^t}_{t \in \R}$ in the neighbourhood of a fixed time $\tilde{t}_0$. To do it, we take charts $\kappa$ and $\kappa'$ for $M$ and we study the family of diffeomorphisms $\p{\mathcal{T}_t}_{t \in \R}$ defined by the formula 
\begin{equation*}
\mathcal{T}_t = \kappa \circ \phi^{\tilde{t}_0 + t} \circ \kappa'^{-1}.
\end{equation*}
Of course, this is not in general a family of diffeomorphisms from $\R^{d+1}$ to itself (\emph{a priori} the domain of $\mathcal{T}_t$ depends on $t$). However, it is more convenient to deal with diffeomorphisms of the whole $\R^{d+1}$, and we will consequently provide extensions of the $\mathcal{T}_t$ to $\R^{d+1}$ when applying Proposition~\ref{proplocop} in \S \ref{gs1} (see Lemma~\ref{lmdecoupe}). These extensions are far from canonical, but the use of a cutoff function will ensure that none of the objects that we consider in \S \ref{gs1} depend on the choices we will make in a relevant way.

It is natural to ask for $\kappa$ and $\kappa'$ to be flow boxes, that is, if $V$ is the generator of the flow $\p{\phi^t}_{t \in \R}$, we require $\kappa^*\p{e_{d+1}} = V$ and $\kappa'^*\p{e_{d+1}} = V$ (we identify $e_{d+1}$ with the constant vector field with value $e_{d+1}$). This requirement implies \eqref{eqdeffamily} for small $t$ and $y$, and, since we are only interested here in the behaviour of $\p{\phi_t}_{t \in \R}$ locally in both space and time, we may modify the definition of $\mathcal{T}_t$ for large $t$ and design our extension to ensure that \eqref{eqdeffamily} holds (we refer to the proof of Lemma~\ref{lmdecoupe} for details). Once again, this will be of no harm in the global analysis thanks to the use of cutoff function in both time and space.
\end{rmq}
 
In this section, we will study such a family with no reference to a particular Anosov flow. We will need further assumptions to do so. The first one is that $F$ (or equivalently $\mathcal{T}_0$ or any $\mathcal{T}_t$ for $t \in \R$) is $\mathcal{C}^{\kappa,\upsilon}$ for some $\kappa > 0$ and $\upsilon > 1$. The second one is a condition of hyperbolicity that we will express using cones.

Let $r \geq 2$ be an integer and choose two systems of $r+2$ cones (with respect to the direction $e_{d+1}$ as usual) $\Theta = \p{C_0,\dots, C_r,C_f}$ and $\Theta' = \p{C_0',\dots,C_r',C_f'}$. We assume that $\p{\mathcal{T}_t}_{t \in \R}$ is cone-hyperbolic from $\Theta'$ to $\Theta$ in the following sense:
\begin{enumerate}[label = (\roman*)]
\item for all $x \in \R^{d+1}$, $i \in \set{1,\dots,r}$ and $t \in \R$ we have\footnote{Here, $A^{\textup{tr}}$ denotes the transpose of $A$.}
\begin{equation*}
D_x \mathcal{T}_t^{\textup{tr}}\p{C_i} \subseteq C'_{\min\p{i+2,r}};
\end{equation*}
\item for all $x \in \R^{d+1}$ and $t \in \R$ we have 
\begin{equation*}
D_x \mathcal{T}_t^{\textup{tr}}\p{C_f} \cap C'_0 = \set{0};
\end{equation*}
\item there is $\Lambda > 1$ such that for all $x \in \R^{d+1}$, all $\xi \in C_{r-1}$, and all $t \in \R$ we have
\begin{equation*}
\va{D_x \mathcal{T}_t^{\textup{tr}}\p{\xi}} \geq \Lambda \va{\xi};
\end{equation*}
\item for the same $\Lambda >1$, for all $x \in \R^{d+1}$, all $\xi \in \R^d$, and all $t \in \R$ such that $D_x \mathcal{T}_t^{\textup{tr}} \p{\xi} \in C_0'$  we have\footnote{Notice that the condition $D_x \mathcal{T}_t^{\textup{tr}} \p{\xi} \in C_0'$ implies in particular that $\xi \in C_0$, as a consequence of (i) and (ii).}
\begin{equation*}
\va{D_x \mathcal{T}_t^{\textup{tr}} \p{\xi}} \leq \Lambda^{-1} \va{\xi}.
\end{equation*}
\end{enumerate}

\begin{rmq}
Notice that the definition of the cone-hyperbolicity of the family $\p{\mathcal{T}_t}_{t \in \R}$ only involves the derivatives $D_x \mathcal{T}_t^{\textup{tr}}$. However, these derivatives do not depend on $t$ (this is a consequence of \eqref{eqdeffamily}). Consequently, one only needs to check that (i)-(iv) hold for $t=0$. This fact may be surprising since hyperbolicity is usually a phenomenon that can only be observed after a small amount of time, but recall Remark \ref{rmqdefTt}: in the application, the family $\p{\mathcal{T}_t}_{t \in \R}$ will only be used to describe the flow $\p{\phi_t}_{t \in \R}$ near some time $\tilde{t}_0$. Then, provided that $\tilde{t}_0 > 0$, the family $\p{\mathcal{T}_t}_{t \in \R}$ will be cone-hyperbolic (see \S \ref{gs1} for the details).
\end{rmq}

\begin{rmq}
Notice that if $\p{\mathcal{T}_t^1}_{t \in \R}$ and $\p{\mathcal{T}_t^2}_{t \in \R}$ are two families of diffeomorphisms as above, then their composition may naturally be defined as $\p{\mathcal{T}_t^1 \circ \mathcal{T}_0^2}_{t \in \R}$. Moreover, if there are systems of cones $\Theta,\Theta'$ and $\Theta''$ such that $\p{\mathcal{T}_t^1}_{t \in \R}$ is cone-hyperbolic from $\Theta'$ to $\Theta''$ and $\p{\mathcal{T}_t^2}_{t \in \R}$ is cone-hyperbolic from $\Theta$ to $\Theta'$ then $\p{\mathcal{T}_t^1 \circ \mathcal{T}_0^2}_{t \in \R}$ is cone-hyperbolic from $\Theta$ to $\Theta''$.
\end{rmq}

We will also consider a $\mathcal{C}^{\infty}$ family $\p{G_t}_{t \in \R}$ of $\mathcal{S}^\upsilon$ functions from $\R^{d+1}$ to $\C$, such that there is a compact subset $K$ of $\R^{d+1}$ such that, if $x \in \R^{d+1} \setminus K$ and $t \in \R$, then $G_t\p{x} = 0$.

We will study the family $\p{\mathcal{L}_t}_{t \in \R}$ of local transfer operator defined by
\begin{equation*}
\mathcal{L}_t u = G_t \p{u \circ \mathcal{T}_t}.
\end{equation*}
This definition makes sense for $u \in \mathcal{S}^{\tilde{\upsilon}}$ (for any $\tilde{\upsilon} > \upsilon$) and may be extended by duality to $u \in \p{\mathcal{S}^{\tilde{\upsilon}}}'$.

The main result of this section is Proposition~\ref{proplocop}, which can be seen as a local version of Theorem~\ref{thmmain}.

\begin{prop}\label{proplocop}
Let $\alpha \in \left]\frac{\upsilon - 1}{\upsilon}, 1 \right[$. For all $t \in \R$ the transfer operator $\mathcal{L}_t$ is bounded from $\h_{\Theta,\alpha}$ to $\h_{\Theta',\alpha}$. Moreover, the family $\p{\mathcal{L}_t}_{t \in \R}$ is strongly continuous (as a family of operators from $\h_{\Theta,\alpha}$ to $\h_{\Theta',\alpha}$), hence it is measurable. 

Moreover, if $\alpha < \frac{1}{2}$, if $k$ is a non-negative integer and if $h : \R \to \C$ is a compactly supported $k$th time differentiable function whose $k$th derivative has bounded variation then the operator
\begin{equation}\label{eqopschatt}
\s{\R}{h\p{t} \mathcal{L}_t}{t} : \h_{\Theta,\alpha} \to \h_{\Theta',\alpha}
\end{equation}
is in the Schatten class\footnote{See \cite[Chapter IV.11]{Gohb} for the definition and basic properties of Schatten classes} $S_{q}$ for all $q \geq 1$ such that $q > \frac{d+1}{k+1}$. Moreover, there is a constant $C >0$, which depends on $h$ only through its support, such that
\begin{equation*}
\n{\s{\R}{h\p{t} \mathcal{L}_t}{t}}_{S_q} \leq C\p{\n{h}_{\mathcal{C}^{k-1}} + \n{h^{\p{k}}}_{\textup{BV}}},
\end{equation*}
where $\n{\cdot}_{S_q}$ denotes the $S_q$ Schatten class norm and $\n{\cdot}_{\textup{BV}}$ the bounded variation norm.

If $k+1 > d+1$ and $\Theta = \Theta'$ we have
\begin{equation*}
\textup{tr}\p{\s{\R}{h\p{t}\mathcal{L}_t}{t}} = \sum_{p \circ F\p{x} = x} \frac{h\p{T\p{x}}}{\va{\det \p{I - p \circ D_x F}}}\int_{\R} G_{T\p{x}}\p{x,y} \mathrm{d}y ,
\end{equation*}
where $p$ is the orthogonal projection from $\R^{d+1}$ to $\R^d \simeq \R^d \times \set{0}$ and, for $x \in \R^d$, the number $T(x)$ is defined by $F(x) = p\p{F(x)} + \p{0, - T(x)}$. 

Without the hypothesis $\alpha < \frac{1}{2}$, it remains true that the operator \eqref{eqopschatt} is compact.
\end{prop}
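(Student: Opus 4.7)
The plan is to decompose $\mathcal{L}_t$ through the Paley--Littlewood frame $\set{\psi_{\Theta,n,i}\p{D}}_{\p{n,i} \in \Gamma}$ of \S \ref{sectls} and estimate its matrix entries in the auxiliary space $\B$ defined in~\eqref{eqdefB}. Each block
\begin{equation*}
M_{\p{n,i},\p{m,j}}\p{t} = \pi_{n,i} \,\mathcal{Q}_{\Theta'}\, \mathcal{L}_t \,\mathcal{Q}_\Theta^{-1}\, \iota_{m,j} : L^2\p{\R^{d+1}} \to L^2\p{\R^{d+1}}
\end{equation*}
has a Schwartz kernel which is an oscillatory integral with phase built from $\mathcal{T}_t$ and symbol supported where the product $\psi_{\Theta',n,i}\p{\cdot}\,\p{\psi_{\Theta,m,j} \circ D\mathcal{T}_t^{\textup{tr}}}\p{\cdot}$ is non-zero. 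I first estimate this kernel by combining a (non-)stationary phase analysis in the frequency variables with the stretched-exponential decay of Fourier transforms of $\mathcal{C}^{\kappa,\upsilon}$ functions given by Proposition~\ref{propdecfour}, applied to $G_t$ and to the inverse Fourier transform of $\psi_{\Theta',n,i}$.

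The second step is to exploit cone-hyperbolicity. The inclusion $D\mathcal{T}_t^{\textup{tr}}\p{C_j} \subseteq C'_{\min\p{j+2,r}}$ together with the expansion/contraction estimates force $M_{\p{n,i},\p{m,j}}\p{t}$ to be negligible unless the cone $C'_i$ is compatible with $C_j$ under $D\mathcal{T}_t^{\textup{tr}}$ and the dyadic scales satisfy $\va{n-m} = O\p{1}$; otherwise, repeated integrations by parts in the frequency variable produce faster-than-polynomial decay, which easily dominates the weights $2^{n\beta_i}$ and $2^{m\beta_j}$ because of the $\log^{\upsilon/\p{\upsilon-1}}$-type Fourier decay from Proposition~\ref{propdecfour}. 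The crucial point is that the size $d+2$ of the jumps in the $\beta_i$ prescribed by~\eqref{eqdefbeta1}--\eqref{eqdefbeta2} has been tuned so that $\beta_{\min\p{j+2,r}} - \beta_j \leq -\p{d+2}$, which exactly compensates the loss produced by integration by parts in the $d+1$ spatial directions. A Schur test on the resulting matrix yields boundedness of $\mathcal{L}_t : \h_{\Theta,\alpha} \to \h_{\Theta',\alpha}$, and strong continuity in $t$ follows from continuity of $t \mapsto \mathcal{L}_t u$ on the dense subset of elements with compactly supported Fourier transform (Remark~\ref{rmqdense}) together with the uniform bound.

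For the operator $T_h = \s{\R}{h\p{t}\mathcal{L}_t}{t}$, I use that the only $t$-dependence of $\mathcal{T}_t\p{x'} = F\p{x'} + y_{d+1}'e_{d+1} + t e_{d+1}$ is in the last coordinate. Hence $k+1$ integrations by parts in $t$ (permitted since $h^{\p{k}}$ has bounded variation) yield an extra factor $\va{\xi_{d+1}}^{-\p{k+1}}$ inside each block. Combined with the kernel estimates above, this produces a Hilbert--Schmidt bound on every block; the sum over $\p{n,i},\p{m,j}$ converges precisely when $\alpha < 1/2$, because the shells $\textup{supp } \psi_{\alpha,n}$ corresponding to cones $C_0$ and $C_{r-1}$ (where the lacunary decomposition is used) have $\p{d+1}$-dimensional volume of order $2^{n^\alpha\p{d+1}}$, and summability of $\sum_n 2^{-n^\alpha \cdot c}$ after the exponential weight cost is exactly what $\alpha < 1/2$ provides. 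Interpolating this Hilbert--Schmidt estimate with the previous operator-norm bound via the standard Weyl-type estimate for integral operators with kernels of Sobolev regularity $k+1$ on a compact set then delivers $T_h \in S_q$ for $q > \p{d+1}/\p{k+1}$, with the quantitative control by $\n{h}_{\mathcal{C}^k} + \n{h^{\p{k}}}_{\textup{BV}}$. When $\alpha \geq 1/2$, one only needs compactness, which follows by approximating $T_h$ in operator norm by finite-rank truncations in the block indices, the off-diagonal decay from step two showing the remainders tend to zero.

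Finally, the trace formula under $\Theta = \Theta'$ and $k+1 > d+1$ is obtained from $\textup{tr}\,T_h = \s{\R^{d+1}}{K_h\p{x,x}}{x}$, which is valid because the kernel is continuous once $h \in \mathcal{C}^k$ with $k \geq d+1$. Writing $K_h\p{x,x}$ as an oscillatory integral in $\p{t,x',\xi}$ and applying stationary phase, the critical set is cut out by $p\circ F\p{x'} = x$ and $t = T\p{x} = -F\p{x}\cdot e_{d+1}$, singling out the fixed points of $p\circ F$; the stationary-phase determinant produces $\va{\det\p{I - p\circ D_x F}}^{-1}$, and the degenerate direction $y_{d+1}$ of the phase contributes the factor $\s{\R}{G_{T\p{x}}\p{x,y_{d+1}}}{y_{d+1}}$. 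I expect the hardest step to be the third one: making the block-wise Hilbert--Schmidt bound summable requires simultaneously balancing the integration-by-parts gain in $t$, the stretched-exponential Fourier decay, and the anisotropic weights, and it is there that the sharp threshold $\alpha < 1/2$ and the sharp exponent $q > \p{d+1}/\p{k+1}$ have to be extracted with care.
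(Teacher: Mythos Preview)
Your overall architecture matches the paper's: decompose through the Paley--Littlewood frame on $\B$, kill the off-diagonal blocks by non-stationary phase driven by the $\mathcal{C}^{\kappa,\upsilon}$ regularity, and integrate by parts in $t$ to gain decay in the flow direction. But three points in your sketch are genuine gaps rather than imprecision.

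First, the claim that the surviving blocks satisfy $|n-m|=O(1)$ is false precisely in the cones $C_0$ and $C_{r-1}$ where the non-dyadic partition $\psi_{\alpha,n}$ is used. A shell at level $n$ there sits at frequency $\sim 2^{n^\alpha}$; contraction or expansion by the factor $\Lambda$ sends it to level $n'$ with $n'^\alpha\sim n^\alpha+\log_2\Lambda$, i.e.\ $n'-n\sim \nu\,n^{1-\alpha}$, not $O(1)$. The paper encodes this in the relation $\hra$ (see the cases $i=j=0$ and $i=j=r-1$ before Lemma~\ref{lmdist}), and it is exactly this \emph{growing} gap that makes the weight gain $2^{(d+2)(n-\ell)}$ beat the shell volume $2^{(d+1)n^\alpha}$ when one sums trace-class norms of the blocks (Lemma~\ref{lmschatten}). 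That comparison $n^{1-\alpha}$ versus $n^\alpha$ is where the threshold $\alpha<\tfrac12$ actually enters.

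Second, integration by parts in $t$ produces a factor $|\eta_{d+1}|^{-(k+1)}$ with $\eta\in\textup{supp}\,\tilde\psi_{\Theta,\ell,j}$. In the stable and unstable cones $C_0,C_{r-1}$ the flow component $\eta_{d+1}$ is small (by the very definition of those cones), so this gain is useless there; in the paper the $t$-integration is invoked only for the $i=j=f$ blocks (Lemma~\ref{lmippf}). The two mechanisms are disjoint: hyperbolicity plus the weight structure handles the $C_0$ and $C_{r-1}$ transitions and fixes $\alpha<\tfrac12$; the $t$-integration handles the flow cone and fixes the Schatten exponent $q>\tfrac{d+1}{k+1}$. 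Your sketch merges them.

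Third, interpolating a Hilbert--Schmidt bound with the operator-norm bound cannot produce $S_q$ for $q<2$, and in particular cannot give the trace-class statement you need when $k+1>d+1$. The paper does not interpolate: for the flow blocks it combines the operator-norm decay $\|H_{n,f}^{\ell,f}\|\le C\,2^{-(k+1)\ell}$ of Lemma~\ref{lmippf} with a finite-rank approximation of $\psi_{\Theta',n,f}(D)$ acting on compactly supported inputs (Lemma~\ref{lmfiniterank}); this yields directly a singular-value bound $s_m\lesssim m^{-(k+1)/(d+1)}$ and hence membership in $S_q$ for every $q>\tfrac{d+1}{k+1}$. Your ``Weyl-type estimate for kernels of Sobolev regularity $k+1$'' would have to be made precise in this anisotropic, non-compactly-supported setting to replace that step.
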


\begin{rmq}
Since $\alpha > \frac{\upsilon - 1}{\upsilon}$, we may choose $\tilde{\upsilon} > \upsilon$ such that $\tilde{\upsilon} < \frac{1}{1 - \alpha}$. Then $\h_{\Theta,\alpha} \subseteq \p{\mathcal{S}^{\tilde{\upsilon}}}'$ and thus $\mathcal{L}_t u$ is well-defined as an element of $\p{\mathcal{S}^{\tilde{\upsilon}}}'$ when $t \in \R$ and $u \in \h_{\Theta,\alpha}$.
\end{rmq}

\begin{rmq}
Notice that the spaces $\h_{\Theta,\alpha}$ and $\h_{\Theta',\alpha}$ depend \textit{a priori} not only on $\Theta$ (and $\alpha$) but also on the choice of partitions of unity $\p{\varphi_0,\dots,\varphi_{r-1},\varphi_f}$ and $\p{\varphi_0',\dots,\varphi_{r-1}',\varphi_f'}$ on $\mathbb{S}^d$ as in \S \ref{sectls}. However, in view of Proposition~\ref{proplocop}, this choice is mostly irrelevant and the dependence on $\Theta$ and $\Theta'$ is the fundamental point.
\end{rmq}

The remainder of this section is devoted to the proof of Proposition~\ref{proplocop}. For this, we introduce in Lemma~\ref{lmconvstrong} a family of auxiliary operators $\p{\mathcal{M}_t}_{t \in \R}$ acting on the space $\mathcal{B}$ defined in \eqref{eqdefB}. Then, we prove that the family $\p{\mathcal{M}_t}_{t \in \R}$ has the properties that we expect from $\p{\mathcal{L}_t}_{t \in \R}$ : boundedness and strong continuity are proven in Lemma~\ref{lmconvstrong} (with the help of the preparatory Lemmas \ref{lmdist} and~\ref{lmmajipp}, see \S \ref{subsecauxiliary}), that an operator similar to \eqref{eqopschatt} is in a Schatten class is proven in Lemma~\ref{lmschatten} (with the help of Lemmas \ref{lmtc}, \ref{lmtcnl} and~\ref{lmippf}, see \S \ref{subsecschatten}) and the formula for the trace is given in Lemma \ref{lmtrace} (see \S \ref{subsectrace}). Finally, we end the proof of Proposition~\ref{proplocop} by showing that $\p{\mathcal{L}_t}_{t \in \R}$ inherits these properties from $\p{\mathcal{M}_t}_{t \in \R}$.

\subsection{The auxiliary operators $\mathcal{M}_t$.}\label{subsecauxiliary}

We will need smooth functions $\tilde{\varphi}_0,\dots,\tilde{\varphi}_{r-1}$, and $\tilde{\varphi}_f : \mathbb{S}^d \to \left[0,1\right]$ such that
\begin{itemize}
\item if $i \in \set{0,\dots,r-1,f}$ then $\tilde{\varphi}_i$ is supported in the interior of $C_i \cap \mathbb{S}^d$;
\item if $i \in \set{1,\dots,r-2}$ then $\tilde{\varphi}_i$ vanishes on a neighborhood of $C_{i+2} \cap \mathbb{S}^d$;
\item if $i \in \set{0, \dots,r-1,f}$, $ x \in \mathbb{S}^d$, and $\varphi_i\p{x} \neq 0$ then $\tilde{\varphi}_i\p{x} =1$.
\end{itemize}
Define then $\tilde{\psi}_n = \chi_{n+2} - \chi_{n-1}$ and $\tilde{\psi}_{\alpha,n} = \chi_{\alpha,n+b} - \chi_{\alpha,n-b}$ for $n \geq 0$, where $b$ is chosen large enough so that for all $n \in \N^*$ we have
\begin{equation*}
2^{\p{n+1}^\alpha} - 2^{\p{n+b}^{\alpha}} + 1 \leq \frac{1}{2} \textup{ and } 2^{n^\alpha}- 2^{\p{n-b}^{\alpha}} \geq 1.
\end{equation*}
If $\p{n,i} \in \Gamma$ set
\begin{equation*}
\tilde{\psi}_{\Theta,n,i}\p{\xi} = \begin{cases} 
\tilde{\psi}_n\p{\xi} \tilde{\varphi}_i\p{\frac{\xi}{\va{\xi}}} & \textrm{ if } n \geq 1, \\
\tilde{\psi_0}\p{\xi} & \textrm{ if } n=0,
\end{cases}
\end{equation*}
if $i \in \set{1,\dots,r-2,f}$, and 
\begin{equation*}
\tilde{\psi}_{\Theta,n,i}\p{\xi} = \begin{cases}
\tilde{\psi}_{\alpha,n}\p{\xi} \tilde{\varphi}_i\p{\frac{\xi}{\va{\xi}}} & \textrm{ if } n \geq 1, \\
\tilde{\psi}_{\alpha,0}\p{\xi} & \textrm{ if } n=0,
\end{cases}
\end{equation*}
if $i \in \set{0,r-1}$. Thus $\psi_{\Theta,n,i}\p{\xi} \neq 0$ implies $\tilde{\psi}_{\Theta,n,i}\p{\xi} = 1$. Now if $\p{n,i},\p{\ell,j} \in \Gamma$, and $t \in \R$ define an operator $S_{t,n,i}^{\ell,j}\ : L^2\p{\R^{d+1}} \to L^2\p{\R^{d+1}}$ by
\begin{equation}\label{eqdefentree}
S_{t,n,i}^{\ell,j} = \psi_{\Theta',n,i}\p{D} \circ \mathcal{L}_t \circ \tilde{\psi}_{\Theta,\ell,j}\p{D}.
\end{equation}

As announced above, we define in Lemma~\ref{lmconvstrong} a family of auxiliary operators whose study will take most of the remainder of this section.

\begin{lm}\label{lmconvstrong}
For all $t \in \R$ the sum 
\begin{equation}\label{eqsomme}
\sum_{\p{n,i},\p{\ell,j} \in \Gamma} \iota_{n,i} \circ S_{t,n,i}^{\ell,j} \circ \pi_{\ell,j}
\end{equation}
converges in the strong operator topology to an operator $\mathcal{M}_t : \B \to \B$ that depends continuously on $t$ in the strong operator topology.
\end{lm}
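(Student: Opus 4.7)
The plan is to obtain convergence of \eqref{eqsomme} from a Schur-type test, relying on pointwise operator-norm estimates of the matrix entries $S_{t,n,i}^{\ell,j} : L^2(\R^{d+1}) \to L^2(\R^{d+1})$ that are strong enough to overcome the weights $2^{n\beta_i}/2^{\ell\beta_j}$ present in the definition \eqref{eqdefB} of $\B$. First, I would rewrite $\iota_{n,i}\circ S_{t,n,i}^{\ell,j}\circ \pi_{\ell,j}(u)$ as the element of $\B$ whose $(n,i)$-component is $S_{t,n,i}^{\ell,j} u_{\ell,j}$ and which vanishes elsewhere, so that the norm in $\B$ of a partial sum equals $\bigl(\sum_{(n,i)} 2^{2n\beta_i} \|\sum_{(\ell,j)\in F} S_{t,n,i}^{\ell,j} u_{\ell,j}\|_2^2\bigr)^{1/2}$. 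Bounding this by Schur's lemma reduces the problem to producing a summable majorant $a_{n,i}^{\ell,j}(t)$ for $2^{n\beta_i - \ell\beta_j}\|S_{t,n,i}^{\ell,j}\|_{L^2\to L^2}$, uniformly in $t$ on compact sets.

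The estimate on $\|S_{t,n,i}^{\ell,j}\|_{L^2\to L^2}$ will come from the (soon to be proven) preparatory Lemmas~\ref{lmdist} and~\ref{lmmajipp}. Heuristically, the Schwartz kernel of $\mathcal{L}_t$ in Fourier variables $(\xi,\eta)$ has an oscillatory factor of the form $e^{i(\langle \xi,\mathcal{T}_t(x)\rangle - \langle \eta,x\rangle)}$ multiplied by an ultradifferentiable amplitude supported in a compact $x$-region. When $\xi\in\operatorname{supp}\psi_{\Theta',n,i}$ and $\eta\in\operatorname{supp}\tilde\psi_{\Theta,\ell,j}$, the cone-hyperbolicity of $(\mathcal{T}_t)_{t\in\R}$ forces $D_x\mathcal{T}_t^{\mathrm{tr}}(\xi)$ to lie in a cone related to $\Theta'$, hence the phase is non-stationary unless $(\ell,j)$ is the cone-hyperbolic image of $(n,i)$, in which case $2^n$ and $2^\ell$ are related by a factor $\sim\Lambda$. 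Off this diagonal, repeated integration by parts in $x$ combined with Proposition~\ref{propdecfour} (the sub-exponential Fourier decay of elements of $\mathcal{S}^{\upsilon}$ in the Denjoy--Carleman class $\mathcal{C}^{\kappa,\upsilon}$) yields bounds of the form $\|S_{t,n,i}^{\ell,j}\|_{L^2\to L^2} \le C\exp\bigl(-R\bigl(\log(2+\max(2^n,2^\ell))\bigr)^{\upsilon/(\upsilon-1)}\bigr)$ with $R$ as large as wanted. I expect this integration-by-parts machinery, carried out with the class $\mathcal{C}^{\kappa,\upsilon}$ and the weights $2^{n\beta_i}$ from \eqref{eqdefbeta1}--\eqref{eqdefbeta2}, to be the main technical obstacle; once these two preparatory lemmas are available the summability is immediate because $1/\alpha < \upsilon/(\upsilon-1)$ forces any polynomial (or $2^{n\beta_i}$) weight to be dominated by the sub-exponential decay in the off-diagonal region, while on the diagonal region the number of relevant $(\ell,j)$ is bounded.

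With these estimates in hand, a standard Schur-test argument produces constants $C_t$, locally bounded in $t$, with
\begin{equation*}
\sup_{(\ell,j)\in\Gamma} \sum_{(n,i)\in\Gamma} 2^{n\beta_i - \ell\beta_j}\|S_{t,n,i}^{\ell,j}\|_{L^2\to L^2} + \sup_{(n,i)\in\Gamma}\sum_{(\ell,j)\in\Gamma} 2^{n\beta_i - \ell\beta_j}\|S_{t,n,i}^{\ell,j}\|_{L^2\to L^2} \le C_t,
\end{equation*}
which shows that the partial sums are uniformly bounded operators on $\B$, and that for every finitely supported $u\in\B$ the sum is actually finite. Since finitely supported sequences are dense in $\B$, a standard $3\varepsilon$-argument extends convergence to every $u \in \B$ and identifies the limit as a bounded operator $\mathcal{M}_t : \B \to \B$.

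Finally, for strong continuity of $t \mapsto \mathcal{M}_t$, I would observe that each individual entry $S_{t,n,i}^{\ell,j}$ depends continuously in the operator norm on $t$ (because $\mathcal{T}_t$ depends smoothly on $t$ and the localisers $\psi_{\Theta',n,i}(D)$, $\tilde\psi_{\Theta,\ell,j}(D)$ are fixed Fourier multipliers, so we can differentiate under the integral); hence on finitely supported $u \in \B$ the map $t \mapsto \mathcal{M}_t u$ is continuous. The uniform Schur bounds above are locally uniform in $t$, so a density and $3\varepsilon$ argument upgrades this to strong continuity on all of $\B$. Measurability is then automatic from strong continuity.
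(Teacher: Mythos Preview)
Your plan rests on the same two preparatory estimates as the paper (Lemmas~\ref{lmdist} and~\ref{lmmajipp}), and the Schur--plus--density packaging you propose does work. The paper organizes the endgame a bit differently: after disposing of the $\nhra$ part in operator norm via Lemma~\ref{lmmajipp}, it splits the $\hra$ part into the six cases defining $\hra$, shows that five of them converge in \emph{operator} norm using \eqref{eqmajnormop} and the specific structure of the weights \eqref{eqdefbeta1}--\eqref{eqdefbeta2}, and then treats the flow-direction block $i=j=f$, $|n-\ell|\le M$ separately by a direct Cauchy-sequence argument (that block does \emph{not} converge in operator norm). Your Schur argument trades this case analysis for a uniform-boundedness-plus-density step, which is a legitimate alternative and arguably tidier for the flow-direction block.

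Two points in your plan need correction. First, the assertion ``for every finitely supported $u\in\B$ the sum is actually finite'' is false: even if only finitely many $u_{\ell,j}$ are nonzero, infinitely many $(n,i)$ contribute (e.g.\ in the case $j=0$, $i\neq 0$ of the relation $\hra$). What is true, and sufficient, is that the sum \emph{converges} in $\B$: the $\iota_{n,i}$ land in orthogonal coordinates, and your row Schur bound gives $\sum_{(n,i)} 2^{2n\beta_i}\|S_{t,n,i}^{\ell,j}\|^2<\infty$, whence dominated convergence handles the tails. Second, your description of the ``diagonal region'' as having a bounded number of relevant $(\ell,j)$ is only accurate for the flow-direction case $i=j=f$; the other $\hra$ cases (such as $i=j=0$ or $j=0$, $i\ne 0$) involve infinitely many indices, and checking your Schur sums there genuinely uses the specific inequalities among the $\beta_i$ (for instance $\beta_i\le\beta_j-(d+2)$ when $i\ge j+1$), not just that the weights are polynomial. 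This is precisely the content the paper unpacks case by case. A minor additional point: the off-diagonal decay you will obtain is $\exp(-c\max(n,\ell)^{\alpha\upsilon/(\upsilon-1)})$ rather than $\exp(-c\max(n,\ell)^{\upsilon/(\upsilon-1)})$, because Lemma~\ref{lmdist} only gives separation $\gtrsim 2^{\max(n,\ell)^\alpha}$ in the $i,j\in\{0,r-1\}$ directions; since $\alpha>(\upsilon-1)/\upsilon$ the exponent still exceeds $1$, so this does not affect your argument.
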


The proof of Lemma~\ref{lmconvstrong} is based on Lemmas \ref{lmdist} and~\ref{lmmajipp}. In order to prove Lemma~\ref{lmconvstrong}, we first define a relation $\hra$ on $\Gamma$ that indexes the transitions (in the frequency space) that would occur for a linear dynamics, in the spirit of \cite{phdadam,adamarxiv}. Our local space has been designed so that it corresponds either to a transition from high regularity to low regularity (which makes this part of the action smoothing) or to a stationary frequency in the direction of the flow (we will integrate in this direction, so that it also corresponds to a smoothing operator). The other transitions do not happen in the linear case, and we will control this non-linearity using not only the hyperbolicity of the dynamics but also its high regularity. Choose $a >0$ such that for all $x \in K$ and $t \in \R$ we have
\begin{equation*}
a < \n{ \p{D_x \mathcal{T}_t^{tr}}^{-1}}^{-1}.
\end{equation*}
Choose also $\nu$ such that $ 0 < \nu < \frac{\log_2 \Lambda}{\alpha}$. We define now the relation $\hra$. For $\p{\ell,j},\p{n,i} \in \Gamma$, we say that $\p{\ell,j} \hra \p{n,i}$ holds if either of the following conditions is satisfied:
\begin{itemize}
\item $i = j = 0$ and $\ell \geq n + \nu n^{1-\alpha}$;
\item $i = j = r-1$ and $ n \geq \ell + \nu \ell^{1-\alpha}$;
\item $j=0$ and $i \in \set{1,\dots,r-1,f}$;
\item $j \in \set{1,\dots,r-2,f},i = r-1$ and $\ell \leq n^\alpha + 4 - \log_2 a$;

\item $j=f,i \in \set{1,\dots,r-2}$ and $n \geq \ell -4 + \log_2 a$;
\item $i,j \in \set{1,\dots,r-2}$ with $i \geq j+1$ and $n \geq \ell -4 + \log_2 a$;

\item $i = j = f$ and $\va{\ell-n} \leq 10 - \log_2 c$, where $c$ is such that for all $\xi = \p{\xi_1,\dots,\xi_{d+1}} \in C_f \cup C_f'$ we have $\va{\xi_{d+1}} \geq c \va{\xi}$ (such a constant exists by our definition of a system of cones).
\end{itemize}
In all other cases, we say that $\p{\ell,j} \nhra \p{n,i}$. Let us list the cases in which $\p{\ell,j} \nhra \p{n,i}$ in prevision of the proof of Lemma \ref{lmdist}:
\begin{itemize}
\item $i=j=0$ and $\ell < n + \nu n^{1 - \alpha}$;
\item $i=j=r$ and $n < \ell + \nu \ell^{1-\alpha}$;
\item $i = r-1, j \in \set{1,\dots,r-2,f}$ and $\ell > n^{\alpha}+4-\log_2 a$;
\item $i \in \set{1,\dots,r-2}, j =f$ and $n < \ell-4 + \log_2 a$;
\item $i,j \in \set{1,\dots,r-2}, i \geq j+1$ and $n < \ell - 4 + \log_2 a$;
\item $i=j=f$ and $\va{\ell-n} > 10 - \log_2 c$; 
\item $j=f$ and $i =0$;
\item $j = r-1$ and $i \neq r-1$;
\item $j \in \set{1,\dots,r-1}$ and $i \in \set{0,\dots,j,f}$.
\end{itemize}

Lemma~\ref{lmdist} is the main tool to use the hyperbolicity of the dynamics to rule out the transitions of frequencies that do not occur in the linear picture. 

\begin{lm}\label{lmdist}
For $ i \in \set{0,\dots,r-1,f}$, set  $\alpha_i=\alpha$ if $i=0$ or $r-1$, and $\alpha_i=1$ otherwise.There are $c' >0$ and $N>0$ such that if $\p{\ell,j},\p{n,i} \in \Gamma$ we have: $\p{\ell,j} \hra \p{n,i}$ or $\max\p{n,\ell} \leq N$ or, for all $x \in K$ and $t \in \R$,
\begin{equation*}
d\p{\textup{supp } \psi_{\Theta',n,i} , D_x\mathcal{T}_t^{\textup{tr}}\p{\textup{supp } \tilde{\psi}_{\Theta,\ell,j}} } \geq c' \max\p{2^{n^{\alpha_i}},2^{\ell^{\alpha_j}}}.
\end{equation*}
\end{lm}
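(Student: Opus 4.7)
The plan is to argue case-by-case on the pair $\p{i,j} \in \set{0,\dots,r-1,f}^2$, following each branch in the definition of $\hra$. For each branch, when $\p{\ell,j} \nhra \p{n,i}$, I aim to show that any $\xi \in \textup{supp } \tilde\psi_{\Theta,\ell,j}$ and $\eta \in \textup{supp } \psi_{\Theta',n,i}$ satisfy $\va{\eta - D_x\mathcal{T}_t^{\textup{tr}}\p{\xi}} \geq c' \max\p{2^{n^{\alpha_i}},2^{\ell^{\alpha_j}}}$ uniformly in $x \in K$ and $t \in \R$, provided $\max\p{n,\ell}$ exceeds some threshold $N$. Two preliminary observations are crucial. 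First, by \eqref{eqdeffamily} the Jacobian $D_{\p{x,y}}\mathcal{T}_t = [D_xF \mid e_{d+1}]$ does not depend on $y$ or $t$, so by compactness of $K$ there exist $a,A>0$ with $a\va{\xi} \leq \va{D_x\mathcal{T}_t^{\textup{tr}}\p{\xi}} \leq A\va{\xi}$ for all admissible $x,t,\xi$; moreover the last coordinate is preserved, $\p{D_x\mathcal{T}_t^{\textup{tr}}\xi}_{d+1} = \xi_{d+1}$. Secondly, by compactness on the sphere, two closed cones meeting only at the origin (or satisfying a $\Subset$ relation) are quantitatively separated: $\va{\xi-\eta}\geq c_0 \max\p{\va{\xi},\va{\eta}}$ when $\xi$ lies in one and $\eta$ outside a neighbourhood of the other.

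The branches split into two types. The \emph{cone-dominated cases} are those where a geometric incompatibility between cones does the work. For instance, when $j = r-1$, condition (i) gives $D_x\mathcal{T}_t^{\textup{tr}}\p{C_{r-1}} \subseteq C'_r$, whereas $\textup{supp }\psi_{\Theta',n,i}$ lies in $C_i'$ minus a neighbourhood of $C'_{i+2}$ (for $i \in \set{1,\dots,r-2}$) or entirely outside $C'_r$ (for $i \in \set{0,f}$, by axiom (v) of Definition \ref{defsyst}); the angular separation combined with $\va{D_x\mathcal{T}_t^{\textup{tr}}\p{\xi}} \geq a\va{\xi}$ then delivers the bound. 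The case $j=f, i=0$ invokes condition (ii); the case $j \in \set{1,\dots,r-2}$, $i\leq j$ uses the containment $D_x\mathcal{T}_t^{\textup{tr}}\p{C_j} \subseteq C'_{j+2} \subseteq C'_{i+2}$ together with the vanishing of $\varphi'_i$ near $C'_{i+2}$.

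The \emph{norm-dominated cases} are those in which both cones may overlap after $D_x\mathcal{T}_t^{\textup{tr}}$, so the separation must come from the moduli. For $j \in \set{1,\dots,r-2,f}$ with $i=r-1$, or with $j=f$ or $i\geq j+1$, the failure of $\hra$ is exactly the inequality making $a 2^{\ell-1} \gg 2^{n^{\alpha_i}}$, respectively $2^{n+1} \ll a 2^{\ell-1}$; the buffer constants $4-\log_2 a$ in the definition of $\hra$ are tuned so that the modulus comparison has a factor of at least $2$ to spare. For $i=j=f$, preservation of the last coordinate yields $c\va{\xi} \leq \va{D_x\mathcal{T}_t^{\textup{tr}}\p{\xi}} \leq A\va{\xi}$, and the gap $\va{\ell-n} > 10 - \log_2 c$ then separates the dyadic shells containing $\eta$ and $D_x\mathcal{T}_t^{\textup{tr}}\p{\xi}$.

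The most delicate cases are $i=j=0$ and its mirror $i=j=r-1$, where $\hra$ encodes the sub-polynomial gap $\ell \geq n+\nu n^{1-\alpha}$ (respectively $n \geq \ell+\nu\ell^{1-\alpha}$). In the case $i=j=0$ one splits further: if $D_x\mathcal{T}_t^{\textup{tr}}\p{\xi} \notin C_0'$ the cone-dominated argument applies (supp $\psi_{\Theta',n,0} \subseteq C_0'$), so one may assume $D_x\mathcal{T}_t^{\textup{tr}}\p{\xi} \in C_0'$; condition (iv) then yields $\va{D_x\mathcal{T}_t^{\textup{tr}}\p{\xi}} \leq \Lambda^{-1} 2^{\p{\ell+b}^\alpha}$, to be compared with $\va{\eta} \geq 2^{n^\alpha}$. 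The main obstacle is verifying that, whenever $\p{\ell,0}\nhra \p{n,0}$, the concavity estimate $\p{n+\nu n^{1-\alpha}+b}^\alpha \leq n^\alpha + \alpha\nu + O\p{n^{\alpha-1}}$ combined with the strict inequality $\nu < \p{\log_2 \Lambda}/\alpha$ forces $\p{\ell+b}^\alpha - \log_2 \Lambda < n^\alpha - 1$ for $n$ past some threshold; this is precisely where the choice of $\nu$ enters. The same mechanism, with (iii) replacing (iv), treats $i=j=r-1$. Taking the minimum of the $c'$ and the maximum of the $N$ produced across these finitely many branches concludes the proof.
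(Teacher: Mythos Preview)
Your proposal is correct and follows essentially the same route as the paper: a case-by-case analysis according to the branches of $\hra$, handling some pairs $(i,j)$ by angular separation of cones (your ``cone-dominated'' cases, the paper's \eqref{eqdistcone}) and the others by a direct comparison of moduli on the relevant dyadic shells, with the delicate $i=j=0$ and $i=j=r-1$ cases treated exactly as in the paper via the further split on whether $D_x\mathcal{T}_t^{\textup{tr}}\xi \in C_0'$ and the concavity estimate $(n+\nu n^{1-\alpha}+b)^\alpha = n^\alpha + \alpha\nu + o(1)$ together with $\nu < (\log_2\Lambda)/\alpha$. The only noticeable difference is in the $i=j=f$ branch: the paper bounds the $(d{+}1)$th coordinate of the difference directly, $\va{D_x\mathcal{T}_t^{\textup{tr}}\eta - \xi} \geq \va{\eta_{d+1}-\xi_{d+1}}$, which uses only the constant $c$; your modulus-shell argument via $c\va{\xi}\leq\va{D_x\mathcal{T}_t^{\textup{tr}}\xi}\leq A\va{\xi}$ works too but would make the threshold depend on $A$ as well as $c$ --- harmless, since the constant $10-\log_2 c$ in the definition of $\hra$ can be replaced by any sufficiently large constant.
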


\begin{proof}

We will make massive use of the following fact in this proof : if $C_+$ and $C_-$ are two closed cones in $\R^{d+1}$ such that $C_+ \cap C_- = \set{0}$ (we say that such cones are transverse) then for all $\xi \in C_+$ and $\eta \in C_-$ we have
\begin{equation}\label{eqdistcone}
d\p{\xi,\eta} \geq \mu \max\p{\va{\xi},\va{\eta}}
\end{equation}
where $\mu = \min\p{d\p{C_+\cap \mathbb{S}^d,C_-},d\p{C_- \cap \mathbb{S}^d,C_+}} > 0$.

Assume that $\p{n,i},\p{\ell,j} \in \Gamma$ are such that $\p{\ell,j} \nhra \p{n,i}$ and $\max\p{n,l} > N$ for some large $N$, and take $\xi \in \textup{ supp }\psi_{\Theta',n,i}, \eta \in \textup{ supp } \tilde{\psi}_{\Theta,\ell,j}$ and $t \in \R$. We go through the different cases in which $\p{\ell,j} \nhra \p{n,i}$ as listed above.
\begin{itemize}
\item If $i=j=0$ and $\ell < n + \nu n^{1-\alpha}$, there are two possibilities: either $ D_x \mathcal{T}_t^{\textup{tr}} \p{\eta}  \notin C_0'$, and we can conclude with \eqref{eqdistcone} (since $\varphi'_0$ is supported in the interior of $C_0'$), or $ D_x \mathcal{T}_t^{\textup{tr}} \p{\eta} \in C_0'$, and by cone-hyperbolicity we have
\begin{equation*}
\begin{split}
\va{\xi} - \va{ D_x \mathcal{T}_t^{\textup{tr}} \p{\eta}} & \geq 2^{n^\alpha} - \Lambda^{-1}\p{2^{\p{\ell+b}^\alpha} + 1}\\ & \geq 2^{n^\alpha} - \Lambda^{-1} \p{2^{\p{n + \nu n^{1 - \alpha} + b}^\alpha} + 1}\\
& \geq 2^{n^\alpha} \p{1 - 2^{\p{n + \nu n^{1-\alpha} + b}^\alpha - \log_2  \Lambda - n^\alpha} - 2^{-n^\alpha}}.
\end{split}
\end{equation*}
We can then conclude if $N$ is large enough, since
\begin{equation*}
\p{n + \nu n^{1 - \alpha} + b}^\alpha - \log_2 \Lambda - n^\alpha \underset{n \to + \infty}{\to} \alpha \nu - \log_2 \Lambda < 0
\end{equation*}
and
\begin{equation}\label{eqbiz}
2^{\p{n + \nu n^{1- \alpha}}^\alpha} \leq C 2^{n^\alpha},
\end{equation}
for some constant $C >0$ that does not depend on $n$ (we used here that $\p{n + \nu n^{1-\alpha}}^\alpha \underset{n \to + \infty}{=} n^\alpha + \alpha \nu + o\p{1}$). 
\item If $i=j=r-1$ and $n < \ell + \nu \ell^{1 - \alpha}$ then
\begin{equation*}
\begin{split}
\va{ D_x \mathcal{T}_t^{\textup{tr}} \p{\eta}} - \va{\xi} & \geq \Lambda 2^{\p{\ell-b}^\alpha} - \p{2^{\p{n+1}^\alpha} + 1} \\
    & \geq \Lambda 2^{\p{\ell - b}^\alpha} - \p{2^{\p{\ell + \nu \ell^{1 - \alpha} + 1}^\alpha} + 1} \\
    & \geq 2^{\ell^\alpha} \p{\Lambda 2^{\p{\ell-b}^\alpha - \ell^\alpha} - 2^{\p{\ell + \nu \ell^{1 - \alpha} + 1}^\alpha - \ell^\alpha} - 2^{-\ell^\alpha}}.
\end{split}
\end{equation*}
We can conclude if $N$ is large enough, since
\begin{equation*}
\Lambda 2^{\p{\ell-b}^\alpha - \ell^\alpha} - 2^{\p{\ell + \nu \ell^{1 - \alpha} + 1}^\alpha - \ell^\alpha} - 2^{-\ell^\alpha} \underset{\ell \to + \infty}{\to} \Lambda - 2^{\alpha \nu} > 0,
\end{equation*}
and \eqref{eqbiz} still holds when $n$ is replaced by $\ell$.
\item If $j \in \set{1,\dots,r-2,f}, i=r-1$ and $\ell > n^\alpha + 4 - \log_2 a$, then
\begin{equation*}
\begin{split}
\va{ D_x \mathcal{T}_t^{\textup{tr}}\p{\eta}} - \va{\xi} & \geq a 2^{\ell-2} - \p{2^{\p{n+1}^\alpha} + 1} \\
     & \geq a 2^{\ell-2} - 2^{n^\alpha + 1} - 1 \\
     & \geq a2^{\ell - 3} -1.
\end{split} 
\end{equation*}
\item If $i \in \set{1,\dots,r-2},j=f$ and $ n < \ell - 4 + \log_2 a$ then
\begin{equation*}
\begin{split}
\va{ D_x \mathcal{T}_t^{\textup{tr}}\p{\eta}} - \va{\xi} & \geq a 2^{\ell-2} - 2^{n+1} \\
                & \geq a 2^{\ell - 3}.
\end{split}
\end{equation*}

\item The case $i,j \in \set{1,\dots,r-2}, i \geq j+1$ and $n < \ell - 4 + \log_2 a$ is dealt as the previous one.

\item If $i=j=f$ and $\va{\ell-n} > 10 - \log_2 c$, then just notice that the $d+1$th coordinate of $ D_x \mathcal{T}_t^{\textup{tr}} \p{\eta} - \xi$ is $\eta_{d+1} - \xi_{d+1}$ and consequently
\begin{equation*}
\va{ D_x \mathcal{T}_t^{\textup{tr}}\p{\eta} - \xi} \geq \va{\eta_{d+1} - \xi_{d+1}}.
\end{equation*} 
Since in addition we have $\va{\xi_{d+1}} \geq c \va{\xi}$ and $\va{\eta_{d+1}} \geq c \va{\eta}$, we can conclude in this case (discussing whether $\va{\xi}$ or $\va{\eta}$ is larger).
\item The three last cases are dealt with by cone hyperbolicity using \eqref{eqdistcone} (the support of $\psi_{\Theta',n,i}$ and the image of the support of $\tilde{\psi}_{\Theta,\ell,j}$ by $ D_x\mathcal{T}_t^{\textup{tr}}$ are contained in transverse cones).
\end{itemize}
\end{proof}

We now use Lemma~\ref{lmdist} to control the transitions that do not happen in the linear picture.

\begin{lm}\label{lmmajipp}
There is $\delta > 1$ such that, for every bounded interval $I$ of $\R$, there is $C >0$ such that if $\p{\ell,j} \nhra \p{n,i} $ for $\p{n,i},\p{\ell,j} \in \Gamma$, then for all $t \in I$ we have, recalling \eqref{eqdefentree},
\begin{equation*}
\n{S_{t,n,i}^{\ell,j}}_{L^2 \to L^2} \leq C \exp\p{- \frac{\max\p{n,\ell}^\delta}{C}}.
\end{equation*} 
\end{lm}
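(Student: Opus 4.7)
The strategy is to analyze $S_{t,n,i}^{\ell,j}$ on the Fourier side as an integral operator and extract decay by non-stationary phase. Using Fubini, the operator $\mathbb{F}\circ S_{t,n,i}^{\ell,j}\circ\mathbb{F}^{-1}$ is, up to a $(2\pi)^{-(d+1)}$ factor, the integral operator on $L^2(\R^{d+1})$ with kernel
\begin{equation*}
K_t(\xi,\eta) = \psi_{\Theta',n,i}(\xi)\, \tilde{\psi}_{\Theta,\ell,j}(\eta) \int_{\R^{d+1}} e^{i\Phi_t(y;\xi,\eta)}\, G_t(y)\, \mathrm{d}y, \qquad \Phi_t(y;\xi,\eta) := \mathcal{T}_t(y)\cdot\eta - y\cdot\xi,
\end{equation*}
whose phase has gradient $\nabla_y\Phi_t = D_y\mathcal{T}_t^{\mathrm{tr}}\,\eta - \xi$. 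Suppose $\p{\ell,j}\nhra\p{n,i}$ and $\max(n,\ell) > N$ with $N$ given by Lemma \ref{lmdist}. Since $G_t$ is supported in the fixed compact $K$, Lemma \ref{lmdist} gives $\va{\nabla_y\Phi_t} \geq c'M$ on the relevant range, with $M := \max(2^{n^{\alpha_i}},2^{\ell^{\alpha_j}})$, uniformly in $t\in I$.

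I would then integrate by parts $m$ times with $L := (i\va{\nabla\Phi_t}^2)^{-1}\nabla\Phi_t\cdot\nabla_y$, which satisfies $L\, e^{i\Phi_t} = e^{i\Phi_t}$. A crucial remark is that on the support of $\tilde{\psi}_{\Theta,\ell,j}$ one has $\va{\eta} \leq C\, 2^{\ell^{\alpha_j}} \leq (C/c')M$; since the derivatives of $\nabla\Phi_t$ are bounded by $\va{\eta}$ times the derivatives of $D_y\mathcal{T}_t^{\mathrm{tr}}$, each factor of $\va{\eta}/M$ arising in $L^*$ is absorbed. Exploiting the $\mathcal{C}^{\kappa,\upsilon}$ regularity of $\mathcal{T}_t$ and $G_t$ uniformly in $t\in I$, a standard Denjoy--Carleman bookkeeping on the Leibniz expansion of $(L^*)^m G_t$ yields constants $B,C_0 > 0$ such that
\begin{equation*}
\Bigl|\int_{\R^{d+1}} e^{i\Phi_t} G_t\, \mathrm{d}y\Bigr| \leq C_0\, B^m\, m!\, \exp\!\Bigl(\frac{m^\upsilon}{\kappa}\Bigr) M^{-m}, \qquad m\in\N,
\end{equation*}
independently of $t,n,\ell,i,j,\xi,\eta$.

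The crux is a Legendre-type optimization in $m$: the choice $m \sim (\kappa\log M/\upsilon)^{1/(\upsilon-1)}$ minimizes the right-hand side (up to subdominant logarithmic terms) and yields the pointwise bound $\va{K_t(\xi,\eta)} \leq C_1 \exp\!\bigl(-c_1(\log M)^{\upsilon/(\upsilon-1)}\bigr)$. Since $\log M \geq (\log 2)\max(n,\ell)^\alpha$ and the standing hypothesis $\alpha > (\upsilon-1)/\upsilon$ of Proposition \ref{proplocop} forces $\delta_0 := \alpha\upsilon/(\upsilon-1) > 1$, one obtains $\va{K_t(\xi,\eta)} \leq C_2\exp(-\max(n,\ell)^{\delta_0}/C_2)$. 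Schur's test on the Fourier side then gives
\begin{equation*}
\n{S_{t,n,i}^{\ell,j}}_{L^2\to L^2} \leq C_3 \sqrt{\,\sup_\xi\!\int \va{K_t}\,\mathrm{d}\eta \cdot \sup_\eta\!\int \va{K_t}\,\mathrm{d}\xi\,};
\end{equation*}
the Fourier supports of $\psi_{\Theta',n,i}$ and $\tilde{\psi}_{\Theta,\ell,j}$ have volume at most $e^{A\max(n,\ell)}$, and since $\delta_0 > 1$ this exponentially-growing factor is swallowed by the superpolynomial decay of $\va{K_t}$ at the cost of replacing $\delta_0$ by any $\delta \in \left]1,\delta_0\right[$. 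The complementary regime $\max(n,\ell) \leq N$ is absorbed into $C$ using the trivial uniform $L^2\to L^2$ bound on $S_{t,n,i}^{\ell,j}$ for $t\in I$, obtained from the change of variable $y\mapsto\mathcal{T}_t(y)$ and the boundedness of $G_t$.

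I expect the main obstacle to be precisely the inductive control of $(L^*)^m G_t$ in the $\mathcal{C}^{\kappa,\upsilon}$ class: each application of $L^*$ distributes derivatives between $G_t$ and $\nabla\Phi_t/\va{\nabla\Phi_t}^2$ via the Leibniz rule, and one must verify that the combinatorial multiplicities combine with the factorials and the $\exp(m^\upsilon/\kappa)$ growth without destroying the $M^{-m}$ gain extracted from the lower bound on $\va{\nabla\Phi_t}$. This is a standard but tedious exercise in ultradifferentiable calculus, and it is where the uniformity of the constants in the family $(\mathcal{T}_t,G_t)_{t\in I}$ and the bound $\va{\eta}/M \lesssim 1$ enter decisively.
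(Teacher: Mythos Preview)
Your proposal is correct and essentially matches the paper's proof: both use non-stationary phase integration by parts exploiting the lower bound on $\va{\nabla_y\Phi_t}$ from Lemma~\ref{lmdist}, track the $\mathcal{C}^{\kappa,\upsilon}$ growth through the Leibniz expansion of the iterated transpose operator, and optimize in the number of integrations to obtain $\delta = \alpha\upsilon/(\upsilon-1)$. The only cosmetic difference is that you pass from the pointwise kernel bound to the $L^2\to L^2$ norm via Schur's test, whereas the paper uses Plancherel plus Cauchy--Schwarz in $\eta$; the polynomial volume factors are absorbed identically in both arguments.
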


\begin{proof}
First of all, notice that $\mathcal{L}_t$ is bounded from $L^2$ to $L^2$ (uniformly when $t \in I$) and, since for all $\p{n,i},\p{\ell,j} \in \Gamma$ and $t \in I$, we have
\begin{equation*}
\n{S_{t,n,i}^{\ell,j}}_{L^2 \to L^2} \leq \n{\mathcal{L}_t}_{L^2 \to L^2},
\end{equation*}
the case of $\max\p{n,\ell} \leq N$ is dealt with by taking $C$ large enough.

Now take $\p{n,i},\p{\ell,j} \in \Gamma$ and $t \in I$ such that $\p{\ell,j} \nhra \p{n,i}$ and $\max\p{n,\ell} > N$. If $u \in L^2\p{\R^{d+1}}$ then we have, using Plancherel's formula,
\begin{equation}\label{eqnorme2}
\begin{split}
& (2 \pi)^{2(d+1)}\n{S_{t,n,i}^{\ell,j} u}_2^2 \\ & \quad = \int_{\R^{d+1}} \psi_{\Theta',n,i}\p{\xi}^2 \Bigg|\int_{\p{\R^{d+1}}^2} e^{-i x \xi} e^{i \mathcal{T}_t\p{x} \eta} \tilde{\psi}_{\Theta,\ell,j}\p{\eta} G_t\p{x} \hat{u}\p{\eta} \mathrm{d}x \mathrm{d}\eta \Bigg|^2 \mathrm{d}\xi.
\end{split}
\end{equation}
We are going to bound the inner integral. To do so, define for all $x \in \R^{d+1}$ and $j \in \set{1,\dots,d+1}$ the linear form $l_j\p{x}$ on $\R^{d+1} \times \R^{d+1}$ by $l_j\p{x}\p{\xi,\eta} = i\p{\partial_j \mathcal{T}_t\p{x} \eta - \xi_j}$. Define also for all $x \in \R^{d+1}$ the quadratic form $\Phi\p{x}$ on $\R^{d+1} \times \R^{d+1}$ by $\Phi\p{x}\p{\xi,\eta} = \va{ D_x \mathcal{T}_t^{\textup{tr}}\p{\eta} - \xi}^2$. Now for all $t \in I$ and $k \in \N$ we define a kernel $\mathcal{K}_{k,t} : \R^{d+1} \times \R^{d+1} \times \R^{d+1} \to \C$ by induction: we set $\mathcal{K}_{0,t}\p{x,\xi,\eta} = G_t\p{x}$ and for all $k \in \N$
\begin{equation*}
\mathcal{K}_{k+1,t}\p{x,\cdot,\cdot} = \sum_{j=1}^{d+1} \partial_{x_j} \p{\frac{l_j\p{x} \mathcal{K}_{k,t}\p{x,\cdot,\cdot}}{\Phi\p{x}}}.
\end{equation*}
Integrating by parts in $y$ we see that the inner integral of \eqref{eqnorme2} is equal, for all $k \in \N, t \in I$ and $\xi \in \R^{d+1}$, to
\begin{equation}\label{eqinnint}
\int_{\p{\R^{d+1}}^2} e^{-i x \xi} e^{i \mathcal{T}_t\p{x} \eta} \tilde{\psi}_{\Theta,\ell,j}\p{\eta} \mathcal{K}_{k,t}\p{x,\xi,\eta} \hat{u}\p{\eta} \mathrm{d}x \mathrm{d}\eta.
\end{equation}
To bound the kernel $\mathcal{K}_{k,t}$, we notice that it is the sum of at most $\p{5 \p{d+1}}^k k!$ terms of the form
\begin{equation}\label{eqform}
\begin{split}
\p{x,\xi,\eta} \mapsto \pm \frac{\partial^{\sigma} G_t\p{x}}{\p{\Phi\p{x}\p{\xi,\eta}}^{k+m}} & \partial^{\gamma_1} l_{j_1}\p{x}\p{\xi,\eta} \dots \partial^{\gamma_k} l_{j_k}(x,\xi) \\ & \times \partial^{\mu_1} \Phi\p{x}\p{\xi,\eta} \dots \partial^{\mu_m} \Phi\p{x}\p{\xi,\eta},
\end{split}
\end{equation}
where $m \leq k$ is an integer, $j_1,\dots,j_k \in \set{1,\dots,d+1}$, and $\sigma,\gamma_1,\dots,\gamma_k,\mu_1,\dots,\mu_m$ are elements of $\N^{d+1}$ such that $\va{\sigma} + \va{\gamma_1} + \dots + \va{\gamma_k} + \va{\mu_1} + \dots +\va{\mu_m} = k$ (all the derivatives are with respect to the variable $x$).

Now, Lemma~\ref{lmdist} implies that if $x \in K$, if $ \xi \in \textup{ supp } \psi_{\Theta',n,i}$ and if $\eta \in \textup{ supp } \tilde{\psi}_{\Theta,\ell,j}$ then
\begin{equation*}
\begin{split}
\Phi\p{x}\p{\xi,\eta} & \geq \p{c'}^2 \p{\max\p{2^{n^{\alpha_i}}, 2^{n^{\alpha_j}}}}^2 \\ & \geq c_1 \max\p{2^{n^{\alpha_i}}, 2^{n^{\alpha_j}}} \max\p{\va{\xi},\va{\eta}} \geq c_2 \max\p{\va{\xi},\va{\eta}}^2,
\end{split}
\end{equation*}
for some positive constants $c_1$ and $c_2$. Consequently, there is a constant $C >0$ such that if $l$ is a linear map from $\R^{d+1} \times \R^{d+1} \to \C$ and if $q$ is a quadratic map $\R^{d+1} \times \R^{d+1} \to \C$ then we have, for all $x \in K, \xi \in \textup{ supp } \psi_{\Theta',n,i}$ and $\eta \in \textup{ supp } \tilde{\psi}_{\Theta,\ell,j}$
\begin{equation*}
\va{\frac{l\p{\xi,\eta}}{\Phi\p{x}\p{\xi,\eta}}} \leq C \frac{\n{l}}{\max\p{2^{n^{\alpha_i}}, 2^{\ell^{\alpha_j}}}} \textup{ and }  \va{\frac{q\p{\xi,\eta}}{\Phi\p{x}\p{\xi,\eta}}} \leq C \n{q}.
\end{equation*}
The choice of the norms on the spaces of linear and quadratic maps $\R^{d+1} \times \R^{d+1} \to \C$ is of course irrelevant. Thus for such $x,\xi$ and $\eta$ any term of the form \eqref{eqform} is bounded by
\begin{equation*}
\begin{split}
C^{2k} \p{\max\p{2^{n^{\alpha_i}}, 2^{\ell^{\alpha_j}}}}^{-k} \n{\partial^{\sigma} G_t}_{\infty} & \n{\partial^{\gamma_1} l_{j_1}}_\infty \dots \n{\partial^{\gamma_k} l_{j_k}} \\ & \times \n{\partial^{\mu_1} \Phi}_{\infty} \dots \n{\partial^{\mu_m} \Phi}_\infty ,
\end{split}
\end{equation*}
where $\n{\cdot}_\infty$ refers to the supremum of the corresponding norm on $K$. Now, notice that, since $\mathcal{T}_0$ is $\mathcal{C}^{\kappa,\upsilon}$ then for any $\kappa' < \kappa$ the maps $l_{1},\dots,l_{d+1}$ (valued in the space of linear maps from $\R^{d+1}\times \R^{d+1}$ to $\C$) and $\Phi$ (valued in the space of quadratic maps from $\R^{d+1} \times \R^{d+1}$ to $\C$) are $\mathcal{C}^{\kappa',\upsilon}$ (we can event take $\kappa' = \kappa$ if $\upsilon \leq 2$). Thus there are constants $M,R >0$ such that for all $\mu \in \N^d$, we have
\begin{equation*}
\n{\partial^{\mu} \Phi}_\infty \leq M R^{\va{\mu}} \va{\mu}! \exp\p{\frac{\va{\mu}^\upsilon}{\kappa'}},
\end{equation*}
for all $\gamma \in \N^d$ and $j \in \set{1,\dots,d+1}$, we have
\begin{equation*}
\n{\partial^\gamma l_j}_\infty \leq M R^{\va{\gamma}} \va{\gamma}! \exp\p{\frac{\va{\gamma}^\upsilon}{\kappa'}},
\end{equation*}
and for all $t \in I$ and $\sigma \in \N^d$, we have
\begin{equation*}
\n{\partial^\sigma G_t}_\infty \leq M R^{\va{\sigma}} \va{\sigma}! \exp\p{\frac{\va{\sigma}^\upsilon}{\kappa'}}.
\end{equation*}
Thus each term of the form \eqref{eqform} is bounded by
\begin{equation*}
C^{2k} M^{2k + 1} R^k k^k \exp\p{\frac{k^\upsilon}{\kappa'}} 2^{- k \max\p{n,\ell}^\alpha}
\end{equation*}
when $x \in K, \xi \in \textup{ supp } \psi_{\Theta',n,i},\eta \in \textup{ supp } \tilde{\psi}_{\Theta,\ell,j}$ and $t \in I$. Consequently, for such $x,\xi,\eta$ and $t$ the kernel $\mathcal{K}_{k,t}\p{x,\xi,\eta}$ is bounded for all integers $k$ by
\begin{equation}\label{eqbornk}
2^{-k \max\p{n,\ell}^\alpha} \p{5\p{d+1}}^k C^{2k} M^{2k + 1} R^k k^{2k} \exp\p{\frac{k^\upsilon}{\kappa'}}.
\end{equation} 
Now, choose $\kappa'' >0$ such that $\frac{1}{\kappa'} + 2 \leq \frac{1}{\kappa''}$ and pick new values of the constants $M$ and $R$ so that \eqref{eqbornk} is now smaller than
\begin{equation*}
M \p{\frac{R}{2^{\max\p{n,\ell}^\alpha}}}^k \exp\p{\frac{k^\upsilon}{\kappa''}}. 
\end{equation*}
Now, using this estimate and Cauchy--Schwarz in \eqref{eqinnint}, we bound the inner integral in \eqref{eqnorme2} by
\begin{equation*}
\widetilde{C} \n{u}_2 2^{\frac{(d+1)\ell}{2}} \p{\frac{R}{2^{\max\p{n,\ell}^\alpha}}}^k \exp\p{\frac{k^\upsilon}{\kappa''}},
\end{equation*}
which gives 
\begin{equation*}
\n{S_{t,n,i}^{\ell,j} u}_2 \leq C' \n{u}_2 2^{\frac{(\ell + n)(d+1)}{2}} \p{\frac{R}{2^{\max\p{n,\ell}^\alpha}}}^k \exp\p{\frac{k^\upsilon}{\kappa''}}.
\end{equation*}
Now take $k = \left\lfloor \p{\frac{- \kappa'' \ln\p{\frac{R}{2^{\max\p{n,\ell}^\alpha}}}}{\upsilon}}^{\frac{1}{\upsilon - 1}} \right\rfloor$ to get (with new constants and $\delta = \frac{\alpha \upsilon}{\upsilon - 1} > 1$, see the proof of Proposition \ref{propdecfour} for a similar computation)
\begin{equation*}
\n{S_{t,n,i}^{\ell,j}u}_2 \leq C \n{u}_2 2^{\frac{(\ell + n)(d+1)}{2}} \exp\p{- \frac{\max\p{n,\ell}^\delta}{C}}.
\end{equation*}
Finally, we get rid of the factor $2^{\frac{(\ell + n)(d+1)}{2}}$ by taking larger $C$.
\end{proof}

We can now prove Lemma~\ref{lmconvstrong} about the family $\p{\mathcal{M}_t}_{t \in \R}$ of auxiliary operators.

\begin{proof}[Proof of Lemma~\ref{lmconvstrong}]
First of all, thanks to Lemma~\ref{lmmajipp}, the sum
\begin{equation*}
\sum_{\substack{\p{n,i},\p{\ell,j} \in \Gamma \\ \p{\ell,j} \nhra \p{n,i}}} \iota_{n,i} \circ S_{t,n,i}^{\ell,j} \circ \pi_{\ell,j}
\end{equation*}
converges absolutely in norm operator topology.

Now, we have to deal with the sum
\begin{equation}\label{eqsommehook}
\sum_{\substack{\p{n,i},\p{\ell,j} \in \Gamma \\ \p{n,i} \hra \p{\ell,j}}} \iota_{n,i} \circ S_{t,n,i}^{\ell,j} \circ \pi_{\ell,j}.
\end{equation}
To do so, notice that there is some constant $C$ depending on $I$ such that, for all $t \in I$ and $\p{n,i},\p{\ell,j} \in \Gamma$, we have
\begin{equation}\label{eqmajnormop}
\n{\iota_{n,i} \circ S_{t,n,i}^{\ell,j} \circ \pi_{\ell,j}}_{\B \to \B} \leq C 2^{n \beta_i} 2^{-\ell \beta_j}.
\end{equation}
Then the sum \eqref{eqsommehook} can be divided into seven sums that correspond to the different cases in the definition of $\hra$. It is elementary, using \eqref{eqmajnormop}, to see that the first six converge in norm operator topology. Consequently, we are left with the sum
\begin{equation}\label{eqleftsum}
\sum_{\substack{n,\ell \in \N \\ \va{n-\ell} \leq M}} \iota_{n,f} \circ S_{t,n,f}^{\ell,f} \circ \pi_{\ell,f}
\end{equation}
for some $M >0$. For all $N_1 \in \N$, define the operator
\begin{equation*}
P_{N_1} = \sum_{\substack{0 \leq n,\ell \leq N_1 \\ \va{n-\ell} \leq M}} \iota_{n,f} \circ S_{t,n,f}^{\ell,f} \circ \pi_{\ell,f}.
\end{equation*}
Pick $u = \p{u_{m,k}}_{\p{m,k} \in \Gamma} \in \B$. Then if $N_2 \geq N_1 \geq 0$, we have
\begin{equation}\label{eqdernier}
\begin{split}
\n{\p{P_{N_2} - P_{N_1}}u}_{\B}^2 & \leq 2 \sum_{n = 0}^{N_2} 2^{-2\p{d+2}n}\n{\sum_{\substack{N_1 < \ell \leq N_2 \\ \va{\ell - n} \leq M}} S_{t,n,f}^{\ell,f} u_{\ell,f}}_2^2 \\ & \quad  + 2 \sum_{n = N_1 + 1}^{N_2} 2^{-2\p{d+2}n}\n{\sum_{\substack{0 \leq \ell \leq N_1 \\ \va{\ell - n} \leq M}} S_{t,n,f}^{\ell,f} u_{\ell,f}}_2^2.
\end{split}
\end{equation}
Next, we have by the triangle inequality,
\begin{equation*}
\begin{split}
\n{\sum_{\substack{N_1 < \ell \leq N_2 \\ \va{\ell - n} \leq M}} S_{t,n,f}^{\ell,f} u_{\ell,f}}_2^2 \leq \p{\sum_{\substack{N_1 < \ell \leq N_2 \\ \va{\ell - n} \leq M}} \n{S_{t,n,f}^{\ell,f}u_{\ell,f}}_2}^2 \leq C \p{\sum_{\substack{N_1<\ell \leq N_2 \\ \va{\ell -n} \leq M}} \n{u_{\ell,f}}_2}^2,
\end{split}
\end{equation*}
for some constant $C>0$. Then, from the Cauchy--Schwarz inequality, we get
\begin{equation*}
\begin{split}
\p{\sum_{\substack{N_1<\ell \leq N_2 \\ \va{\ell -n} \leq M}} \n{u_{\ell,f}}_2}^2 & =\p{\sum_{\substack{N_1<\ell \leq N_2 \\ \va{\ell -n} \leq M}} 2^{\ell \p{d+2}} 2^{-\ell \p{d+2}} \n{u_{\ell,f}}_2}^2 \\
   & \leq \sum_{\substack{N_1<\ell \leq N_2 \\ \va{\ell -n} \leq M}} 2^{2 \ell\p{d+2}} \sum_{\substack{N_1<\ell \leq N_2 \\ \va{\ell -n} \leq M}} 2^{-2 \ell \p{d+2}} \n{u_{\ell,f}}_2^2 \\
   &\leq C' 2^{2n\p{d+2}} \sum_{\substack{N_1<\ell \leq N_2 \\ \va{\ell -n} \leq M}} 2^{-2 \ell \p{d+2}} \n{u_{\ell,f}}_2^2 
\end{split}
\end{equation*}
for another constant $C' >0$. Consequently, we can bound the first sum in \eqref{eqdernier}
\begin{equation*}
\begin{split}
\sum_{n = 0}^{N_2} 2^{-2n\p{d+2}} \n{\sum_{\substack{N_1 < \ell \leq N_2 \\ \va{\ell - n} \leq M}} S_{t,n,f}^{\ell,f} u_{\ell,f}}_2^2 & \leq C C' \sum_{n=0}^{N_2} \sum_{\substack{N_1<\ell \leq N_2 \\ \va{\ell -n} \leq M}} 2^{-2 \ell \p{d+2}} \n{u_{\ell,f}}_2^2  \\
    & \leq \widetilde{C} \sum_{ \ell > N_1} 2^{- 2 \ell \p{d+2}} \n{u_{\ell,f}}_2^2,
\end{split}
\end{equation*}
where in the last line we notice that, when $\ell$ is fixed, there are at most $2M + 1$ values of $n$ for which $\va{\ell - n} \leq M$. Working similarly with the second sum, we see that there is a constant $C$ such that
\begin{equation*}
\n{\p{P_{N_2} - P_{N_1}}u}_{\B}^2 \leq C \sum_{\ell \geq N_1 - M} 2^{- 2\p{d+2} \ell} \n{u_{\ell,f}}_2^2,
\end{equation*}
and thus the sequence $\p{P_{N_1}u}_{N_1 \geq 0}$ is Cauchy in $\B$. Consequently, the sequence $\p{P_N}_{N \geq 0}$ converges in strong operator topology, hence, so does the sum \eqref{eqleftsum}.

To prove that $\mathcal{M}_t$ depends continuously on $t$ in the strong operator topology, just notice that when $u$ is fixed the sum
\begin{equation*}
\sum_{\p{n,i},\p{\ell,j} \in \Gamma} \iota_{n,i} \circ S_{t,n,i}^{\ell,j} \circ \pi_{\ell,j} u
\end{equation*}
converges uniformly (in $t \in I$) to $\mathcal{M}_t u$ and each of its terms is continuous with respect to $t$ (to see this, notice that if $\p{n,i},\p{\ell,j} \in \Gamma$ then $S_{t,n,i}^{\ell,j}$ is locally uniformly bounded as an operator from $L^2$ to $L^2$, and the continuity is easily proven for smooth $u$).
\end{proof}

\subsection{Schatten class properties}\label{subsecschatten}

Now let $h : \R_+^* \to \C$ be a compactly supported function as in Proposition~\ref{proplocop}. If $\p{n,i},\p{\ell,j} \in \Gamma$, then write
\begin{equation*}
H_{n,i}^{\ell,j} = \s{\R}{h\p{t} S_{t,n,i}^{\ell,j}}{t},
\end{equation*}
where we recall that $S_{t,n,i}^{\ell,j}$ is defined by \eqref{eqdefentree}.

Notice that the sum
\begin{equation*}
\sum_{\p{n,i},\p{\ell,j} \in \Gamma} \iota_{n,i} \circ H_{n,i}^{\ell,j} \circ \pi_{\ell,j}
\end{equation*}
converges in strong operator topology to $\int_{\R} h\p{t} \mathcal{M}_t \mathrm{d}t$, since the convergence in Lemma~\ref{lmconvstrong} is uniform. To prove Proposition~\ref{proplocop}, we want now to prove that this operator is in a Schatten class (or at least compact), this is the point of Lemma~\ref{lmschatten}. To do so we need first to establish a bunch of lemmas: Lemma~\ref{lmtc} will be used to deal with the transition of frequencies corresponding to the linear model of the dynamics apart from the direction of the flow, Lemma~\ref{lmtcnl} will settle the problem of frequency transitions corresponding to the non-linearity, and Lemmas \ref{lmippf} and~\ref{lmfiniterank} will be used to deal with stationary frequencies in the direction of the flow.

\begin{lm}\label{lmtc}
There is a constant $C >0$ such that, for all $\p{n,i},\p{\ell,j} \in \Gamma$, the trace class operator norm of $H_{n,i}^{\ell,j} : L^2 \to L^2$ is bounded by $C 2^{\frac{\p{d+1} n^{\alpha_i}}{2}} 2^{\frac{\p{d+1}\ell^{\alpha_j}}{2}}$, where $\alpha_i = \alpha$ if $i= 0$ or $i=r-1$ and $\alpha_i = 1$ otherwise.
\end{lm}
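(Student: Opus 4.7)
The plan is to factor $H_{n,i}^{\ell,j}$ as a product of two Hilbert--Schmidt operators sandwiching a bounded operator, and then apply the standard inequality $\n{A B C}_{S_1} \leq \n{A}_{HS} \n{B}_{L^2 \to L^2} \n{C}_{HS}$.

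First, since $\psi_{\Theta',n,i}\p{D}$ and $\tilde{\psi}_{\Theta,\ell,j}\p{D}$ are Fourier multipliers independent of $t$, they commute with the integral over $t$, so
\begin{equation*}
H_{n,i}^{\ell,j} = \psi_{\Theta',n,i}\p{D} \circ \tilde{H} \circ \tilde{\psi}_{\Theta,\ell,j}\p{D}, \qquad \text{where } \tilde{H} = \int_{\R} h\p{t} \mathcal{L}_t \, \mathrm{d}t.
\end{equation*}
Next, I exploit the compact support assumptions on $G_t$ and $h$. For every $t$ and every $u \in L^2\p{\R^{d+1}}$, the function $\mathcal{L}_t u = G_t \cdot\p{u \circ \mathcal{T}_t}$ is supported in the fixed compact set $K$, and its value at $x \in K$ depends on $u$ only through $u\p{\mathcal{T}_t\p{x}}$, with $\mathcal{T}_t\p{x}$ ranging over the compact set $K' = \bigcup_{t \in \textup{supp}\,h} \mathcal{T}_t\p{K}$. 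Thus, picking a function $\chi \in \mathcal{C}^\infty_c\p{\R^{d+1}}$ with $\chi \equiv 1$ on $K \cup K'$, one has $\tilde{H} = M_\chi \tilde{H} M_\chi$ (where $M_\chi$ denotes multiplication by $\chi$), so
\begin{equation*}
H_{n,i}^{\ell,j} = \bigl[\psi_{\Theta',n,i}\p{D} M_\chi\bigr] \circ \tilde{H} \circ \bigl[M_\chi \tilde{\psi}_{\Theta,\ell,j}\p{D}\bigr].
\end{equation*}

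Then I estimate the three factors. The middle factor $\tilde{H}$ is bounded on $L^2\p{\R^{d+1}}$ uniformly: changing variables $y = \mathcal{T}_t\p{x}$ in $\n{\mathcal{L}_t u}_2^2$ shows that $t \mapsto \n{\mathcal{L}_t}_{L^2 \to L^2}$ is locally bounded, so $\n{\tilde{H}}_{L^2 \to L^2} \leq \int \va{h\p{t}} \n{\mathcal{L}_t}_{L^2 \to L^2} \mathrm{d}t < \infty$, this quantity depending only on the support of $h$ (in the sense required for later use). The outer factors are Hilbert--Schmidt: the kernel of $\psi_{\Theta',n,i}\p{D} M_\chi$ is $\p{x,y} \mapsto \p{2\pi}^{-\p{d+1}} \mathbb{F}^{-1}\p{\psi_{\Theta',n,i}}\p{x-y} \chi\p{y}$, so by Plancherel
\begin{equation*}
\n{\psi_{\Theta',n,i}\p{D} M_\chi}_{HS}^2 = \p{2\pi}^{-\p{d+1}} \n{\chi}_2^2 \n{\psi_{\Theta',n,i}}_2^2,
\end{equation*}
and analogously for $M_\chi \tilde{\psi}_{\Theta,\ell,j}\p{D}$.

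Finally I bound $\n{\psi_{\Theta',n,i}}_2^2 \leq \va{\textup{supp}\,\psi_{\Theta',n,i}}$. For $i \in \set{1,\dots,r-2,f}$ the support lies in the spherical shell $\set{2^{n-1} \leq \va{\xi} \leq 2^{n+1}}$ (intersected with a fixed cone), of volume $\leq C 2^{\p{d+1}n} = C 2^{\p{d+1}n^{\alpha_i}}$; for $i \in \set{0,r-1}$ it lies in $\set{2^{n^\alpha} \leq \va{\xi} \leq 2^{\p{n+1}^\alpha}+1}$ (intersected with a fixed cone), whose volume is at most $C 2^{\p{d+1}n^\alpha} = C 2^{\p{d+1}n^{\alpha_i}}$ since the width $2^{\p{n+1}^\alpha} - 2^{n^\alpha} \lesssim 2^{n^\alpha} n^{\alpha - 1}$ is dominated by the radial scale. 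The identical argument, with the slightly enlarged supports of $\tilde{\psi}_{\Theta,\ell,j}$, yields the analogous bound with $\ell, \alpha_j$. Combining the three bounds gives
\begin{equation*}
\n{H_{n,i}^{\ell,j}}_{S_1} \leq C 2^{\frac{\p{d+1}n^{\alpha_i}}{2}} 2^{\frac{\p{d+1}\ell^{\alpha_j}}{2}},
\end{equation*}
as required. There is no serious obstacle; the only minor care needed is to isolate the compactness in space (to convert the translation-invariant Fourier localizers into Hilbert--Schmidt operators) through the cutoff $\chi$, which is legitimate precisely because of the support conditions on $G_t$ and $h$.
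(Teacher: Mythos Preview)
Your proof is correct and is essentially the same idea as the paper's, packaged slightly differently. The paper writes $H_{n,i}^{\ell,j}$ directly as an integral over $y \in K$ of rank-one operators
\[
H_{n,i}^{\ell,j} = \int_K \Bigl(\int_{\R} h\p{t} G_t\p{y}\, \delta_{\mathcal{T}_t\p{y}} \circ \tilde{\psi}_{\Theta,\ell,j}\p{D}\, \mathrm{d}t\Bigr) \otimes \mathbb{F}^{-1}\p{\psi_{\Theta',n,i}}\p{\cdot - y}\, \mathrm{d}y
\]
and bounds the trace norm by $\int_K \n{\ell_y}_{(L^2)^*}\n{v_y}_2\,\mathrm{d}y$, whereas you insert a spatial cutoff $\chi$ to turn each Fourier localizer into a Hilbert--Schmidt factor and use $\n{ABC}_{S_1}\le\n{A}_{HS}\n{B}\n{C}_{HS}$. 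Both routes exploit exactly the same two ingredients: the compact support of $G_t$ (and of $h$) in space and time, and the bound $\n{\psi_{\Theta',n,i}}_2^2 \le \va{\textup{supp}\,\psi_{\Theta',n,i}} \le C\,2^{(d+1)n^{\alpha_i}}$ (and similarly for $\tilde{\psi}_{\Theta,\ell,j}$). Your argument is a clean and perfectly acceptable alternative; there is nothing to fix.
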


\begin{proof}
Notice that if $u \in L^2$ then $\psi_{\Theta',n,i}\p{D} u = \mathbb{F}^{-1}\p{\psi_{\Theta',n,i}} \ast u$. Consequently, we have\footnote{If $E,F$ are Banach spaces, $e \in F$ and $l \in E'$, we denote by $e \otimes l$ the rank $1$ operator defined by $e \otimes l (u) = l(u). e$ for $u \in E$.}
\begin{equation}\label{eqopnoy}
H_{n,i}^{\ell,j} = \int_K \mathbb{F}^{-1}\p{\psi_{\Theta',n,i}}\p{ \cdot - y} \otimes \p{\int_{\R} h\p{t} G_t\p{y} \delta_{\mathcal{T}_t\p{y}} \circ \tilde{\psi}_{\Theta,\ell,j}\p{D} \mathrm{d} t} \mathrm{d}y.
\end{equation}
And then the result follows from the fact that
\begin{equation*}
\n{\mathbb{F}^{-1}\p{\psi_{\Theta',n,i}}}_2 = \frac{1}{\sqrt{2 \pi}^{d+1}} \n{\psi_{\Theta',n,i}}_2 \leq C 2^{\frac{\p{d+1}n^{\alpha_i}}{2}}
\end{equation*}
and
\begin{equation*}
\n{\int_{\R} h\p{t} G_t\p{y} \delta_{\mathcal{T}_t\p{y}} \circ \tilde{\psi}_{\Theta,\ell,j}\p{D} \mathrm{d} t}_{\p{L^2}^*} \leq C \n{\tilde{\psi}_{\Theta,\ell,j}}_2 \leq \tilde{C} 2^{\frac{\p{d+1} \ell^{\alpha_j}}{2}},
\end{equation*}
where $\n{\cdot}_{\p{L^2}^*}$ denotes the operator norm on the dual of $L^2\p{\R^{d+1}}$.
\end{proof}

\begin{lm}\label{lmtcnl}
There is a constant $C >0$ and some $\delta >1$ such that, if $\p{\ell,j} \nhra \p{n,i}$ for $\p{n,i}, \p{\ell,j} \in \Gamma$, then the trace class operator norm of $H_{n,i}^{\ell,t} : L^2 \to L^2$ is bounded by $C \exp\p{-\frac{\max\p{n,\ell}^\delta}{C}}$.
\end{lm}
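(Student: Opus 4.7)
\emph{Plan.} My plan is to refine the rank-one decomposition \eqref{eqopnoy} underlying Lemma~\ref{lmtc} by performing the integration by parts from Lemma~\ref{lmmajipp} inside it. Expanding both Fourier inversions in \eqref{eqopnoy}, the Schwartz kernel of $H_{n,i}^{\ell,j}$ reads
\begin{equation*}
K\p{x, z} = \frac{1}{\p{2\pi}^{2\p{d+1}}} \int_\R h\p{t} \int_K G_t\p{y} \int \int e^{i\p{x-y}\xi + i\p{\mathcal{T}_t\p{y} - z}\eta} \psi_{\Theta',n,i}\p{\xi} \tilde\psi_{\Theta,\ell,j}\p{\eta} \mathrm{d}\xi \mathrm{d}\eta \mathrm{d}y \mathrm{d}t.
\end{equation*}
By Lemma~\ref{lmdist} (combined with $\p{\ell,j} \nhra \p{n,i}$ and absorbing the finite set $\max\p{n,\ell} \leq N$ into the constant), the $y$-phase $\mathcal{T}_t\p{y}\eta - y\xi$ has gradient of magnitude at least $c' \max\p{2^{n^{\alpha_i}}, 2^{\ell^{\alpha_j}}}$ on the supports of $\psi_{\Theta',n,i}\p{\xi}\tilde\psi_{\Theta,\ell,j}\p{\eta}$. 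Performing $k$ integrations by parts in $y$, as in the proof of Lemma~\ref{lmmajipp}, replaces $G_t\p{y}$ by an amplitude $\mathcal{K}_{k,t}\p{y,\xi,\eta}$ supported in $K$ and satisfying the pointwise bound
\begin{equation*}
\va{\mathcal{K}_{k,t}\p{y,\xi,\eta}} \leq M_k := M \p{\frac{R}{2^{\max\p{n,\ell}^\alpha}}}^k \exp\p{\frac{k^\upsilon}{\kappa''}}
\end{equation*}
on those supports.

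Next I would bound $\n{H_{n,i}^{\ell,j}}_{S_1}$ through a Hilbert--Schmidt factorization $H_{n,i}^{\ell,j} = A_k \circ B_k$ modeled on the decomposition implicit in the proof of Lemma~\ref{lmtc}, with the $L^\infty$-bound $M_k$ on $\mathcal{K}_{k,t}$ distributed symmetrically as a factor $\sqrt{M_k}$ in each of $A_k$ and $B_k$. A Plancherel computation analogous to the end of the proof of Lemma~\ref{lmtc} gives $\n{A_k}_{S_2} \leq C \sqrt{M_k}\, 2^{\p{d+1}n^{\alpha_i}/2}$ and $\n{B_k}_{S_2} \leq C \sqrt{M_k}\, 2^{\p{d+1}\ell^{\alpha_j}/2}$. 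The standard inequality $\n{A_k B_k}_{S_1} \leq \n{A_k}_{S_2}\n{B_k}_{S_2}$ then yields
\begin{equation*}
\n{H_{n,i}^{\ell,j}}_{S_1} \leq C M_k\, 2^{\p{d+1}\p{n^{\alpha_i}+\ell^{\alpha_j}}/2}.
\end{equation*}

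To conclude, I would optimize $k \sim \max\p{n,\ell}^{\alpha/\p{\upsilon-1}}$ as at the end of the proof of Lemma~\ref{lmmajipp} to reach $M_k \leq C \exp\p{-\max\p{n,\ell}^\delta/C}$ with $\delta = \alpha\upsilon/\p{\upsilon-1} > 1$. Since $\alpha_i, \alpha_j \leq 1 < \delta$, the polynomial prefactor $2^{\p{d+1}\p{n^{\alpha_i}+\ell^{\alpha_j}}/2}$ is absorbed into a slightly larger constant in the exponent, giving the stretched-exponential bound. The main technical obstacle is the decoupling step: the IBP-generated amplitude $\mathcal{K}_{k,t}\p{y,\xi,\eta}$ couples $\xi$ and $\eta$, breaking the tensor-product structure of the rank-one decomposition of Lemma~\ref{lmtc}. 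My intended workaround is to pass to the pointwise bound $M_k$ before computing the $L^2$-norms of the two Hilbert--Schmidt factors, thereby forfeiting the cancellation inside $\mathcal{K}_{k,t}$ itself but preserving the exponential size reduction due to the non-stationarity of the phase, which is all that is needed.
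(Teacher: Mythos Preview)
There is a genuine gap in the factorization step. The integration-by-parts gain $M_k$ comes from the non-stationarity of the phase $y \mapsto \mathcal{T}_t\p{y}\eta - y\xi$, which in turn relies on the separation $d\p{\textup{supp}\,\psi_{\Theta',n,i}, D_x\mathcal{T}_t^{\textup{tr}}\p{\textup{supp}\,\tilde\psi_{\Theta,\ell,j}}}$ from Lemma~\ref{lmdist}: you need \emph{both} frequency localizations simultaneously to see it. Any Hilbert--Schmidt factorization $H_{n,i}^{\ell,j} = A_k \circ B_k$ modeled on Lemma~\ref{lmtc} (for instance $A_k = \psi_{\Theta',n,i}\p{D} M_\chi$ and $B_k = \mathcal{L}_h \tilde\psi_{\Theta,\ell,j}\p{D}$, or the symmetric split with the dynamics placed in the other factor) puts one frequency cutoff in each factor; the factor carrying only one cutoff does not see the separation at all, and its HS norm is $\sim 2^{\p{d+1}\p{\cdot}^{\alpha}/2}$ with no $M_k$ improvement. ``Passing to the pointwise bound $M_k$ before computing the $L^2$-norms'' does not repair this: a pointwise bound $\va{\widehat K\p{\xi,\eta}} \leq C M_k$ on the Fourier-side kernel controls the Hilbert--Schmidt norm of $H_{n,i}^{\ell,j}$, not its trace norm, and without a rank-one (or low-rank) structure in $\p{\xi,\eta}$ there is no mechanism to split $M_k$ as $\sqrt{M_k}\cdot\sqrt{M_k}$ between two HS factors. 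After the IBP the amplitude $\mathcal{K}_{k,t}\p{y,\xi,\eta}$ genuinely couples $\xi$ and $\eta$ through the denominators $\Phi\p{y}\p{\xi,\eta} = \va{D_y\mathcal{T}_t^{\textup{tr}}\eta - \xi}^2$, so the rank-one structure of \eqref{eqopnoy} is destroyed and cannot be restored by a sup bound.

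The paper circumvents this by inserting a \emph{discrete} decomposition between the two frequency cutoffs rather than trying to factor the continuous kernel. After reducing to $K \subseteq \left]-\pi,\pi\right[^{d+1}$, one expands the spatially localized function $\mathcal{L}_h \tilde\psi_{\Theta,\ell,j}\p{D}u$ in a Fourier series, obtaining
\begin{equation*}
H_{n,i}^{\ell,j} = \sum_{k \in \Z^{d+1}} \p{\psi_{\Theta',n,i}\p{D}\rho_k} \otimes \p{c_k \circ \mathcal{L}_h \circ \tilde\psi_{\Theta,\ell,j}\p{D}},
\end{equation*}
a sum of genuinely rank-one operators. For each $k$, the triangle inequality together with Lemma~\ref{lmdist} forces either $d\p{k,\textup{supp}\,\psi_{\Theta',n,i}} \gtrsim 2^{\max\p{n,\ell}^\alpha}$ (so the first tensor factor is small, by Gevrey decay of $\hat\rho$) or $d\p{k, D_x\mathcal{T}_t^{\textup{tr}}\p{\textup{supp}\,\tilde\psi_{\Theta,\ell,j}}} \gtrsim 2^{\max\p{n,\ell}^\alpha}$ (so the second factor is small, by the same IBP as in Lemma~\ref{lmmajipp}). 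Summing the rank-one trace norms over $k$ yields the stretched-exponential bound. The essential difference from your plan is that the exponential gain is realized in a \emph{single} factor of each rank-one term (which factor depends on $k$), rather than being distributed symmetrically between two HS factors of a single operator.
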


\begin{proof}
We may assume that $ \max\p{n,\ell} > N$. Without loss of generality, we may assume that $K \subseteq \left]-\pi,\pi\right[^{d+1}$ and then, if $u \in L^2\p{\R^{d+1}}$ write (the sum converges in $L^2$)
\begin{equation*}
H_{n,i}^{\ell,j}u = \sum_{k \in \Z^{d+1}} c_k\p{\int_{\R} h\p{t} \mathcal{L}_t \tilde{\psi}_{\Theta,\ell,j}\p{D} u \mathrm{d}t} \psi_{\Theta',n,i}\p{D}\rho_k,
\end{equation*}
where $\rho$ is a function supported in $\left]-\pi,\pi\right[^{d+1}$ that takes value $1$ on $K$, the function $\rho_k$ is defined by $\rho_k\p{x} = \rho\p{x} e^{ikx}$ and if $v$ is supported in $\left]-\pi,\pi\right[^{d+1}$ and $k \in \Z^{d+1}$, its $k$th Fourier coefficient is denoted by $c_k\p{v}$:
\begin{equation*}
c_k\p{v} = \frac{1}{\p{2\pi}^{d+1}} \s{\left]-\pi,\pi\right[^{d+1}}{e^{-ikx}v\p{x}}{x}.
\end{equation*}
By requiring that $\rho$ is $\sigma$-Gevrey (for some $\sigma >1$), we may ensure as in \cite[Lemma 6.5]{lagtf} that (for some constant $C > 0$)
\begin{equation*}
\n{\psi_{\Theta',n,i}\p{D} \rho_k}_2 \leq C 2^{\frac{\p{d+1}n^{\alpha_i}}{2}} \exp\p{-\frac{d\p{k,\textup{supp } \psi_{\Theta',n,i}}^{\frac{1}{\sigma}}}{C}}.
\end{equation*}
Now, if $k \in \Z^{d+1}$ and $\p{\ell,j} \in \Gamma$ define
\begin{equation*}
\delta\p{k,\ell,j} = \sup_{x \in K} d\p{k, D_x \mathcal{T}_t^{\textup{tr}} \p{\textup{supp } \tilde{\psi}_{\Theta,\ell,j}}}.
\end{equation*}
Then integrating by parts as in \cite[Lemma 6.7]{lagtf} or as in Lemma~\ref{lmmajipp} we see that if $\delta\p{k,\ell,j} \geq \epsilon 2^{\ell^{\alpha_j}}$ (for some arbitrary fixed $\epsilon >0$) then
\begin{equation*}
\n{c_k \circ \int_{\R}h\p{t}\mathcal{L}_t \mathrm{d}t \circ \tilde{\psi}_{\Theta,\ell,j}\p{D}}_{\p{L^2}^*} \leq C 2^{\p{d+1} \ell^{\alpha_j}}\exp\p{- \frac{\ln\p{1 + \delta\p{k,\ell,j}}^{\frac{\upsilon}{\upsilon-1}}}{C}}.
\end{equation*}
But now, if $\p{\ell,j} \nhra \p{n,i}$ and $\max\p{n,l} > N$, then, for all $k \in \Z^{d+1}$, either the distance $d\p{k,\textup{supp } \psi_{\Theta',n,i}}$ or the distance $\delta\p{k,\ell,j}$ is greater than $\frac{c'}{2} \max\p{2^{n^{\alpha_i}},2^{\ell^{\alpha_j}}}$, thanks to Lemma~\ref{lmdist}. Moreover, if $\va{k}$ is greater than $C 2^{\max\p{n,\ell}}$ (for some large $C >0$), then we have $\delta\p{k,\ell,j} \geq \epsilon 2^{\ell^{\alpha_j}}$ and $d\p{k,\textup{supp } \psi_{\Theta',n,i}} \geq \epsilon \va{k}$. Thus, the sum
\begin{equation*}
H_{n,i}^{\ell,j} = \sum_{k \in \Z^{d+1}} \p{\psi_{\Theta',n,i}\p{D} \rho_k} \otimes \p{c_k \circ \s{\R}{h\p{t}L_t}{t} \circ \tilde{\psi}_{\Theta,\ell,j}\p{D}}
\end{equation*}
converges in trace class topology, and the estimates above imply the result with $\delta = \frac{\alpha \upsilon}{\upsilon - 1}$.
\end{proof}

\begin{lm}\label{lmippf}
Assume that $h$ is $k$th times differentiable and that is $k$th derivative has bounded variation. Then there is a constant $C >0$ such that for all $n,\ell \in \N$ we have
\begin{equation*}
\n{H_{n,f}^{\ell,f}}_{L^2 \to L^2} \leq C 2^{-\p{k+1}\ell}.
\end{equation*}
\end{lm}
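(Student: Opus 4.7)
The key observation is that $\mathcal{T}_t\p{x} = \mathcal{T}_0\p{x} + t e_{d+1}$, so the family is a rigid translation along $e_{d+1}$ and integrating against $h\p{t}\,\mathrm{d}t$ amounts to a Fourier transform in the $\p{d+1}$th direction of $h\p{t} G_t\p{x}$. Since $\psi_{\Theta',n,f}\p{D}$ has $L^2$-operator norm at most $1$, it suffices to prove
\begin{equation*}
\n{A \circ \tilde{\psi}_{\Theta,\ell,f}\p{D}}_{L^2 \to L^2} \leq C 2^{-\p{k+1}\ell}, \quad \text{with } A := \s{\R}{h\p{t}\mathcal{L}_t}{t}.
\end{equation*}

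The plan is first to rewrite, using Fourier inversion for $v$ and Fubini,
\begin{equation*}
\p{Av}\p{x} = \frac{1}{\p{2\pi}^{d+1}} \s{\R^{d+1}}{e^{i\mathcal{T}_0\p{x}\cdot \eta}\, \Psi\p{x,\eta_{d+1}}\, \hat{v}\p{\eta}}{\eta}, \quad \Psi\p{x,\zeta} := \s{\R}{h\p{t} G_t\p{x} e^{it\zeta}}{t}.
\end{equation*}
Then by $k+1$ integrations by parts in $t$---the last one in the Riemann--Stieltjes sense using that $h^{\p{k}}$ has bounded variation and $h$ is compactly supported---one obtains
\begin{equation*}
\va{\Psi\p{x,\zeta}} \leq \frac{C \p{\n{h}_{\mathcal{C}^k} + \n{h^{\p{k}}}_{\textup{BV}}}}{\p{1 + \va{\zeta}}^{k+1}}
\end{equation*}
uniformly in $x \in K$, together with analogous bounds on $\partial_x^\alpha \partial_\zeta^\beta \Psi$. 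On $\textup{supp } \tilde{\psi}_{\Theta,\ell,f} \subseteq C_f$, cone condition (ii) of Definition~\ref{defsyst} gives $\va{\eta_{d+1}} \geq c \va{\eta}$, and the Paley--Littlewood scaling forces $\va{\eta} \geq c' 2^\ell$ for $\ell \geq 1$, so the cut-off symbol $\Psi\p{x, \eta_{d+1}} \tilde{\psi}_{\Theta,\ell,f}\p{\eta}$ (and its $x,\eta$-derivatives) is of size $O\p{2^{-\p{k+1}\ell}}$.

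For the $L^2$ bound I would factor $A = \mathcal{T}_0^* \circ P$, where $\mathcal{T}_0^* v := v \circ \mathcal{T}_0$ is bounded on $L^2$ because $\mathcal{T}_0$ is a $\mathcal{C}^{\kappa,\upsilon}$-diffeomorphism with bounded Jacobian, and $P$ is the pseudodifferential operator with symbol $p\p{y,\eta} := \Psi\p{\mathcal{T}_0^{-1}\p{y},\eta_{d+1}}$. The composition $P \circ \tilde{\psi}_{\Theta,\ell,f}\p{D}$ has symbol $p\p{y,\eta}\tilde{\psi}_{\Theta,\ell,f}\p{\eta}$ of order zero with a uniform $O\p{2^{-\p{k+1}\ell}}$ bound on itself and on a finite number of its derivatives, so Calderón--Vaillancourt's theorem yields the claimed estimate. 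The case $\ell=0$ is trivial from $L^2$-boundedness of $A$.

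The main obstacle is converting the pointwise decay of $\Psi$ into an operator bound: since $\Psi\p{x,\zeta}$ depends on $x$, the operator $A$ is not a Fourier multiplier, and only after peeling off the pullback $\mathcal{T}_0^*$ does one land in a standard pseudodifferential setting. A minor technicality is that the integrations by parts need $t \mapsto G_t\p{x}$ to be $\mathcal{C}^{k+1}$ (going beyond the $\mathcal{C}^2$ family hypothesis of this section); this is harmless in applications, where $G_t$ is built from smooth time-cutoffs and the smooth exponential cocycle.
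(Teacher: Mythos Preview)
Your argument is correct, and the key mechanism---$k+1$ integrations by parts in $t$ producing a factor $\eta_{d+1}^{-(k+1)}$, together with $\va{\eta_{d+1}} \gtrsim 2^{\ell}$ on $\textup{supp }\tilde{\psi}_{\Theta,\ell,f}$---is exactly the one the paper uses. The difference lies in how the symbol decay is converted into an $L^2$ bound.

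You factor $A = \mathcal{T}_0^* \circ P$ and invoke Calder\'on--Vaillancourt on the pseudodifferential operator $P \circ \tilde{\psi}_{\Theta,\ell,f}\p{D}$, which requires uniform control on several $y$- and $\eta$-derivatives of the symbol $\Psi\p{\mathcal{T}_0^{-1}\p{y},\eta_{d+1}}\tilde{\psi}_{\Theta,\ell,f}\p{\eta}$. The paper instead keeps the $x$-dependence inside the transfer operators and pushes the $\eta_{d+1}^{-(k+1)}$ factor onto the Fourier multiplier: after the integrations by parts one writes
\begin{equation*}
H_{n,f}^{\ell,f} = \psi_{\Theta',n,f}\p{D} \circ \s{\R}{L_t}{\mu(t)} \circ \Psi_\ell\p{D},
\qquad \Psi_\ell\p{\eta} = \frac{\tilde{\psi}_{\Theta,\ell,f}\p{\eta}}{\eta_{d+1}^{k+1}},
\end{equation*}
where $L_t u = f\p{t,\cdot}\,(u\circ\mathcal{T}_t)$ with $f\p{t,z}\mathrm{d}\mu\p{t} = \frac{\mathrm{d}^{k+1}}{\mathrm{d}t^{k+1}}\p{h\p{t}G_t\p{z}}\mathrm{d}t$. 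The middle factor is manifestly bounded on $L^2$ (multiplication by a bounded compactly supported function followed by composition with a diffeomorphism, integrated against a finite measure), and $\n{\Psi_\ell\p{D}}_{L^2\to L^2} = \n{\Psi_\ell}_\infty \leq C 2^{-\p{k+1}\ell}$ with no derivative estimates needed. This is more elementary---it avoids pseudodifferential calculus entirely---and sidesteps the need to bound $\partial_y^\alpha$ of the symbol through $\mathcal{T}_0^{-1}$. Your route, on the other hand, makes the pseudodifferential nature of the smoothing more transparent and would generalize more readily to situations where the $\eta$-decay cannot be cleanly separated as a multiplier. The technicality you flag about $t$-regularity of $G_t$ is present in both arguments.
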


\begin{proof}
If $u \in L^2\p{\R^{d+1}}$ and $x \in \R^{d+1}$, then we have,
\begin{equation*}
H_{n,f}^{\ell,f}u\p{x} = \s{\R^{d+1}}{V_{n,\ell}\p{x,\eta} \hat{u}(\eta)}{\eta},
\end{equation*}
where the kernel $V_{n,\ell}$ is defined by
\begin{equation}\label{eqnoyau}
\begin{split}
& V_{n,\ell}\p{x,\eta}  \\  & \quad = \frac{1}{\p{2\pi}^{2\p{d+1}}} \int_{\p{\R^{d+1}}^3\times \R} e^{i\p{x-z}\xi + i \mathcal{T}_0\p{z} \eta} e^{i t \eta_{d+1}} \psi_{\Theta',n,f}\p{\xi} \\ & \qquad \qquad \qquad \qquad \qquad \qquad \qquad \qquad \times \tilde{\psi}_{\Theta,\ell,f}\p{\eta} h\p{t} G_t\p{z} \mathrm{d}z \mathrm{d}\xi \mathrm{d}t.
\end{split}
\end{equation}
We can assume that $ \ell$ is large enough (the $H_{n,f}^{\ell,f}$'s are uniformly bounded on $L^2$), which ensures that $\eta_{d+1}$ (the last coordinate of $\eta$) does not vanish on the support of $\tilde{\psi}_{\Theta,\ell,f}$. Consequently, we can perform $\p{k+1}$ integrations by parts in $t$ in \eqref{eqnoyau} to get
\begin{equation*}
\begin{split}
& V_{n,\ell}\p{x,\eta} \\ & \quad = \frac{i^{k+1}}{\p{2\pi}^{2\p{d+1}}} \int_{\p{\R \times \R^{d+1}}^3} e^{i\p{x-z}\xi + i \mathcal{T}_0\p{z} \eta} e^{i t \eta_{d+1}} \psi_{\Theta',n,f}\p{\xi} \\ & \qquad \qquad \qquad \qquad \qquad \qquad \times \frac{\tilde{\psi}_{\Theta,\ell,f}\p{\eta}}{\eta_{d+1}^{k+1}} \frac{\mathrm{d}^{k+1}}{\mathrm{d}t^{k+1}}\p{ h\p{t} G_t\p{z}} \mathrm{d}t \mathrm{d}z \mathrm{d}\xi.
\end{split}
\end{equation*}
Using the Leibniz rule, we see that, if $\mu$ denotes the measure of total variation of $h^{\p{k+1}}$, the measure $\frac{\mathrm{d}^{k+1}}{\mathrm{d}t^{k+1}}\p{ h\p{t} G_t\p{z}} \mathrm{d}t$ may be written as $f\p{t,z} \mathrm{d}\mu \p{t}$ for all $z \in \R^{d+1}$. Moreover, $f$ has the following properties: it is measurable, $f\p{t,z} = 0$ if $z \in \R^{d+1} \setminus K$, and $\int_{\R} \sup_{z \in \R^{d+1}} \va{f\p{t,z}} \mathrm{d}\mu\p{t} < + \infty$.  Then, define the function $\Psi_{\ell} : \R^{d+1} \to \R$ by $\Psi_{\ell}\p{\eta} = \frac{\tilde{\psi}_{\Theta,\ell,f}\p{\eta}}{\eta_{d+1}^{k+1}}$, the operator $L_t : L^2\p{\R^{d+1}} \to L^2\p{\R^{d+1}}$ by $L_t u \p{z} = f\p{t,z} . \p{u \circ \mathcal{T}_t\p{z}}$, and notice that we have
\begin{equation*}
H_{n,f}^{\ell,f} = \psi_{\Theta',n,f}\p{D} \circ \int_{\R} L_t \mathrm{d}\mu\p{t} \circ \Psi_{\ell}\p{D}.
\end{equation*}
Finally, notice that $\n{\Psi_\ell}_{\infty} \leq C 2^{- \ell\p{k+1}}$ to end the proof.
\end{proof}

\begin{lm}\label{lmfiniterank}
Let $s>0$ and $\epsilon>0$. Then there is a constant $C >0$ such that for all $N >0$ and $n \in \N$ with $n < N$ there is an operator $F_{n,N} : L^2\p{K} \to L^2\p{\R^{d+1}}$ of rank at most $2^{\p{1+\epsilon}\p{d+1}N}$ such that for all $u \in L^2\p{K}$ we have
\begin{equation*}
\n{\psi_{\Theta',n,f}\p{D}u - F_{n,N}u}_2 \leq C 2^{-sN}.
\end{equation*}
\end{lm}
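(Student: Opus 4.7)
The plan is to approximate $u$ by a Fourier truncation on a box enclosing $K$, mimicking the expansion already used in the proof of Lemma~\ref{lmtcnl}. After an affine change of coordinates I may assume $K\subseteq(-\pi,\pi)^{d+1}$, and I would fix once and for all a $\sigma$-Gevrey cutoff $\rho\in\mathcal{C}^\infty_c((-\pi,\pi)^{d+1})$ (for some $\sigma>1$) with $\rho\equiv 1$ on $K$, setting $\rho_k(x)=\rho(x)e^{ikx}$ for $k\in\Z^{d+1}$. For $u\in L^2(K)$ extended by zero, the identity $\rho u=u$ combined with the Fourier series of $u$ seen as periodic on $[-\pi,\pi]^{d+1}$ yields
\begin{equation*}
u=\sum_{k\in\Z^{d+1}}c_k(u)\,\rho_k,\qquad c_k(u)=\frac{1}{(2\pi)^{d+1}}\int_{[-\pi,\pi]^{d+1}}u(x)e^{-ikx}\,\mathrm{d}x,
\end{equation*}
with $|c_k(u)|\leq C\|u\|_2$ uniformly in $k$ by Cauchy--Schwarz.

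Choosing $M=\lfloor 2^{(1+\epsilon/2)N}\rfloor$, I would define
\begin{equation*}
F_{n,N}u=\sum_{|k|_\infty\leq M}c_k(u)\,\psi_{\Theta',n,f}(D)\rho_k,
\end{equation*}
which is linear with range contained in $\textup{span}\set{\psi_{\Theta',n,f}(D)\rho_k:|k|_\infty\leq M}$, hence of rank at most $(2M+1)^{d+1}\leq 2^{(1+\epsilon)(d+1)N}$ for $N$ large enough (for bounded $N$ it suffices to set $F_{n,N}=0$ and absorb everything into $C$). To bound the error, I would use that $\textup{supp }\psi_{\Theta',n,f}\subseteq\set{|\xi|\leq 2^{n+1}}$ with $n<N\leq\log_2 M$, so that $d(k,\textup{supp }\psi_{\Theta',n,f})\geq |k|/2$ whenever $|k|_\infty>M$, and invoke the Gevrey estimate from \cite[Lemma~6.5]{lagtf} already used in the proof of Lemma~\ref{lmtcnl}:
\begin{equation*}
\n{\psi_{\Theta',n,f}(D)\rho_k}_2\leq C\,2^{dn/2}\exp\p{-\frac{|k|^{1/\sigma}}{2C}}.
\end{equation*}
The triangle inequality combined with the uniform bound on $c_k(u)$ would then give
\begin{equation*}
\n{\psi_{\Theta',n,f}(D)u-F_{n,N}u}_2\leq C'\|u\|_2\,2^{dn/2}\exp\p{-\frac{M^{1/\sigma}}{C''}},
\end{equation*}
and since $n<N$ while $M^{1/\sigma}\geq 2^{(1+\epsilon/2)N/\sigma}$ grows superpolynomially in $2^N$, this is dominated by $C'''\|u\|_2\,2^{-sN}$ for any fixed $s>0$.

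The proof is not particularly hard, but the balance that deserves care is the choice of $M$: it must exceed $2^n$ (so that the Gevrey decay of $\hat{\rho}$ kicks in uniformly in $n<N$) while $(2M+1)^{d+1}$ must still fit under the rank budget $2^{(1+\epsilon)(d+1)N}$. The fact that in the flow direction the support of $\psi_{\Theta',n,f}$ has radius only $2^{n+1}$ (rather than the larger $2^{n^\alpha}$-like scale used in the stable and unstable directions, where $\alpha<1$) is precisely what leaves enough room between $2^n$ and $2^{(1+\epsilon)N}$ to carry out the truncation.
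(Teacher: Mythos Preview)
Your proof is correct and follows essentially the same Fourier-truncation approach that the paper defers to via \cite[Lemma~4.21]{Tsu}: expand $u$ in a Fourier series on a torus containing $K$, truncate at frequencies $|k|_\infty\leq M\approx 2^{(1+\epsilon/2)N}$, and use the Gevrey decay of $\hat\rho$ together with the frequency localization of $\psi_{\Theta',n,f}$ to control the tail. The rank count and the error bound are both handled correctly.

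One small slip in your closing commentary: since $\alpha<1$, the scale $2^{n^\alpha}$ associated with the indices $i\in\{0,r-1\}$ is \emph{smaller} than $2^{n+1}$, not larger. This does not affect the argument, which only concerns $i=f$ and correctly uses that $\textup{supp}\,\psi_{\Theta',n,f}\subseteq\{|\xi|\leq 2^{n+1}\}$ with $2^{n+1}\leq 2^N<M$; but the intuitive remark about why the flow direction is the delicate one is backwards.
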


\begin{proof}
The proof is similar to the proof of \cite[Lemma 4.21]{Tsu}.
\end{proof}

We are now ready to prove Lemma~\ref{lmschatten}.

\begin{lm}\label{lmschatten}
Under the hypotheses of Lemma~\ref{lmippf} and if in addition $\alpha < \frac{1}{2}$, the operator
\begin{equation}\label{eqnucopaux}
\int_{\R} h\p{t} \mathcal{M}_t \mathrm{d}t
\end{equation}
belongs to the Schatten class $S_p$ for every $p \geq 1$ such that $p > \frac{d+1}{k+1}$ . Moreover, its norm in this Schatten class is bounded by $C \p{\n{h}_{\mathcal{C}^{k-1}} + \n{h^{\p{k}}}_{BV}}$ where $C$ depends on $h$ only through its support.

Without the assumption that $\alpha < \frac{1}{2}$, it remains true that the operator defined by \eqref{eqnucopaux} is compact.
\end{lm}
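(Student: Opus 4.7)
\medskip
\textbf{Proof plan.} I would decompose the operator as
\begin{equation*}
\int_\R h(t)\mathcal{M}_t\,\mathrm{d}t = \sum_{(n,i),(\ell,j) \in \Gamma} \iota_{n,i} \circ H_{n,i}^{\ell,j} \circ \pi_{\ell,j},
\end{equation*}
and note the key observation $\|\iota_{n,i} A \pi_{\ell,j}\|_{S_p(\B)} = 2^{n\beta_i - \ell\beta_j} \|A\|_{S_p(L^2)}$ for any $A : L^2 \to L^2$. I split along the relation $\hra$ and, within the $\hra$ part, along the pair $(i,j)$, and estimate the Schatten norm of each piece.

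The $\nhra$ contribution is trace class by Lemma~\ref{lmtcnl}: the super-exponential decay $\exp(-\max(n,\ell)^\delta/C)$ with $\delta > 1$ absorbs the at-most-exponential weights $2^{n\beta_i - \ell\beta_j}$. For the $\hra$ part with $(i,j) \ne (f,f)$, Lemma~\ref{lmtc} yields $\|H_{n,i}^{\ell,j}\|_{S_1} \leq C 2^{(d+1)(n^{\alpha_i} + \ell^{\alpha_j})/2}$, and a direct check using \eqref{eqdefbeta1}, \eqref{eqdefbeta2} and the arithmetic restrictions built into $\hra$ gives absolute $S_1$ convergence in each sub-case. The only spot where the assumption $\alpha < 1/2$ is needed is in the cases $(0,0)$ and $(r-1,r-1)$: there the weight saving $2^{-(d+2)\nu n^{1-\alpha}}$ (coming from $\ell \geq n + \nu n^{1-\alpha}$, and symmetrically) must beat the trace-class loss $2^{(d+1)\ell^\alpha/2}$, which happens exactly when $1-\alpha > \alpha$.

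The delicate sub-case $(i,j) = (f,f)$ is where the exponent $(d+1)/(k+1)$ is produced. The weights are bounded since $|n-\ell|$ stays below a fixed constant $M$. Writing $H_{n,f}^{\ell,f} = \psi_{\Theta',n,f}(D) \circ R_\ell$, where $R_\ell$ has image in $L^2$-functions supported in a fixed compact $K'$ and, by Lemma~\ref{lmippf}, satisfies $\|R_\ell\|_{L^2 \to L^2} \leq C 2^{-(k+1)\ell}(\|h\|_{\mathcal{C}^k} + \|h^{(k)}\|_{\mathrm{BV}})$, I would apply Lemma~\ref{lmfiniterank} to the restriction of $\psi_{\Theta',n,f}(D)$ to $L^2(K')$: for any $s > 0$, $\epsilon > 0$ and $N > n$, it is approximated by an operator of rank $\leq 2^{(1+\epsilon)(d+1)N}$ with error $\leq C 2^{-sN}$ in operator norm. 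This produces a two-regime control of the singular values: $s_j(H_{n,f}^{\ell,f}) \leq C 2^{-(k+1)\ell}$ for $j \leq 2^{(1+\epsilon)(d+1)n}$, and $s_j(H_{n,f}^{\ell,f}) \leq C j^{-s/((1+\epsilon)(d+1))} 2^{-(k+1)\ell}$ for larger $j$ (by matching $j$ with $2^{(1+\epsilon)(d+1)N}$). Choosing $s$ so large that $ps/((1+\epsilon)(d+1)) > 1$, I obtain
\begin{equation*}
\|H_{n,f}^{\ell,f}\|_{S_p}^p \leq C \, 2^{(1+\epsilon)(d+1)n - p(k+1)\ell}(\|h\|_{\mathcal{C}^k} + \|h^{(k)}\|_{\mathrm{BV}})^p,
\end{equation*}
and summing over $|n-\ell| \leq M$ converges precisely when $p(k+1) > (1+\epsilon)(d+1)$; sending $\epsilon \to 0$ yields the stated condition $p > (d+1)/(k+1)$.

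The dependence of $C$ on $h$ is traced through: non-$(f,f)$ estimates use only $\|h\|_\infty \leq \|h\|_{\mathcal{C}^k}$, the $(f,f)$ bound supplies the $\|h\|_{\mathcal{C}^k} + \|h^{(k)}\|_{\mathrm{BV}}$ factor via Lemma~\ref{lmippf}, and the support of $h$ enters only through $K'$ and a uniform bound on $\|\mathcal{L}_t\|$ for $t \in \mathrm{supp}\,h$. For the compactness claim when $\alpha \geq 1/2$: the only step that fails is the $S_1$ summability in the cases $(0,0)$ and $(r-1,r-1)$, but the same finite-rank approximation used in the $(f,f)$ case applies verbatim to $\psi_{\Theta',n,0}(D)$ and $\psi_{\Theta',n,r-1}(D)$ (their symbols are compactly supported of measure $\sim 2^{(d+1)n^\alpha}$), and yields Schatten-$p$ bounds for some (possibly large) $p$, which is sufficient for compactness. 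The main obstacle throughout is the two-regime Schatten book-keeping in the $(f,f)$ case: balancing the rank of the approximation against the operator-norm decay in $\ell$ is what pins down the precise exponent $(d+1)/(k+1)$.
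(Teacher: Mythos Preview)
Your main Schatten argument is correct and follows the paper's strategy closely. The one organizational difference is in the $(f,f)$ block: you bound each $H_{n,f}^{\ell,f}$ individually in $S_p$ via the two-regime singular-value estimate and then sum, whereas the paper approximates the entire block $P = \sum_{|n-\ell|\le M}\iota_{n,f}H_{n,f}^{\ell,f}\pi_{\ell,f}$ by a single finite-rank operator $P_N$ of rank at most $N^2 2^{(1+\epsilon)(d+1)N}$ with $\|P-P_N\|\le C\,2^{-(k+1)N}$, and reads off the singular-value decay of $P$ directly. Both routes are valid and yield the same threshold $p>\frac{d+1}{k+1}$; your term-by-term version is perhaps more transparent, the paper's global version slightly shorter.

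There is, however, a genuine gap in your compactness argument for $\alpha\ge\frac12$. Your proposed fix via finite-rank approximation of $\psi_{\Theta',n,0}(D)$ would give
\[
\big\|\iota_{n,0}H_{n,0}^{\ell,0}\pi_{\ell,0}\big\|_{S_p}\lesssim 2^{(d+2)(n-\ell)}\,2^{(1+\epsilon)(d+1)n^{\alpha}/p},
\]
and after summing over $\ell\ge n+\nu n^{1-\alpha}$ and then over $n$ you are left with $\sum_n 2^{-(d+2)\nu n^{1-\alpha}+(1+\epsilon)(d+1)n^{\alpha}/p}$. When $\alpha>\frac12$ the term $n^{\alpha}$ dominates $n^{1-\alpha}$ and this diverges for \emph{every} $p$, so no Schatten summability is available. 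The paper's argument is simpler and works for all $\alpha\in(0,1)$: each summand $\iota_{n,i}H_{n,i}^{\ell,j}\pi_{\ell,j}$ is already trace class by Lemma~\ref{lmtc}, hence compact, and the full sum \eqref{eqmt} converges in \emph{operator norm} (the $(0,0)$ and $(r-1,r-1)$ blocks sum because $\sum_n 2^{-(d+2)\nu n^{1-\alpha}}<\infty$, exactly as in the proof of Lemma~\ref{lmconvstrong}; the $(f,f)$ block is already in $S_p$). A norm-limit of compact operators is compact.
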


\begin{proof}
We know that
\begin{equation}\label{eqmt}
\int_{\R}h\p{t} \mathcal{M}_t \mathrm{d}t = \sum_{\p{n,i},\p{\ell,j} \in \Gamma} \iota_{n,i} \circ H_{n,i}^{\ell,j} \circ \pi_{\ell,j}
\end{equation}
where the sum converges in the strong operator topology. From Lemma~\ref{lmtcnl}, it is clear that the sum
\begin{equation*}
\sum_{\substack{\p{n,i},\p{\ell,j} \in \Gamma \\ \p{n,i} \nhra \p{\ell,j}}} \iota_{n,i} \circ H_{n,i}^{\ell,j} \circ \pi_{\ell,j}
\end{equation*}
converges in the trace class operator topology. We are left with the sum
\begin{equation*}
\sum_{\substack{\p{n,i},\p{\ell,j} \in \Gamma \\ \p{n,i} \hra \p{\ell,j}}} \iota_{n,i} \circ H_{n,i}^{\ell,j} \circ \pi_{\ell,j}
\end{equation*}
that we can divide, as in the proof of Lemma~\ref{lmconvstrong}, into seven sums corresponding to the different cases in the definition of $\hra$. The first six sums are dealt with by using Lemma~\ref{lmtc}. We will only detail the computation corresponding to the first case in the definition of $\hra$ (i.e. the case $i=j=0$, the case $i=j=r-1$ is dealt with in the same way and the others are easier), in order to highlight where the hypothesis $\alpha < \frac{1}{2}$ is used. If $n,\ell \in \N$, then the trace class operator norm of $\iota_{n,0} \circ H_{n,0}^{\ell,0} \circ \pi_{\ell,0}$ is smaller than $C 2^{\frac{\p{d+1}n^\alpha}{2}} 2^{\frac{\p{d+1}\ell^\alpha}{2}} 2^{\p{d+2}n} 2^{-\p{d+2}\ell}$. Thus, in order to deal with the sum corresponding with the case $i = j =0$ in the definition of $\hra$, we only need to prove that the quantity
\begin{equation}\label{eqsumalpha}
\sum_{\substack{\ell,n \in \N \\ \p{\ell,0} \hra \p{n,0}}} 2^{\frac{\p{d+1}n^\alpha}{2}} 2^{\frac{\p{d+1}\ell^\alpha}{2}} 2^{\p{d+2}n} 2^{-\p{d+2}\ell}
\end{equation}
is finite. Notice that
\begin{equation*}
2^{-\p{d+2}\ell + \frac{\p{d+1}\ell^\alpha}{2}} \underset{\ell \to + \infty}{\sim} \frac{1}{1 - 2^{-\p{d+2}}} \p{2^{-\p{d+2}\ell + \frac{\p{d+1}\ell^\alpha}{2}} - 2^{\p{d+2}\p{\ell+1} + \frac{\p{d+1}\p{\ell + 1}^\alpha}{2}}}
\end{equation*}
so that
\begin{equation*}
\sum_{\ell \geq \ell_0} 2^{-\p{d+2}\ell + \frac{\p{d+1}\ell^\alpha}{2}} \underset{\ell_0 \to + \infty}{\sim} \frac{ 2^{-\p{d+2}\ell_0 + \frac{\p{d+1}\ell_0^\alpha}{2}}}{1 - 2^{-\p{d+2}}}.
\end{equation*}
In particular, there is a constant $C >0$ such that, for all $\ell_0 \in \N$. We have 
\begin{equation*}
\sum_{\ell \geq \ell_0} 2^{-\p{d+2}\ell + \frac{\p{d+1}\ell^\alpha}{2}} \leq C 2^{-\p{d+2}\ell_0 + \frac{\p{d+1}\ell_0^\alpha}{2}}.
\end{equation*}
Now if $n \in \N$, let $\ell_0$ be the smallest integer such that $\ell_0 \geq n + \nu n^{1-\alpha}$, we have then (notice that $\ell_0 \leq B n$ for some constant $B$ that does not depend on $n$)
\begin{equation*}
\begin{split}
\sum_{\substack{\ell \in \N \\ \p{\ell,0} \hra \p{n,0}}} 2^{-\p{d+2}\ell + \frac{\p{d+1}\ell^\alpha}{2}} & = \sum_{\ell \geq \ell_0} 2^{-\p{d+2}\ell + \frac{\p{d+1}\ell^\alpha}{2}} \\
   & \leq C 2^{-\p{d+2}\ell_0 + \frac{\p{d+1}\ell_0^\alpha}{2}} \\
   & \leq C 2^{-\p{d+2}n} 2^{-\p{d+2} \nu n^{1-\alpha}} 2^{\frac{\p{d+1}B^\alpha}{2} n^\alpha}.
\end{split}
\end{equation*}
Thus, we have
\begin{equation}\label{eqcellequiconverge}
\begin{split}
& \sum_{\substack{\ell,n \in \N \\ \p{\ell,0} \hra \p{n,0}}} 2^{\frac{\p{d+1}n^\alpha}{2}} 2^{\frac{\p{d+1}\ell^\alpha}{2}} 2^{\p{d+2}n} 2^{-\p{d+2}\ell} \\ & \qquad \qquad \leq C \sum_{n \in \N} 2^{-\p{d+2} \nu n^{1-\alpha}} 2^{\frac{\p{d+1}\p{B^\alpha + 1}}{2} n^\alpha}, 
\end{split} 
\end{equation}
and this sum is finite since $\alpha < \frac{1}{2}$.

Finally, we are left with the sum
\begin{equation*}
P = \sum_{\substack{n,\ell \in \N \\ \va{n-\ell} \leq M} } \iota_{n,f} \circ H_{n,f}^{\ell,f} \circ \pi_{\ell,f}.
\end{equation*}
Choose $s> k+1$ and $\epsilon >0$, and apply Lemma~\ref{lmfiniterank} to define for all $N >0$ the operator
\begin{equation*}
P_N = \sum_{\substack{0 \leq n,\ell < N \\ \va{n- \ell} \leq M}} \iota_{n,f} \circ F_{n,N} \circ \s{\R}{h\p{t}\mathcal{L}_t}{t} \circ \tilde{\psi}_{\Theta,\ell,j} \circ \pi_{\ell,f},
\end{equation*}
whose rank is at most $N^2 2^{\p{1+\epsilon}\p{d+1} N}$. Then notice, using Lemma~\ref{lmippf}, that we have
\begin{equation}\label{eqappfin}
\begin{split}
& \n{P_N - P}_{\B \to \B} \\ & \quad \leq C\sum_{\substack{n,\ell < N \\ \va{n-\ell} \leq M}} \Bigg( \n{F_{n,N} - \psi_{\Theta',n,f}\p{D}}_{L^2\p{K} \to L^2\p{R^{d+1}}} \\ &  \qquad \qquad \qquad \qquad \qquad \times \n{\s{\R}{h\p{t} \mathcal{L}_t}{t} \circ \tilde{\psi}_{\Theta,\ell,f}\p{D}}_{L^2 \to L^2} \Bigg) \\ & \quad \quad  + C\sum_{\substack{n,\ell \geq N \\ \va{n - \ell} \leq M}} \n{H_{n,f}^{\ell,f}}_{L^2 \to L^2} \\
   & \quad \leq \widetilde{C} \p{N^2 2^{-sN} + 2^{-\p{k+1} N}} \leq C' 2^{-\p{k+1}N},
\end{split}
\end{equation}
for some constants $C,\tilde{C}$ and $C'$ that do not depend on $N$. Letting $N$ tend to infinity, we see that $P$ is compact. Moreover, if $\p{s_m}_{m \geq 0}$ denotes the sequence of singular values of $P$, we get from \eqref{eqappfin} and \cite[Theorem 2.5 p.51]{Gohb}
\begin{equation*}
s_{N^2 2^{\p{1+\epsilon} \p{d+1} N}+1} \leq \tilde{C} 2^{-\p{k+1}N}.
\end{equation*}
Thus, the sequence $\p{s_m}_{m \geq 0}$ is in $\ell^p$ for all $p > \frac{\p{1 + \epsilon} \p{d+1}}{k+1}$ (the sequence $\p{s_m}_{m \geq 0}$ is decreasing). This ends the proof in the case $\alpha < \frac{1}{2}$ since $\epsilon >0$ is arbitrary. Indeed, all the terms in the proof are controlled by the $L^\infty$ norm of $h$, except the one that we bounded using Lemma \ref{lmippf} that is controlled by $\n{h}_{\mathcal{C}^{k-1}} + \n{h^{(k)}}_{BV}$.

In order to deal with the case $\alpha \geq \frac{1}{2}$, notice that we only used the assumption $\alpha < \frac{1}{2}$ to ensure that the series \eqref{eqcellequiconverge} converges. However, if we remove the factor $2^{\frac{\p{d+1}n^\alpha}{2}} 2^{\frac{\p{d+1}\ell^\alpha}{2}}$ from the sum \eqref{eqsumalpha}, this new series converges, just like in the proof of Lemma~\ref{lmconvstrong}. That is, if we consider the operator norm instead of the trace class operator norm, the sums corresponding to the first six cases in the definition of $\hra$ converge, even if $ \alpha \geq \frac{1}{2}$. Consequently, the right-hand side of \eqref{eqmt} always converges in the operator norm topology, and the left-hand side of \eqref{eqmt} is always compact.
\end{proof}

\subsection{Trace of $\int_0^{+ \infty} h\p{t} \mathcal{M}_t \mathrm{d}t$ and proof of Proposition~\ref{proplocop}.}\label{subsectrace}

Before proving that $\p{\mathcal{L}_t}_{t \in \R}$ inherits of the properties of $\p{\mathcal{M}_t}_{t \in \R}$, thus showing Proposition~\ref{proplocop}, we still need to prove that the operator $\int_{\R} h\p{t} \mathcal{M}_t \mathrm{d}t$ has the expected trace, when it makes sense. This is the point of the following lemma.

\begin{lm}\label{lmtrace}
Under the hypotheses of Proposition~\ref{proplocop}, if $\Theta = \Theta'$,if $\alpha < \frac{1}{2}$ and if $k+1 > d+1$ then
\begin{equation*}
\textup{tr}\p{\s{\R}{h\p{t}\mathcal{M}_t}{t}} = \sum_{p \circ F\p{x} = x} \frac{h\p{T\p{x}}}{\va{\det \p{I - p \circ D_x F}}}\int_{\R} G_{T\p{x}}\p{x,y} \mathrm{d}y ,
\end{equation*}
where, for $x \in \R^d$, the number $T(x)$ is defined by $F(x) = p\p{F(x)} + \p{0 , - T(x)}$. 
\end{lm}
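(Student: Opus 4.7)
The strategy is to compute the trace on $\B$ block by block, reducing it to Schwartz-kernel calculations for a sequence of smoothing operators on $L^2\p{\R^{d+1}}$, and then to collapse the summation using the partition-of-unity identity $\sum_{\p{n,i} \in \Gamma} \psi_{\Theta,n,i} = 1$ together with Fourier inversion.

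\emph{Step 1: Block trace decomposition.} Since $k+1 > d+1$, Lemma~\ref{lmschatten} applied with $q=1$ ensures that $\int_\R h\p{t}\mathcal{M}_t\mathrm{d}t$ is trace class on $\B$. The map $\p{u_{n,i}}_{\p{n,i} \in \Gamma} \mapsto \p{2^{n\beta_i} u_{n,i}}_{\p{n,i} \in \Gamma}$ is an isometric isomorphism of $\B$ onto the unweighted $\ell^2$-direct sum of copies of $L^2\p{\R^{d+1}}$ indexed by $\Gamma$. Using an orthonormal basis of $\B$ adapted to this decomposition, I obtain
\begin{equation*}
\textup{tr}_{\B}\p{\int_\R h\p{t}\mathcal{M}_t\,\mathrm{d}t} = \sum_{\p{n,i} \in \Gamma} \textup{tr}_{L^2}\p{H^{n,i}_{n,i}},
\end{equation*}
where $H^{n,i}_{n,i} = \psi_{\Theta,n,i}\p{D} \circ \int_\R h\p{t}\mathcal{L}_t\,\mathrm{d}t \circ \tilde{\psi}_{\Theta,n,i}\p{D}$, and each block is trace class on $L^2\p{\R^{d+1}}$ by Lemma~\ref{lmtc}. (Note that a direct $L^2$-trace of $\int h\p{t}\mathcal{L}_t\,\mathrm{d}t$ is not available, since its Schwartz kernel contains a Dirac mass in the unstable directions; the block decomposition is what allows each piece to become smoothing.)

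\emph{Step 2: Block trace via Schwartz kernel.} Each $H^{n,i}_{n,i}$ is bracketed by Fourier multipliers whose inverse Fourier transforms are Schwartz functions, so its integral kernel is smooth and the trace equals its diagonal integral. Using the explicit formula $\mathcal{L}_t u = G_t \cdot \p{u \circ \mathcal{T}_t}$, the kernel reads
\begin{equation*}
K^{n,i}\p{x,w} = \int_\R \int_{\R^{d+1}} h\p{t}\, G_t\p{y}\, \mathbb{F}^{-1}\p{\psi_{\Theta,n,i}}\p{x-y}\, \mathbb{F}^{-1}\p{\tilde{\psi}_{\Theta,n,i}}\p{\mathcal{T}_t\p{y} - w}\,\mathrm{d}y\,\mathrm{d}t.
\end{equation*}
Computing $\int_{\R^{d+1}} K^{n,i}\p{x,x}\,\mathrm{d}x$, substituting $\zeta = x - y$ in the $x$-integral, recognizing a convolution in $\zeta$, and invoking $\psi_{\Theta,n,i} \tilde{\psi}_{\Theta,n,i} = \psi_{\Theta,n,i}$ (so that $\mathbb{F}^{-1}\p{\psi_{\Theta,n,i}} * \mathbb{F}^{-1}\p{\tilde{\psi}_{\Theta,n,i}} = \mathbb{F}^{-1}\p{\psi_{\Theta,n,i}}$), I obtain
\begin{equation*}
\textup{tr}_{L^2}\p{H^{n,i}_{n,i}} = \int_\R \int_{\R^{d+1}} h\p{t}\, G_t\p{y}\, \mathbb{F}^{-1}\p{\psi_{\Theta,n,i}}\p{\mathcal{T}_t\p{y} - y}\,\mathrm{d}y\,\mathrm{d}t.
\end{equation*}

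\emph{Step 3: Summation and fixed-point evaluation.} Summing over $\p{n,i} \in \Gamma$ and interchanging sum with integral, the Fourier factor becomes $\mathbb{F}^{-1}\p{1}$, which distributionally equals the Dirac mass $\delta_0$ at the origin of $\R^{d+1}$. Writing $y = \p{y_1, y_{d+1}} \in \R^d \times \R$, the equation $\mathcal{T}_t\p{y} - y = 0$ decouples into $p \circ F\p{y_1} = y_1$ together with $t = -F\p{y_1}_{d+1} = T\p{y_1}$. The standard change-of-variables formula for Dirac masses at transverse fixed points produces the Jacobian factor $\va{\det\p{I - p \circ D_{x_0} F}}^{-1}$, while $y_{d+1}$ remains a free integration variable giving rise to $\int_\R G_{T\p{x_0}}\p{x_0, y_{d+1}}\,\mathrm{d}y_{d+1}$. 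This yields the claimed formula.

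\emph{Main obstacle.} The delicate point is rigorously justifying both the interchange of $\sum_{\p{n,i} \in \Gamma}$ with the $\mathrm{d}y\,\mathrm{d}t$ integration and the distributional collapse $\sum_{\p{n,i} \in \Gamma} \mathbb{F}^{-1}\p{\psi_{\Theta,n,i}} = \delta_0$. I would truncate the sum to $n \leq N$, pair the resulting smooth partial sum with the compactly supported $\mathcal{C}^\infty$ function $\p{y,t} \mapsto h\p{t}G_t\p{y}$ precomposed with the affine-in-$t$ map $\p{y,t} \mapsto \mathcal{T}_t\p{y} - y$, and pass to the limit using the standard convergence of Paley--Littlewood partitions of unity to the Dirac mass in the sense of tempered distributions, together with the trace-norm convergence of the truncated block sum to the full operator provided by the proof of Lemma~\ref{lmschatten}.
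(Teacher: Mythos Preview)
Your proof is correct and follows essentially the same route as the paper: block-trace decomposition, Schwartz-kernel evaluation of each diagonal block giving $\int h\p{t} G_t\p{y}\,\mathbb{F}^{-1}\p{\psi_{\Theta,n,i}}\p{\mathcal{T}_t\p{y}-y}\,\mathrm{d}y\,\mathrm{d}t$, and then collapse of $\sum_{\p{n,i}}\mathbb{F}^{-1}\p{\psi_{\Theta,n,i}}$ to $\delta_0$ against the pushed-forward test function.

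Two small points where the paper is more explicit than your sketch. First, the transversality you invoke (non-vanishing of $\det\p{I - p\circ D_xF}$ at the fixed points) is a consequence of cone-hyperbolicity; the paper states this and, because the map $\p{y_1,t}\mapsto g\p{y_1,t}\coloneqq \mathcal{T}_t\p{y}-y$ is only a \emph{local} diffeomorphism on the support of the integrand, introduces a partition of unity $\p{\rho_a}_{a\in\mathcal{A}}$ and global diffeomorphisms $g_a$ agreeing with $g$ near $\textup{supp}\,\rho_a$ before changing variables. Second, your appeal to ``trace-norm convergence of the truncated block sum \dots\ provided by the proof of Lemma~\ref{lmschatten}'' is not quite what that lemma supplies: for the $\p{f,f}$ diagonal, the Schatten-class membership there is obtained via finite-rank approximation and singular-value bounds, not by trace-class convergence of the $N$-truncations. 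The paper instead writes $A_N=P_N\p{\int h\mathcal{M}}P_N$ with $P_N\to I$ strongly and invokes \cite[Theorem 11.3 p.89]{Gohb} to get $\textup{tr}\p{A_N}\to\textup{tr}\p{\int h\mathcal{M}}$, then performs the change of variables \emph{before} summing so that the limit $N\to\infty$ becomes pointwise evaluation of a smooth compactly supported function at the origin. Your pairing-with-a-test-function argument is exactly this once the change of variables is made, so the right fix is simply to drop the reference to trace-norm convergence and rely on the distributional convergence after pulling back through $g_a$.
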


\begin{proof}
For all $N \in \N$ write
\begin{equation*}
A_N = \sum_{\substack{\p{n,i},\p{\ell,j} \in \Gamma \\ 0 \leq n,\ell \leq N}} \iota_{n,i} \circ H_{n,i}^{\ell,j} \circ \pi_{\ell,j}
\end{equation*}
and notice that \cite[Theorem 11.3 p.89]{Gohb} implies that
\begin{equation*}
\textup{tr}\p{\s{\R}{h\p{t}\mathcal{M}_t}{t}} = \lim_{N \to +\infty} \textup{tr}\p{A_N}.
\end{equation*}
Moreover, using Lidskii's trace theorem, we see that for all $N \in \N$ we have
\begin{equation*}
\textup{tr}\p{A_N} = \sum_{\substack{\p{n,i} \in \Gamma \\ 0 \leq n \leq N}} \textup{tr}\p{H_{n,i}^{n,i}}.
\end{equation*}
Now, from \eqref{eqopnoy}, we see that
\begin{equation*}
\begin{split}
\textup{tr}\p{H_{n,i}^{n,i}} & = \int_{\R} \int_K h\p{t}G_t\p{w} \tilde{\psi}_{\Theta,n,i}\p{D}\p{\mathbb{F}^{-1}\p{\psi_{\Theta,n,i}}\p{\cdot -w}}\p{\mathcal{T}_t\p{w}} \mathrm{d}w \mathrm{d}t \\
           & = \int_{\R} \int_K h\p{t}G_t\p{w} \mathbb{F}^{-1}\p{\psi_{\Theta,n,i}}\p{\mathcal{T}_t\p{w} - w} \mathrm{d}w \mathrm{d}t.
\end{split}
\end{equation*}
(We used in the second line that if $\psi_{\Theta,n,i}\p{\xi} \neq 0$ then $\tilde{\psi}_{\Theta,n,i}\p{\xi} = 1$). Now let $M$ be such that $K \subseteq \left[-M,M\right]^{d+1}$ and $h$ is supported in $\left[-M,M\right]$. Define the map $g : \R^{d+1} \simeq \R^d \times \R \to \R^{d+1}$ by $g\p{x,t} = F\p{x} - \p{x,-t}$. Notice that for all $ \p{x,y} \in \R^{d+1}$ and $t \in \R$ we have
\begin{equation*}
\mathcal{T}_t\p{x,y} - \p{x,y} = g\p{x,t}.
\end{equation*}
Cone-hyperbolicity implies that the Jacobian of $g$ does not vanish. Consequently we can find a finite family $\p{\rho_a}_{a\in \mathcal{A}}$ of compactly supported $\mathcal{C}^\infty$ functions $\rho_a : \R^{d+1} \to \left[0,1\right]$ such that $\sum_{a \in \mathcal{A}} \rho_a\p{w} = 1$ for all $w \in \left[-M,M\right]^{d+1}$ and for all $a \in \mathcal{A}$ there is a $\mathcal{C}^\infty$ diffeomorphism $g_a : \R^{d+1} \to \R^{d+1}$ that coincides with $g$ on a neighborhood of the support of $\rho_a$. Thus, we have (with $w=(x,y)$)
\begin{equation*}
\begin{split}
\textup{tr}\p{H_{n,i}^{n,i}} & = \sum_{a \in \mathcal{A}} \int_{\left[-M,M\right]} \Bigg( \int_{\left[-M,M\right]^{d+1} } h\p{t} \rho_a\p{x,t} G_t\p{x,y}  \\ 
             & \qquad \qquad \qquad \qquad \qquad \qquad \qquad \times \mathbb{F}^{-1}\p{\psi_{\Theta,n,i}}\p{g_a\p{x,t}} \mathrm{d}x\mathrm{d}t \Bigg) \mathrm{d}y \\
             & = \sum_{a \in \mathcal{A}} \int_{\left[-M,M\right]} \Bigg( \int_{\R^{d+1}} h\p{t_a\p{z}} \rho_a \circ g_a^{-1}\p{z} \frac{ G_{t_a\p{z}}\p{x_{a}\p{z},y}}{\va{\det D_{g_a^{-1}\p{z}} g_a}} \\
             & \qquad \qquad \qquad \qquad \qquad \qquad \qquad \qquad \quad \times \mathbb{F}^{-1}\p{\psi_{\Theta,n,i}}\p{z} \mathrm{d}z \Bigg) \mathrm{d}y 
\end{split}
\end{equation*}
where $g_a^{-1}(z)=\p{x_a(z),t_a(z)}$. Since $\sum_{\p{n,i} \in \Gamma} \psi_{\Theta,n,i}= 1$ we find that for all $a \in \mathcal{A}$ we have
\begin{equation*}
\begin{split}
\lim_{N \to + \infty} \sum_{\substack{\p{n,i} \in \Gamma \\ 0 \leq n \leq N}} & \int_{\left[-M,M\right]} \Bigg( \int_{\R^{d+1}} h\p{t_a\p{z}} \rho_a \circ g_a^{-1}\p{z} \frac{ G_{t_a\p{z}}\p{x_{a}\p{z},y}}{\va{\det D_{g_a^{-1}\p{z}} g_a}} \\ & \qquad \qquad \qquad \qquad \times \mathbb{F}^{-1}\p{\psi_{\Theta,n,i}}\p{z} \mathrm{d}z \Bigg) \mathrm{d}y \\
   = & \int_{\left[-M,M\right]} h\p{t_a\p{0}} \rho_a\p{g_a^{-1}\p{0}} \frac{G_{t_a\p{0}}\p{x_{a}\p{0},y}}{\va{\det D_{g_a^{-1}\p{0}} g_a}} \mathrm{d}y.
\end{split} 
\end{equation*}
And thus
\begin{equation*}
\textup{tr}\p{\s{\R}{h\p{t}\mathcal{M}_t}{t}} = \sum_{a \in \mathcal{A}} \int_{\left[-M,M\right]} h\p{t_a\p{0}} \rho_a\p{g_a^{-1}\p{0}} \frac{G_{t_a\p{0}}\p{x_{a}\p{0},y}}{\va{\det D_{g_a^{-1}\p{0}} g_a}} \mathrm{d}y.
\end{equation*}
Now, notice that $g\p{x,t} = 0$ if and only if $p\circ F\p{x} = x$ and $t = T\p{x}$, thus
\begin{equation*}
\begin{split}
& \int_{\left[-M,M\right]} h\p{t_a\p{0}} \rho_a\p{g_a^{-1}\p{0}} \frac{G_{t_a\p{0}}\p{x_{a}\p{0},y}}{\va{\det D_{g_a^{-1}\p{0}} g_a}} \mathrm{d}y \\
& \qquad \qquad = \sum_{p \circ F\p{x} = x} \int_{\left[-M,M\right]} \rho_a\p{x,T\p{x}} h\p{T\p{x}} \frac{G_{T\p{x}}\p{x,y}}{\va{\det \p{I - p \circ D_x F}}} \mathrm{d}y.
\end{split}
\end{equation*}
Here we noticed that the Jacobian of $g$ do not depend on the last coordinate. Finally, summing over $a \in \mathcal{A}$ we get
\begin{equation*}
\begin{split}
& \textup{tr}\p{\s{\R}{h\p{t}\mathcal{M}_t}{t}} \\ & \quad \quad =\sum_{p \circ F\p{x} = x} \frac{h\p{T\p{x}}}{\va{\det \p{I - p \circ D_x F}}}\int_{\left[-M,M\right]} G_{T\p{x}}\p{x,y} \mathrm{d}y.
\end{split}
\end{equation*}
\end{proof}

We show Proposition~\ref{proplocop} by proving that $\p{\mathcal{L}_t}_{t \in \R}$ also satisfies the properties established for $\p{\mathcal{M}_t}_{t \in \R}$ in Lemmas \ref{lmconvstrong}, \ref{lmschatten} and~\ref{lmtrace}.

\begin{proof}[Proof of Proposition~\ref{proplocop}]
Recall that $\mathcal{Q}_\Theta$ (defined by \eqref{eqdefQTheta}) induces an isomorphism between $\h_{\Theta,\alpha}$ and $\mathcal{Q}_\Theta\p{\h_{\Theta,\alpha}}$, which is a closed subspace of $\B$. We denote by $\mathcal{Q}_\Theta^{-1}$ the inverse isomorphism (and similarly for $\mathcal{Q}_{\Theta'}$).

Now, if $\p{u_{n,i}}_{\p{n,i} \in \Gamma}$ is finitely supported (i.e. there are finitely many $\p{n,i} \in \Gamma$ such that $u_{n,i} \neq 0$) and such that for all $\p{n,i} \in \Gamma$ we have $u_{n,i} \in S^{\tilde{\upsilon}}$ (for some $\tilde{\upsilon} \in \left]\upsilon,\frac{1}{1 - \alpha} \right[$) we write $u = \sum_{\p{n,i} \in \Gamma} \tilde{\psi}_{\Theta,n,i}\p{D} u_{n,i}$ and notice that $\mathcal{L}_t u \in S^{\tilde{\upsilon}}$ for all $t \in \R$, and thus $\mathcal{L}_t u \in \h_{\Theta',\alpha}$. Consequently, $\mathcal{M}_t \p{u_{n,i}}_{\p{n,i} \in \Gamma} = Q_{\Theta'} \mathcal{L}_t u$ is in $\mathcal{Q}_{\Theta'}\p{\h_{\Theta',\alpha}}$. Since such elements are easily seen to be dense in $\B$, it appears that $\mathcal{M}_t$ sends $\B$ into $\mathcal{Q}_{\Theta'}\p{\h_{\Theta',\alpha}}$. We can consequently define the operator $\mathcal{Q}_{\Theta'}^{-1} \circ \mathcal{M}_t \circ \mathcal{Q}_\Theta$.

The calculation above also implies that $\mathcal{L}_t$ and $\mathcal{Q}_{\Theta'}^{-1} \circ \mathcal{M}_t \circ \mathcal{Q}_\Theta$ coincides on $\h_{\Theta,\alpha}$ (since the element of $S^{\tilde{\upsilon}}$ whose Fourier transform is compactly supported are dense in $\h_{\Theta,\alpha}$, and $\h_{\Theta,\alpha}$ and $\h_{\Theta',\alpha}$ are continuously contained in $\p{\mathcal{S}^{\tilde{\upsilon}}}'$, see Remark \ref{rmqdense}). Now, since $\mathcal{L}_t : \h_{\Theta,\alpha} \to \h_{\Theta',\alpha}$ is conjugated to $\mathcal{M}_t : \B \to \B$ (the conjugacy being independent of $t$), it inherits of all the relevant properties of $\mathcal{M}_t$, which ends the proof of Proposition~\ref{proplocop} with Lemmas \ref{lmconvstrong}, \ref{lmschatten} and~\ref{lmtrace} (for the computation of the trace, use Lidskii's trace theorem and the fact that $\mathcal{M}_t$ sends $\B$ into $\mathcal{Q}_{\Theta'}\p{\h_{\Theta',\alpha}}$, and not only let this subspace stable). 
\end{proof}

\section{Global space: first step}\label{gs1}

We are now ready to start the proof of Theorem~\ref{thmmain} using the tools from \S \ref{sectls} and \S \ref{sectlto}. So let $M$ be a compact $d+1$-dimensional $C^{\kappa,\upsilon}$ manifold, let $\p{\phi^t}_{t \in \R}$ be a $\mathcal{C}^{\kappa,\upsilon}$ Anosov flow on $M$, and let $g : M \to \C$ a $\mathcal{C}^{\kappa,\upsilon}\p{M}$ potential. We fix $t_0 > 0$ from now on. We will construct in this section two auxiliary Hilbert spaces $\widetilde{\h}$ and $\widetilde{\h}_0$. The space $\widetilde{\h}_0$ almost satisfies the conclusions of Theorem~\ref{thmmain} (this is the point of Proposition~\ref{proprecap}) but the Koopman operator $\mathcal{L}_t$ from \eqref{eqkoopman} is bounded from $\widetilde{\h}_0$ to itself only for large values of $t$ \emph{a priori}. This problem will be settled in \S \ref{gs2}. The first thing that we need to do in order to construct the spaces $\widetilde{\h}$ and $\widetilde{\h}_0$ is to show that, locally in space and for large times, the action of the flow $\p{\phi^t}_{t \in \R}$ behaves like the local model that we studied in \S \ref{sectlto}, this is the point of Lemma~\ref{lmdecoupe}. Indeed, we construct in Lemma~\ref{lmdecoupe} a system of admissible charts adapted to the dynamics of $\p{\phi^t}_{t \in \R}$ (this is a continuous-time analogue of \cite[Lemma 8.1]{lagtf}). We can then glue copies of the local spaces from \S \ref{sectls} to define the global spaces $\widetilde{\h}$ and $\widetilde{\h}_0$. Finally, we state and prove Proposition~\ref{proprecap}.

\begin{lm}\label{lmdecoupe}
There are a finite set $\Omega$, an integer $r$ and $t_1 \in \left]0,t_0\right[$ such that:
\begin{enumerate}[label=(\roman*)]
\item there is no periodic orbit of $\p{\phi^t}_{t \in \R}$ of length less than $3t_1$;
\item for all $\omega \in \Omega$ there is a $\mathcal{C}^{\kappa,\upsilon}$ chart $\kappa_\omega : U_\omega \to V_\omega$, where $U_\omega$ is an open subset of $M$ and $V_\omega$ an open subset of $\R^{d+1}$, such that $V_\omega = W_\omega \times \left]-t_1,t_1\right[$ for some open subset $W_\omega$ of $\R^d$, and for all $x \in U_\omega : D_x \kappa_\omega \p{V\p{x}} = e_{d+1}$;
\item $\bigcup_{\omega \in \Omega} U_\omega = M$;
\item for all $\omega \in \Omega$, there is a system of $r+2$ cones $\Theta_\omega = \p{C_{0,\omega},\dots,C_{r,\omega},C_{f,\omega}}$ in $\R^{d+1}$ (with respect to the direction $e_{d+1}$);
\item for every $\omega,\omega' \in \Omega$ and $t \in \left[t_0,3t_0\right]$ there is a $\mathcal{C}^{\kappa,\upsilon}$ immersion $F_{\omega,\omega',t} : \R^{d} \to \R^{d+1}$ such that the associated family $\p{\mathcal{T}^{\omega,\omega',t}_{t'}}_{t' \in \R}$ (defined by \eqref{eqdeffamily}) is indeed a family of diffeomorphisms and is cone-hyperbolic from $\Theta_\omega$ to $\Theta_{\omega'}$;
\item for all $\omega,\omega' \in \Omega,t \in \left[t_0,3t_0\right]$ and $t' \in \left]-t_1,t_1\right[$, if $x \in U_\omega$ is such that $\phi^{t+t'}\p{x} \in U_{\omega'}$ then $\mathcal{T}^{\omega,\omega',t}_{t'} \circ \kappa_{\omega}\p{x} = \kappa_{\omega'} \circ \phi^{t+t'}\p{x}$.
\end{enumerate}
\end{lm}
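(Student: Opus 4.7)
The plan is to build the required structure in four stages: a uniform lower bound on orbit periods, flow-box charts, cone systems, and extensions of the chart-wise expression of the flow to global diffeomorphisms of $\R^{d+1}$. The principal obstacle will be the coordinated design of the cones $\Theta_\omega$ and the integer $r$ so that cone-hyperbolicity holds for every pair $(\omega,\omega')$ and every $t \in [t_0,3t_0]$.

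For~(i), since $V$ is a nowhere-vanishing $\mathcal{C}^{\kappa',\upsilon}$ vector field (for any $\kappa' < \kappa$) on the compact manifold $M$, a flow-box argument yields $\delta > 0$ such that $\phi^t(x) \neq x$ for all $x \in M$ and $t \in (0,\delta]$: in any flow-box chart where $V$ is straightened to $e_{d+1}$, no orbit can return before leaving the box, and compactness provides a uniform $\delta$. Fix $t_1$ with $3t_1 < \min(\delta, t_0)$. For~(ii)--(iii), the flow-box theorem in the Denjoy--Carleman setting (see \cite{nqa, dcode} and the discussion in \S\ref{secDCC}) produces, around any point of $M$, a $\mathcal{C}^{\kappa',\upsilon}$ chart $\kappa : U \to W \times (-t_1, t_1)$ with $\kappa_* V = e_{d+1}$ and $W$ of arbitrarily small diameter; by compactness, finitely many such charts cover $M$, and we index them by $\Omega$.

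For~(iv), I would exploit the continuity on $M$ of the Anosov splitting $E^u \oplus E^s \oplus \R V$, equivalently of the dual cotangent splitting into $(E^u \oplus \R V)^\perp$, $(E^s \oplus \R V)^\perp$ and $(E^s \oplus E^u)^\perp$. Pushing forward by $\kappa_\omega$, and using the Euclidean structure of $\R^{d+1}$ to identify tangent and cotangent spaces, this becomes a splitting of $\R^{d+1}$ that is nearly constant over $V_\omega$ once $W_\omega$ is small enough. Accordingly, I would choose $C_{0,\omega}$ as a narrow closed $d_s$-dimensional cone around the image of the dual-stable subbundle, $C_{f,\omega} = \R e_{d+1}$ (which is the dual-flow direction, preserved because $\kappa_\omega$ straightens $V$), and a nested family $C_{1,\omega} \Supset \cdots \Supset C_{r,\omega}$ of $d_u$-dimensional cones tightening around the image of the dual-unstable subbundle, with $C_{1,\omega}$ wide enough for the covering condition of Definition~\ref{defsyst}. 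The integer $r$ is taken large so that each cone gap $C_{i,\omega} \setminus C_{i+2,\omega}$ is narrower than what a single application of $(D\phi^t)^{\text{tr}}$ for $t \in [t_0, 3t_0]$ can absorb; this constraint degrades as $t_0$ shrinks, which is why $r$ must depend on $t_0$.

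For~(v)--(vi), given $x \in U_\omega$ with $\phi^t(x) \in U_{\omega'}$ and $\kappa_\omega(x) = (y, y_{d+1})$, the semigroup property of $(\phi^t)$ combined with both charts straightening $V$ gives
\begin{equation*}
\kappa_{\omega'}(\phi^{t+t'}(x)) = F(y) + y_{d+1} e_{d+1} + t' e_{d+1}, \quad F(y) := \kappa_{\omega'}(\phi^t(\kappa_\omega^{-1}(y, 0))),
\end{equation*}
whenever the right-hand side lies in $V_{\omega'}$, which is the desired local form. I would extend $F$ to a global $\mathcal{C}^{\kappa',\upsilon}$ immersion $\R^d \to \R^{d+1}$ by interpolating with a fixed linear model through a $\mathcal{C}^{\kappa',\upsilon}$ cutoff, after shrinking the $U_\omega$ slightly so that the cutoff region does not intersect the overlap where~(vi) must hold. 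Cone-hyperbolicity is then inherited from the pointwise cotangent action of $\phi^t$, which expands along $(E^s \oplus \R V)^\perp$ by at least $\approx \lambda^{-t_0}$ and contracts along $(E^u \oplus \R V)^\perp$; since the cone conditions are open, the patched $F$ still satisfies them.
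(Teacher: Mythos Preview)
Your overall strategy matches the paper's: flow-box charts, cones modeled on the Anosov splitting, and extension of the chart-wise flow map by interpolation with an affine model. However, there are two genuine gaps.

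First, the choice $C_{f,\omega} = \R e_{d+1}$ cannot work: this cone has empty interior in $\R^{d+1}$, so condition (i) of Definition~\ref{defsyst} (namely $\bul{C_0} \cup \bul{C_1} \cup \bul{C_f} = \R^{d+1} \setminus \set{0}$) fails outright. The paper instead takes $C_f$ to be a genuine open-cored cone around the flow codirection, of the form $\set{\va{\xi_0} \geq \epsilon_s \va{\xi_s} \textrm{ and } \va{\xi_0} \geq \epsilon_u \va{\xi_u}}$ in coordinates adapted to the splitting; this has nonempty interior yet is still one-dimensional in the sense of Definition~\ref{defsyst}. The apertures $\epsilon_s,\epsilon_u$ must then be chosen compatibly with those of $C_0$ and the $C_i$ and with the hyperbolicity constant $\lambda^{t_0}$, and the paper carries out this bookkeeping explicitly (via a Mather metric making the splitting orthogonal and a chain of inequalities fixing $\gamma,\gamma_i,\tilde\epsilon_u,\tilde\epsilon_s$ and finally $r$).

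Second, your extension step for (v)--(vi) hides the main difficulty. Cone-hyperbolicity is a condition on $D_x\mathcal{T}_t^{\textup{tr}}$ at \emph{every} $x \in \R^{d+1}$, so the interpolated map must have derivative uniformly close to a cone-hyperbolic linear map throughout the cutoff region. This forces the region where the genuine $F$ lives to be small relative to the $\mathcal{C}^2$ size of $\kappa_{\omega'}\circ\phi^t\circ\kappa_\omega^{-1}$, a smallness that depends on $(\omega,\omega',t)$; ``shrinking the $U_\omega$ slightly'' and ``a fixed linear model'' do not address this. The paper resolves it by a two-level compactness argument: first cover $M$ by charts $\kappa_{x_i}$; then, for each base point $y$ in each chart and each discretised time $t_0+kt_2$, choose $\eta_{i,j,k,y}$ small enough that interpolation of the true map with its \emph{affine approximation at $y$} is $\mathcal{C}^1$-close to that affine map (hence cone-hyperbolic by openness, and a global diffeomorphism by Hadamard--Lévy); finally cover each chart by finitely many balls $B(y_{i,\ell},\tilde\eta_{i,y_{i,\ell}}/2)$. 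The resulting $\Omega$ indexes these small balls, not the original charts.
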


\begin{proof}
Choose a Mather metric $\va{\cdot}_x$ on $M$ (see \cite{Mather}). This metric makes the splitting
\begin{equation}\label{eqdecomp}
T_x M = E_x^u \oplus E_x^s \oplus \R V\p{x} 
\end{equation}
orthogonal and is H\"older-continuous. Moreover, $\va{V\p{x}}_x = 1$ for all $x \in M$ and for all $t \geq 0$ we have $\n{\left. D_x \phi^{t} \right|_{E_x^s}} \leq \lambda^{-t}$ and $\n{\left. D_x \phi^{-t} \right|_{E_x^u}} \leq \lambda^{-t}$ (for the induced norm, $\lambda >1$).

Choose $\gamma >0$ such that 
\begin{equation*}
\lambda^{-2 t_0} \p{\gamma^2 + 1} < 1.
\end{equation*}
Then choose $\gamma_1 \in \left] \frac{1}{\gamma}, \frac{\lambda^{\frac{t_0}{2}}}{\gamma} \right[$ and define for all $i \geq 2$ the number $\gamma_i = \lambda^{-\frac{t_0\p{i-1}}{2} }\gamma_1$. Now choose $r$ large enough so that
\begin{equation*}
\frac{\lambda^{2 t_0}}{ 1 + \gamma_{r-1}^2} > 1.
\end{equation*}
Since $\gamma \gamma_1 \lambda^{-\frac{5 t_0}{8}} < 1$, we may choose $\tilde{\epsilon}_u>0$ and $\tilde{\epsilon}_s >0$ such that
\begin{equation*}
\tilde{\epsilon}_u > \lambda^{-\frac{t_0}{2}} \gamma_1 \tilde{\epsilon}_s \textrm{ and } \tilde{\epsilon}_s > \gamma \lambda^{-\frac{t_0}{8}} \tilde{\epsilon}_u.
\end{equation*}
and small enough so that
\begin{equation*}
\lambda^{-2 t_0} \p{\tilde{\epsilon}_s^2 + \gamma^2 + 1} < 1
\end{equation*}
and
\begin{equation*}
\frac{\lambda^{2 t_0}}{ 1 + \gamma_{r-1}^2 + \tilde{\epsilon}_u^2} > 1.
\end{equation*}
Finally, set $\epsilon_u = \lambda^{-\frac{t_0}{8}} \tilde{\epsilon}_u$ and $\epsilon_s = \lambda^{-\frac{t_0}{2}} \tilde{\epsilon}_s$.

Now, for all $x \in M$, if $\xi \in T_x M$ write $\xi = \xi_u + \xi_s + \xi_0$ the decomposition of $\xi$ with respect to \eqref{eqdecomp}, and define the cones $C_f\p{x}$ and $C_i\p{x}$, for $i \in \N$ by
\begin{equation*}
C_0\p{x} = \set{ \xi \in T_x M : \va{\xi_u}_x \leq \gamma \va{\xi_s}_x \textrm{ and } \va{\xi_0}_x \leq \tilde{\epsilon}_s \va{\xi_s}_x},  
\end{equation*}
\begin{equation}\label{eqdefCix}
C_i\p{x} = \set{ \xi \in T_x M : \va{\xi_s}_x \leq \gamma_i \va{\xi_u}_x \textrm{ and } \va{\xi_0}_x \leq \lambda^{- \frac{\p{i-1} t_0}{4}} \tilde{\epsilon}_u \va{\xi_u}_x}
\end{equation}
for $i \in \N^*$ and 
\begin{equation*}
C_f\p{x} = \set{\xi \in T_x M : \va{\xi_0}_x \geq \epsilon_s \va{\xi_s}_x \textrm{ and } \va{\xi_0}_x \geq \epsilon_u \va{\xi_u}_x}.
\end{equation*}
Notice that all these cones depend Hölder-continuously on $x$. We will see that our choice of parameter ensures that for all $x \in M, \Theta\p{x} = \left(C_0\p{x},\dots,C_r\p{x}, \right.$ $\left. C_f\p{x}\right)$ is a system of $r+2$ cones with respect to the direction $V\p{x}$. Indeed:
\begin{enumerate}[label=(\roman*)]
\item if $\xi \in T_xM \setminus C_f(x)$, since $\gamma \gamma_1 > 1$, we have either $\va{\xi_u}_x < \gamma \va{\xi_s}_x$ or $\va{\xi_s}_x < \gamma_1 \va{\xi_u}_x$. In the first case, either $\va{\xi_0}_x < \tilde{\epsilon}_s \va{\xi_s}_x$, in which case $\xi \in \bul{C_0\p{x}}$, or $\va{\xi_0}_x \geq \tilde{\epsilon}_s \va{\xi_s}_x$, which implies $\xi \in \bul{C_f\p{x}}$, since $\epsilon_s < \tilde{\epsilon}_s$ and $ \va{\xi_0}_x \geq \frac{\tilde{\epsilon}_s}{\gamma} \va{\xi_u}_x > \epsilon_u \va{\xi_u}_x$. Similarly, we can see that in the second case either $\xi \in \bul{C_1\p{x}}$ or $\xi \in \bul{C_f\p{x}}$;
\item if $\xi \in C_f\p{x}$ then $\va{\xi_0}_x \geq \frac{1}{\sqrt{1 + \epsilon_u^{-2} + \epsilon_s^{-2} }} \va{\xi}$, which implies that $C_f\p{x}$ is one dimensional;
\item if $\xi \in C_0\p{x}$ then $\va{\xi}_{x} \leq \sqrt{1 + \gamma^2 + \tilde{\epsilon}_s^2} \va{\xi_s}$ and thus $C_0\p{x}$ is $d_s$-dimensional, where $d_s$ is the dimension of $E_x^s$, for the same reason $C_i\p{x}$ is $d_u$-dimensional for $i \in \set{1,\dots,r}$;
\item $C_{i+1}\p{x} \Subset C_i\p{x}$ for $i \in \set{1,\dots,r-1}$ because $\gamma_{i+1} < \gamma_i$ and $\lambda^{-\frac{i t_0}{2}} \tilde{\epsilon}_u < \lambda^{-\frac{\p{i-1} t_0}{2}} \tilde{\epsilon}_u$;
\item $C_0\p{x} \cap C_2\p{x} = \set{0}$ because $\gamma \gamma_2 < 1$ and $C_f\p{x} \cap C_2\p{x} = \set{0}$ because $\frac{\lambda^{\frac{t_0}{4}} \epsilon_u}{\tilde{\epsilon}_u} = \lambda^{\frac{t_0}{8}} >1$.
\end{enumerate}

Our choice of parameter also ensures that for all $t \geq t_0$ and all $x \in M$ (with $\Lambda = \lambda^{t_0} \min\p{\p{\tilde{\epsilon}_u^2 + \gamma_{r-1}^2 + 1}^{-\frac{1}{2}}, \p{\tilde{\epsilon}_s^2 + \gamma^2 + 1}^{-\frac{1}{2}}} > 1$):
\begin{itemize}
\item for all $i \in \set{1,\dots,r}$ we have $ \p{D_x \phi^t}^{\textup{tr}} \p{C_i\p{\phi^t\p{x}}} \subseteq C_{i+4}\p{x}$ because $\lambda^{-2t} \gamma_i \leq \gamma_{i+4}$ and $\frac{\lambda^{-t}\lambda^{- \frac{\p{i-1}t_0}{4}} \tilde{\epsilon}_u}{\lambda^{- \frac{\p{i+3}t_0}{4} \tilde{\epsilon}_u}} < 1$;
\item $\p{D_x \phi^t}^{\textup{tr}} \p{C_f\p{\phi^t\p{x}}} \cap C_0\p{x} = \set{0}$ because $\epsilon_s \lambda^t > \tilde{\epsilon}_s$;
\item for all $ \xi \in C_{r-1}\p{\phi^t\p{x}}$ we have $\va{ \p{D_x \phi^t}^{\textup{tr}} \p{\xi}}_x \geq \Lambda \va{\xi}_{\phi^t\p{x}}$;
\item for all $\xi \in T_{\phi^t\p{x}}M$ such that $ \p{D_x \phi^t}^{\textup{tr}}\p{\xi} \in C_0\p{x}$ we have the inequality $\va{\p{D_x \phi^t}^{\textup{tr}}\p{\xi}}_x \leq \Lambda^{-1} \va{\xi}_{\phi^t\p{x}}$.
\end{itemize}

Then, for every $x \in M$, we may choose a $\mathcal{C}^{\kappa,\upsilon}$ chart $\kappa_x : V_x \to W_x = B\p{0,\delta_x} \times \left]-t_x,t_x\right[$ such that $\kappa_x\p{x} = 0$, the map $D_x \kappa_x: T_x M \to \R^{d+1}$ is an isometry and, for every $y \in V_x$, we have $D_y \kappa_x \p{V\p{y}} = e_{d+1}$ (we can require the last two points simultaneously because $\va{V\p{x}}_x = 1$). For every $x \in M$, choose a system of $r+2$ cones $\Theta_x = \p{C_{0,x},\dots,C_{r,x},C_{f,x}}$ such that $ D_x \kappa_x^{\textup{tr}}\p{C_{f,x}} \Subset C_f\p{x}$,$ D_x \kappa_x^{\textup{tr}}\p{C_{0,x}} \Subset C_0\p{x}$, and, for every $i \in \set{1,\dots,r}$, we have $\p{ D_x \kappa_x^{\textup{tr}}}^{-1}\p{C_{i+1}\p{x}} \Subset C_{i,x} \Subset \p{ D_x \kappa_x^{\textup{tr}}}^{-1}\p{C_i\p{x}}$. Here we recall that the definition \eqref{eqdefCix} of $C_i(x)$ is valid for any $i \geq 1$. Up to making $V_x$ smaller, we may ensure that for all $y \in V_x$ we have
\begin{equation}\label{eqinccone1}
 D_y \kappa_x^{\textup{tr}}\p{C_{f,x}} \Subset C_f\p{y},  D_y \kappa_x^{\textup{tr}}\p{C_{0,x}} \Subset C_0\p{y},
\end{equation}
for all $i \in \set{1,\dots,r}$ we have
\begin{equation}\label{eqinccone2}
\p{ D_y \kappa_x^{\textup{tr}}}^{-1}\p{C_{i+1}\p{y}} \Subset C_{i,x} \Subset \p{ D_y \kappa_x^{\textup{tr}}}^{-1}\p{C_i\p{y}},
\end{equation}
and, in addition,
\begin{equation}\label{eqquasiiso}
\n{D_y \kappa_x} \leq 1 + \epsilon \textrm{ and } \n{\p{D_y \kappa_x}^{-1}}^{-1} \geq 1 - \epsilon,
\end{equation}
where $\epsilon > 0$ is small enough so that
\begin{equation*}
\frac{1-\epsilon}{1 + \epsilon} \Lambda > 1.
\end{equation*}
By compactness of $M$, there are $x_1,\dots,x_n$ such that $M$ is covered by the open sets $\kappa_{x_i}^{-1}\p{B\p{0,\frac{\delta_{x_i}}{2}} \times \left]-\frac{t_{x_i}}{100},\frac{t_{x_i}}{100} \right[}$ for $i = 1,\dots,n$. Let $t_1 = \min_{i=1,\dots,n} \frac{t_{x_i}}{100}$. By cutting the charts into pieces and translating them, we may assume that for every $i = 1,\dots,n$ we have $t_{x_i} = 100 t_1$ (this could make us lose the fact that $D_{x_i} \kappa_{x_i}$ is an isometry, but this is of no harm since \eqref{eqquasiiso} remains true and that is all we need). Notice that for such a $t_1$ there is no periodic orbit of $\p{\phi^t}_{t \in \R}$ of length less than $3t_1$. If necessary, we reduce the value of $t_1$ so that $t_1 < t_0$. Set $t_2 = 30 t_1$ and let $N = \left\lceil \frac{2 t_0}{t_2} \right\rceil$. Choose $\chi : \R^d \to \left[0,1\right]$ Gevrey, compactly supported and such that $\chi\p{y} = 1 $ if $\va{y} \leq 1$.

If $i,j \in \set{1,\dots,n}$, if $k \in \set{0,\dots,N}$, and if $y \in \overline{B}\p{0,\frac{\delta_{x_i}}{2}}$ are such that the point $\phi^{t_0 + k t_2}\p{\kappa_{x_i}^{-1}\p{y,0}}$ lies in $\kappa_{x_j}^{-1}\p{B\p{0,\frac{\delta_{x_j}}{2}}} \times \left[-t_2,t_2\right] $, and $\eta > 0$ is small enough define $F_{i,j,k,y,\eta} : \R^d \to \R^{d+1} $ by (here we see $\R^d \simeq \R^d \times \set{0}$ as a subset of $\R^{d+1}$)
\begin{equation*}
\begin{split}
& F_{i,j,k,y,\eta}\p{z} = \chi\p{\frac{z-y}{\eta}} \kappa_{x_j} \circ \phi^{t_0 + k t_2} \circ \kappa_{x_i}^{-1}\p{z} \\ & \qquad \qquad \qquad \qquad  + \p{1 - \chi\p{\frac{z-y}{\eta}}} \times \bigg( \kappa_{x_j} \circ \phi^{t_0 + k t_2} \circ \kappa_{x_i}^{-1}\p{y} \\ & \qquad \qquad \qquad \qquad \qquad \qquad \qquad \qquad \qquad \qquad + D_y \p{\kappa_{x_j} \circ \phi^{t_0 + k t_2} \circ \kappa_{x_i}^{-1}}\p{z-y} \bigg)
\end{split}
\end{equation*}
Notice that $F_{i,j,k,y,\eta}$ coincides with $\kappa_{x_j} \circ \phi^{t_0 + k t_2} \circ \kappa_{x_i}^{-1}$ on $B\p{0,\eta}$, and that it can be made arbitrarily close in the $\mathcal{C}^1$ topology to the affine map $ z \mapsto \kappa_{x_j} \circ \phi^{t_0 + k t_2} \circ \kappa_{x_i}^{-1}\p{y} + D_y \p{\kappa_{x_j} \circ \phi^{t_0 + k t_2} \circ \kappa_{x_i}^{-1}}\p{z-y}$ by taking $\eta = \eta_{i,j,k,y}$ small enough. In particular, $F_{i,j,k,y,\eta}$ defines a cone-hyperbolic family of diffeomorphisms $\p{\mathcal{T}_{i,j,k,y,\eta,t'}}_{t' \in \R}$  from $\Theta_{x_i}$ to $\Theta_{x_j}$ (the cone-hyperbolicity follows from the properties of the differential of $\phi^{t_0 + k t_2}$ proven above and the quasi-isometry property \eqref{eqquasiiso} of the charts, to see that the $\mathcal{T}_{i,j,k,y,\eta,t'}$'s are diffeomorphisms just notice that they are proper local diffeomorphism and hence covering of $\R^{d+1}$ by itself). Define $\tilde{\eta}_{i,y} = \min_{\substack{k=0,\dots,N \\ j = 1,\dots,n}} \eta_{i,j,k,y}$ (if $j$ is such that $\phi^{t_0 + k t_2}\p{\kappa_{x_i}^{-1}\p{y,0}} \notin \kappa_{x_j}^{-1}\p{B\p{0,\frac{\delta_{x_j}}{2}}} \times \left[-t_2,t_2\right] $, i.e. there is no allowed transitions from $i$ to $j$ at the considered time, set $\eta_{i,j,k,y} = \infty$ and take for $F_{i,j,k,y, \tilde{\eta}_{i,y}}$ any $\mathcal{C}^{\kappa,\upsilon}$ map that defines a cone-hyperbolic family of diffeomorphisms\footnote{There always is a linear such map.} from $\Theta_{x_i}$ to $\Theta_{x_j}$) .

Notice also that for all $\p{z,z'} \in B\p{0,\tilde{\eta}_{i,y}} \times \left]-t_2,t_2\right[$ and all $t,t' \in \left]-t_2,t_2\right[$ we have
\begin{equation*}
\begin{split}
\kappa_{x_j} \circ \phi^{t_0 + k t_2 + t + t'} \circ \kappa_{x_i}^{-1} \p{z,z'} & =\kappa_{x_j} \circ \phi^{t_0 + k t_2 + t + t' + z'} \circ \kappa_{x_i}^{-1} \p{z,0} \\
    & = F_{i,j,k,y,\tilde{\eta}_{i,y}}\p{z} + z' e_{d+1} + \p{t+t'} e_{d+1}\\
    & = \mathcal{T}^{i,j,k,y,t}_{t'} \p{z,z'},
\end{split}
\end{equation*}
where $\p{\mathcal{T}^{i,j,k,y,t}_{t'}}_{t' \in \R}$ denotes the family of cone-hyperbolic diffeomorphisms associated with $F_{i,j,k,y,\tilde{\eta}_{i,y}} + t e_{d+1}$.

By compactness of $\overline{B}\p{0,\frac{\delta_{x_i}}{2}}$, we may find $y_{i,1},\dots ,y_{i,m_i} \in \overline{B}\p{0,\frac{\delta_{x_i}}{2}}$ such that $\overline{B}\p{0,\frac{\delta_{x_i}}{2}} \subseteq \bigcup_{\ell=1}^{m_i} B\p{y_{i,\ell}, \frac{\tilde{\eta}_{i,y_{i,\ell}}}{2}}$. 

Finally, set
\begin{equation*}
\Omega = \set{\p{i,\ell} : i \in \set{1,\dots,n}, \ell \in \set{1,\dots,m_i}},
\end{equation*}
and, for all $\omega = \p{i,\ell} \in \Omega$,
\begin{equation*}
\begin{split}
V_\omega = B\p{0,\frac{\tilde{\eta}_{i, y_{i,\ell}}}{2}} \times \left]-t_1,t_1\right[, & \quad U_\omega = \kappa_{x_i}^{-1}\p{B\p{0,\frac{\tilde{\eta}_{i, y_{i,\ell}}}{2}} \times \left]-t_1,t_1\right[},\\ \kappa_\omega = \left. \kappa_{x_i} \right|_{U_\omega},& \quad \Theta_{\omega} = \Theta_{x_i}.
\end{split}
\end{equation*}
If $\omega' = \p{j,\ell'} \in \Omega$ and $t \in \left[t_0 + kt_2,t_0 + \p{k+1} t_2\right]$ let
\begin{equation*}
F_{\omega,\omega',t} = F_{i,j,k,y_{i,\ell},\tilde{\eta}_{i,y_{i,\ell}}} + \p{t-k t_2} e_{d+1}.
\end{equation*}
\end{proof}

Choose a Gevrey partition of unity $\p{\varphi_{\omega}}_{\omega \in \Omega}$ subordinated to the open cover $\p{U_\omega}_{\omega \in \Omega}$. Fix $\alpha \in \left] \frac{\upsilon- 1}{\upsilon},1\right[$  (if $\upsilon < 2$, we choose $\alpha < \frac{1}{2}$) and choose $\tilde{\upsilon} \in \left]\upsilon,\frac{1}{1 -\alpha}\right[$. Then define
\begin{equation*}
\begin{array}{ccccc}
\Phi & : & \mathcal{D}^{\tilde{\upsilon}}\p{M}' & \to & \oplus_{\omega \in \Omega} \p{\mathcal{S}^{\tilde{\upsilon}}}' \\
 & & u & \mapsto & \p{\p{\varphi_\omega u}\circ \kappa_w^{-1}}
\end{array}
\end{equation*}
and
\begin{equation*}
\begin{array}{ccccc}
S & : & \oplus_{\omega \in \Omega} \p{\mathcal{S}^{\tilde{\upsilon}}}' & \to & \mathcal{D}^{\tilde{\upsilon}}\p{M}' \\
 & & \p{u_\omega}_{\omega \in \Omega} & \mapsto & \sum_{\omega \in \Omega} \p{h_\omega u_\omega} \circ \kappa_\omega,
\end{array}
\end{equation*}
where $h_\omega : \R^{d+1} \to \left[0,1\right]$ is Gevrey, supported in $W_\omega$, and takes value $1$ on $\kappa_\omega\p{\textup{supp } \varphi_\omega}$. Notice that $S \circ \Phi$ is the identity of $\mathcal{D}^{\tilde{\upsilon}}\p{M}'$. It can be verified that $\Phi$ and $S$ are continuous.

We may now define the first version of the global Hilbert space (the final one will be introduced in \S \ref{gs2}). Define
\begin{equation*}
\h_{\Omega} = \oplus_{\omega \in \Omega} \h_{\Theta_\omega,\alpha}
\end{equation*}
and
\begin{equation*}
\widetilde{\h} = \set{ u \in \mathcal{D}^{\tilde{\upsilon}}\p{M}' : \Phi\p{u} \in \h_{\Omega}},
\end{equation*}
endowed with the norm
\begin{equation*}
\n{u}_{\widetilde{\h}} = \n{\Phi\p{u}}_{\h_{\Omega}} = \sqrt{\sum_{\omega \in \Omega} \n{\p{\varphi_\omega u} \circ \kappa_\omega^{-1}}_{\h_{\Theta_\omega,\alpha}}^2}.
\end{equation*}

\begin{prop}
$\widetilde{\h}$ is a separable Hilbert space (equivalently, $\Phi\p{\widetilde{\h}}$ is closed in $\h_{\Omega}$) that does not depend on the choice of $\tilde{\upsilon}$. The inclusion of $\widetilde{\h}$ in $\mathcal{D}^{\tilde{\upsilon}}\p{M}'$ is continuous, and $\mathcal{C}^{\infty,\tilde{\upsilon}}\p{M}$ is continuously contained in $\widetilde{\h}$.
\end{prop}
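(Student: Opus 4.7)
The plan is to leverage the identity $S \circ \Phi = \mathrm{id}_{\mathcal{D}^{\tilde{\upsilon}}\p{M}'}$ together with the continuity of both $\Phi$ and $S$ already recorded above, so as to pull everything back to the local Hilbert spaces $\h_{\Theta_\omega,\alpha}$ whose properties have been established in Proposition~\ref{propbaseh}. No serious obstacle is expected: the construction has been set up precisely so that the four claims become routine consequences of Proposition~\ref{propbaseh} and the factorisation $\mathrm{id} = S\circ \Phi$. The only step that requires any genuine verification is completeness, i.e.\ that $\Phi\p{\widetilde{\h}}$ is closed in $\h_\Omega$.

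For completeness, I would take a Cauchy sequence $\p{u_n}$ in $\widetilde{\h}$, so that $\Phi\p{u_n}$ converges to some $v \in \h_\Omega$. The continuous inclusions $\h_{\Theta_\omega,\alpha} \hra \p{\mathcal{S}^{\tilde{\upsilon}}}'$ from Proposition~\ref{propbaseh} upgrade this to convergence in $\bigoplus_\omega \p{\mathcal{S}^{\tilde{\upsilon}}}'$, so by continuity of $S$ we get $u_n = S\p{\Phi\p{u_n}} \to S\p{v}$ in $\mathcal{D}^{\tilde{\upsilon}}\p{M}'$. Setting $u = S\p{v}$ and applying the continuous map $\Phi$ yields $\Phi\p{u} = v$, so $u \in \widetilde{\h}$ and $\n{u_n - u}_{\widetilde{\h}} = \n{\Phi\p{u_n} - v}_{\h_\Omega} \to 0$. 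Separability is inherited, since $\Phi$ embeds $\widetilde{\h}$ isometrically into the finite direct sum $\h_\Omega$ of separable Hilbert spaces.

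The continuous inclusion $\widetilde{\h} \subseteq \mathcal{D}^{\tilde{\upsilon}}\p{M}'$ then follows from factorising the identity as $\widetilde{\h} \xrightarrow{\Phi} \h_\Omega \hra \bigoplus_\omega \p{\mathcal{S}^{\tilde{\upsilon}}}' \xrightarrow{S} \mathcal{D}^{\tilde{\upsilon}}\p{M}'$, each arrow being continuous. For the continuous inclusion $\mathcal{C}^{\infty,\tilde{\upsilon}}\p{M} \subseteq \widetilde{\h}$, each component $\p{\varphi_\omega u} \circ \kappa_\omega^{-1}$ of $\Phi\p{u}$ is a compactly supported $\mathcal{C}^{\infty,\tilde{\upsilon}}$ function on $\R^{d+1}$ and hence trivially belongs to $\mathcal{S}^{\tilde{\upsilon}}$, which is continuously embedded in $\h_{\Theta_\omega,\alpha}$ by Proposition~\ref{propbaseh}; overall continuity then reduces to the continuity of multiplication by $\varphi_\omega$ and pullback by $\kappa_\omega^{-1}$ on ultradifferentiable functions, recorded in \S~\ref{secDCC}.

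Finally, for the independence of $\tilde{\upsilon} \in \left]\upsilon,\tfrac{1}{1-\alpha}\right[$: Proposition~\ref{propbaseh} asserts that the local spaces $\h_{\Theta_\omega,\alpha}$ and their norms do not depend on $\tilde{\upsilon}$, so the Hilbert norm $\n{\cdot}_{\widetilde{\h}}$ is intrinsically defined; only the ambient space $\mathcal{D}^{\tilde{\upsilon}}\p{M}'$ in which we realise $\widetilde{\h}$ could depend on $\tilde{\upsilon}$. For $\tilde{\upsilon}_1 < \tilde{\upsilon}_2$ admissible, one inclusion $\widetilde{\h}_{\tilde{\upsilon}_2} \subseteq \widetilde{\h}_{\tilde{\upsilon}_1}$ is immediate from $\mathcal{D}^{\tilde{\upsilon}_2}\p{M}' \subseteq \mathcal{D}^{\tilde{\upsilon}_1}\p{M}'$; for the reverse, given $u \in \widetilde{\h}_{\tilde{\upsilon}_1}$ we reapply the factorisation $u = S\p{\Phi\p{u}}$ but with $S$ viewed as continuous into $\mathcal{D}^{\tilde{\upsilon}_2}\p{M}'$, which is legitimate because $\h_{\Theta_\omega,\alpha}$ also embeds into $\p{\mathcal{S}^{\tilde{\upsilon}_2}}'$ by Proposition~\ref{propbaseh}.
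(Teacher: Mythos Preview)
Your proof is correct and follows essentially the same strategy as the paper. The only cosmetic difference is that for closedness the paper characterises $\Phi(\widetilde{\h})$ as the fixed-point set $\{u \in \h_\Omega : \Phi S u = u\}$ and invokes continuity of $\Phi S$ into $\bigoplus_\omega (\mathcal{S}^{\tilde{\upsilon}})'$, whereas you unroll the same idea as a Cauchy-sequence argument; both hinge on $S\circ\Phi = \mathrm{id}$ in exactly the same way. Your treatment of the independence of $\tilde{\upsilon}$ is in fact more explicit than the paper's (which simply asserts it), and is correct.
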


\begin{proof}
To see that $\Phi\p{\widetilde{\h}}$ is closed in $\h_\Omega$, just notice that
\begin{equation*}
\Phi\p{\widetilde{\h}} = \set{ u \in \h_\Omega : \Phi S u = u},
\end{equation*}
and that the inclusion of $\h_\Omega $ in $\bigoplus_{\omega \in \Omega} \p{\mathcal{S}^{\tilde{\upsilon}}}'$ is continuous. The inclusion of $\widetilde{\h}$ in $\mathcal{D}^{\tilde{\upsilon}}\p{M}'$ may be written as the composition of $\Phi$, the inclusion of $\h_\Omega$ in $\bigoplus_{\omega \in \Omega} \p{\mathcal{S}^{\tilde{\upsilon}}}'$ and $S$. It is thus continuous. Finally, $\Phi$ sends $\mathcal{C}^{\infty,\tilde{\upsilon}}\p{M}$ continuously into $\bigoplus_{\omega \in \Omega} \mathcal{S}^{\tilde{\upsilon}}$, which is continuously contained in $\h_{\Omega}$, thus $\mathcal{C}^{\infty,\tilde{\upsilon}}\p{M}$ is contained in $\widetilde{\h}$, the inclusion being continuous.
\end{proof}

Let $\widetilde{\h}_0$ be the closure\footnote{It could well be that $\widetilde{\h}_0 = \widetilde{\h}$, see Proposition~\ref{propbaseh}, but we do not need this fact} of $\mathcal{C}^{\infty,\tilde{\upsilon}}\p{M}$ in $\widetilde{\h}$. Recall from \S \ref{secDCC} that for each $t \in \R$ we may define the operator $\mathcal{L}_t$ from \eqref{eqkoopman} as an operator from $\mathcal{D}^{\tilde{\upsilon}}\p{M}'$ to itself. We start by proving that, for $t \geq t_0$, the operator $\mathcal{L}_t$ is bounded from $\widetilde{\h}$ to $\widetilde{\h}_0$.

\begin{prop}\label{propLtbounded}
For all $t \in \left[t_0,+\infty \right[$ the operator $\mathcal{L}_t$ is bounded from $\widetilde{\h}$ to $\widetilde{\h}_0$. Moreover,as an operator from $\widetilde{\h}$ to $\widetilde{\h}_0$, the operator $\mathcal{L}_t$ depends continuously on $t \in \left[t_0,+\infty\right[$ in the strong operator topology.
\end{prop}

\begin{proof}
We only need to prove the result for $t \in \left[t_0,3t_0\right]$, and then use the group property of $\p{\mathcal{L}_t}_{t \in \R}$. Recall indeed that $\widetilde{\h}_0$ is a closed subspace of $\widetilde{\h}$. For all $t \in \R$ define 
\begin{equation*}
\widetilde{\mathcal{L}}_t : \bigoplus_{\omega \in \Omega} \p{\mathcal{S}^{\tilde{\upsilon}}}' \to \bigoplus_{\omega \in \Omega} \p{\mathcal{S}^{\tilde{\upsilon}}}'
\end{equation*}
by $\widetilde{\mathcal{L}}_t = \Phi \circ \mathcal{L}_t \circ S$. The operator $\widetilde{\mathcal{L}}_t$ may be described via a matrix of operators $\p{\widetilde{\mathcal{L}}_{\omega,\omega',t}}_{\omega,\omega' \in \Omega}$, that is, we have
\begin{equation}\label{eqmatrix}
\widetilde{\mathcal{L}}_t \p{u_{\omega}}_{\omega \in \Omega} = \p{\sum_{\omega' \in \Omega} \widetilde{\mathcal{L}}_{\omega,\omega',t} u_{\omega'}}_{\omega \in \Omega}.
\end{equation}
Now, if $t \in \left[t_0,3t_0\right]$ and $t' \in \left]-t_1,t_1\right[$, then the operator $\widetilde{\mathcal{L}}_{\omega,\omega',t+t'}$ for $\omega,\omega' \in \Omega$ may be described as
\begin{equation}\label{eqecrloc}
\begin{split}
\widetilde{\mathcal{L}}_{\omega,\omega',t+t'} u\p{x} & = \varphi_\omega \circ \kappa_\omega^{-1}\p{x} e^{\int_0^{t+t'} g \circ \phi^\tau \p{\kappa_\omega^{-1}\p{x}} \mathrm{d}\tau} \\ & \quad \times h_{\omega'} \circ \kappa_{\omega'} \circ \phi^{t+t'} \circ \kappa_\omega^{-1}\p{x} u \circ \kappa_{\omega'} \circ \phi^{t+t'} \circ \kappa_\omega^{-1}\p{x} \\
 & = G_{\omega,\omega',t,t'}\p{x} u \circ \mathcal{T}^{\omega',\omega,t}_{t'}\p{x},
\end{split}
\end{equation}
where $\p{\mathcal{T}^{\omega',\omega,t}_{t'}}_{t' \in \R}$ is the family of diffeomorphisms associated to $F_{\omega',\omega,t}$ by \eqref{eqdeffamily}, and
\begin{equation*}
G_{\omega,\omega',t,t'}\p{x} = \varphi_\omega \circ \kappa_\omega^{-1}\p{x} e^{\int_0^{t+t'} g \circ \phi^{\tau} \p{\kappa_\omega^{-1}\p{x}} \mathrm{d}\tau} h_{\omega'} \circ \kappa_{\omega'} \circ \phi^{t+t'} \circ \kappa_\omega^{-1}\p{x}
\end{equation*}
properly extended by zero. We can then apply Proposition~\ref{proplocop} to prove that $\widetilde{\mathcal{L}}_{\omega,\omega',t+t'}$ is bounded from $\h_{\Theta_\omega',\alpha}$ to $\h_{\Theta_\omega,\alpha}$. Then $\widetilde{\mathcal{L}}_{t+t'}$ is bounded from $\h_\Omega$ to itself thanks to \eqref{eqmatrix}. Notice that if $u \in \bigoplus_{\omega \in \Omega} S^{\tilde{\upsilon}}$ then $\widetilde{\mathcal{L}}_{t+t'} u = \Phi\p{\mathcal{L}_{t+t'} \circ S u} \in \Phi\p{\widetilde{\h}_0}$. Thus, since $\bigoplus_{\omega \in \Omega} S^{\tilde{\upsilon}}$ is dense in $\h_\Omega$, the operator $\widetilde{\mathcal{L}}_{t+t'}$ sends $\h_\Omega$ into $\Phi\p{\widetilde{\h}_0}$. Denote by $\Psi : \Phi\p{\widetilde{\h}_0}  \to \widetilde{\h}_0$ the inverse of the isomorphism induced by $\Phi$, and notice that $\mathcal{L}_{t+t'}$ coincide on $\widetilde{\h}$ with $\Psi \circ \widetilde{\mathcal{L}}_{t+t'} \circ \Phi$, and is thus bounded from $\widetilde{\h}$ to $\widetilde{\h}_0$. Finally, from Proposition~\ref{proplocop}, we know that $\widetilde{\mathcal{L}}_{t+t'} : \h_{\Omega} \to \h_{\Omega}$ depends continuously on $t' \in \left]-t_1,t_1\right[$ in the strong operator topology, and consequently so does $\mathcal{L}_{t+t'} : \widetilde{\h} \to \widetilde{\h}_0$.
\end{proof}

We want now to prove Schatten properties for operators defined in term of the $\mathcal{L}_t$'s for $t \geq t_0$. To do so, it is convenient to introduce $\p{\psi_\ell}_{\ell \in \Z}$, a $\frac{t_1}{3} \Z$ invariant smooth partition of unity on $\R$ (that is, we have $\psi_\ell = \psi_0\p{\cdot - \ell \frac{t_1}{3}}$) such that $\psi_0$ is supported in $\left]-\frac{t_1}{2},\frac{t_1}{2}\right[$.

\begin{prop}\label{proprecap}
Assume $\upsilon < 2$. There is $\varpi \in \R$ with the following property: if $h : \R_+^* \to \C$ and $k \in \N$ satisfy
\begin{enumerate}[label = (\roman*)]
\item $h$ is supported in $\left[t_0,+ \infty\right[$;
\item $h$ is $k$th time differentiable and its $k$th derivatives has bounded variations;
\item there is a constant $C > 0$ such that for every $\ell \in \N$ we have
\begin{equation*}
\begin{split}
\n{\psi_0 h\p{\cdot + \ell \frac{t_1}{3}}}_{\mathcal{C}^{k-1}} + \n{\p{\psi_0 h\p{\cdot + \ell \frac{t_1}{3}}}^{(k)}}_{BV} \leq C e^{- \varpi \ell};
\end{split}
\end{equation*}
\end{enumerate}
then the operator
\begin{equation}\label{eqoperateursynthetique}
\begin{split}
 \int_{0}^{+ \infty} h(t) \mathcal{L}_t \mathrm{d}t : \widetilde{\h} \to \widetilde{\h}_0
\end{split}
\end{equation}
is in the Schatten class $S_p$ for every $p \geq 1$ such that $p > \frac{d+1}{k+1}$. Moreover, if $k > d$ and if we see \eqref{eqoperateursynthetique} as an operator from $\widetilde{\h}_0$ to itself, we have
\begin{equation*}
\begin{split}
\textup{tr}\p{\int_{t_0}^{+ \infty} h\p{t} \mathcal{L}_t \mathrm{d}t} = \sum_{\gamma} T_\gamma^{\#} \frac{h\p{T_\gamma}}{\va{\det\p{I - \mathcal{P}_\gamma}}} \exp\p{\int_\gamma g},
\end{split}
\end{equation*}
where the sum on the right-hand side runs over closed periodic orbits\footnote{Recall that $T_\gamma$ is the length of $\gamma$, while $T_\gamma^\#$ denotes its primitive length and $\mathcal{P}_\gamma$ is a linearized Poincaré map. We will see during the proof of the proposition that this sum converges.} $\gamma$ of the flow $\p{\phi^t}_{t \in \R}$.

Finally, if $\upsilon \geq 2$, it remains true (under the same assumptions) that the operator \eqref{eqoperateursynthetique} is compact.
\end{prop}

\begin{rmq}
Notice that if $h$ is a $\mathcal{C}^\infty$ function supported in $\left[t_0,+\infty\right[$, then $h$ clearly satisfies the conditions (i)-(iii) from Proposition \ref{proprecap}. This will be the main application of Proposition \ref{proprecap} in order to prove the trace formula (see Lemma \ref{lmlaplace}). However, we will also need to consider other functions $h$ in the proof of Lemma \ref{lmdiscretespectrum} and in the Appendix \ref{appdet}.
\end{rmq}

For the sake of the proof, we split Proposition \ref{proprecap} into Lemmas \ref{lmsumtr}, \ref{lmtrper} and~\ref{lmtracegs1}.

\begin{lm}\label{lmsumtr}
Under the assumption of Proposition \ref{proprecap}, the operator \eqref{eqoperateursynthetique} is in the Schatten class $S_p$ for every $p \geq 1$ such that $p > \frac{d+1}{k+1}$. If $\upsilon \geq 2$, it remains true that \eqref{eqoperateursynthetique} is compact.
\end{lm}

\begin{proof}
Let $p \geq 1$ be such that $p > \frac{d+1}{k+1}$. Choose $N$ large enough so that $N t_1 \geq t_0$, and write for all $\ell \geq N$
\begin{equation*}
\int_\R \psi_{\ell}\p{t} h\p{t} \mathcal{L}_t \mathrm{d}t = \mathcal{L}_{\frac{N t_1}{3}}^q \int_\R \psi_0\p{t} h\p{t + \ell \frac{t_1}{3}} \mathcal{L}_{\frac{r t_1}{3} + t} \mathrm{d}t,
\end{equation*}
where $\ell=qN+r$ with $q,r \in \Z$ and $N \leq r < 2N$ (notice that $q \geq 0$). Applying Proposition~\ref{proplocop} as in the proof of Proposition \ref{propLtbounded}, we see that the operator 
\begin{equation}\label{eqpiecedebase}
\int_\R \psi_0\p{t} h\p{t + \ell \frac{t_1}{3}} \mathcal{L}_{\frac{r t_1}{3} + t} \mathrm{d}t
\end{equation}
is in the Schatten class $S_p$, with norm in this class an $\mathcal{O}\p{e^{- \varpi \ell}}$ (there is a finite number of possible values for $r$). Thus the sum
\begin{equation}\label{eqsumtr}
\sum_{\ell \in \Z} \int_\R \psi_\ell\p{t} h\p{t} \mathcal{L}_t \mathrm{d}t
\end{equation}
converges in $S_p$ provided that $\varpi$ is large enough(there are a finite number of non-zero terms with $k < N$ that are also in $S_p$ thanks to Proposition~\ref{proplocop} since $h\p{t}$ vanishes for $t \leq t_0$). Now notice that, for every $t \in \R$, the sum
\begin{equation*}
\sum_{\ell \in \Z} \psi_\ell\p{t} h\p{t} \mathcal{L}_t
\end{equation*}
converges in operator norm topology to $h\p{t} \mathcal{L}_t$, and the convergence is uniform in $t$ (provided that $\varpi$ is large enough), so that the sum \eqref{eqsumtr} is in fact the operator
\begin{equation*}
\int_{0}^{+ \infty} h(t) \mathcal{L}_t \mathrm{d}t,
\end{equation*}
which is consequently in $S_p$.

Finally, when $\upsilon \geq 2$ it remains true that the operator \eqref{eqpiecedebase} is compact according to Proposition \ref{proplocop}, and the convergence in the operator norm topology ensures that \eqref{eqoperateursynthetique} is compact.
\end{proof}

We need now to compute the trace of this operator when $k > d$ and $\upsilon < 2$. We will deduce the global formula for the trace from the local formula from Proposition~\ref{proplocop}.

\begin{lm}\label{lmtrper}
Under the assumptions of Proposition \ref{proprecap} and if $\ell \in \Z$ we have
\begin{equation}\label{eqtrper}
\textup{tr}\p{\int_\R \psi_{\ell}\p{t} h\p{t} \mathcal{L}_t \mathrm{d}t} = \sum_{\gamma} \psi_{\ell}\p{T_\gamma} \exp\p{\int_\gamma g} \frac{h\p{T_\gamma} T_\gamma^\#}{\va{\det\p{I - \mathcal{P}_\gamma}}},
\end{equation}
where the sum runs over periodic orbits $\gamma$ of the flow $\p{\phi^t}_{t \in \R}$. Here, we recall that $\p{\psi_\ell}_{\ell \in \Z}$ is a $\frac{t_1}{3} \Z$ invariant smooth partition of unity on $\R$ such that $\psi_0$ is supported in $\left]-\frac{t_1}{2},\frac{t_1}{2}\right[$.
\end{lm}

\begin{proof}
If $\ell$ is such that $\frac{\ell t_1}{3} < t_0 - \frac{t_1}{2}$ then \eqref{eqtrper} is immediate: both sides vanish. Otherwise, choose an integer $m \geq 0$ such that $\frac{\ell t_1}{3} - m t_0 \in \left[t_0 -\frac{t_1}{2}, 2t_0 \right]$ (one can for instance take $m$ to be the largest integer such that $\frac{\ell t_1}{3} - m t_0 \geq t_0 - \frac{t_1}{2}$) and define $t_3 = \max\p{t_0,\frac{\ell t_1}{3} - m t_0}$. This ensures that the support of $\psi_{\ell}$ is contained in $m t_0 + t_3 + \left]-t_1,t_1\right[$ and that $t_3 \in \left[t_0,2t_0\right]$. For all $\overrightarrow{\omega} = \p{\omega_1,\dots,\omega_{m}} \in \Omega^{m}$ define 
\begin{equation*}
U_{\overrightarrow{\omega}} = \bigcap_{j = 1}^{m} \phi^{-j t_0}\p{U_{\omega_i}}.
\end{equation*} 
Then choose a refinement $\p{\widetilde{U}_{\overrightarrow{\omega},w}}_{\p{\overrightarrow{\omega},w} \in \Omega^{m} \times \mathcal{W}}$ of $\p{U_{\overrightarrow{\omega}}}_{\omega \in \Omega^{m}}$ whose elements are small enough such that, if $\gamma$ is a periodic orbit of $\p{\phi^{t}}_{t \in \R}$ of length $T_\gamma$ less than $t_3 + m t_0 + t_1$, and $\p{\overrightarrow{\omega},w} \in \Omega^{m} \times \mathcal{W}$, then the intersection of $\gamma$ with $\widetilde{U}_{\overrightarrow{\omega},w}$ is an interval (i.e. connected, while possibly empty). This can be done because there are a finite number of such orbits. Choose a Gevrey partition of unity $\p{\theta_{\overrightarrow{\omega},w}}_{\p{\overrightarrow{\omega},w} \in \Omega^{m} \times \mathcal{W}}$ adapted to the open cover $\p{\widetilde{U}_{\overrightarrow{\omega},w}}_{\p{\overrightarrow{\omega},w} \in \Omega^{m}\times \mathcal{W}}$ of $M$. For $t \in t_3 + m t_0 + \left]-t_1,t_1\right[$, recall from the proof of Proposition \ref{propLtbounded} the operators $\widetilde{\mathcal{L}}_t = \Phi \circ \mathcal{L}_t \circ S$, and $\widetilde{\mathcal{L}}_{\omega,\omega',t}$, for $\omega,\omega' \in \Omega$, defined by the formula,
\begin{equation*}
\widetilde{\mathcal{L}}_{\omega,\omega',t} u\p{x} = \varphi_\omega \circ \kappa_\omega^{-1}\p{x} e^{\int_0^t g \circ \phi^\tau \p{\kappa_\omega^{-1}\p{x}} \mathrm{d}\tau} h_{\omega'} \circ \kappa_{\omega'} \circ \phi^t \circ \kappa_\omega^{-1}\p{x} u \circ \kappa_{\omega'} \circ \phi^t \circ \kappa_\omega^{-1}\p{x}.
\end{equation*}
Then write $\widetilde{\mathcal{L}}_{\omega,\omega',t}$ as a sum of operators
\begin{equation*}
\widetilde{\mathcal{L}}_{\omega,\omega',t} = \sum_{\p{\overrightarrow{\omega},w} \in \Omega^{m} \times \mathcal{W}} A_{\omega,\omega',\overrightarrow{\omega},w,t}
\end{equation*}
where, for $\overrightarrow{\omega} = \p{\omega_0,\dots,\omega_{m-1}}$ and $w \in \mathcal{W}$,
\begin{equation*}
\begin{split}
& A_{\omega,\omega',\overrightarrow{\omega},w,t}u \p{x} = \theta_{\overrightarrow{\omega},w}\p{\kappa_\omega^{-1}\p{x}} \widetilde{\mathcal{L}}_{\omega,\omega',t}u \p{x} \\
& \qquad \qquad = \p{\theta_{\overrightarrow{\omega},w} \varphi_\omega} \circ \kappa_{\omega}^{-1}\p{x} e^{\int_0^t g \circ \phi^\tau \p{\kappa_\omega^{-1}\p{x}} \mathrm{d}\tau} h_{\omega'} \circ \kappa_{\omega'} \circ \phi^{t} \circ \kappa_{\omega}^{-1} \p{x} \\
& \qquad \qquad \qquad \times u \circ \mathcal{T}^{\omega_{m},\omega', t_3}_{t -t_3- m t_0} \circ \mathcal{T}^{\omega_{m - 1},\omega_{m}, t_0}_{0} \circ \dots \circ \mathcal{T}^{\omega_1,\omega_2,t_0}_{0} \circ \mathcal{T}^{\omega,\omega_1,t_0}_{0}\p{x}.
\end{split}
\end{equation*}
Consequently, we can use Proposition~\ref{proplocop} to see that $A_{\omega,\omega',\overrightarrow{\omega},w,t} : \h_{\Theta_{\omega'},\alpha} \to \h_{\Theta_\omega,\alpha}$ is bounded (here, we recall that $\alpha$ has been fiwed after the proof of Lemma \ref{lmdecoupe}, when defining the space $\widetilde{\h}$). Then, working as in the proof of Lemma~\ref{lmsumtr}, the operator
\begin{equation}\label{eqtilde}
\int_{\R} \psi_{\ell}\p{t} h\p{t} \widetilde{\mathcal{L}}_t \mathrm{d}t
\end{equation}
is trace class, sends $\h_{\Omega}$ into $\Phi\p{\widetilde{\h}_0}$ and the induced operator is conjugated to the operator defined by \eqref{eqtilde} without the tilde. Consequently, using Lidskii's trace theorem, we get
\begin{equation*}
\begin{split}
\textup{tr}\Bigg( \int_\R \psi_{\ell}\p{t} &  h\p{t} \mathcal{L}_t \mathrm{d}t \Bigg) = \textup{tr}\p{\int_\R \psi_{\ell}\p{t} h\p{t} \widetilde{\mathcal{L}}_t \mathrm{d}t} \\
& = \sum_{\omega \in \Omega} \textup{tr}\p{\int_\R \psi_{\ell}\p{t} h\p{t} \widetilde{\mathcal{L}}_{\omega,\omega,t} \mathrm{d}t} \\ 
& = \sum_{\omega \in \Omega} \sum_{\p{\overrightarrow{\omega},w} \in \Omega^{m} \times \mathcal{W}} \textup{tr}\p{\int_\R \psi_{\ell}\p{t} h\p{t} A_{\omega,\omega,\overrightarrow{\omega},w,t} \mathrm{d}t}. \\
\end{split}
\end{equation*}
Next, we fix $\omega$ and $\p{\overrightarrow{\omega},w}$ and we will compute
\begin{equation*}
\textup{tr}\p{\int_\R \psi_{\ell}\p{t} h\p{t} \mathcal{A}_{\omega,\omega,\overrightarrow{\omega},w,t} \mathrm{d}t}
\end{equation*}
using Proposition~\ref{proplocop}. To do so, recall the family of cone-hyperbolic diffeomorphisms
\begin{equation*}
\p{\mathcal{T}^{\omega,\overrightarrow{\omega}}_{t}}_{t \in \R} \coloneqq \p{\mathcal{T}^{\omega_{m},\omega', t_3}_{t -t_3-m t_0} \circ \mathcal{T}^{\omega_{m - 1},\omega_{m}, t_0}_{0} \circ \dots \circ \mathcal{T}^{\omega_1,\omega_2,t_0}_{0} \circ \mathcal{T}^{\omega,\omega_1,t_0}_{0}}_{t \in \R}
\end{equation*}
and denote by $F_{\omega,\overrightarrow{\omega}} : \R^d \to \R^{d+1}$ the associated immersion. By Proposition~\ref{proplocop}, we have
\begin{equation*}
\begin{split}
& \textup{tr}\p{\int_\R \psi_{\ell}\p{t} h\p{t} \mathcal{A}_{\omega,\omega,\overrightarrow{\omega},w,t} \mathrm{d}t} \\& \quad  = \sum_{p \circ F_{\omega,\overrightarrow{\omega}}\p{x} = x} \Bigg( \frac{h\p{T_{\omega,\overrightarrow{\omega}}\p{x}}\psi_{\ell}\p{T_{\omega,\overrightarrow{\omega}}\p{x}}}{\va{\det\p{I - p \circ D_x F_{\omega,\overrightarrow{\omega}}}}} \int_{\R} G_{\omega,\overrightarrow{\omega},w,T_{\omega,\overrightarrow{\omega}}\p{x}}\p{x,y} \mathrm{d}y \Bigg),
\end{split}
\end{equation*}
where, as in Proposition~\ref{proplocop}, if $x \in \R^d$, then $T_{\omega,\overrightarrow{\omega}}\p{x}$ denotes the opposite of the last coordinate of $F_{\omega,\overrightarrow{\omega}}\p{x}$, and
\begin{equation*}
\begin{split}
G_{\omega,\overrightarrow{\omega},w,t}\p{x} &  = \p{\theta_{\overrightarrow{\omega},w} \varphi_\omega} \circ \kappa_{\omega}^{-1}\p{x} e^{\int_0^t g \circ \phi^\tau \p{\kappa_\omega^{-1}\p{x}} \mathrm{d}\tau} \\ & \quad h_{\omega} \circ \kappa_{\omega} \circ \phi^{t} \circ \kappa_{\omega}^{-1}\p{x},
\end{split}
\end{equation*}
properly extended by zero.

Now, denote by $P$ the (finite) set of $x \in \R^d$ such that $p \circ F_{\omega,\overrightarrow{\omega}}\p{x} = x$ and
\begin{equation*}
\begin{split}
& D\p{x} \coloneqq \frac{h\p{T_{\omega,\overrightarrow{\omega}}\p{x}}\psi_{\ell}\p{T_{\omega,\overrightarrow{\omega}}\p{x}}}{\va{\det\p{I - p \circ D_x F_{\omega,\overrightarrow{\omega}}}}} \int_{\R} G_{\omega,\overrightarrow{\omega},w,T_{\omega,\overrightarrow{\omega}}\p{x}}\p{x,y} \mathrm{d}y \neq 0,
\end{split}
\end{equation*}
and by $Q$ the (finite) set of periodic orbits $\gamma$ for $\p{\phi^t}_{t \in \R}$ such that 
\begin{equation*}
E\p{\gamma} \coloneqq \frac{h\p{T_{\gamma}} \psi_{\ell}\p{T_{\gamma}}}{\va{\det\p{I - \mathcal{P}_{\gamma}}}} e^{\int_{\gamma} g} \int_{\gamma^{\#}} \theta_{\overrightarrow{\omega},w} \varphi_{\omega} \neq 0.
\end{equation*}
We will construct a bijection $x \mapsto \gamma\p{x}$ between $P$ and $Q$ such that, for all $x \in P$, we have $D\p{x} = E\p{\gamma\p{x}}$. This will immediately imply that
\begin{equation*}
\begin{split}
\textup{tr}\p{\int_\R \psi_{\ell}\p{t} h\p{t} \mathcal{A}_{\omega,\omega,\overrightarrow{\omega},w,t} \mathrm{d}t} & = \sum_{\gamma} \frac{h\p{T_{\gamma}} \psi_{\ell}\p{T_{\gamma}}}{\va{\det\p{I - \mathcal{P}_{\gamma}}}} e^{\int_{\gamma} g} \int_{\gamma^{\#}} \theta_{\overrightarrow{\omega},w} \varphi_{\omega}
\end{split}
\end{equation*}
and we can then end the proof by summing over $\omega \in \Omega$ and $\p{\overrightarrow{\omega},w} \in \Omega^{m} \times \mathcal{W}$.

Let $x \in P$. Since $D\p{x} \neq 0$, there is $\tilde{y} \in \R$ such that $G_{\omega,\overrightarrow{\omega},w,T_{\omega,\overrightarrow{\omega}}\p{x}}\p{x,\tilde{y}}$ is non-zero. Set $z = \p{x,\tilde{y}}$, and notice that $z \in V_\omega$, so that $\kappa_\omega^{-1}\p{z}$ make sense. Moreover, since $G_{\omega,\overrightarrow{\omega},w,T_{\omega,\overrightarrow{\omega}}\p{x}}\p{z} \neq 0$, we must have $\phi^{jt_0}\p{z} \in U_{\omega_{j}} $ for $j \in \set{1,\dots,m}$, and $\phi^{T_{\omega,\overrightarrow{\omega}}\p{x}}\p{z} \in U_\omega$. In addition, since $\psi_{\ell}\p{T_{\omega,\overrightarrow{\omega}}\p{x}}\neq 0$, we know that $T_{\omega,\overrightarrow{\omega}}\p{x} \in t_3 + m t_0 + \left]-t_1,t_1\right[$, and thus Lemma~\ref{lmdecoupe} ensures that
\begin{equation*}
\begin{split}
\kappa_\omega \circ \phi^{T_{\omega,\overrightarrow{\omega}}\p{x}} & \circ \kappa_\omega^{-1} \p{z} = \mathcal{T}^{\omega_{m},\omega', t_3}_{t -t_3- m t_0} \circ \mathcal{T}^{\omega_{m - 1},\omega_{m}, t_0}_{0} \circ \dots \circ \mathcal{T}^{\omega_1,\omega_2,t_0}_{0} \circ \mathcal{T}^{\omega,\omega_1,t_0}_{0}\p{z} \\
   & = \mathcal{T}^{\omega,\overrightarrow{\omega}}_{T_{\omega,\overrightarrow{\omega}}\p{x}}\p{z} = F_{\omega,\overrightarrow{\omega}}\p{x} + T_{\omega,\overrightarrow{\omega}}\p{x} e_{d+1} + \tilde{y}_{d+1} e_{d+1} \\
   & = p \circ F_{\omega,\overrightarrow{\omega}}\p{x} - T_{\omega,\overrightarrow{\omega}}\p{x} e_{d+1} + T_{\omega,\overrightarrow{\omega}}\p{x} e_{d+1} + \tilde{y}_{d+1} e_{d+1} \\
   & = z.
\end{split}
\end{equation*}
Consequently, there is a periodic orbit $\gamma\p{x}$ of length $T_{\gamma\p{x}} = T_{\omega,\overrightarrow{\omega}}\p{x}$ for $\p{\phi^t}_{t \in \R}$ passing through the point $\kappa_\omega^{-1}\p{z}$. Notice that, while the point $\kappa_{\omega}^{-1}\p{z}$ depends on the choice of $\tilde{y}$, the orbit $\gamma\p{x}$ does not (another choice of $\tilde{y}$ would only change $\kappa_{\omega}^{-1}\p{z}$ into another point of the orbit $\gamma\p{x}$). The map $D_{\kappa_\omega^{-1}\p{z}} \phi^{T_{\gamma\p{x}}}$ is conjugated via $D_{\kappa_\omega^{-1}\p{z}} \kappa_\omega$ to $D_z \mathcal{T}^{\omega,\overrightarrow{\omega}}_{T_{\omega,\overrightarrow{\omega}}\p{x}}$. However, in a base adapted to the decomposition of the tangent space into the stable and unstable directions and the direction of the flow, the matrix of the map $D_{\kappa_\omega^{-1}\p{z}} \phi^{T_{\gamma\p{x}}}$ is
\begin{equation*}
\left[
\begin{array}{cc}
\mathcal{P}_{\gamma\p{x}} & 0 \\
0 & 1 \\
\end{array}
\right],
\end{equation*}
while the matrix of $D_z \mathcal{T}^{\omega,\overrightarrow{\omega}}_{T_{\omega,\overrightarrow{\omega}}\p{x}}$ in the canonical basis of $\R^{d+1}$ is of the form
\begin{equation*}
\left[
\begin{array}{cc}
p \circ D_x F_{\omega,\overrightarrow{\omega}} & 0 \\
\ast & 1
\end{array}
\right].
\end{equation*}
Thus, the linear maps $\mathcal{P}_{\gamma\p{x}}$ and $p \circ D_x F_{\omega,\overrightarrow{\omega}}$ have the same spectrum, which implies that $\det\p{I - \mathcal{P}_{\gamma\p{x}}} = \det\p{I - D_x F_{\omega,\overrightarrow{\omega}}}$. Denote by $I_x$ the set of $y \in \R$ such that $G_{\omega,\overrightarrow{\omega},w,T_{\omega,\overrightarrow{\omega}}\p{x}}\p{x,y} \neq 0$. Then for all $y \in I_x$, we have 
\begin{equation*}
e^{\int_0^{T_{\omega,\overrightarrow{\omega}}\p{x}} g \circ \phi^\tau \p{\kappa_\omega^{-1}\p{x,y}} \mathrm{d}\tau} = \exp\p{\int_{\gamma\p{x}} g}.
\end{equation*}
Moreover, the map $I_x \ni y \to \kappa_\omega^{-1}\p{x,y} = \phi^{y - \tilde{y}} \p{\kappa_\omega^{-1}\p{z}}$ is injective (the length of $I_x$ is at most $2t_1$, and there is no periodic orbit of $\p{\phi^t}_{t \in \R}$ of length less than $3t_1$), and its image is $\gamma \cap \widetilde{U}_{\overrightarrow{\omega},w}$ (thanks to our assumption on the refinement), so that a change of variable gives
\begin{equation*}
\int_{\R} G_{\omega,\overrightarrow{\omega},w,T_{\omega,\overrightarrow{\omega}}\p{x}}\p{x,y} \mathrm{d}y = \exp\p{\int_{\gamma\p{x}} g} \int_{\gamma^{\#}\p{x}} \theta_{\overrightarrow{\omega},w} \varphi_\omega,
\end{equation*}
and thus we have $E\p{\gamma\p{x}} = D\p{x} \neq 0$, in particular $\gamma \in Q$. It remains to prove that $P \ni x \mapsto \gamma\p{x} \in Q$ is a bijection.

If $x \in P$ then the intersection of $\gamma\p{x}$ with $\widetilde{U}_{\overrightarrow{\omega},w}$ is an interval, and thus $\kappa_\omega\p{\gamma\p{x}\cap \widetilde{U}_{\overrightarrow{\omega},w}}$ is contained in a line perpendicular to $\R^d \times \set{0}$ (recall that $\kappa_\omega$ is a flow box) and this line projects on $x \in \R^d$. Thus $\gamma\p{x}$ determines $x$ and consequently the map $x \mapsto \gamma\p{x}$ is injective.

Reciprocally, if $\gamma \in Q$ then $\gamma$ must intersect $\widetilde{U}_{\overrightarrow{\omega},w}$ on a non-empty interval that is sent by $\kappa_\omega$ into a line perpendicular to $\R^{d} \times \set{0}$, that projects on a point $x \in \R^{d}$. Choose $y \in \R$ such that $\p{x,y}$ is the image by $\kappa_\omega$ of some point $\tilde{z} \in \gamma$ such that $\theta_{\overrightarrow{\omega},w}\p{\tilde{z}} \varphi_\omega\p{\tilde{z}} \neq 0$. Working as in the other case, we see that $\mathcal{T}^{\omega,\overrightarrow{\omega}}_{T_\gamma}\p{x,y} = \p{x,y}$, and thus $p \circ F_{\omega,\overrightarrow{\omega}}\p{x} = x$ and $T_\gamma = T_{\omega,\overrightarrow{\omega}}\p{x}$. The same calculation as above implies that $D\p{x} = E\p{\gamma} \neq 0$, so that $x \in P$. Finally, it is clear that $\gamma = \gamma\p{x}$ from the construction of $\gamma\p{x}$: these two periodic orbits pass through the point $\tilde{z}$. Thus, the map $x \mapsto\gamma\p{x}$ is surjective, and the proof is over.
\end{proof}

\begin{lm}\label{lmtracegs1}
Under the assumptions of Proposition \ref{proprecap}, the series
\begin{equation}\label{eqsumcv}
\sum_{\gamma} \frac{h\p{T_\gamma} T_\gamma^{\#}}{\va{\det\p{I - \mathcal{P}_{\gamma}}}} e^{\int_{\gamma} g}
\end{equation}
converges absolutely and
\begin{equation*}
\textup{tr}\p{\int_{0}^{+ \infty} h(t) \mathcal{L}_t \mathrm{d}t} = \sum_{\gamma} \frac{h\p{T_\gamma} T_\gamma^{\#}}{\va{\det\p{I - \mathcal{P}_{\gamma}}}} e^{\int_{\gamma} g}.
\end{equation*}
\end{lm}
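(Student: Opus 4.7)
The plan is to combine Lemmas~\ref{lmsumtr} and~\ref{lmtrper} and justify the resulting interchange of summations by producing a non-negative majorant. Write $B = \int_{t_0}^{+\infty} e^{-zt}(t-t_0)^{d+2}\mathcal{L}_t\,\mathrm{d}t$ and $A_k = \int_\R \psi_k(t)e^{-zt}h(t)\mathcal{L}_t\,\mathrm{d}t$, where $h(t)=(t-t_0)^{d+2}\mathds{1}_{[t_0,+\infty[}(t)$. Lemma~\ref{lmsumtr} gives $B = \sum_{k \in \Z} A_k$ in trace class norm (for $\Re(z)$ large enough). Since the trace is continuous on $S_1$ and Lemma~\ref{lmtrper} computes $\textup{tr}(A_k)$ as a finite sum over periodic orbits, one gets
\begin{equation*}
\textup{tr}(B) = \sum_{k \in \Z} \textup{tr}(A_k) = \sum_{k \in \Z} \sum_\gamma \psi_k(T_\gamma) c(\gamma), \quad c(\gamma) := \frac{h(T_\gamma)T_\gamma^{\#}e^{-zT_\gamma}e^{\int_\gamma g}}{|\det(I - \mathcal{P}_\gamma)|}.
\end{equation*}
Since $\sum_k \psi_k \equiv 1$, a Fubini-type interchange would produce the announced sum $\sum_\gamma c(\gamma)$, so it remains only to establish absolute convergence of the double sum.

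For absolute convergence, the idea is to dominate by a non-negative variant of the whole construction. Pick a smooth non-negative function $\tilde h \geq |h|$ on $\R$, compactly supported in $[t_0,+\infty[$, having the same regularity (up to the $(d+2)$-nd derivative being of bounded variation) as $h$. Consider the real-valued potential $\Re(g)$, which is still $\mathcal{C}^{\kappa,\upsilon}$, and the real number $\Re(z)$. The proofs of Lemmas~\ref{lmsumtr} and~\ref{lmtrper} apply verbatim to this data (they only use Proposition~\ref{proplocop} with $k=d+2$ and its trace formula, and none of the intermediate arguments depend on $h$, $g$, $z$ being positive or complex). They therefore yield a trace class operator $\tilde B = \sum_k \tilde A_k$ for which
\begin{equation*}
\textup{tr}(\tilde A_k) = \sum_\gamma \psi_k(T_\gamma) \frac{\tilde h(T_\gamma)T_\gamma^{\#}e^{-\Re(z)T_\gamma}e^{\int_\gamma \Re(g)}}{|\det(I - \mathcal{P}_\gamma)|}.
\end{equation*}
All terms on the right-hand side are non-negative, and by the argument of Lemma~\ref{lmsumtr}, $\sum_k \|\tilde A_k\|_{S_1}$ converges (for $\Re(z)$ large enough). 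In particular $\sum_k \textup{tr}(\tilde A_k) < +\infty$.

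Since $|c(\gamma)| \leq \tilde h(T_\gamma)T_\gamma^{\#}e^{-\Re(z)T_\gamma}e^{\int_\gamma \Re(g)}/|\det(I - \mathcal{P}_\gamma)|$, the previous step gives
\begin{equation*}
\sum_{k \in \Z}\sum_\gamma \psi_k(T_\gamma) |c(\gamma)| \leq \sum_{k \in \Z} \textup{tr}(\tilde A_k) < + \infty.
\end{equation*}
By Tonelli's theorem, $\sum_\gamma |c(\gamma)| = \sum_\gamma |c(\gamma)| \sum_k \psi_k(T_\gamma) < +\infty$, establishing absolute convergence. Fubini then yields $\sum_k \sum_\gamma \psi_k(T_\gamma)c(\gamma) = \sum_\gamma c(\gamma)$, which combined with the first displayed equation gives the lemma.

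The main obstacle is the absolute convergence step: the naive bound $|\textup{tr}(A_k)| \leq \|A_k\|_{S_1}$ only controls partial traces and not the sum of the absolute values of their individual contributions. The trick of replacing $(h,g,z)$ by $(\tilde h, \Re(g), \Re(z))$ to produce a manifestly non-negative sum is what converts the trace class estimate into the pointwise estimate $\sum_\gamma |c(\gamma)|<+\infty$; this plays the role of the standard half-plane convergence of the dynamical zeta function, obtained here as a by-product of the Schatten-class machinery rather than invoked as a black box.
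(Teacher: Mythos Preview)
Your approach is essentially the paper's: pass to a non-negative variant of the data so that Tonelli applies, then use Fubini to interchange the sums. There is one slip, however. You ask for a smooth $\tilde h \geq |h|$ that is \emph{compactly supported} in $[t_0,+\infty[$, but $h(t)=(t-t_0)^{d+2}$ is unbounded on $[t_0,+\infty[$, so no compactly supported majorant exists. The fix is trivial: $h$ is already non-negative, so take $\tilde h=h$ and drop the compact-support requirement; Lemmas~\ref{lmsumtr} and~\ref{lmtrper} are stated for this specific $h$ anyway, and their proofs go through unchanged when $g$ is replaced by $\Re(g)$ (or by the constant $\n{\Re(g)}_\infty$, as the paper does) and $z$ by $\Re(z)$. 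With that correction your argument is correct and coincides with the paper's proof.
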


\begin{proof}
First, use Lemma~\ref{lmtrper} and (the proof) of Lemma~\ref{lmsumtr}, with $g$ replaced by $\n{\Re\p{g}}_{\infty}$ $h$ replaced by $1+ \va{h}^2$, to get that (this can also be seen using an estimates on the number of periodic orbit for $\p{\phi_t}_{t \in \R}$ such as \cite[Lemma 2.2]{DZdet}):
\begin{equation*}
\sum_{\ell \in \Z} \sum_{\gamma} \psi_{\ell }\p{T_\gamma} \exp\p{T_\gamma \n{\Re\p{g}}_{\infty}} \frac{\va{h\p{T_\gamma}} T_\gamma^\# }{\va{\det\p{I - \mathcal{P}_\gamma}}} < + \infty.
\end{equation*}
We can then use the Fubini--Tonelli and monotone convergence theorems to get that
\begin{equation*}
\begin{split}
 & \sum_{\ell \in \Z} \sum_{\gamma} \psi_{\ell}\p{T_\gamma} \exp\p{T_\gamma \n{\Re\p{g}}_{\infty}} \frac{\va{h\p{T_\gamma}} T_\gamma^\#}{\va{\det\p{I - \mathcal{P}_\gamma}}} \\& \qquad \qquad = \sum_{\gamma} \sum_{\ell \in \Z} \psi_{\ell}\p{T_\gamma} \exp\p{T_\gamma \n{\Re\p{g}}_{\infty}} \frac{\va{h\p{T_\gamma}} T_\gamma^\#}{\va{\det\p{I - \mathcal{P}_\gamma}}} \\
 & \qquad \qquad = \sum_{\gamma} \exp\p{T_\gamma \n{\Re\p{g}}_{\infty}} \frac{\va{h\p{T_\gamma}} T_\gamma^\# }{va{\det\p{I - \mathcal{P}_\gamma}}} < + \infty. 
\end{split}
\end{equation*}
This proves that the sum \eqref{eqsumcv} converges absolutely and provides integrability and domination which allow us to apply Fubini's theorem and the dominated convergence theorem to get
\begin{equation*}
\begin{split}
\textup{tr}\Bigg( \int_{0}^{+ \infty} h(t) & \mathcal{L}_t \mathrm{d}t \Bigg) = \sum_{\ell \in \Z} \textup{tr}\p{\int_{0}^{+ \infty} \psi_{\ell}\p{t} h(t) \mathcal{L}_t \mathrm{d}t} \\
 & = \sum_{\ell \in \Z} \sum_{\gamma} \psi_{\ell}\p{T_\gamma} \exp\p{\int_\gamma g} \frac{h\p{T_\gamma} T_\gamma^\#}{\va{\det\p{I - \mathcal{P}_\gamma}}} \\
 & = \sum_{\gamma}\sum_{\ell \in \Z} \psi_{\ell}\p{T_\gamma} \exp\p{\int_\gamma g} \frac{h\p{T_\gamma} T_\gamma^\# }{\va{\det\p{I - \mathcal{P}_\gamma}}} \\
 & = \sum_{\gamma} \exp\p{\int_\gamma g} \frac{h\p{T_\gamma} T_\gamma^\#}{\va{\det\p{I - \mathcal{P}_\gamma}}}.
 \end{split}
\end{equation*}
\end{proof}

We end this section with the proof of two merely technical lemmas that will be useful in the following section to construct and study the anisotropic Hilbert spaces from Theorem~\ref{thmmain}.

\begin{lm}\label{lmmesurab}
For all $u \in \mathcal{D}^{\tilde{\upsilon}}\p{M}'$, the map $\R \ni t \mapsto \n{\mathcal{L}_t u}_{\widetilde{\h}} $ is measurable (with the convention that $\n{u}_{\widetilde{\h}} = \infty$ if $u \notin \widetilde{\h}$).
\end{lm}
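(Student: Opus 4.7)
The plan is to express $t \mapsto \n{\mathcal{L}_t u}_{\widetilde{\h}}^2$ as a finite sum of suprema of countably many continuous functions of $t$ with values in $[0,+\infty]$, and then invoke Borel measurability of such a supremum.

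First, Lemma~\ref{lmderiv}(ii) ensures that $t \mapsto \mathcal{L}_t u$ is continuous as a map $\R \to \mathcal{D}^{\tilde{\upsilon}}\p{M}'$. Since $\Phi$ is continuous, the map $t \mapsto \Phi\p{\mathcal{L}_t u} = \p{\p{\varphi_\omega \mathcal{L}_t u} \circ \kappa_\omega^{-1}}_{\omega \in \Omega}$ is also continuous into $\bigoplus_{\omega \in \Omega} \p{\mathcal{S}^{\tilde{\upsilon}}}'$ equipped with its weak-$\ast$ topology. In particular, for each fixed $\Psi \in \mathcal{S}^{\tilde{\upsilon}}$ and each $\omega \in \Omega$, the scalar function $t \mapsto \langle \p{\varphi_\omega \mathcal{L}_t u}\circ \kappa_\omega^{-1}, \Psi\rangle$ is continuous on $\R$.

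Next, for each $\omega \in \Omega$, I would fix a countable family $\p{\psi_n^\omega}_{n \geq 1}$ of compactly supported Gevrey functions on $\R^{d+1}$ that is dense in the closed unit ball of $L^2\p{\R^{d+1}}$. Since $w_{\Theta_\omega,\alpha}$ is smooth, ultradifferentiable and strictly positive, each product $w_{\Theta_\omega,\alpha}\overline{\psi_n^\omega}$ is compactly supported and lies in $\mathcal{S}^{\tilde{\upsilon}}$; by Proposition~\ref{propfourier}, so does its Fourier transform. Recalling that the Fourier transform on $\p{\mathcal{S}^{\tilde{\upsilon}}}'$ is defined by duality, the functions
\begin{equation*}
f_n^\omega\p{t} \coloneqq \va{\left\langle \widehat{\p{\varphi_\omega \mathcal{L}_t u}\circ \kappa_\omega^{-1}}, w_{\Theta_\omega,\alpha} \overline{\psi_n^\omega}\right\rangle}
\end{equation*}
are then continuous in $t$ by the preceding paragraph.

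The hard part is to verify the identity
\begin{equation*}
\n{v}_{\h_{\Theta_\omega,\alpha}} = \sup_{n \geq 1} \va{\left\langle \hat{v}, w_{\Theta_\omega,\alpha} \overline{\psi_n^\omega}\right\rangle}
\end{equation*}
for every $v \in \p{\mathcal{S}^{\tilde{\upsilon}}}'$, adopting the convention that the left-hand side equals $+\infty$ when $v \notin \h_{\Theta_\omega,\alpha}$. When $v \in \h_{\Theta_\omega,\alpha}$, the identity follows from the definition of $\n{\cdot}_{\h_{\Theta_\omega,\alpha}}$ together with the density of $\p{\psi_n^\omega}_{n \geq 1}$ in the unit ball of $L^2$ and standard $L^2$ duality. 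When $v \notin \h_{\Theta_\omega,\alpha}$, suppose for contradiction that the supremum were finite; the Riesz representation theorem would produce $g \in L^2$ such that $\langle g, \psi\rangle_{L^2} = \langle \hat{v}, w_{\Theta_\omega,\alpha}\bar{\psi}\rangle$ on the dense family, and passing to distributions (using that $w_{\Theta_\omega,\alpha}$ is a smooth strictly positive multiplier) would yield the distributional identity $w_{\Theta_\omega,\alpha}\hat{v} = g$, hence $\hat{v} = g/w_{\Theta_\omega,\alpha} \in L^2_{loc}$ with $\s{\R^{d+1}}{\va{\hat{v}}^2 w_{\Theta_\omega,\alpha}^2}{\xi} = \n{g}_2^2 < \infty$, contradicting $v \notin \h_{\Theta_\omega,\alpha}$.

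Once this identity is in hand, $t \mapsto \n{\p{\varphi_\omega \mathcal{L}_t u}\circ \kappa_\omega^{-1}}_{\h_{\Theta_\omega,\alpha}}$ is a supremum of countably many continuous functions, hence lower semicontinuous and Borel measurable on $\R$. Summing the squares over the finite set $\Omega$ and taking a square root yields the measurability of $t \mapsto \n{\mathcal{L}_t u}_{\widetilde{\h}}$, proving the lemma.
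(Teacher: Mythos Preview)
Your overall strategy is sound and, once patched, yields lower semicontinuity of $t \mapsto \n{\mathcal{L}_t u}_{\widetilde{\h}}$, slightly more than the paper proves. The paper's route is different in form: it first argues that $u \mapsto \n{u}_{\widetilde{\h}}$ is Borel on $\mathcal{D}^{\tilde{\upsilon}}\p{M}'$ (reducing to the Borel measurability of $L^2_{loc}$ inside $\p{\mathcal{S}^{\tilde{\upsilon}}}'$, described via testing against compactly supported elements of $\mathcal{S}^{\tilde{\upsilon}}$), and then composes with the continuous map $t \mapsto \mathcal{L}_t u$ from Lemma~\ref{lmderiv}.

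There is, however, a genuine gap in your contrapositive step for $v \notin \h_{\Theta_\omega,\alpha}$. Density of $\p{\psi_n^\omega}_n$ in the $L^2$ unit ball is \emph{not} enough: the pairing $\psi \mapsto \langle \hat{v}, w_{\Theta_\omega,\alpha}\bar\psi\rangle$ is only continuous for the $\mathcal{S}^{\tilde{\upsilon}}$ topology, not for the $L^2$ topology, so boundedness on the countable family does not pass to all test functions and Riesz does not apply. Concretely, take $\hat{v}$ to be a derivative of a Dirac mass and let each $\psi_n^\omega$ vanish on some neighbourhood of the origin (with the neighbourhoods shrinking so that the family remains $L^2$-dense); then every $f_n^\omega \equiv 0$ while $v \notin \h_{\Theta_\omega,\alpha}$. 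The fix is easy: require in addition that, for each compact $K$, the $\psi_n^\omega$ supported in $K$ are dense for the $\mathcal{S}^{\tilde{\upsilon}}$ (or $C_c^\infty$) topology among test functions supported in $K$. The bound $\va{\langle \hat{v}, w\bar\psi\rangle} \leq M\n{\psi}_2$ then extends to all such $\psi$ by continuity of the distributional pairing, after which your Riesz argument goes through. (To be fair, the paper's own one-line reduction to the Borel measurability of $L^2_{loc}$ hides essentially the same countability issue.)
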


\begin{proof}
Let us prove first that the map $ \mathcal{D}^{\tilde{\upsilon}}\p{M}' \ni u \mapsto \n{u}_{\widetilde{\h}}$ is measurable. Since the inclusion of $\widetilde{\h}$ in $\mathcal{D}^{\tilde{\upsilon}}\p{M}'$ is continuous (hence measurable) and $\n{\cdot}_{\widetilde{\h}}$ is continuous on $\widetilde{\h}$, we only need to check that $\widetilde{\h}$ is a measurable subset of $\mathcal{D}^{\tilde{\upsilon}}\p{M}'$. Keeping track of the different steps in the definition of $\widetilde{\h}$, we see that it is enough to prove that $L^2_{\textup{loc}}$ is a measurable subset of $\p{\mathcal{S}^{\tilde{\upsilon}}}'$, which is clear with the following characterization of $L^2_{\textup{loc}}$:
\begin{equation*}
\begin{split}
& L^2_{\textup{loc}} = \Bigg\{ u \in \p{\mathcal{S}^{\tilde{\upsilon}}}' : \forall \textup{ compact } K \subseteq \R^{d+1}, \exists C > 0 , \\ & \qquad \qquad \qquad \qquad \qquad \qquad \qquad \forall \varphi \in \mathcal{S}^{\tilde{\upsilon}} \textup{ supported in } K, \va{\langle u ,\varphi \rangle} \leq C \n{\varphi}_2 \Bigg\}.
\end{split}
\end{equation*}

Finally, recall that, if $u \in \mathcal{D}^{\tilde{\upsilon}}\p{M}'$, the map $\R \ni t \mapsto \mathcal{L}_t u \in \mathcal{D}^{\tilde{\upsilon}}\p{M}'$ is measurable (and even $\mathcal{C}^\infty$) according to Lemma~\ref{lmderiv} to end the proof.
\end{proof}

\begin{lm}\label{lmunifinc}
There is a continuous semi-norm $N$ on $\mathcal{C}^{\infty,\tilde{\upsilon}}\p{M}$ such that for all $u \in \mathcal{C}^{\infty,\tilde{\upsilon}}\p{M}$ and $t \in \left[-t_0,t_0\right]$ we have
\begin{equation*}
\n{\mathcal{L}_t u}_{\widetilde{\h}} \leq N\p{u}.
\end{equation*}
The same is true replacing $\mathcal{L}_t$ by $\p{\mathcal{L}_{-t}}^*$ and $\mathcal{C}^{\infty,\tilde{\upsilon}}\p{M}$ by $\mathcal{D}^{\tilde{\upsilon}}\p{M}$.
\end{lm}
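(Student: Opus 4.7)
\emph{Plan.} The strategy is purely functional-analytic: apply the Banach--Steinhaus theorem on the Fréchet space $\mathcal{C}^{\infty,\tilde{\upsilon}}\p{M}$ to the family $\p{\mathcal{L}_t}_{t \in \left[-t_0,t_0\right]}$. Since the inclusion $\mathcal{C}^{\infty,\tilde{\upsilon}}\p{M} \hookrightarrow \widetilde{\h}$ is continuous (established in \S\ref{gs1}), there is a continuous semi-norm $p$ on $\mathcal{C}^{\infty,\tilde{\upsilon}}\p{M}$ with $\n{u}_{\widetilde{\h}} \leq p\p{u}$; it therefore suffices to produce a continuous semi-norm $N$ on $\mathcal{C}^{\infty,\tilde{\upsilon}}\p{M}$ satisfying $p\p{\mathcal{L}_t u} \leq N\p{u}$ uniformly in $t \in \left[-t_0,t_0\right]$.

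\emph{First I would verify} that, for every fixed $t$, the operator $\mathcal{L}_t : \mathcal{C}^{\infty,\tilde{\upsilon}}\p{M} \to \mathcal{C}^{\infty,\tilde{\upsilon}}\p{M}$ is continuous. This follows from the stability of the class $\mathcal{C}^{\infty,\tilde{\upsilon}}$ under composition with $\mathcal{C}^{\kappa,\upsilon}$ maps (applied to $\phi^t$) and under pointwise multiplication by the $\mathcal{C}^{\infty,\tilde{\upsilon}}$ weight $\exp\p{\int_0^t g \circ \phi^\tau \mathrm{d}\tau}$, both of which are recalled in \S\ref{secDCC}. Next, Lemma~\ref{lmderiv}(i) ensures that for every $u$ the orbit $t \mapsto \mathcal{L}_t u$ is $\mathcal{C}^\infty$ from $\R$ to $\mathcal{C}^{\infty,\tilde{\upsilon}}\p{M}$, so $\set{\mathcal{L}_t u : t \in \left[-t_0,t_0\right]}$ is compact, in particular bounded, in $\mathcal{C}^{\infty,\tilde{\upsilon}}\p{M}$. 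The family $\p{\mathcal{L}_t}_{t \in \left[-t_0,t_0\right]}$ is thus a pointwise bounded family of continuous linear endomorphisms of a Fréchet space, and the Banach--Steinhaus theorem yields its equicontinuity: there is a continuous semi-norm $N$ on $\mathcal{C}^{\infty,\tilde{\upsilon}}\p{M}$ with $p\p{\mathcal{L}_t u} \leq N\p{u}$ for every such $t$ and $u$, which together with $\n{\cdot}_{\widetilde{\h}} \leq p\p{\cdot}$ gives the desired bound.

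\emph{For the adjoint statement}, I would run the same argument after identifying $\mathcal{D}^{\tilde{\upsilon}}\p{M}$ with $\mathcal{C}^{\infty,\tilde{\upsilon}}\p{M}$ through a fixed positive $\mathcal{C}^{\infty,\tilde{\upsilon}}$ reference density; via this identification $\mathcal{D}^{\tilde{\upsilon}}\p{M}$ is continuously embedded in $\widetilde{\h}$. The explicit formula for $\p{\mathcal{L}_t}^*$ recalled in \S\ref{secDCC} exhibits $\p{\mathcal{L}_{-t}}^*$, under this identification, as an operator of exactly the same ``weight times composition with the flow'' form as $\mathcal{L}_t$ (with the potential $g$ essentially replaced by $\textup{div}\p{V} - g$), so the continuity of each $\p{\mathcal{L}_{-t}}^*$ on $\mathcal{D}^{\tilde{\upsilon}}\p{M}$ and the $\mathcal{C}^\infty$-dependence of its orbits follow by the same reasoning, and Banach--Steinhaus again delivers the required semi-norm. \emph{Expected main obstacle.} The argument is essentially formal and no serious difficulty is anticipated; the only item requiring a modicum of care is the continuous inclusion $\mathcal{D}^{\tilde{\upsilon}}\p{M} \hookrightarrow \widetilde{\h}$, which is however immediate from the identification of densities with functions by a smooth reference density combined with the already-established continuous inclusion $\mathcal{C}^{\infty,\tilde{\upsilon}}\p{M} \hookrightarrow \widetilde{\h}$.
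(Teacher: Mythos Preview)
Your argument is correct and provides a clean soft proof, but it follows a genuinely different route from the paper. The paper's proof is a direct estimate: using the group property of $\p{\mathcal{L}_t}_{t\in\R}$, it reduces to showing uniform equicontinuity of $\p{\mathcal{L}_t}_{t\in[-\epsilon,\epsilon]}$ on $\mathcal{C}^{\infty,\tilde{\upsilon}}\p{M}$ for some small $\epsilon>0$, and then checks this by hand on the generating semi-norms $\n{\cdot}_{\kappa_\omega,\varphi_\omega,\kappa,\tilde{\upsilon}}$, exploiting that the charts $\kappa_\omega$ from Lemma~\ref{lmdecoupe} are flow boxes (so that $\mathcal{L}_t$ is, in coordinates, a translation times a weight). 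Your approach bypasses the explicit computation by invoking Banach--Steinhaus on the Fréchet space $\mathcal{C}^{\infty,\tilde{\upsilon}}\p{M}$, with pointwise boundedness supplied by Lemma~\ref{lmderiv}.

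What each buys: the paper's proof is entirely self-contained at this point and makes the mechanism transparent; your proof is shorter and conceptually cleaner, but it shifts the analytic work onto Lemma~\ref{lmderiv} (whose proof, incidentally, also relies on the flow-box structure). Since Lemma~\ref{lmderiv} is established independently in Appendix~\ref{appDCC} and does not use Lemma~\ref{lmunifinc}, there is no circularity, and your argument is perfectly valid. The adjoint case is handled correctly in both approaches by the same reduction.
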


\begin{proof}
Since the inclusion of $\mathcal{C}^{\infty,\tilde{\upsilon}}\p{M}$ in $\widetilde{\h}$ is continuous and $\p{\mathcal{L}_t}_{t \in \R}$ is a group, we only need to prove that there is $\epsilon >0$ such that for every continuous semi-norm $N_1$ on $\mathcal{C}^{\infty,\tilde{\upsilon}}\p{M}$ there is a continuous semi-norm $N_2$ on $\mathcal{C}^{\infty,\tilde{\upsilon}}\p{M}$ such that for all $u \in \mathcal{C}^{\infty,\tilde{\upsilon}}\p{M}$, and $t \in \left[-\epsilon,\epsilon\right]$ we have
\begin{equation}\label{eqach}
N_1\p{\mathcal{L}_t u} \leq N_2\p{u}.
\end{equation}
In fact, we only need to achieve \eqref{eqach} for $N_1$ of the form $\n{\cdot}_{\kappa_\omega,\varphi_\omega,\kappa,\tilde{\nu}}$ for $\omega \in \Omega$ and $\kappa \in \R_{+}^*$ (because these semi-norms generate the topology of $\mathcal{C}^{\infty,\tilde{\upsilon}}\p{M}$). But then, it becomes clear that \eqref{eqach} can be achieved, since the $\kappa_\omega$ are flow boxes. The proof for the adjoint is similar.
\end{proof}

\section{Global space: second step}\label{gs2}

Given the spaces $\widetilde{\h}$ and $\widetilde{\h}_0$ and Proposition~\ref{proprecap} from the previous section, the proofs of Theorem~\ref{thmmain}, Proposition~\ref{propmain}, and Proposition~\ref{propmqr} are now reduced to functional analysis, and we deal with these proofs in this last section. These proofs are split into several lemmas as follow: as far as Theorem~\ref{thmmain}  and Proposition \ref{propmqr} are concerned, (i) is contained in Lemma~\ref{lmwb}, (ii) is in Lemma~\ref{lmbounded}, (iii) is a consequence of Lemma~\ref{lmbounded} and Lemma~\ref{lmdomain}, (iv) is in Lemma \ref{lmdiscretespectrum}, and (v) is in Lemma~\ref{lmlaplace} (with $2t_0$ instead of $t_0$). We end the section with the proof of Proposition~\ref{propmain}. First of all, we define the space $\h$.

\begin{df}
Thanks to Lemma~\ref{lmmesurab}, we may define for all $u \in \mathcal{D}^{\tilde{\upsilon}}\p{M}'$,
\begin{equation}\label{eqdefnh}
\n{u}_{\h}^2 = \int_{0}^{t_0} \n{\mathcal{L}_t u}_{\widetilde{\h}}^2 \mathrm{d}t,
\end{equation}
and then define the space
\begin{equation*}
\widehat{\h} = \set{u \in \mathcal{D}^{\tilde{\upsilon}}\p{M}' : \n{u}_{\h}^2 < \infty}
\end{equation*}
endowed with the norm $\n{\cdot}_{\h}$. Let $\h$ be the closure of $\mathcal{C}^{\infty,\tilde{\upsilon}}\p{M}$ in $\widehat{\h}$ (for some $\tilde{\upsilon} \in \left] \upsilon, \frac{1}{1 - \alpha}\right[$, where $\alpha$ has been defined in \S \ref{gs1}, we recall in particular that if $\upsilon < 2$ then $\alpha < \frac{1}{2}$).
\end{df}

\begin{lm}\label{lmwb}
$\h$ and $\widehat{\h}$ are separable Hilbert spaces. The inclusion of $\h$ and $\widehat{\h}$ in $\mathcal{D}^{\tilde{\upsilon}}\p{M}'$ are continuous, and $\mathcal{C}^{\infty,\tilde{\upsilon}}$ is contained in $\h$ and $\widehat{\h}$, and the inclusion is continuous.
\end{lm}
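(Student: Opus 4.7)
My plan is to identify $\widehat{\h}$ (and hence $\h$) with subspaces of the Bochner space $L^2([0,t_0]; \widetilde{\h})$ via the orbit map $u \mapsto (t \mapsto \mathcal{L}_t u)$, which will supply the Hilbert structure, completeness, and separability at once. The norm on $\widehat{\h}$ comes from the inner product $\langle u, v\rangle_{\h} = \int_0^{t_0} \langle \mathcal{L}_t u, \mathcal{L}_t v\rangle_{\widetilde{\h}}\, \mathrm{d}t$, whose integrand is measurable by polarization of Lemma~\ref{lmmesurab}; positive-definiteness holds because $\n{u}_{\h} = 0$ forces $\mathcal{L}_t u = 0$ in $\mathcal{D}^{\tilde{\upsilon}}(M)'$ for a.e.\ $t$, which by Lemma~\ref{lmderiv}(ii) (continuity of $t \mapsto \mathcal{L}_t u$ in $\mathcal{D}^{\tilde{\upsilon}}(M)'$) yields $u = \mathcal{L}_0 u = 0$. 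The continuous embedding $\mathcal{C}^{\infty,\tilde{\upsilon}}(M) \hookrightarrow \widehat{\h}$ follows at once from Lemma~\ref{lmunifinc}, giving $\n{u}_{\h}^2 \leq t_0 N(u)^2$ for the seminorm $N$ there.

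Next I would establish the continuous inclusion $\widehat{\h} \hookrightarrow \mathcal{D}^{\tilde{\upsilon}}(M)'$ by exploiting the group identity $\langle u, \mu\rangle = \langle \mathcal{L}_t u, (\mathcal{L}_{-t})^* \mu\rangle$ for $u \in \widehat{\h}$, $\mu \in \mathcal{D}^{\tilde{\upsilon}}(M)$. For fixed $\mu$, the family of continuous linear functionals $\{v \mapsto \langle v, (\mathcal{L}_{-t})^* \mu\rangle\}_{t \in [0,t_0]}$ on the Banach space $\widetilde{\h}$ is pointwise bounded, because for each $v$ the map $t \mapsto \langle v, (\mathcal{L}_{-t})^* \mu\rangle = \langle \mathcal{L}_t v, \mu\rangle$ is continuous by Lemma~\ref{lmderiv}(ii); Banach--Steinhaus then yields a constant $C_\mu$ with $|\langle u, \mu\rangle| \leq C_\mu \n{\mathcal{L}_t u}_{\widetilde{\h}}$ for a.e.\ $t$, and integration with Cauchy--Schwarz delivers $|\langle u, \mu\rangle| \leq C_\mu t_0^{-1/2} \n{u}_{\h}$. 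Completeness of $\widehat{\h}$ is then routine: for a Cauchy sequence $(u_n)$ one extracts a limit $u$ in $\mathcal{D}^{\tilde{\upsilon}}(M)'$; passing to a subsequence with $\sum_n \n{u_{n+1} - u_n}_{\h} < \infty$ and invoking Fubini shows $(\mathcal{L}_t u_n)$ is Cauchy in $\widetilde{\h}$ for a.e.\ $t$, with limit necessarily equal to $\mathcal{L}_t u$ by the continuity $\widetilde{\h} \hookrightarrow \mathcal{D}^{\tilde{\upsilon}}(M)'$; Fatou concludes $\n{u_n - u}_{\h} \to 0$. The space $\h$, a closed subspace of the complete $\widehat{\h}$, is then itself complete.

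Separability of $\h$ is immediate: Lemmas~\ref{lmderiv}(i) and~\ref{lmunifinc} make $v \mapsto (t \mapsto \mathcal{L}_t v)$ a continuous isometry from $\mathcal{C}^{\infty,\tilde{\upsilon}}(M)$ into the separable Bochner space $L^2([0,t_0]; \widetilde{\h})$ (note $\widetilde{\h}$ is separable as a closed subspace of the direct sum $\oplus_\omega \h_{\Theta_\omega,\alpha}$, each factor being isometric to $L^2(\R^{d+1})$ by Proposition~\ref{propbaseh}), and this extends by density to an isometric embedding of $\h$. The main obstacle is the separability of $\widehat{\h}$: for a general $u \in \widehat{\h}$, the orbit map $t \mapsto \mathcal{L}_t u$ is not obviously strongly measurable into $\widetilde{\h}$. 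I would settle this via Pettis's theorem, so that it suffices to check weak measurability on a Hilbert basis $(e_j)$ of $\widetilde{\h}$ chosen in $\mathcal{C}^{\infty,\tilde{\upsilon}}(M)$ (Gram--Schmidt on a countable dense sequence). The function $F(t,s) := \langle \mathcal{L}_t u, \mathcal{L}_t(\mathcal{L}_{-s} e_j)\rangle_{\widetilde{\h}}$ is Carathéodory: it is measurable in $t$ for fixed $s$ by polarization of Lemma~\ref{lmmesurab} applied to the elements $u, \mathcal{L}_{-s} e_j \in \widehat{\h}$, and continuous in $s$ for a.e.\ $t$ because $s \mapsto \mathcal{L}_{-s} e_j$ is continuous into $\mathcal{C}^{\infty,\tilde{\upsilon}}(M) \hookrightarrow \widetilde{\h}$ by Lemma~\ref{lmderiv}(i). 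Joint measurability of $F$ then yields measurability of $t \mapsto F(t,t) = \langle \mathcal{L}_t u, e_j\rangle_{\widetilde{\h}}$, completing the Pettis argument and embedding $\widehat{\h}$ isometrically in $L^2([0,t_0]; \widetilde{\h})$.
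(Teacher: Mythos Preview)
Your overall strategy coincides with the paper's: both identify $\widehat{\h}$ with the image of the orbit map $u\mapsto(\mathcal{L}_t u)_{0\le t\le t_0}$ inside $L^2([0,t_0];\widetilde{\h})$, and deduce completeness and separability from closedness of that image. The paper packages completeness and separability into a single ``image is closed'' argument and obtains the inclusion $\widehat{\h}\hookrightarrow\mathcal{D}^{\tilde\upsilon}(M)'$ by writing $\langle u,\varphi\rangle=\tfrac{1}{t_0}\int_0^{t_0}\langle\mathcal{L}_t u,(\mathcal{L}_{-t})^*\varphi\rangle\,\mathrm{d}t$ and invoking Lemma~\ref{lmunifinc} for the adjoint; your Banach--Steinhaus route to the same inclusion is a perfectly valid alternative, as is your direct Cauchy/Fatou argument for completeness.

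Where your proposal goes further than the paper is in trying to justify that $t\mapsto\mathcal{L}_t u$ is genuinely Bochner measurable into $\widetilde{\h}$ (the paper simply asserts the isometry into $L^2([0,t_0];\widetilde{\h})$ without checking this). Your Carath\'eodory/Pettis argument is the right idea, but it has a gap: you select a Hilbert basis $(e_j)$ of $\widetilde{\h}$ lying in $\mathcal{C}^{\infty,\tilde\upsilon}(M)$, which requires $\mathcal{C}^{\infty,\tilde\upsilon}(M)$ to be dense in $\widetilde{\h}$. The paper explicitly declines to establish this (see the footnote defining $\widetilde{\h}_0$), so at present your argument only yields measurability of $t\mapsto\langle\mathcal{L}_t u,v\rangle_{\widetilde{\h}}$ for $v\in\widetilde{\h}_0$, not for all $v\in\widetilde{\h}$. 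A clean fix is to bypass $\mathcal{C}^{\infty,\tilde\upsilon}(M)$ and use the explicit local structure: via $\Phi$, weak measurability into $\widetilde{\h}$ reduces to weak measurability of each $t\mapsto(\varphi_\omega\mathcal{L}_t u)\circ\kappa_\omega^{-1}$ into $\h_{\Theta_\omega,\alpha}$; under the isometry $v\mapsto\hat v\cdot w_{\Theta_\omega,\alpha}$ onto $L^2(\R^{d+1})$, it suffices to test against compactly supported Gevrey $g$, for which $t\mapsto\langle\widehat{(\varphi_\omega\mathcal{L}_t u)\circ\kappa_\omega^{-1}},\,w_{\Theta_\omega,\alpha}\bar g\rangle$ is continuous by Lemma~\ref{lmderiv}(ii), since $w_{\Theta_\omega,\alpha}\bar g\in\mathcal{S}^{\tilde\upsilon}$.
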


\begin{proof}
We only need to prove the lemma for $\widehat{\h}$ (the statements for $\h$ immediately follows). Notice that the map
\begin{equation}\label{eqisom}
\begin{array}{ccc}
\widehat{\h} & \to & L^2\p{\left[0,t_0\right],\widetilde{\h}} \\
u & \mapsto & \p{\mathcal{L}_t u}_{0 \leq t \leq t_0}
\end{array}
\end{equation}
is an isometry. To show that $\widehat{\h}$ is a separable Hilbert space, we only need to prove that the image of the map \eqref{eqisom} is closed . Let $\p{u_n}_{n \in \N}$ be a sequence in $\widehat{\h}$ such that the sequence $\p{\p{\mathcal{L}_t u_n}_{0 \leq t \leq t_0}}_{n \in \N}$ converges to $\p{v\p{t}}_{0 \leq t \leq t_0}$ in $L^2\p{\left[0,t_0\right],\widetilde{\h}}$. Then there is a subset $A$ of $\N$ and a Borel subset $B$ of full measure in $\left[0,t_0\right]$ such that, for all $t \in B$, the sequence $\p{\mathcal{L}_t u_n}_{n \in A}$ converges to $v\p{t}$ in $\widetilde{\h}$ (in particular, it converges in $\mathcal{D}^{\tilde{\upsilon}}\p{M}'$). Choose $t' \in B$ and set $u = \mathcal{L}_{-t'} v\p{t'} \in \mathcal{D}^{\tilde{\upsilon}}\p{M}'$. Then, for all $t \in B$ and $n \in A$, we have
\begin{equation*}
\mathcal{L}_t u_n = \mathcal{L}_{t-t'} \p{\mathcal{L}_{t'} u_n}.
\end{equation*}
Letting $n$ tend to infinity, we have
\begin{equation*}
v\p{t} = \mathcal{L}_{t-t'} v\p{t'} = \mathcal{L}_t \p{\mathcal{L}_{-t'} v\p{t'}} = \mathcal{L}_t u.
\end{equation*}
Since $v \in L^2\p{\left[0,t_0\right],\widetilde{\h}}$, this implies that $u \in \widehat{\h}$, and thus the image of $\widehat{\h}$ under the map \eqref{eqisom} is closed, so that $\widehat{\h}$ is a Hilbert space.

To prove that the inclusion of $\widehat{\h}$ in $\mathcal{D}^{\tilde{\upsilon}}\p{M}'$ is continuous, just notice that if $\varphi \in \mathcal{C}^{\infty,\tilde{\upsilon}}\p{M}$ then
\begin{equation*}
\langle u, \phi \rangle = \frac{1}{t_0} \int_0^{t_0} \langle \mathcal{L}_t u, \p{\mathcal{L}_{-t}}^* \varphi \rangle \mathrm{d}t,
\end{equation*}
and use Lemma~\ref{lmunifinc}. The continuous inclusion of $\mathcal{C}^{\infty,\tilde{\upsilon}}\p{M}$ in $\widehat{\h}$ is an immediate consequence of Lemma~\ref{lmunifinc}.
\end{proof}

We now prove that $\h$ has the property that $\widetilde{\h}_0$ missed: the operator $\mathcal{L}_t$ for $t \geq 0$ is bounded from $\h$ to itself.

\begin{lm}\label{lmbounded}
For all $t \geq 0$, the operator $\mathcal{L}_t$ is bounded from $\mathcal{H}$ to itself. Moreover, $\p{\mathcal{L}_t}_{t \geq 0}$ is a strongly continuous semi-group of operators on $\h$.
\end{lm}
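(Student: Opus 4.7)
My plan is to first establish boundedness of $\mathcal{L}_T$ on $\widehat{\h}$ for $T \in \left[0, t_0\right]$, then extend to all $T \geq 0$ via iteration, verify stability of $\h$, and finally treat strong continuity by density. The key identity, for $u \in \widehat{\h}$, is
\begin{equation*}
\n{\mathcal{L}_T u}_{\h}^2 = \int_0^{t_0} \n{\mathcal{L}_{t+T}u}_{\widetilde{\h}}^2 \mathrm{d}t = \int_T^{t_0+T} \n{\mathcal{L}_s u}_{\widetilde{\h}}^2 \mathrm{d}s,
\end{equation*}
using the semi-group property $\mathcal{L}_t \mathcal{L}_T = \mathcal{L}_{t+T}$ on $\mathcal{D}^{\tilde{\upsilon}}\p{M}'$ and the change of variable $s = t+T$. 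For $T \in \left[0,t_0\right]$ I split the right-hand side as $\int_T^{t_0} + \int_{t_0}^{t_0+T}$; the first piece is trivially bounded by $\n{u}_{\h}^2$, and in the second I factor $\mathcal{L}_s u = \mathcal{L}_{t_0}\p{\mathcal{L}_{s-t_0}u}$. Since membership in $\widehat{\h}$ gives precisely that $\mathcal{L}_{s-t_0}u \in \widetilde{\h}$ for almost every $s \in \left[t_0,t_0+T\right]$, and since $\mathcal{L}_{t_0} : \widetilde{\h} \to \widetilde{\h}_0 \subseteq \widetilde{\h}$ is bounded by Proposition~\ref{proprecap}, another change of variable bounds this piece by $C_0^2 \n{u}_{\h}^2$ with $C_0 = \n{\mathcal{L}_{t_0}}_{\widetilde{\h} \to \widetilde{\h}_0}$, yielding the uniform estimate $\n{\mathcal{L}_T u}_{\h} \leq \p{1+C_0^2}^{\frac{1}{2}} \n{u}_{\h}$ on $\left[0,t_0\right]$.

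For $T > t_0$ I write $T = k t_0 + T'$ with $k \in \N$ and $T' \in \left[0,t_0\right[$, so that $\mathcal{L}_T = \p{\mathcal{L}_{t_0}}^k \mathcal{L}_{T'}$ is a composition of bounded operators on $\widehat{\h}$ (the identity $\mathcal{L}_{k t_0 + T'} = \mathcal{L}_{t_0}^k \mathcal{L}_{T'}$ being inherited from $\mathcal{D}^{\tilde{\upsilon}}\p{M}'$). Since $\mathcal{L}_T$ visibly preserves $\mathcal{C}^{\infty,\tilde{\upsilon}}\p{M}$ from its explicit form \eqref{eqkoopman}, its bounded extension to $\widehat{\h}$ preserves the closure $\h = \widehat{\h}_0$. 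The semi-group identity on $\h$ is then read off from the one on $\mathcal{D}^{\tilde{\upsilon}}\p{M}'$ through the continuous inclusion of Lemma~\ref{lmwb}.

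For strong continuity, by density of $\mathcal{C}^{\infty,\tilde{\upsilon}}\p{M}$ in $\h$ and the uniform $\left[0,t_0\right]$ bound above, it is enough to show $\n{\mathcal{L}_T u - u}_\h \to 0$ as $T \to 0^+$ for $u \in \mathcal{C}^{\infty,\tilde{\upsilon}}\p{M}$. Expanding the norm gives
\begin{equation*}
\n{\mathcal{L}_T u - u}_\h^2 = \int_0^{t_0} \n{\mathcal{L}_{t+T}u - \mathcal{L}_t u}_{\widetilde{\h}}^2 \mathrm{d}t;
\end{equation*}
Lemma~\ref{lmderiv}(i) supplies continuity of $t \mapsto \mathcal{L}_t u$ in $\mathcal{C}^{\infty,\tilde{\upsilon}}\p{M}$, which upgrades to continuity in $\widetilde{\h}$ via the continuous inclusion established in \S \ref{gs1}, and dominated convergence on $\left[0,t_0\right]$ with envelope $\p{2 \sup_{t \in \left[0,2t_0\right]} \n{\mathcal{L}_t u}_{\widetilde{\h}}}^2$ then concludes. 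The only non-routine step is the factorization in the first paragraph: one really must exploit that the a.e.\ condition $\mathcal{L}_{s-t_0}u \in \widetilde{\h}$ is built into the very definition of $\widehat{\h}$, because $u$ itself typically does not lie in $\widetilde{\h}$, and the boundedness of $\mathcal{L}_{t_0}$ on the model space $\widetilde{\h}$ is exactly what compensates for the lack of hyperbolicity of the flow at short times.
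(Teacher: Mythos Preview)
Your proof is correct and follows essentially the same approach as the paper: the same change of variable and splitting $\int_T^{t_0} + \int_{t_0}^{t_0+T}$ for $T \in [0,t_0]$, with the factorization $\mathcal{L}_s = \mathcal{L}_{t_0}\mathcal{L}_{s-t_0}$ in the second piece, and the same density-plus-uniform-bound argument for strong continuity. The only cosmetic difference is that for $T \geq t_0$ the paper applies the bound $\n{\mathcal{L}_T}_{\widetilde{\h}\to\widetilde{\h}}$ directly (available from Proposition~\ref{proprecap}) rather than iterating $\mathcal{L}_{t_0}$, and you spell out the dominated convergence step for strong continuity more explicitly than the paper does.
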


\begin{proof}
If $u \in \widetilde{\h}$ and $t \geq t_0$ then we have
\begin{equation*}
\begin{split}
\n{\mathcal{L}_t u}_{\h}^2 & = \int_0^{t_0} \n{\mathcal{L}_\tau \mathcal{L}_t u}_{\widetilde{\h}}^2 \mathrm{d}\tau = \int_0^{t_0} \n{\mathcal{L}_t \mathcal{L}_\tau u}_{\widetilde{\h}}^2 \mathrm{d}\tau \leq \n{\mathcal{L}_t}_{\widetilde{\h} \to \widetilde{\h}}^2 \n{u}_{\h}^2.
\end{split}
\end{equation*}
If $0 \leq t \leq t_0$ then we have
\begin{equation*}
\begin{split}
\n{\mathcal{L}_t u}_{\h}^2 & = \int_{0}^{t} \n{\mathcal{L}_{t_0} \mathcal{L}_\tau u}^2_{\widetilde{\h}} \mathrm{d}\tau + \int_{t}^{t_0} \n{\mathcal{L}_\tau u }_{\widetilde{\h}}^2 \mathrm{d}\tau \\
   & \leq \p{ 1 + \n{\mathcal{L}_{t_0}}_{\widetilde{\h} \to \widetilde{\h}}^2} \n{u}_{\h}^2.
\end{split}
\end{equation*}
Thus $\mathcal{L}_t$ is bounded from $\widetilde{\h}$ to itself, but since $\mathcal{L}_t$ sends $\mathcal{C}^{\infty,\tilde{\upsilon}}\p{M}$ into $\mathcal{C}^{\infty,\tilde{\upsilon}}\p{M}$ (and thus into $\h$), the operator $\mathcal{L}_t$ induces a bounded operator $\mathcal{L}_t : \h \to \h$. Since $\p{\mathcal{L}_t}_{t \geq 0}$ is locally uniformly bounded and $\p{\mathcal{L}_t u}_{t \geq 0}$ depends continuously on $t$ as an element of $\h$ when $u \in \mathcal{C}^{\infty,\tilde{\upsilon}}\p{M}$ (see Lemma~\ref{lmderiv}), the semi-group $\p{\mathcal{L}_t}_{t \geq 0}$ is strongly continuous.
\end{proof}

Notice that, according to Lemma~\ref{lmdomain}, the generator of the semi-group $\p{\mathcal{L}_t}_{t \geq 0}$ is $X$. We prove now a lemma that allows us to go from $\h$ to $\widetilde{\h}_0$ and back, in order to prove that the properties that we stated for $\widetilde{\h}_0$ in Proposition~\ref{proprecap} may be extended to $\h$.

\begin{lm}\label{lmpassage}
For all $t \geq t_0$, the operator $\mathcal{L}_t$ is bounded from $\widetilde{\h}$ to $\h$. If $z \in \C$ is such that $\Re\p{z} \gg 1$ then $\p{z- X}^{-1}$ is bounded from $\h$ to $\widetilde{\h}$.
\end{lm}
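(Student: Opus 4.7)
Both parts follow from the definitions, Proposition~\ref{proprecap} and the Laplace representation $(z-X)^{-1}=\int_0^\infty e^{-zt}\mathcal{L}_t\,dt$ on $\h$ (valid for $\Re(z)$ larger than the spectral abscissa of $(\mathcal{L}_t)_{t\geq0}$ on $\h$, cf.\ Lemma~\ref{lmbounded}). The main input from Proposition~\ref{proprecap} is that $\mathcal{L}_t:\widetilde{\h}\to\widetilde{\h}_0$ is bounded for every $t\geq t_0$, with operator norm locally bounded in $t$ by Banach--Steinhaus applied to the strongly continuous family.

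For the first assertion, if $u\in\widetilde{\h}_0\subseteq\widetilde{\h}$ and $t\geq t_0$, the definition of the $\h$-norm gives
\[
\n{\mathcal{L}_t u}_{\h}^2=\int_0^{t_0}\n{\mathcal{L}_{t+\tau} u}_{\widetilde{\h}}^2\,d\tau\leq\p{\sup_{\tau\in[0,t_0]}\n{\mathcal{L}_{t+\tau}}_{\widetilde{\h}\to\widetilde{\h}_0}}^2 t_0\,\n{u}_{\widetilde{\h}_0}^2,
\]
so $\mathcal{L}_t u\in\widehat{\h}$ with a continuous estimate. When $u\in\mathcal{C}^{\infty,\tilde{\upsilon}}(M)$ the vector $\mathcal{L}_t u$ is smooth, hence in $\h$; density of $\mathcal{C}^{\infty,\tilde{\upsilon}}(M)$ in $\widetilde{\h}_0$ together with the closedness of $\h$ in $\widehat{\h}$ extend this to arbitrary $u\in\widetilde{\h}_0$.

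The second assertion rests on the auxiliary fact that $\mathcal{L}_{2t_0}:\h\to\widetilde{\h}_0$ is bounded. Indeed, for $u\in\h\subseteq\widehat{\h}$ one has $\mathcal{L}_\tau u\in\widetilde{\h}$ for a.e.\ $\tau\in(0,t_0)$; the factorisation $\mathcal{L}_{2t_0}u=\mathcal{L}_{2t_0-\tau}\mathcal{L}_\tau u$ with $2t_0-\tau\in(t_0,2t_0)$, combined with the uniform norm bound above, yields
\[
\n{\mathcal{L}_{2t_0} u}_{\widetilde{\h}_0}\leq C\,\n{\mathcal{L}_\tau u}_{\widetilde{\h}}\quad\text{for a.e. }\tau\in(0,t_0),
\]
and taking the $L^2(d\tau)$-norm via Cauchy--Schwarz produces $\n{\mathcal{L}_{2t_0}u}_{\widetilde{\h}_0}\leq C t_0^{-1/2}\n{u}_{\h}$. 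The same factorisation $\mathcal{L}_t u=\mathcal{L}_{t-s}\mathcal{L}_s u$ with $s\in(0,t-t_0)$ gives, for $t\in(t_0,2t_0]$, the pointwise bound $\n{\mathcal{L}_t u}_{\widetilde{\h}_0}\leq C(t-t_0)^{-1/2}\n{u}_{\h}$, an integrable singularity at $t=t_0$.

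With these ingredients, split the resolvent by means of the semigroup identity
\[
(z-X)^{-1}u=\int_0^{2t_0}e^{-zt}\mathcal{L}_t u\,dt+e^{-2zt_0}\mathcal{L}_{2t_0}(z-X)^{-1}u.
\]
The second term lies in $\widetilde{\h}_0$ with norm $\lesssim\n{u}_{\h}$ by the auxiliary fact applied to $(z-X)^{-1}u\in\h$. For the first term, Cauchy--Schwarz on $[0,t_0]$ together with the integrable pointwise bound on $[t_0,2t_0]$ give a $\widetilde{\h}$-estimate by $C\n{u}_{\h}$; for $u\in\mathcal{C}^{\infty,\tilde{\upsilon}}(M)$ the integrand is continuous with values in $\mathcal{C}^{\infty,\tilde{\upsilon}}(M)\subseteq\widetilde{\h}_0$ by Lemma~\ref{lmderiv}, so the Bochner integral is already in $\widetilde{\h}_0$, and density of smooth functions in $\h$ together with the closedness of $\widetilde{\h}_0$ in $\widetilde{\h}$ extend this to every $u\in\h$. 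The main subtlety throughout is that an element of $\h$ need not belong to $\widetilde{\h}$, preventing a direct appeal to Proposition~\ref{proprecap}; the factorisation trick through a ``good'' intermediate time $\tau$ is the device that replaces it.
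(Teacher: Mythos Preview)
Your proof of the first assertion follows the paper's argument essentially verbatim.

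For the second assertion, your argument is correct but takes a longer route than the paper. The paper proceeds in a single stroke: since $\|u\|_{\h}$ equals the $L^2\bigl([0,t_0],\widetilde{\h}\bigr)$-norm of $\tau\mapsto\mathcal{L}_\tau u$, the identity $\mathcal{L}_{t}u=\mathcal{L}_{t-nt_0}(\mathcal{L}_{nt_0}u)$ shows that the $L^2\bigl([nt_0,(n+1)t_0],\widetilde{\h}\bigr)$-norm of $t\mapsto\mathcal{L}_t u$ equals $\|\mathcal{L}_{nt_0}u\|_{\h}\leq\|\mathcal{L}_{t_0}\|_{\h\to\h}^n\|u\|_{\h}$. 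Cauchy--Schwarz on each block and summation over $n$ then give directly that $\int_0^\infty e^{-\Re(z)t}\|\mathcal{L}_t u\|_{\widetilde{\h}}\,dt\leq C\|u\|_{\h}$ for $\Re(z)$ large, so the whole Laplace integral converges in $\widetilde{\h}$; the $\widetilde{\h}_0$ conclusion then follows by the same density argument you use. Your path---via the auxiliary boundedness of $\mathcal{L}_{2t_0}:\h\to\widetilde{\h}_0$, the split of the resolvent at $t=2t_0$, and the pointwise estimate $\|\mathcal{L}_t u\|_{\widetilde{\h}_0}\leq C(t-t_0)^{-1/2}\|u\|_{\h}$ on $(t_0,2t_0]$---reaches the same conclusion but with more moving parts. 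The paper's block argument is cleaner and avoids both the split and the integrable-singularity estimate; on the other hand, the auxiliary fact you isolate (boundedness of $\mathcal{L}_{2t_0}:\h\to\widetilde{\h}_0$) is a pleasant observation in its own right that the paper does not state explicitly.
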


\begin{proof}
Let $u \in \widetilde{\h}$ then 
\begin{equation*}
\n{\mathcal{L}_{t} u}_{\h}^2 \leq \sup_{\tau \in \left[t,t+t_0\right]} \n{\mathcal{L}_{\tau}}_{\widetilde{\h} \to \widetilde{\h}}^2 \n{u}_{\widetilde{\h}}^2.
\end{equation*}
Thus $\mathcal{L}_t$ is bounded from $\widetilde{\h}$ to $\widehat{\h}$. Since it sends $\mathcal{C}^{\infty,\tilde{\upsilon}}\p{M}$ into itself, $\mathcal{L}_t$ sends $\widetilde{\h}_0$ into $\h$.

Now, recall \cite[Problem 1.15 p.487]{Kato} that if $\Re\p{z} \gg 1$ and $u \in \h$ then
\begin{equation*}
\p{z-X}^{-1} u = \int_{0}^{+ \infty} e^{-zt} \mathcal{L}_t u \mathrm{d}t.
\end{equation*}
But recall that the norm of $u$ in $\h$ is the norm of $\p{\mathcal{L}_t u}_{0 \leq t \leq t_0}$ in the space $L^2\p{\left[0,t_0\right],\widetilde{\h}}$. Thus, for all $n \in \N$, the norm of $\p{\mathcal{L}_t u}_{n t_0 \leq t \leq \p{n+1} t_0}$ in the space $L^2\p{\left[0,t_0\right],\widetilde{\h}}$ is smaller than $\n{\mathcal{L}_{t_0}}^n_{\h \to \h} \n{u}_{\h}$. Thus if $\Re\p{z} > \ln\p{\n{\mathcal{L}_{t_0}}_{\h \to \h}}$, then, by Cauchy--Schwarz inequality, there is a constant $C >0$ such that the $L^1$ norm of $t \mapsto e^{-zt}\mathcal{L}_t u$ is smaller than $C \n{u}_{\h \to \h}$. Thus $\p{z-X}^{-1}$ is bounded from $\h$ to $\widetilde{\h}$.
\end{proof}

We are now ready to prove that the spectrum of $X$ acting on $\h$ is discrete.

\begin{lm}\label{lmdiscretespectrum}
The spectrum of $X$ acting on $\h$ is made of isolated eigenvalues of finite multiplicity which coincide with the Ruelle resonances of $X$ (multiplicity taken into account).
\end{lm}

\begin{proof}
According to Lemma \ref{lmresgen}, it is enough to prove that the spectrum of $X$ acting on $\h$ is made of isolated eigenvalues of finite multiplicity. Let $z \in \C$ be such that $\Re z \gg 1$. Since $X$ is the generator of a strongly continuous semi-group, $z$ belongs to the resolvent set of $X$. From \cite[Problem 6.16 p.177]{Kato}, we see that we only need to prove that the essential spectral radius of $(z-X)^{-1}$ is zero (see Definition \ref{dfesr}).

To do so, let $\chi : \R_+^* \to \left[0,1\right]$ be a compactly supported $\mathcal{C}^\infty$ function such that $\chi(t) = 1$ if $t \leq 2 t_0$. Then, according to \cite[Problem 1.15 p.487]{Kato}, for all $n \geq 1$ we have (provided that $\Re z$ is large enough):
\begin{equation}\label{eqdecomporesolvent}
\begin{split}
(z - X)^{-n} & = \frac{1}{(n-1)!}\int_{0}^{+ \infty} e^{-zt} t^{n-1} \mathcal{L}_t \mathrm{d}t \\
             & = \frac{1}{(n-1)!}\int_{0}^{+ \infty} \chi(t) e^{-zt} t^{n-1} \mathcal{L}_t \mathrm{d}t + \frac{1}{(n-1)!}\int_{0}^{+ \infty} h_n(t) \mathcal{L}_t \mathrm{d}t,
\end{split}
\end{equation} 
where the function $h_n : \R_+^* \to \C$ is defined by $h_n(t) = \p{1 - \chi(t)} e^{-zt} t^{n-1}$. Set also $\tilde{h}_n(t) = z h_n(t+t_0) + h_n'(t+t_0)$, so that for all $t \in \R_+^*$ we have
\begin{equation*}
\begin{split}
h_n(t+t_0) = e^{-zt} \int_0^t e^{z \tau} \tilde{h}_n(\tau) \mathrm{d}\tau.
\end{split}
\end{equation*}
Then, notice that
\begin{equation}\label{eqfactorop}
\begin{split}
\mathcal{L}_{t_0} \circ \int_{0}^{+ \infty} \tilde{h}_n(\tau) \mathcal{L}_\tau \mathrm{d}\tau \circ (z-X)^{-1} & = \int_{0}^{+ \infty} \int_{0}^{+ \infty} e^{-zt} \tilde{h}_n(\tau) \mathcal{L}_{t_0 + t + \tau} \mathrm{d}t \mathrm{d}\tau \\
         & = \int_{0}^{+ \infty}  \p{\int_{\tau}^{+ \infty} e^{-zu} e^{z \tau} \tilde{h}_n(\tau) \mathcal{L}_{t_0+u} \mathrm{d}u} \mathrm{d}\tau \\
         & = \int_{0}^{+ \infty} e^{-zu} \p{\int_0^u e^{z \tau} \tilde{h}_n(\tau) \mathrm{d}\tau} \mathcal{L}_{t_0 + u} \mathrm{d}u\\
         & = \int_0^{+ \infty} h_n(t_0 + u) \mathcal{L}_{t_0+u} \mathrm{d}u \\
         & = \int_0^{+ \infty} h_n(t) \mathcal{L}_t \mathrm{d}t.
\end{split}
\end{equation}
Moreover, if $\Re z$ is large enough, then, for every $n \geq 1$, the function $\tilde{h}_n$ satisfies the assumptions of Proposition \ref{proprecap} and consequently the operator 
\begin{equation*}
\begin{split}
\int_0^{+ \infty} \tilde{h}_n(t) \mathrm{d}t : \widetilde{\h} \to \widetilde{\h}
\end{split}
\end{equation*}
is compact. It follows then from \eqref{eqfactorop} and Lemma \ref{lmpassage} that the operator
\begin{equation*}
\begin{split}
\frac{1}{(n-1)!}\int_{0}^{+ \infty} h_n(t) \mathcal{L}_t \mathrm{d}t : \h \to \h
\end{split}
\end{equation*}
is compact. On the other hand, we see that the operator norm of 
\begin{equation*}
\begin{split}
\frac{1}{(n-1)!}\int_{0}^{+ \infty} \chi(t) e^{-zt} t^{n-1} \mathcal{L}_t \mathrm{d}t : \h \to \h
\end{split}
\end{equation*}
is less than $ \frac{C (2 t_0)^{n}}{(n-1)!}$ for some constant $C > 0$. With \eqref{eqdecomporesolvent}, it follows then from Hennion's argument \cite{hennion} based on Nussbaum formula \cite{nussbaum} that the essential spectral radius of $(z-X)^{-1}$ on $\h$ is less than
\begin{equation*}
\begin{split}
\liminf_{n \to + \infty} \p{\frac{C (2 t_0)^n}{\p{n-1}!}}^{\frac{1}{n}} = 0.
\end{split}
\end{equation*}
\end{proof}

We can now give the proof of the most interesting property of the Hilbert space $\widetilde{\h}$.

\begin{lm}\label{lmlaplace}
Let $h$ be a $\mathcal{C}^\infty$ function supported on a compact subset of $\left[2 t_0,+\infty\right[$. Then the operator
\begin{equation}\label{eqoph}
\int_{0}^{+ \infty} h\p{t} \mathcal{L}_t \mathrm{d}t : \h \to \h
\end{equation}
is compact.  Its non-zero spectrum is the intersection of $\C \setminus \set{0}$ with the image of the spectrum of $X$ by $\lambda \mapsto \textup{Lap}(h)(-\lambda)$, where $\textup{Lap}$ denotes the Laplace transform. 

If $\upsilon < 2$, the operator \eqref{eqoph} is trace class and
\begin{equation*}
\textup{tr}\p{\int_{0}^{+ \infty} h\p{t} \mathcal{L}_t \mathrm{d}t} = \sum_{\gamma} \frac{h\p{T_\gamma} T_\gamma^{\#}}{\va{\det\p{I - \mathcal{P}_{\gamma}}}} \exp\p{\int_\gamma g}.
\end{equation*}
\end{lm}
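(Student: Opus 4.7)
The plan is to reduce to Proposition~\ref{proprecap}(v), which is the analogous statement on the auxiliary space $\widetilde{\h}_0$, and then transfer both the trace-class property and the trace to $\h$. The extra gap in the support condition ($[2t_0,+\infty[$ here versus $[t_0,+\infty[$ in Proposition~\ref{proprecap}(v)) is precisely what makes the transfer possible, by providing an additional factor of $\mathcal{L}_{t_0}$ that can be used to pass between $\h$ and $\widetilde{\h}_0$.

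The first step is essentially tautological: since $\mathrm{supp}(h)\subseteq[2t_0,+\infty[\subseteq[t_0,+\infty[$, Proposition~\ref{proprecap}(v) applies directly and gives that $A_0 := \int_0^{+\infty}h(t)\mathcal{L}_t\,\mathrm{d}t$ is trace class on $\widetilde{\h}_0$, with trace equal to the periodic-orbit sum in the statement. The second step is a smoothing property: $A$ maps $\h$ continuously into $\widetilde{\h}_0$. To see this, I first establish that $\mathcal{L}_{2t_0}:\h\to\widetilde{\h}_0$ is bounded via the averaging identity
\[
\mathcal{L}_{2t_0}u = \frac{1}{t_0}\int_0^{t_0}\mathcal{L}_{2t_0-\tau}(\mathcal{L}_\tau u)\,\mathrm{d}\tau,
\]
valid for smooth $u$: for $\tau\in[0,t_0]$ we have $2t_0-\tau\in[t_0,2t_0]$, so Proposition~\ref{proprecap}(i) gives a uniform bound $\|\mathcal{L}_{2t_0-\tau}\|_{\widetilde{\h}\to\widetilde{\h}_0}\leq C$, and Cauchy--Schwarz then yields $\|\mathcal{L}_{2t_0}u\|_{\widetilde{\h}}\leq C'\|u\|_{\h}$; density of smooth functions in $\h$ and preservation of smoothness by $\mathcal{L}_{2t_0}$ extend this to $\h$. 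Now, since $h$ is supported in $[2t_0,+\infty[$, one has $A = \mathcal{L}_{2t_0}\,C$ with $C = \int_0^{+\infty}h(s+2t_0)\mathcal{L}_s\,\mathrm{d}s$ bounded on $\h$ by Lemma~\ref{lmbounded} and absolute integrability of $h$, hence $A:\h\to\widetilde{\h}_0$ is bounded; in particular $A(\h)\subseteq\widetilde{\h}_0$.

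For the final step, I combine this smoothing with the intertwining $\mathcal{L}_{t_0}\circ A_0 = A\circ \mathcal{L}_{t_0}$ (a direct consequence of the commutativity of the semigroup and integration) and with the fact that $\mathcal{L}_{t_0}:\widetilde{\h}_0\to\h$ from Lemma~\ref{lmpassage} is injective (since the flow is invertible) with dense image (because it is a bijection on the dense subspace $\mathcal{C}^{\infty,\tilde{\upsilon}}(M)$). An eigenvalue/eigenvector matching then shows that $A:\h\to\h$ and $A_0:\widetilde{\h}_0\to\widetilde{\h}_0$ have the same non-zero spectrum with multiplicities: non-zero eigenvectors transfer from $\widetilde{\h}_0$ to $\h$ via $\mathcal{L}_{t_0}$, and any eigenvector $v\in\h$ of $A$ with eigenvalue $\lambda\neq 0$ satisfies $v = \lambda^{-1}Av\in A(\h)\subseteq\widetilde{\h}_0$ (with the analogous argument for generalized eigenvectors using $A^m = \mathcal{L}_{2t_0}B^2\mathcal{L}_{(m-2)t_0}$ for $m\geq 2$). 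Combining this spectral identification with the factorization $A = \mathcal{L}_{t_0}\,B = B\,\mathcal{L}_{t_0}$, where $B = \int_0^{+\infty}h(s+t_0)\mathcal{L}_s\,\mathrm{d}s$ is trace class on $\widetilde{\h}_0$ by Proposition~\ref{proprecap}(v) and bounded on $\h$ by Lemma~\ref{lmbounded}, a cyclic trace argument together with Lidskii's theorem applied on $\widetilde{\h}_0$ yields that $A:\h\to\h$ is trace class with trace equal to $\mathrm{tr}(A_0)$, which is exactly the periodic-orbit sum.

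The main obstacle is the last step: $\h$ and $\widetilde{\h}_0$ are not continuously included in one another, so the transfer of the trace-class property from $\widetilde{\h}_0$ to $\h$ cannot be read off from an abstract nuclear-operator argument. What makes it work is the simultaneous availability of two bounded intertwining operators going in opposite directions, namely $\mathcal{L}_{t_0}:\widetilde{\h}_0\to\h$ (Lemma~\ref{lmpassage}) and $\mathcal{L}_{2t_0}:\h\to\widetilde{\h}_0$ (Step~2), which crucially rely on the hypothesis $\mathrm{supp}(h)\subseteq[2t_0,+\infty[$; together with the commutation $A\mathcal{L}_{t_0} = \mathcal{L}_{t_0}A$, this is what allows one to shuttle between the two spaces and pin down both the trace-class nature and the value of the trace of $A$ on $\h$.
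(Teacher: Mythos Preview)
Your overall strategy — establish the result on $\widetilde{\h}_0$ via Proposition~\ref{proprecap}(v), then transfer to $\h$ using operators that shuttle between the two spaces — is the same as the paper's. Your averaging identity for $\mathcal{L}_{2t_0}:\h\to\widetilde{\h}_0$ is correct and elegant (the paper does not isolate this; it uses the resolvent $(z-X)^{-1}:\h\to\widetilde{\h}_0$ from Lemma~\ref{lmpassage} instead). Your spectral matching in Step~3 is fine and is essentially the content of Lemma~\ref{lmesr}, which the paper also invokes.

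The genuine gap is in the final step: you have not shown that $A:\h\to\h$ is \emph{trace class}. Knowing that $A_0$ is trace class on $\widetilde{\h}_0$ and that $A$ and $A_0$ share the same non-zero eigenvalues is not enough — summable eigenvalues do not imply trace class (think of quasi-nilpotent non-nuclear operators). Your ``cyclic trace argument'' amounts to writing $A=\mathcal{L}_{t_0}B$ on $\h$ and $A_0=B\mathcal{L}_{t_0}$ on $\widetilde{\h}_0$; but for bounded $P:\h_1\to\h_2$, $Q:\h_2\to\h_1$, the implication ``$QP$ trace class $\Rightarrow$ $PQ$ trace class'' is false in general, so this does not close the argument. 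Likewise, your Step~2 gives $A:\h\to\widetilde{\h}_0$ bounded, but to conclude $A:\h\to\h$ is nuclear you would still need a bounded map $\widetilde{\h}_0\to\h$ to post-compose with, and the identity is not such a map. Attempting instead a three-term factorization $A=\mathcal{L}_{t_0}\circ T\circ\mathcal{L}_{2t_0}$ with $T$ trace class on $\widetilde{\h}_0$ forces $T=\int h(t+3t_0)\mathcal{L}_t\,\mathrm{d}t$, whose integrand is supported in $[-t_0,+\infty[$, outside the scope of Proposition~\ref{proprecap}(v).

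The paper circumvents exactly this obstruction by introducing the resolvent. Setting $\tilde h(t)=zh(t+t_0)+h'(t+t_0)$ (which is $\mathcal{C}^\infty$ and supported in $[t_0,+\infty[$), one checks the identity
\[
\int_0^{+\infty}h(t)\mathcal{L}_t\,\mathrm{d}t \;=\; \mathcal{L}_{t_0}\circ\left(\int_{t_0}^{+\infty}\tilde h(t)\mathcal{L}_t\,\mathrm{d}t\right)\circ(z-X)^{-1}.
\]
Here $(z-X)^{-1}:\h\to\widetilde{\h}_0$ is bounded (Lemma~\ref{lmpassage}), the middle factor is trace class on $\widetilde{\h}_0$ by Proposition~\ref{proprecap}(v), and $\mathcal{L}_{t_0}:\widetilde{\h}_0\to\h$ is bounded (Lemma~\ref{lmpassage}). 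This directly exhibits $A:\h\to\h$ as (bounded)$\circ$(trace class)$\circ$(bounded), hence trace class; the spectral identification and Lidskii then give the trace. The resolvent is precisely what provides the missing bounded map $\h\to\widetilde{\h}_0$ without consuming any of the ``$t_0$ budget'' needed to keep the middle operator in the regime of Proposition~\ref{proprecap}(v).
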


\begin{proof}
As in the proof of Lemma \ref{lmdiscretespectrum}, define the function $\tilde{h}$ on $\R_+^*$ by
\begin{equation*}
\widetilde{h}\p{t} = z h\p{t + t_0} + h'\p{ t + t_0}.
\end{equation*}
Since $\tilde{h}$ is $\mathcal{C}^\infty$ and compactly supported in $\left[t_0,+ \infty\right[$, it satisfies the assumption of Proposition \ref{proprecap} and, working as in the proof of Lemma \ref{lmdiscretespectrum}, we see that the operator
\begin{equation*}
\begin{split}
\int_{0}^{+ \infty} h\p{t} \mathcal{L}_t \mathrm{d}t  = \mathcal{L}_{t_0} \circ \int_{t_0}^{+ \infty} \widetilde{h}\p{t} \mathcal{L}_t \mathrm{d}t \circ \p{z-X}^{-1} : \h \to \h
\end{split}
\end{equation*}
is compact.

In order to identify the on-zero spectrum of \eqref{eqoph}, we denote by $f$ the function defined by $f(z) = \textup{Lap}(h)(-z)$ and by $A$ the operator \eqref{eqoph}. If $\lambda \in \C$, denote by $E_\lambda$ the generalized eigenspace of $X$ associated to $\lambda$ and, if $\lambda \neq 0$, by $F_\lambda$ the generalized eigenspace of $A$ associated to $\lambda$. We want to prove that for all $\mu \in \C \setminus \set{0}$ we have
\begin{equation*}
\begin{split}
F_\mu = \bigoplus_{\substack{\lambda \in \C \\f(\lambda) = \mu}} E_\lambda,
\end{split}
\end{equation*}
which is a more precise statement that our claim on the eigenvalues of $A$. Let $\lambda \in \sigma\p{X}$ be such that $f(\lambda) \neq 0$. Since $X$ commutes with $\mathcal{L}_t$ for $t \geq 0$, it commutes with $A$ so that $E_\lambda$ is stable by $A$. We denote by $\widetilde{X}$ and $\widetilde{A}$ the endomorphisms of $E_\lambda$ induced respectively by $X$ and $A$. Since $E_\lambda$ is finite-dimensional (according to Lemma \ref{lmdiscretespectrum}), the operator $\widetilde{X}$ is bounded, and we may define for $t \geq 0$ the operator $e^{t\widetilde{X}}$ on $E_\lambda$. Then, $e^{t \widetilde{X}}$ is nothing else than the operator induced by $\mathcal{L}_t$ on $E_\lambda$ (they solve the same Cauchy problem). It follows that we have
\begin{equation}\label{eqidentificationeigenspaces}
\begin{split}
\widetilde{A} = \int_0^{+ \infty} h(t) e^{t \widetilde{X}} \mathrm{d}t = f\p{\widetilde{X}},
\end{split}
\end{equation}
where $f\p{\widetilde{X}}$ is meant in the sense of the holomorphic calculus of bounded operators (we may develop $e^{t \widetilde{X}}$ in power series). Since $\sigma\p{\widetilde{X}} = \set{\lambda}$ by definition of $E_\lambda$, it follows that $\sigma\p{\widetilde{A}} = \set{f(\lambda)}$, which gives
\begin{equation*}
\begin{split}
E_\lambda \subseteq F_{f(\lambda)}.
\end{split}
\end{equation*}
Reciprocally, let $\mu \in \sigma\p{A} \setminus \set{0}$. From the equality
\begin{equation*}
\begin{split}
X A = -\int_0^{+ \infty} h'(t) \mathcal{L}_t \mathrm{d}t,
\end{split}
\end{equation*}
we see that the range of $A$ is included in the domain of $X$. In particular, $F_\mu$ is contained in the domain of $X$ and thus $X$ induces a bounded operator on the finite dimensional space $F_\mu$. Applying as above the holomorphic calculus of bounded operators, we get that
\begin{equation*}
\begin{split}
F_\mu \subseteq \bigoplus_{\substack{\lambda \in \C \\f(\lambda) = \mu}} E_\lambda,
\end{split}
\end{equation*}
and \eqref{eqidentificationeigenspaces} is proven.

If $\upsilon < 2$, we may replace ``compact'' by ``trace class'' in the argument above. Then, using Lemma~\ref{lmesr} as in the proof of Lemma~\ref{lmresgen}, the operator
\begin{equation*}
\int_{0}^\infty h\p{t} \mathcal{L}_t \mathrm{d}t
\end{equation*}
has the same non-zero spectrum when acting on $\h$ or on $\widetilde{\h}_0$ and thus, by Lidskii's trace theorem, the same trace. This ends the proof with Proposition~\ref{proprecap}.
\end{proof}

\begin{rmq}
As pointed out after the statement of Theorem~\ref{thmmain}, the point (v) of Theorem~\ref{thmmain} proves trace formula \eqref{eqtrform} which was stated as an equality between distributions on $\R_+^*$. However, it is clear from the proof that the equality in fact holds in the dual of the space of compactly supported $\mathcal{C}^{d+2}$ functions on $\R_+^*$ whose $d+2$th derivative has bounded variation. In fact, using the same trick as in the proof of Proposition~\ref{propmain}, we see that trace formula holds in the dual of the space of compactly supported $\mathcal{C}^{d+1}$ functions on $\R_+^*$ whose $d+1$th derivative has bounded variations.  
\end{rmq}

Finally, we end this section with the proof of Proposition~\ref{propmain}.

\begin{proof}[Proof of Proposition~\ref{propmain}]
First of all, we need to prove that, when $\Re\p{z} \gg 1$, the essential spectral radius (see Definition \ref{lmesr} in Appendix \ref{apprri}) of the operator
\begin{equation}\label{eqruse}
\mathcal{L}_{t_0} \p{z-X}^{-1} = \int_{t_0}^{+ \infty} e^{-z\p{t-t_0}} \mathcal{L}_t \mathrm{d}t
\end{equation}
acting on $\h$ is zero. From the proof of Lemma \ref{lmdiscretespectrum}, we know that the essential spectral radius of $\p{z - X}^{-1}$ is zero. Then if $r > 0$ is such that $\p{z-X}^{-1}$ has no eigenvalue of modulus $r$ we may define the spectral projection
\begin{equation*}
\Pi_r  = \frac{1}{2 i \pi} \int_{\partial \mathbb{D}\p{0,r}} \p{w - \p{z-X}^{-1}}^{-1} \mathrm{d}w.
\end{equation*}
Then $I - \Pi_r$ has finite rank and the spectral radius of $\p{z-X}^{-1} \Pi_r$ is less than $r$. Since $\mathcal{L}_{t_0}$ commutes with $\p{z-X}^{-1}$, it also commutes with $\Pi_r$ and thus the spectral radius of $\mathcal{L}_{t_0} \p{z-X}^{-1} \Pi_r$ is less than $\n{\mathcal{L}_{t_0}} r$. Then writing
\begin{equation}\label{eqdecesr}
\mathcal{L}_{t_0} \p{z-X}^{-1} = \mathcal{L}_{t_0} \p{z-X}^{-1} \Pi_r + \mathcal{L}_{t_0} \p{z-X}^{-1} \p{1 - \Pi_r}
\end{equation}
and using Hennion's argument \cite{hennion} as in the proof of Lemma \ref{lmdiscretespectrum} (notice that the second term of the right-hand side of \eqref{eqdecesr} has finite rank), we see that the essential spectral radius of $\mathcal{L}_{t_0} \p{z-X}^{-1}$ is less than $\n{\mathcal{L}_{t_0}} r$. Since $r >0$ may be chosen arbitrarily small, the essential spectral radius of $\mathcal{L}_{t_0} \p{z-X}^{-1}$ is zero. Consequently, using functional calculus in finite dimension as in the proof of Lemma \ref{lmlaplace}, we may prove that the spectrum of $\mathcal{L}_{t_0} \p{z-X}^{-1}$ is made of the $\frac{e^{t_0 \lambda}}{z - \lambda}$ when $\lambda$ runs over the Ruelle spectrum of $X$.

On the other hand, according to Proposition~\ref{proprecap} (with $h = \mathds{1}_{\left[t_0,+\infty\right[}$ and $k=0$), the right-hand side of \eqref{eqruse} defines an operator on $\widetilde{\h}_0$ which is in the Schatten class $S_p$ for any $p > d+1$ (in particular it is compact and has essential spectral radius zero). We may use Lemma~\ref{lmesr} as in the proof of Lemma~\ref{lmresgen} to get that the spectrum of this operator is the same as the spectrum of the operator \eqref{eqruse} acting on $\h$, that we just described. Consequently, for all $p > d+1$, since the operator acting on $\widetilde{\h}_0$ is in the Schatten class $S_p$, its spectrum is in $\ell^p$ (see \cite[Corollary 3.4 p.54]{Gohb}), so that
\begin{equation*}
\sum_{\lambda \textup{ resonances of } X} \va{\frac{e^{\lambda t_0}}{z - \lambda}}^p < + \infty.
\end{equation*}
Since $t_0 >0$ and $p > d+1$ are arbitrary, Proposition~\ref{propmain} follows.
\end{proof}

\appendix

\section*{Appendix}

\section{Ruelle resonances are intrinsic}\label{apprri}

As pointed out before, the Banach spaces $\B$ that appear in Theorem~\ref{thmfonda} are highly non-canonical. To prove that Ruelle resonances do not depend on the choice of these spaces, there is a classical argument based on the investigation of a meromorphic continuation of the resolvent of $X$ as an operator from $\mathcal{C}^\infty\p{M}$ to $\mathcal{D}'\p{M}$. To deal with spaces that are not intermediary between $\mathcal{C}^\infty\p{M}$ and $\mathcal{D}'\p{M}$, it is easier to use an approach based on the following Lemma~\ref{lmesr}, whose proof may be found in \cite[Lemma A.3]{Bal2} or \cite[Lemma A.1]{Tsu}. Recall first the following definition.

\begin{df}[Isolated eigenvalue of finite multiplicity and essential spectral radius]\label{dfesr}
If $\B$ is a Banach space, $X$ an \emph{a priori} unbounded operator on $\B$ and $\lambda \in \C$, we say that $\lambda$ is an \emph{isolated eigenvalue of finite multiplicity} for $X$ if $\lambda$ is an isolated point of $\sigma\p{X}$ and the rank of the spectral projector
\begin{equation*}
\Pi_{\lambda} = \frac{1}{2 i \pi} \int_{\partial \mathbb{D}\p{\lambda,r}} \p{z-X}^{-1} \mathrm{d}z,
\end{equation*}
where $r$ is any small enough positive real number so that $\sigma\p{X} \cap \overline{\mathbb{D}}\p{\lambda,r} = \set{\lambda}$, is finite (this rank is by definition the algebraic multiplicity of $\lambda$).

Now, if $X$ is bounded we define the \emph{essential spectral radius} of $X$ as the infimum of $\rho > 0$ such that the intersection of $\sigma\p{X}$ with $\set{z \in \C : \va{z} > \rho}$ contains only isolated eigenvalues of finite multiplicity.
\end{df}

\begin{lm}\label{lmesr}
Let $\B$ be a Hausdorff topological vector space. Let $\B_1$ and $\B_2$ be Banach spaces continuously included in $\B$ such that $\B_1 \cap \B_2$ is dense both in $\B_1$ and in $\B_2$. Let $L : \B \to \B$ be a continuous linear map that preserves $\B_1$ and $\B_2$. Assume that the maps induced by $L$ on $\B_1$ and $\B_2$ are bounded operators whose essential spectral radius is smaller than some number $\rho >0$. Then the eigenvalues in $\set{z \in \C : \va{z} > \rho}$ coincide. Furthermore, the corresponding generalized eigenspaces coincide and are contained in $\B_1 \cap \B_2$.
\end{lm}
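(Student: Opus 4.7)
The strategy is standard for comparing the Ruelle-type spectra of a single abstract operator on two different Banach spaces; see \cite[Lemma A.3]{Bal2} and \cite[Lemma A.1]{Tsu} for the argument in essentially the present generality. The plan is to show that the two resolvents are restrictions of a single $\mathcal{B}$-valued meromorphic object on $\set{\va{z} > \rho}$, and then to read off eigenvalues and generalized eigenspaces from the residues.

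First, for $\va{z}$ strictly larger than both $\n{L}_{\B_1}$ and $\n{L}_{\B_2}$ the resolvents $R_i\p{z} = \p{z - L|_{\B_i}}^{-1}$ are both given by the Neumann series $\sum_{n \geq 0} z^{-n-1} L^n$. Since the inclusions $\iota_i : \B_i \hra \B$ are continuous and $L$ is continuous on $\B$, applying $\iota_i$ term by term shows that for every $u \in \B_1 \cap \B_2$ we have the equality $\iota_1 R_1\p{z}u = \iota_2 R_2\p{z}u$ in $\B$. This identification between the two resolvents on the common subspace will be the bridge between the two spectral pictures.

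Second, the assumption that the essential spectral radius of $L|_{\B_i}$ is at most $\rho$ implies (by standard Riesz--Dunford functional calculus, see e.g.~\cite{Kato}) that $R_i$ admits a meromorphic continuation to $\set{\va{z} > \rho}$ whose poles are exactly the eigenvalues $\lambda$ of $L|_{\B_i}$ with $\va{\lambda} > \rho$, each residue being the finite-rank Riesz spectral projector $\pi_\lambda^{\p{i}} = \frac{1}{2 i \pi} \oint_\gamma R_i\p{z}\mathrm{d}z$ onto the generalized eigenspace $V_\lambda^{\p{i}}$. By analytic continuation of the identity from the first step, the relation $\iota_1 R_1\p{z}u = \iota_2 R_2\p{z}u$ persists on $\B_1 \cap \B_2$ throughout the common meromorphic domain.

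Third, and this is the main obstacle, one has to match the individual eigenvalues and transfer each $V_\lambda^{\p{1}}$ into $\B_2$ (and vice versa), even though a priori an element of $V_\lambda^{\p{1}} \subseteq \B_1$ need not live in $\B_2$. The natural move is to introduce the intersection $\B_1 \cap \B_2$ equipped with the sum norm $\n{\cdot}_{\B_1} + \n{\cdot}_{\B_2}$, on which $L$ acts boundedly, and to check that $L$ on this third Banach space also has essential spectral radius at most $\rho$ (this follows from Hennion's theorem applied to the Neumann series and the finite-rank decomposition of the spectral projectors of $L|_{\B_i}$ at eigenvalues $\va{\lambda} > \rho$). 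Picking a small circle $\gamma$ around $\lambda$ in $\set{\va{z} > \rho}$ that avoids every other eigenvalue of both $L|_{\B_1}$ and $L|_{\B_2}$, the projector $\frac{1}{2 i \pi} \oint_\gamma R_1\p{z}\mathrm{d}z$ has image $V_\lambda^{\p{1}}$, and the analogous contour integral of $R_2\p{z}$ has image $V_\lambda^{\p{2}}$ (zero if $\lambda$ is not a pole on $\B_2$). Combining with the resolvent identification and the compatibility of the $\B$-valued residues, one concludes that $\iota_1\p{V_\lambda^{\p{1}}} = \iota_2\p{V_\lambda^{\p{2}}}$ inside $\B$, hence both equal $V_\lambda^{\p{1}} \cap V_\lambda^{\p{2}} \subseteq \B_1 \cap \B_2$, yielding the desired coincidence of eigenvalues, generalized eigenspaces, and their containment in $\B_1 \cap \B_2$.
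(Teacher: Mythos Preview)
The paper does not give its own proof of this lemma; it simply defers to \cite[Lemma A.3]{Bal2} and \cite[Lemma A.1]{Tsu}, which are exactly the references you cite. Your first two steps --- identifying the two resolvents on $\B_1 \cap \B_2$ via the Neumann series for large $\va{z}$ and propagating this by analytic continuation of the $\B$-valued map throughout $\set{\va{z} > \rho}$ --- are correct and are the standard opening moves in those references.

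Your third step, however, has a real gap. The resolvent identity holds only for $u \in \B_1 \cap \B_2$, so the contour integral gives $\iota_1 \pi_\lambda^{\p{1}} u = \iota_2 \pi_\lambda^{\p{2}} u$ only for such $u$; this matches $\pi_\lambda^{\p{1}}\p{\B_1 \cap \B_2}$ with $\pi_\lambda^{\p{2}}\p{\B_1 \cap \B_2}$ inside $\B$, not the full generalized eigenspaces $V_\lambda^{\p{i}} = \pi_\lambda^{\p{i}}\p{\B_i}$. Your proposed remedy --- bounding the essential spectral radius of $L$ on $\B_1 \cap \B_2$ --- is both unjustified (compactness on the intersection does not follow from compactness on the factors; the inequality goes the wrong way) and, even if granted, does not help, since it still only tells you about vectors that are already in $\B_1 \cap \B_2$. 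What actually closes the argument in the cited references is a \emph{density} hypothesis: $\B_1 \cap \B_2$ dense in each $\B_i$. Then $\pi_\lambda^{\p{i}}\p{\B_1 \cap \B_2}$ is dense in the finite-dimensional space $V_\lambda^{\p{i}}$, hence equals it, and your resolvent identity on $\B_1 \cap \B_2$ gives $\iota_1\p{V_\lambda^{\p{1}}} = \iota_2\p{V_\lambda^{\p{2}}}$ at once. This density holds in every application in the paper (in the proof of Lemma~\ref{lmresgen} both spaces contain $\mathcal{C}^{\infty,\tilde{\upsilon}}\p{M}$ or $\mathcal{C}^\infty\p{M}$ as a dense subspace), but it is not written into the statement of Lemma~\ref{lmesr}; without it the conclusion is false, as one sees by taking $\B = \ell^\infty\p{\Z}$, $\B_1 = \ell^2\p{\Z_{\geq 0}}$, $\B_2 = \ell^2\p{\Z_{<0}}$ (embedded by zero-extension) and $L\p{x} = 2 x_0 e_0$.
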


Applying Lemma~\ref{lmesr} to the resolvent of $X$, we may prove the two following lemmas. Lemma~\ref{lmdefres} asserts that Ruelle resonances are well-defined, while Lemma~\ref{lmresgen} ensures that the spectrum of $X$ acting on the space $\h$ given by Theorem~\ref{thmmain} coincides with the Ruelle spectrum (recall Definition \ref{defresonances}). The proofs of Lemmas \ref{lmdefres} and~\ref{lmresgen} are very similar and consequently we will only prove Lemma~\ref{lmresgen}, in order to show that there are no particular difficulties when working with unusual classes of regularity.

\begin{lm}\label{lmdefres}
Let $\B$ and $\widetilde{\B}$ be two Banach spaces and $A > 0$ be a positive real number. Assume that $\B$ and $\widetilde{\B}$ both satisfy the points (i)-(iv) from Theorem~\ref{thmfonda} for this particular value of $A$. Then the intersections of $\set{z \in \C : \Re\p{z} > - A}$ with the spectrum of $X$ acting on $\B$ and $\widetilde{\B}$ coincide.
\end{lm}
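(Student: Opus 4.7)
The plan is to apply Lemma~\ref{lmesr} with the ambient topological vector space $\mathcal{D}'\p{M}$, $\B_1 = \B$, $\B_2 = \widetilde{\B}$, and as the common operator $L$ the Koopman operator $\mathcal{L}_t$ for a fixed $t > 0$. Both spaces embed continuously into $\mathcal{D}'\p{M}$ by Theorem~\ref{thmfonda}(i); $\mathcal{L}_t$ extends to a continuous operator on $\mathcal{D}'\p{M}$ by the duality construction spelled out in \S \ref{secDCC} (adapted from $\mathcal{D}^{\tilde{\upsilon}}\p{M}'$ to $\mathcal{D}'\p{M}$); and it preserves both $\B$ and $\widetilde{\B}$ by Theorem~\ref{thmfonda}(ii). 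So the structural hypotheses of Lemma~\ref{lmesr} are immediate; everything boils down to a spectral radius estimate and then a translation from $\mathcal{L}_t$ back to $X$.

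The key step is to bound the essential spectral radius of $\mathcal{L}_t$ on each of $\B$ and $\widetilde{\B}$ by $e^{-At}$. This is where a spectral mapping argument for $C_0$-semigroups enters: since $\sigma\p{X|_{\B}} \cap \set{z \in \C : \Re\p{z} > -A}$ consists only of isolated eigenvalues of finite multiplicity, the essential spectrum of $X$ on $\B$ lies in $\set{z \in \C : \Re\p{z} \leq -A}$, and I would transfer this to the statement that the spectrum of $\mathcal{L}_t$ in $\set{w \in \C : \va{w} > e^{-At}}$ consists only of isolated eigenvalues of finite multiplicity; explicitly, one writes the resolvent $\p{\mu - \mathcal{L}_t}^{-1}$ in terms of $\p{z-X}^{-1}$ via the usual contour integral and exploits the holomorphy / meromorphy of the latter in the relevant half-plane. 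The same holds verbatim on $\widetilde{\B}$.

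Once this estimate is established, applying Lemma~\ref{lmesr} with any $\rho > e^{-At}$ gives that the eigenvalues of $\mathcal{L}_t$ of modulus greater than $\rho$ coincide on $\B$ and $\widetilde{\B}$, and that the corresponding finite-dimensional generalized eigenspaces coincide and sit inside $\B \cap \widetilde{\B}$. To translate this back into a statement about $X$, I would observe that each such generalized eigenspace $V_\mu$ of $\mathcal{L}_t$ is invariant under $X$ (as $X$ commutes with $\mathcal{L}_t$) and decomposes as a direct sum of generalized eigenspaces of $X$ corresponding to those $\lambda$ with $e^{\lambda t} = \mu$. Since $V_\mu$ is the same subspace of $\mathcal{D}'\p{M}$ whether viewed through $\B$ or $\widetilde{\B}$, and the action of $X$ on it is in both cases the restriction of the generator of the semigroup, this decomposition is intrinsic. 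To pick out the actual value of $\lambda$ rather than only $\lambda \bmod 2\pi i / t$, I would either invoke the a priori bound $\Re\p{\lambda} > -A$ together with a choice of $t$ small enough, or else run the argument simultaneously for two rationally independent values of $t$.

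The main obstacle will be the essential spectral radius estimate for $\mathcal{L}_t$ from the spectral hypothesis on $X$: spectral mapping for $C_0$-semigroups is notoriously delicate, so I expect this step to require a careful hand-crafted argument based on the resolvent identity rather than a black-box invocation of a semigroup theorem. Everything else—checking the ambient-space hypotheses of Lemma~\ref{lmesr} and extracting the $X$-spectral information from the $\mathcal{L}_t$-spectral information—is then routine linear algebra on the finite-dimensional common generalized eigenspaces.
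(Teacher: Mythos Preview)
Your strategy is in the right spirit—apply Lemma~\ref{lmesr} to a common bounded operator—but you have chosen the harder operator. The paper applies Lemma~\ref{lmesr} not to $\mathcal{L}_t$ but to the resolvent $\p{z_0 - X}^{-1}$ for a large real $z_0$. The point is that the spectral mapping for resolvents is automatic: $\sigma\p{\p{z_0 - X}^{-1}} \setminus \set{0} = \set{\p{z_0 - \lambda}^{-1} : \lambda \in \sigma\p{X}}$, and the map $\lambda \mapsto \p{z_0 - \lambda}^{-1}$ sends the closed half-plane $\set{\Re\p{\lambda} \leq -A}$ into the closed disc of radius $\p{z_0 + A}^{-1}$. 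Hence the essential spectral radius of $\p{z_0 - X}^{-1}$ on each space is at most $\p{z_0 + A}^{-1}$ with no semigroup subtleties, and one translates back to $\sigma\p{X}$ via the same injective map (so there is no $2\pi i / t$ ambiguity to resolve either). The ambient space is $\B + \widetilde{\B}$ with its natural norm; the two resolvents agree on $\B \cap \widetilde{\B}$ because both are given by the integral $\int_0^\infty e^{-z_0 t} \mathcal{L}_t u\, \mathrm{d}t$.

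Your route via $\mathcal{L}_t$ is not hopeless, but the obstacle you flag is real: from hypotheses (i)--(iv) of Theorem~\ref{thmfonda} alone, the bound $r_{\textup{ess}}\p{\mathcal{L}_t} \leq e^{-At}$ does not follow from any black-box semigroup theorem, and hand-crafting it would essentially route you through the resolvent anyway. So the paper's choice is strictly simpler and buys everything your approach would.
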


\begin{lm}\label{lmresgen}
Assume that $M$, the flow $\p{\phi^t}_{t \in \R}$, and $g$ are $\mathcal{C}^{\kappa,\upsilon}$ for some $\kappa > 0$ and $\upsilon >1$. Let $\B$ be a Banach space such that for some $\tilde{\upsilon} > \upsilon$ and $A \in \R$ we have: 
\begin{enumerate}[label=(\roman*)]
\item $\mathcal{C}^{\infty,\tilde{\upsilon}}\p{M} \subseteq \B \subseteq \mathcal{D}^{\tilde{\upsilon}}\p{M}'$, all the inclusions being continuous, the first one having dense image;
\item for all $t \in \R_+$, the operator $\mathcal{L}_t$ defined by \eqref{eqkoopman} is bounded from $\B$ to itself;
\item $\p{\mathcal{L}_t}_{t \geq 0}$ forms a strongly continuous semi-group of operator acting on $\B$, whose generator is $X$;
\item the intersection of the spectrum of $X$ acting on $\B$ with $\set{ z \in \C: \Re\p{z} > -A}$ is made of isolated eigenvalues of finite multiplicity.
\end{enumerate}
Then the intersection of $\set{ z \in \C: \Re\p{z} > -A}$ with the spectrum of $X$ acting on $\B$ is the intersection of $\set{ z \in \C: \Re\p{z} > -A}$ with the Ruelle spectrum of $X$ from Definition \ref{defresonances}.
\end{lm}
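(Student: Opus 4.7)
The strategy follows the classical proof that Ruelle resonances are intrinsic, based on Lemma~\ref{lmesr}. Pick any $A' > A$ and let $\widetilde{\B}$ be a Banach space furnished by Theorem~\ref{thmfonda} for parameter $A'$. The continuous inclusion $\mathcal{D}^{\tilde{\upsilon}}\p{M} \hookrightarrow \mathcal{D}\p{M}$ (every ultradifferentiable density is smooth) dualizes to $\mathcal{D}'\p{M} \hookrightarrow \mathcal{D}^{\tilde{\upsilon}}\p{M}'$, and together with hypothesis (i) this places both $\B$ and $\widetilde{\B}$ continuously in the Hausdorff TVS $\mathcal{E} \coloneqq \mathcal{D}^{\tilde{\upsilon}}\p{M}'$. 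Fix $t_0 > 0$ and take $L \coloneqq \mathcal{L}_{t_0}$: by \S\ref{secDCC} this is a continuous operator on $\mathcal{E}$, and by hypothesis (ii) (and Theorem~\ref{thmfonda}(ii)) it preserves both Banach spaces. The plan is to apply Lemma~\ref{lmesr} to this $L$.

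The main technical step is to bound $r_{\mathrm{ess}}\p{L|_{\B}} \leq e^{-t_0 A}$ and $r_{\mathrm{ess}}\p{L|_{\widetilde{\B}}} \leq e^{-t_0 A'}$. I would proceed as in the proof of Proposition~\ref{propmain}: for $z_0$ with $\Re\p{z_0}$ large and outside both spectra, the spectral mapping theorem for the resolvent of the unbounded generator, combined with Kato's Problem~6.16 p.~177 applied in the good strip (where hypothesis (iv) gives isolated eigenvalues of finite multiplicity), bounds $r_{\mathrm{ess}}\p{\p{z_0 - X}^{-1}|_\B}$ by $\p{\Re\p{z_0} + A}^{-1}$. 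A finite-rank spectral projector isolating the large eigenvalues of $\p{z_0 - X}^{-1}$ together with Hennion's theorem, applied as in the decomposition \eqref{eqdecesr}, then yields the bound on $r_{\mathrm{ess}}\p{\mathcal{L}_{t_0}|_{\B}}$ (the same argument works on $\widetilde{\B}$ with $A'$). This is the main obstacle, because unlike in Proposition~\ref{propmain} we cannot assume that all of $\sigma\p{X|_\B}$ consists of isolated eigenvalues of finite multiplicity; only the portion in $\set{\Re > -A}$ is controlled.

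Once these essential spectral radius bounds are in place, Lemma~\ref{lmesr} with $\rho \coloneqq e^{-t_0 A}$ shows that the eigenvalues of modulus greater than $\rho$ of $L$ coincide on $\B$ and $\widetilde{\B}$, with matching finite-dimensional generalized eigenspaces $V \subseteq \B \cap \widetilde{\B}$. Any $\lambda_0 \in \sigma\p{X|_\B}$ with $\Re\p{\lambda_0} > -A$ produces an eigenvalue $w_0 \coloneqq e^{t_0 \lambda_0}$ of $L$ of modulus greater than $\rho$; the $X$-invariant subspace $V$ attached to $w_0$ decomposes (both on $\B$ and on $\widetilde{\B}$) as the direct sum of generalized eigenspaces of $X$ indexed by $\set{\lambda : e^{t_0 \lambda} = w_0} = \lambda_0 + 2 \pi i t_0^{-1} \Z$. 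Running the argument with a second $t_1 > 0$ such that $t_1 / t_0$ is irrational removes this periodic ambiguity and identifies $\lambda_0 \in \sigma\p{X|_{\widetilde{\B}}}$ with matching algebraic multiplicity; the reverse inclusion is symmetric. By Definition~\ref{defresonances}, $\sigma\p{X|_{\widetilde{\B}}} \cap \set{z \in \C : \Re\p{z} > -A}$ is precisely the Ruelle spectrum in that half-plane, concluding the proof.
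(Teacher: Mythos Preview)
Your overall strategy---embed both $\B$ and a space $\widetilde{\B}$ from Theorem~\ref{thmfonda} into a common Hausdorff TVS and invoke Lemma~\ref{lmesr}---is exactly the paper's, but you choose a different operator: you take $L=\mathcal{L}_{t_0}$, whereas the paper takes $L=(z_0-X)^{-1}$ for a large real $z_0$ and works on the sum space $\B+\widetilde{\B}$. This difference is not cosmetic; it is precisely where your argument breaks.

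The gap is the claimed bound $r_{\mathrm{ess}}(\mathcal{L}_{t_0}|_\B)\le e^{-t_0 A}$. You correctly obtain $r_{\mathrm{ess}}\big((z_0-X)^{-1}|_\B\big)\le (\Re z_0+A)^{-1}$ from hypothesis~(iv) via the M\"obius correspondence $\lambda\mapsto(z_0-\lambda)^{-1}$, but the passage from there to a bound on $r_{\mathrm{ess}}(\mathcal{L}_{t_0})$ does not follow from the argument you sketch. The decomposition \eqref{eqdecesr} in the proof of Proposition~\ref{propmain} bounds the essential spectral radius of $\mathcal{L}_{t_0}(z-X)^{-1}$, not of $\mathcal{L}_{t_0}$. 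If instead you write $\mathcal{L}_{t_0}=\mathcal{L}_{t_0}\pi_r+\mathcal{L}_{t_0}(1-\pi_r)$ with $\pi_r$ the resolvent's spectral projector, then $1-\pi_r$ has finite rank only for $r>(\Re z_0+A)^{-1}$, and on the range of $\pi_r$ you know merely that the \emph{resolvent} has spectral radius $\le r$; this gives no control on the spectral radius of $\mathcal{L}_{t_0}$ there, since the spectral mapping $\sigma(\mathcal{L}_{t_0})\setminus\{0\}=e^{t_0\sigma(X)}$ can fail for $C_0$-semigroups. You yourself flag this as ``the main obstacle,'' and the obstacle is real: hypothesis~(iv) does not by itself force the essential growth bound of the semigroup to be $\le -A$.

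The paper avoids the issue entirely: since $\lambda\mapsto(z_0-\lambda)^{-1}$ is a bijection, Lemma~\ref{lmesr} applied directly to $(z_0-X)^{-1}$ with $\rho=(z_0+A)^{-1}$ matches the eigenvalues of the resolvent of modulus $>\rho$ on $\B$ and $\widetilde{\B}$; pulling back, these are the $\lambda$ in the disc $\va{z_0-\lambda}<z_0+A$, which for suitable $z_0$ covers any prescribed bounded portion of $\{\Re\lambda>-A'\}$ (this is \eqref{eqoutside}). Letting $A'\uparrow A$ and $R\to\infty$ finishes. Incidentally, had your semigroup bound been valid, the two-times trick with irrational $t_1/t_0$ would have been unnecessary: Lemma~\ref{lmesr} would give a common finite-dimensional $V\subseteq\B\cap\widetilde{\B}$ on which $X$ acts as a single fixed linear map, so its generalized eigenspace decomposition---and hence the multiplicity of each $\lambda_0$---is intrinsic.
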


\begin{proof}
Apply Theorem~\ref{thmfonda} (with the same value of $A$) to get a Banach space $\widetilde{\B}$ such that in particular the intersection of $\set{ z \in \C: \Re\p{z} > -A}$ with the spectrum of $X$ acting on $\widetilde{\B}$ coincides with the intersection of $\set{ z \in \C: \Re\p{z} > -A}$ with the Ruelle spectrum of $X$ (by definition of the Ruelle spectrum). Now choose a positive real number $z_0$ large enough so that the resolvent $\p{z_0 - X}^{-1}$ is well-defined both on $\B$ and $\widetilde{\B}$. Notice that the resolvents of $X$ acting on $\B$ and $\widetilde{\B}$ coincide on the intersection $\B \cap \widetilde{\B}$. Indeed, from (iii) and \cite[Problem 1.15 p.487]{Kato}, it follows that, if $u \in \B \cap \widetilde{\B}$, then $\p{z_0 - X}^{-1}$ is defined as an element of $\mathcal{D}^{\tilde{\upsilon}}\p{M}'$ by
\begin{equation*}
\forall \mu \in \mathcal{D}^{\tilde{\upsilon}}\p{M} : \left\langle \p{z_0 - X}^{-1} u,\mu \right\rangle = \int_0^{+ \infty} e^{-z_0 t} \left\langle \mathcal{L}_t u,\mu\right\rangle.
\end{equation*} 
Thus we may extend $\p{z_0 - X}^{-1}$ to $\B + \widetilde{\B}$ by setting that $\p{z_0 - X}^{-1} u$ is equal to $ \p{z_0 - X}^{-1} v + \p{z_0 - X}^{-1} w$, if $u = v + w$ with $v \in \B$ and $w \in \widetilde{\B}$ (it does not depend on the choice of $v$ and $w$). Furthermore, this extension is continuous when $\B + \widetilde{\B}$ is endowed with the norm $\n{\cdot}_{\B + \widetilde{\B}}$ defined by
\begin{equation*}
\forall u \in \B + \widetilde{\B} : \n{u}_{\B + \widetilde{\B}} = \inf_{\substack{u = v + w \\ v \in \B, w \in \widetilde{\B}}} \n{v}_{\B} + \n{w}_{\widetilde{\B}}.
\end{equation*} 

Let $A' <A$ and $R >0$, provided that $z_0$ is large enough we have
\begin{equation}\label{eqoutside}
\frac{1}{\sqrt{\p{z_0 + A'}^2 + R^2}} \geq \frac{1}{z_0 + A}.
\end{equation}
The map $\lambda \mapsto \p{z_0 - \lambda}^{-1}$ induces a bijection between the spectrum of $X$ acting on $\B$ and the spectrum of $\p{z_0 - X}^{-1}$ acting on $\B$, but it also sends $ \set{z \in \C : \Re\p{z} \leq - A}$ into the disc of center $0$ and radius $\frac{1}{z_0 + A}$. Consequently, the essential spectral radius of $\p{z_0 - X}^{-1}$ acting on $\B$ is less than $\frac{1}{z_0+A}$. The same is true for the same reason replacing $\B$ by $\widetilde{\B}$. Thus we may apply Lemma~\ref{lmesr} (with $\rho = \frac{1}{z_0 + A}$ and $\B_1,\B_2$ and $\B$ being respectively $\B,\widetilde{\B}$ and $\B + \widetilde{\B}$) to see that the spectrum of $\p{z_0 -X}^{-1}$ outside of the disc of center $0$ and radius $\frac{1}{z_0 +A}$ is the same on $\B$ and on $\widetilde{\B}$. But the map $\lambda \mapsto \p{z_0 - \lambda}^{-1}$ sends $\set{z \in \C : -A' \leq \Re (z) \leq z_0 \textup{ and } \va{\Im\p{z}} \leq R}$ outside of the disc of center $0$ and radius $\frac{1}{z_0+A}$ (see \eqref{eqoutside}). Consequently, the intersection of $\set{z \in \C : -A' \leq \Re (z) \leq z_0 \textup{ and } \va{\Im\p{z}} \leq R}$ with the spectrum of $X$ acting on $\B$ coincides with the intersection of $\set{z \in \C : -A' \leq \Re (z) \leq z_0 \textup{ and } \va{\Im\p{z}} \leq R}$ with the set of Ruelle resonances of $X$. Since $R >0$ and $A' < A$ are arbitrary, and $z_0$ may be chosen arbitrarily large, the lemma is proven.
\end{proof}

\section{Proofs of Lemmas \ref{lmderiv} and~\ref{lmdomain}}\label{appDCC}

\begin{proof}[Proof of Lemma~\ref{lmderiv}]
We only need to prove the first point: the same argument with $\mathcal{C}^{\infty,\tilde{\upsilon}}\p{M}$ replaced by $\mathcal{D}^{\tilde{\upsilon}}\p{M}$, and $\mathcal{L}_t$ and $X$ replaced by their formal adjoints gives the second point.

We start with the case $g = 0$. Using the group property of $\p{\mathcal{L}_t}_{t \in \R}$, we only need to prove differentiability at $t=0$. Then we may cover $M$ by flow boxes, and thus we only need to show that if $u \in \mathcal{S}^{\tilde{\upsilon}}$ is supported in a compact subset $K$ of $\R^{d+1}$ then 
\begin{equation}\label{eqconv}
\frac{u\p{\cdot + t e_{d+1}} - u}{t} \underset{t \to 0}{\to} \partial_{x_{d+1}} u \textup{ in } \mathcal{S}^{\tilde{\upsilon}},
\end{equation}
where $e_{d+1}$ denotes the last vector of the canonical basis of $\R^{d+1}$. Up to enlarging $K$ we may assume that for all $t \in \left[-1,1\right]$ the function $u\p{\cdot + t e_{d+1}}$ is supported in $K$. Then if $x \in K$, $\alpha \in \N^{d+1}$, and $t \in \left[-1,1\right]$ we have with Taylor's formula (for any $\kappa'' >0$):
\begin{equation*}
\begin{split}
& \va{\frac{\partial^\alpha u\p{x+t e_{d+1}} - \partial^{\alpha}u\p{x}}{t} - \partial^{\alpha} \partial_{x_{d+1}} u\p{x} }  \\  & \quad \quad \quad = \va{\frac{\partial^\alpha u\p{x+t e_{d+1}} - \partial^{\alpha}u\p{x}}{t} - \partial_{x_{d+1}} \partial^\alpha u\p{x} }\\
     & \quad \quad \quad \leq  \frac{\n{\partial_{x_{d+1}}^2 \partial^\alpha u}_{\infty}}{2} \va{t} \leq \frac{\va{t}}{2} \n{u}_{\kappa'',\tilde{\upsilon}} \exp\p{\frac{\p{\va{\alpha} + 2}^{\tilde{\upsilon}}}{\kappa''}}.
\end{split}
\end{equation*}
Thus if $\kappa',\kappa'' > 0$ and for $R > 0$ depending only on $K$, we have for all $x \in \R^{d+1}, \alpha \in \N^{d+1}$ and $m \in \N$:
\begin{equation*}
\begin{split}
\p{1 + \va{x}}^m \va{\frac{\partial^\alpha u\p{x+t e_{d+1}} - \partial^{\alpha}u\p{x}}{t} - \partial^{\alpha} \partial_{x_{d+1}} u\p{x} } \exp\p{ - \frac{\p{m + \va{\alpha}}^{\tilde{\upsilon}}}{\kappa'}} \\ \leq \frac{\va{t}}{2} \n{u}_{\kappa'',\tilde{\upsilon}} R^m \exp\p{\frac{\p{\va{\alpha} + 2}^{\tilde{\upsilon}}}{\kappa''} - \frac{\p{m + \va{\alpha}}^{\tilde{\upsilon}}}{\kappa'}}.
\end{split}
\end{equation*}
Thus if $\kappa' > 0$ and $\kappa'' > \kappa' $, then there is a constant $C >0$ (that only depends on $K, \tilde{\upsilon},\kappa'$, and $\kappa''$) such that for all $t \in \left[-1,1\right]$ we have
\begin{equation*}
\n{\frac{u\p{\cdot + t e_{d+1}} - u}{t} - \partial_{x_{d+1}} u}_{\kappa',\tilde{\upsilon}} \leq C \va{t} \n{u}_{\kappa'',\tilde{\upsilon}},
\end{equation*}
which implies \eqref{eqconv} and thus ends the proof of the lemma in the case $g =0$.

In order to deduce the result in the case of a general $g$ from the case $g=0$, we only need to prove that the map
\begin{equation}\label{equnecourbe}
\begin{split}
t \mapsto \exp\p{ \int_0^t g \circ \phi^\tau \mathrm{d}\tau}
\end{split}
\end{equation}
is $\mathcal{C}^\infty$ from $\R$ to $\mathcal{C}^{\infty,\tilde{\upsilon}}\p{M}$. Indeed, the multiplication is continuous from the product $\mathcal{C}^{\infty,\tilde{\upsilon}}\p{M} \times \mathcal{C}^{\infty,\tilde{\upsilon}}\p{M}$ to $\mathcal{C}^{\infty,\tilde{\upsilon}}\p{M}$. The map \eqref{equnecourbe} is easily seen to be $\mathcal{C}^\infty$ from $\R$ to $\mathcal{C}^0\p{M}$, and one may notice that its derivatives are valued in $\mathcal{C}^{\infty,\tilde{\upsilon}}$ (recall that the classes of regularity $\mathcal{C}^{\kappa,\tilde{\upsilon}}$, and hence $\mathcal{C}^{\infty,\tilde{\upsilon}}$, are closed by composition) with uniform bounds locally in $t$. Then, by successive applications of Taylor's formula at order $1$ with integral remainder, one gets that the map \eqref{equnecourbe} is $\mathcal{C}^\infty$ from $\R$ to $\mathcal{C}^{\infty,\tilde{\upsilon}}\p{M}$, ending the proof of the lemma (we use the exact formula for the remainder in order to bound it in $\mathcal{C}^{\infty,\tilde{\upsilon}}\p{M}$).
\end{proof}

\begin{proof}[Proof of Lemma~\ref{lmdomain}]
Denote for now the generator of $\p{\mathcal{L}_t}_{t \geq 0}$ by $\widetilde{X}$. Let $u \in \B$ be in the domain of $\widetilde{X}$, then the map $\R_+ \ni t \mapsto \mathcal{L}_t u \in \B$ is differentiable at $0$ and its derivative at $0$ is $\widetilde{X}u$ (by definition of $\widetilde{X}$). Since $\B \subseteq \mathcal{D}^{\tilde{\upsilon}}\p{M}$ is continuous, the same is true for the map $\R_+ \ni t \mapsto \mathcal{L}_t u \in \mathcal{D}^{\tilde{\upsilon}}\p{M}'$, whose derivative at $0$ is $Xu$ according to Lemma~\ref{lmderiv}. Thus $\widetilde{X}u = Xu \in \B$.

Reciprocally, if $u \in \B$ is such that $Xu \in \B$, then we may define a $\mathcal{C}^1$ map $c: \R_+ \to \B$ by $c\p{t} = u + \int_0^t \mathcal{L}_\tau X u \mathrm{d}\tau$ for all $t \in \R_+$. Notice that $c'\p{0} = Xu$. Since $\B \subseteq \mathcal{D}^{\tilde{\upsilon}}\p{M}$ is continuous, the map $c$ is still $\mathcal{C}^1$ when seen as a map from $\R_+$ to $\mathcal{D}^{\tilde{\upsilon}}\p{M}$ and we have $c\p{0} = u$ and $c'\p{t} = \mathcal{L}_t X u$ for all $t \in \R_+$, so that $c\p{t} = \mathcal{L}_t u$ for all $t \in \R_+$, using Lemma~\ref{lmderiv}. This proves that $u$ belongs to the domain of $\widetilde{X}$.
\end{proof}

\section{Factorization of the dynamical determinant}\label{appdet}

We prove here, under the hypotheses of Theorem~\ref{thmmain}, a Hadamard-like factorization \eqref{eqfacto} for the dynamical determinant $d_g$ defined by \eqref{eqdefdet}. Let $t_0 > 0$ be shorter than any periodic orbit of $\p{\phi^t}_{t \in \R}$. Then, working as in the proof of Proposition \ref{propmain}, we see that, for $\Re z \gg 1$, the essential spectral radius of 
\begin{equation}\label{eqilestpresqueatrace}
\begin{split}
\mathcal{L}_{t_0}(z-X)^{-(d+2)} = \frac{1}{(d+1)!}\int_{t_0}^{+ \infty} e^{-z(t-t_0)}(t-t_0)^{d+1} \mathcal{L}_t \mathrm{d}t : \h \to \h
\end{split}
\end{equation}
is zero. Then, applying holomorphic functional calculus in finite dimension as in the proof of Lemma \ref{lmlaplace}, we see that the spectrum of \eqref{eqilestpresqueatrace} is made of the $\frac{e^{\lambda t_0}}{(z-\lambda)^{d+2}}$ for $\lambda$ in the spectrum of $X$. Then, for $\Re z \gg 1$, Proposition \ref{proprecap} implies that the right hand side of \eqref{eqilestpresqueatrace} defines a trace class operator on $\widetilde{\h}_0$. From Lemma \ref{lmesr}, we see that the spectrum of \eqref{eqilestpresqueatrace} is the same when acting on $\h$ or on $\tilde{h}_0$. Then, using Lidskii's Trace Theorem and Proposition \ref{proprecap}, we see that\footnote{Notice that the global trace formula \eqref{eqtrform} may be deduced from this equality using residue's formula.},
\begin{equation*}
\sum_{\lambda \textup{ resonance}} \frac{e^{\lambda t_0}}{\p{z - \lambda}^{d+2}} =\frac{1}{\p{d+1}!} \sum_{\gamma} T_\gamma^{\#} \exp\p{\int_\gamma g} \p{T_\gamma - t_0}^{d+1} \frac{e^{-z \p{T_\gamma - t_0}}}{\va{\det\p{I - \mathcal{P}_\gamma}}}.
\end{equation*}
For all $\lambda \in \C \setminus \set{0}$ notice that the meromorphic map
\begin{equation*}
z \mapsto - \sum_{n \geq d+1} \frac{z^n}{\lambda^{n+1}} e^{-\p{z-\lambda}t_0} = \frac{e^{-\p{z-\lambda} t_0}}{z- \lambda} + \sum_{n = 0}^{d} \frac{z^n}{\lambda^{n+1}}e^{-\p{z-\lambda}t_0}
\end{equation*}
has a unique pole in $\lambda$ whose order is $1$ and whose residue is $1$. Thus there is an entire function $G_{\lambda,t_0}$ such that for all $z \in \C$
\begin{equation*}
\frac{G_{\lambda,t_0}'\p{z}}{G_{\lambda,t_0}\p{z}} = - \sum_{n \geq d+1} \frac{z^n}{\lambda^{n+1}} e^{-\p{z-\lambda}t_0} = \frac{e^{-\p{z-\lambda} t_0}}{z- \lambda} + \sum_{n = 0}^{d} \frac{z^n}{\lambda^{n+1}}e^{-\p{z-\lambda}t_0}
\end{equation*}
and $G_{\lambda,t_0}\p{0} = 1$. Choose for $G_{0,t_0}$ any logarithmic primitive of $z \mapsto \frac{e^{-t_0 z}}{z}$.

Now, choose $R > 0$ and if $\va{\lambda} \geq 2 R$ notice that for all $z \in \mathbb{D}\p{0,R}$ we have
\begin{equation*}
\va{\frac{G_{\lambda,t_0}'\p{z}}{G_{\lambda,t_0}\p{z}}} \leq 2 e^{R t_0} R^{d+1} \frac{e^{\Re\p{\lambda}t_0}}{\va{\lambda}^{d+2}}
\end{equation*}
and using the fact that $G_{\lambda,t_0}$ has a logarithm on $\mathbb{D}\p{0,R}$ that vanishes in $0$ (since $G_{\lambda,t_0}$ vanishes only at $\lambda$) we get that, for some constance $C$ depending only on $R$ and all $z \in \mathbb{D}\p{0,R}$
\begin{equation*}
\va{1 - G_{\lambda,t_0}\p{z}} \leq C \frac{e^{\Re\p{\lambda}t_0}}{\va{\lambda}^{d+2}}.
\end{equation*}
Using Proposition~\ref{propmain}, this implies that the infinite product
\begin{equation*}
\widetilde{d}_g\p{z} = \prod_{\lambda \textup{ resonance}} G_{\lambda,t_0}\p{z}
\end{equation*}
converges uniformly on all compact subset of $\C$. Notice that the zeros of $\widetilde{d}_g$ are precisely the Ruelle resonances. Now, we find that
\begin{equation*}
\p{e^{zt_0} \p{\ln G_{\lambda,t_0}\p{z}}'}^{\p{d+1}} =\p{-1}^{d+1}\p{d+1}! \frac{e^{\lambda t_0}}{\p{z - \lambda}^{d+2}}
\end{equation*}
and thus, for $\Re z \gg 1$,
\begin{equation}\label{eqderlog}
\begin{split}
\p{e^{zt_0} \p{\ln \widetilde{d}_g\p{z}}'}^{\p{d+1}} & = \p{-1}^{d+1}\p{d+1}!\sum_{\lambda \textup{ resonance}} \frac{e^{\lambda t_0}}{\p{z - \lambda}^{d+2}} \\
     & = \p{-1}^{d+1}\sum_{\gamma} T_\gamma^{\#} \exp\p{\int_\gamma g} \p{T_\gamma- t_0}^{d+1} \frac{e^{-z \p{T_\gamma - t_0}}}{\va{\det\p{I - \mathcal{P}_\gamma}}}\\
     & = \p{ e^{z t_0} \sum_{\gamma} T_\gamma^{\#} \exp\p{\int_\gamma g} \frac{e^{- zT_\gamma}}{\va{\det\p{I - \mathcal{P}_\gamma}}}}^{\p{d+1}} \\
     & = \p{ e^{z t_0} \p{ \ln d_g\p{z}}'}^{\p{d+1}},
\end{split}
\end{equation}
where $d_g$ is the usual dynamical determinant defined by \eqref{eqdefdet}. 
From \eqref{eqderlog} we deduce that there are a polynomial $P$ of degree at most $d$ and $\mu \in \C$ such that, for all $z \in \C$, we have the Hadamard-like factorization
\begin{equation}\label{eqfacto}
d_g\p{z} = \mu \exp\p{P\p{z} e^{-t_0z}} \prod_{\lambda \textup{ resonance}} G_{\lambda,t_0}\p{z}.
\end{equation}

In order to make this factorization more explicit, let us describe the $G_{\lambda,t_0}$'s. For all $\lambda \in \C \setminus \set{0} $, define the polynomial
\begin{equation*}
Q_{\lambda,t_0} = - \sum_{k=0}^{d} \p{\sum_{n=k}^{d} \frac{k!}{n!} \frac{(t_0 - \lambda)^{n-k-1}}{\lambda^{k+1}} }X^k,
\end{equation*}
and notice that
\begin{equation*}
\p{Q_{\lambda,t_0}\p{z}e^{-z\p{t_0 - \lambda}}}' = \sum_{n = 0}^{d} \frac{z^n}{\lambda^{n+1}}e^{-\p{z-\lambda}t_0}.
\end{equation*}
Thus we have for all $\lambda \in \C  \setminus \set{0}$ and $z \in \C$
\begin{equation*}
\begin{split}
G_{\lambda,t_0}\p{z} = \p{1- \frac{z}{\lambda}} & \exp \p{Q_{\lambda,t_0}\p{z}e^{-\p{z - \lambda} t_0 } - Q_{\lambda,t_0}(0) e^{\lambda t_0}} \\ & \qquad \qquad \qquad \qquad \qquad \qquad \times \exp\p{z \int_0^1 \frac{e^{-(zu - \lambda)t_0}-1}{zu - \lambda}\mathrm{d}u}.
\end{split}
\end{equation*}
The last factor is a logarithmic primitive of $ z \mapsto \frac{e^{-\p{z-\lambda}t_0}-1}{z-\lambda}$.

\section{Applications of the trace formula}\label{apptrdet}

As applications of the trace formula, we prove here Proposition~\ref{propapp} and Corollary~\ref{corphrag}.

\begin{proof}[Proof of Proposition~\ref{propapp}]
It is folklore to prove the implication (i) $\Rightarrow$ (ii) from residue theorem, see also \cite[Theorem 17]{MunozMarco}. Let us prove the implication (i) $\Rightarrow$ (ii).

Choose $x >0$ large enough so that the series
\begin{equation}\label{eqdefx}
\sum_\gamma T_\gamma^{\#} \frac{e^{-x T_\gamma}}{\va{\det\p{I - \mathcal{P}_\gamma}}} \exp\p{\int_\gamma g}
\end{equation}
converges absolutely and $x > \Re\p{\lambda}+\epsilon$ for all the resonances $\lambda$ and some $\epsilon >0$. Write $k = \lceil \rho \rceil$. Choose $z \in \C$ such that $\Re\p{z} > x$. Then, we can find a sequence $\p{\varphi_n}_{n \in \N}$ of $\mathcal{C}^\infty$ functions, compactly supported in $\R_+^*$ such that
\begin{equation}\label{eqappro}
\lim_{n \to + \infty} \sup_{t \in \R} e^{tx}\va{\varphi_n\p{t} - t^{k}e^{-zt}} = 0
\end{equation}
and 
\begin{equation}\label{eqboun}
\sup_{\substack{n \in \N \\ t \in \R_+^*}} \va{e^{tx} \varphi^{\p{k}}\p{t}} < + \infty.
\end{equation}
Then, with \eqref{eqdefx} and \eqref{eqappro}, we have
\begin{equation*}
\sum_\gamma T_\gamma^{\#} \frac{\varphi_n\p{T_\gamma}}{\va{\det\p{I - \mathcal{P}_\gamma}}} \exp\p{\int_\gamma g} \underset{n \to + \infty}{\to} \sum_\gamma T_\gamma^{\#} \frac{e^{-z T_\gamma} T_{\gamma}^{k}}{\va{\det\p{I - \mathcal{P}_\gamma}}} \exp\p{\int_\gamma g}.
\end{equation*}
Now, since the trace formula holds (by assumption), we know that for all $n \in \N$ we have
\begin{equation*}
\sum_\gamma T_\gamma^{\#} \frac{\varphi_n\p{T_\gamma}}{\va{\det\p{I - \mathcal{P}_\gamma}}} \exp\p{\int_\gamma g} = \sum_{\lambda \textup{ resonances}} L\p{\varphi_n}\p{- \lambda}.
\end{equation*}
However, recall that
\begin{equation*}
L\p{\varphi_n}\p{-\lambda} = \int_0^\infty e^{\lambda t} \varphi_n\p{t}\mathrm{d}t 
\end{equation*}
so that, using \eqref{eqappro}, we have,
\begin{equation*}
L\p{\varphi_n}\p{-\lambda} \underset{n \to + \infty}{\to} \int_0^\infty t^{k} e^{-\p{z - \lambda} t}\mathrm{d}t = \frac{k!}{\p{z-\lambda}^{k+1}}.
\end{equation*}
Now, if $\lambda$ is non-zero, we have
\begin{equation*}
L\p{\varphi_n}\p{- \lambda} = \frac{\p{-1}^{k}}{\lambda^{k}} \int_0^{+ \infty} e^{\lambda t} \varphi_n^{\p{k}} \p{t} \mathrm{d}t.
\end{equation*}
Thus, \eqref{eqboun}, with $x > \Re\p{\lambda} +\epsilon$, and the second hypothesis provide a domination of $L\p{\varphi_n}\p{- \lambda} $, so that we have, using the dominated convergence theorem,
\begin{equation*}
\sum_{\lambda \textup{ resonances}} L\p{\varphi_n}\p{- \lambda} \underset{n \to + \infty}{\to} k! \sum_{\lambda \textup{ resonances}} \frac{1}{\p{z-\lambda}^{k+1}}.
\end{equation*}
Finally we have (when $\Re\p{z} \gg 1$)
\begin{equation*}
\begin{split}
k! \sum_{\lambda \textup{ resonances}} \frac{1}{\p{z-\lambda}^{k+1}} & = \sum_\gamma T_\gamma^{\#} \frac{e^{-z T_\gamma} T_{\gamma}^{k}}{\va{\det\p{I - \mathcal{P}_\gamma}}} \exp\p{\int_\gamma g} \\ & = \p{-1}^{k+1} \p{\ln d_g}^{\p{k+1}}(z).
\end{split}
\end{equation*}
Let $P$ denote the canonical product of genus $k-1$ whose zeros are the Ruelle resonances of $X$ (well-defined by \cite[(2.6.4)]{Boas} thanks to \eqref{eqsommeresonne}). Then we see that, if $z$ is not a Ruelle resonance for $X$, we have
\begin{equation}\label{eqderiveecanonical}
\begin{split}
\p{\ln P}^{(k)}(z) = \p{-1}^k \p{k-1}! \sum_{\lambda \textup{ resonances}} \frac{1}{(z-\lambda)^k}.
\end{split}
\end{equation}
It follows that $\p{\ln d_g}^{(k+1)} = \p{\ln P}^{(k+1)}$ and consequently there is a complex number $a$ such that for every $z \in \C$ that is not a Ruelle resonance for $X$ we have
\begin{equation*}
\begin{split}
\p{\ln P}^{(k)}(z) = \p{\ln d_g}^{(k)}(z) + a.
\end{split}
\end{equation*}
With \eqref{eqsommeresonne}, \eqref{eqderiveecanonical} and dominated convergence we see that $\p{\ln P}^{(k)}(r) \underset{\substack{r \to + \infty \\r \in \R}}{\to} 0$. By direct inspection, we see that $\p{\ln d_g}^{(k)}(r) \underset{\substack{r \to + \infty \\r \in \R}}{\to} 0$, and consequently $a = 0$. Thus, there is a polynomial $Q$ of degree at most $k-1 \leq \rho$ such that, for every $z \in \C$, we have
\begin{equation*}
\begin{split}
d_g(z) = e^{Q(z)} P(z),
\end{split}
\end{equation*}
and the result follows since $P$ has order less than $\rho$ by \cite[Theorem 2.6.5]{Boas}.

\end{proof}

\begin{proof}[Proof of Corollary~\ref{corphrag}]
Proposition~\ref{propapp} implies that $d_g$ has order less than $1$. But notice that $d_g$ is bounded on a line (choose a line parallel to the imaginary axis corresponding to a large positive real part) and thus has to be constant by the Phragm\'en--Lindel\"of Theorem \cite[Theorem 1.4.1]{Boas}. Finally, it has to be constant equal to $1$ since $d_g(z) \underset{\substack{z \to + \infty \\ z \in \R}}{\to} 1$.
\end{proof}

\section{Expanding maps of the circle and the condition $\upsilon < 2 $}\label{appupsilon}

In order to discuss the condition $\upsilon < 2$ in Theorem~\ref{thmmain}, we can consider a very simple example: expanding maps of the circle $\mathbb{S}^1 = \R / \Z$. An analogue of the space $\h$ from Theorem~\ref{thmmain} would then be an isotropic space of the type (here $\p{\hat{f}(n)}_{n \in \Z}$ denotes the sequence of Fourier coefficient of a function $f$)
\begin{equation*}
\h_{\alpha,\beta} = \set{f \in \mathcal{C}^\infty\p{\mathbb{S}^1,\C} : \sum_{n \in \Z} \va{\hat{f}\p{n}}^2 e^{2 \beta \ln\p{1 + \va{n}}^{\frac{1}{\alpha}}} < + \infty},
\end{equation*}
where $\beta > 0$ and $\alpha \in \left]\frac{\upsilon - 1}{\upsilon},1 \right[$ (this is the same condition as in Proposition~\ref{proplocop}), endowed with the norm
\begin{equation*}
\n{f}_{\alpha,\beta} = \sqrt{\sum_{n \in \Z} \va{\hat{f}\p{n}}^2 e^{2 \beta \ln\p{1 + \va{n}}^{\frac{1}{\alpha}}}}.
\end{equation*}
Then the transfer operator 
\begin{equation*}
\mathcal{L} : f \mapsto \frac{f\p{\frac{\cdot}{2}} + f\p{\frac{\cdot + 1}{2}}}{2}
\end{equation*}
associated to the doubling map may be written as 
\begin{equation*}
\mathcal{L} = \sum_{n \in \Z} \langle e_{2n}, \cdot \rangle_{L^2} e_n,
\end{equation*}
where $e_n : x \mapsto e^{2 i \pi n x}$ (the sum converges in strong operator topology on the space of continuous endomorphisms of $\h_{\alpha,\beta}$). Thus, the singular values of $\mathcal{L}$ acting on $\h_{\alpha,\beta}$ are the $ e^{\beta\p{\ln\p{1 + \va{n}}^{\frac{1}{\alpha}} - \ln\p{1 + 2\va{n}}^{\frac{1}{\alpha}}}} $ for $n \in \Z$. Using the fact that
\begin{equation*}
\ln\p{1 + \va{n}}^{\frac{1}{\alpha}} - \ln\p{1 + 2\va{n}}^{\frac{1}{\alpha}} \underset{\va{n} \to + \infty}{=} - \frac{\ln 2}{\alpha} \ln\p{1 + \va{n}}^{\frac{1}{\alpha} - 1} + O\p{\ln\p{1 + \va{n}}^{\frac{1}{\alpha} - 2}}
\end{equation*}
we see that $\mathcal{L}$ acting on $\h_{\alpha,\beta}$ is trace class when $\alpha < \frac{1}{2}$ and is not trace class when $\alpha > \frac{1}{2}$ (in the case $\alpha = \frac{1}{2}$ it depends on the value of $\beta$). Thus, we need to chose $\alpha < \frac{1}{2}$ if we want $\mathcal{L}$ to be nuclear. For general maps, this choice is possible only when $\upsilon < 2$ (see the condition in Proposition~\ref{proplocop}).

Consequently, using our method to prove the trace formula for $\mathcal{C}^{\kappa,\upsilon}$ Anosov flows (or hyperbolic diffeomorphisms as in \cite{lagtf}) would require to construct Hilbert spaces in a totally different way, if $\upsilon \geq 2$.

\section*{Acknowledgement}

I would like to thank Viviane Baladi for her careful reading of the different versions of this work. I would also like to thank Sébastien Gou\"ezel, Maciej Zworski, Semion Dyatlov and Shu Shen for discussions about the trace formula and useful suggestions. 

\bibliographystyle{plain}
\bibliography{biblio}

\end{document}